\DeclareMathOperator{\id}{id}
\DeclareMathOperator{\Crit}{Crit }
\DeclareMathOperator{\cl}{cl}
\newcommand{\Fix}{\mathrm{Fix\;}}
\renewcommand{\setminus}{\smallsetminus}
\newcommand{\NN}{\mathbb{N}}
\newcommand{\RR}{\mathbb{R}}
\newcommand{\CC}{\mathbb{C}}
\newcommand{\HH}{\mathbb{H}}
\renewcommand{\S}{\mathbb{S}}
\newcommand{\C}{\mathcal{C}}
\newcommand{\sing}{\mathrm{sing}}
\newcommand{\Ver}{\mathrm{Ver}}
\DeclareMathOperator{\TC}{\mathsf{TC}}
\DeclareMathOperator{\cat}{\mathsf{cat}}
\newcommand{\pr}{\mathrm{pr}}
\newcommand{\ver}{\mathrm{ver}}
\newcommand{\op}{\mathrm{op}}
\theoremstyle{plain}
\newtheorem{theorem}{Theorem}[section]
\newtheorem*{theorem*}{Theorem}
\newtheorem{prop}[theorem]{Proposition}
\newtheorem{lemma}[theorem]{Lemma}
\newtheorem{cor}[theorem]{Corollary}
\theoremstyle{definition}
\newtheorem{definition}[theorem]{Definition}
\theoremstyle{remark}
\newtheorem{remark}[theorem]{Remark}
\newtheorem{example}[theorem]{Example}
\numberwithin{equation}{section}
\begin{document}
\setlength{\parindent}{0cm}

\title{Approaches to critical point theory via sequential and parametrized topological complexity}


\author{Stephan Mescher}
\address{Institut f\"ur Mathematik \\ Martin-Luther-Universit\"at Halle-Wittenberg \\ Theodor-Lieser-Stra\ss{}e 5 \\ 06120 Halle (Saale) \\ Germany}
\email{stephan.mescher@mathematik.uni-halle.de}

\author{Maximilian Stegemeyer}
\address{Mathematisches Institut \\ Universit\"at Freiburg \\ Ernst-Zermelo-Stra\ss{}e 1 \\ 79104 Freiburg \\ Germany} 
\email{maximilian.stegemeyer@math.uni-freiburg.de}
\date{\today}

\begin{abstract}
	The Lusternik-Schnirelmann category of a space was introduced to obtain a lower bound of the number of critical points of a $C^1$-function on a given manifold. Related to Lusternik-Schnirelmann category and motivated by topological robotics, the topological complexity (TC) of a space is a numerical homotopy invariant whose topological properties are an active field of research. The notions of sequential and parametrized topological complexity extend the ideas of topological complexity. While the definition of TC is closely related to Lusternik-Schnirelmann category, the connections of sequential and parametrized TC to critical point theory have not been fully explored yet. In this article we apply methods from Lusternik-Schnirelmann theory to establish several lower bounds of numbers of critical points of functions in terms of sequential and parametrized TCs. We carry out several consequences and applications of these bounds, among them a computation of the parametrized TC of the unit tangent bundles of $(4m-1)$-spheres.
\end{abstract}

\maketitle

\section*{Introduction}
Relating topological invariants of spaces to numbers of critical points of functions is one of the central ideas in topology and geometry.
In the 1920s, Lusternik and Schnirelmann introduced their notion of \textit{category of a space} to find a lower bound of these numbers for arbitrary $C^1$-functions on a closed smooth manifold.
Since then, there have been many extensions and applications of Lusternik-Schnirelmann category to critical point theory, e.g. to infinite-dimensional Banach manifolds by R. Palais in \cite{PalaisLuster} and to equivariant settings by T. Bartsch in \cite{Bartsch} as well as M. Clapp and D. Puppe in \cite{ClappPuppeSymm}.

Another common approach to relate topological properties of manifolds to critical points of functions is Morse theory, see \cite{MilMorse}.
While Morse theory often produces stronger bounds of the numbers of critical points, its downside is that the considered function needs to be of class $C^2$ and its critical points need to fulfill a non-degeneracy condition, which is only satisfied generically. \medskip 

In recent years, invariants similar to Lusternik-Schnirelmann (LS) category have been studied in topological robotics. Motivated by the motion planning problem from robotics, M. Farber introduced the notion of \emph{topological complexity} of a topological space $X$ in \cite{farber:2003} and developed it further in several subsequent articles.
For a path-connected topological space $X$, let $PX = C^0([0,1],X)$ be the path space endowed with the compact-open topology and let 
$$\pi\colon PX\to X\times X, \quad \pi(\gamma) = (\gamma(0),\gamma(1)),$$ be the path fibration.
The topological complexity  of $X$ is defined as the smallest integer $k\in \mathbb{N}$ such that there exists an open cover $U_1,U_2,\ldots , U_k$ of $X\times X$ with $U_i$ admitting a continuous section of $\pi$ for each $i\in\{1,2,\ldots,k\}$. Here, we note that we use Farber's original definition instead of the reduced definition which is by one smaller than ours and which has become very common in the literature.

Since M. Farber's seminal work, many results on and many variants of topological complexity have been studied. In this article, we shall consider the \textit{sequential} or \textit{higher} topological complexity of a space $X$, denoted by $\TC_r(X)$ and the \textit{(sequential) parametrized topological complexity} of a fibration $p\colon E\to B$, denoted by $\TC_r[p\colon E \to B]$.

We want to briefly discuss these two concepts. Let $r \in \NN$ with $r\geq 2$ and let $X$ be a path-connected space. Then
$$  \pi_r\colon PX \to X^r, \quad    \pi_r (\gamma) = \big(      \gamma(0), \gamma(\tfrac{1}{r-1}),\ldots, \gamma(\tfrac{r-2}{r-1}),\gamma(1)   \big) , $$
is a fibration. The \textit{$r$-th sequential topological complexity of $X$} is defined as the smallest integer $k\in \mathbb{N}$ such that there exists an open cover $U_1,U_2,\ldots , U_k$ of $X^r$ such that each $U_i$, $i\in\{1,2,\ldots,k\}$ admits a continuous section of $\pi_r$. This notion was defined by Y. Rudyak in \cite{RudyakHigher}. Note that for $r=2$, we obtain that $\TC_2(X)=\TC(X)$, the topological complexity of $X$.

The notion of parametrized topological complexity was introduced by D. Cohen, M. Farber and S. Weinberger in \cite{cohen2021topology} and generalized to sequential parametrized topological complexities by M. Farber and A. Paul in \cite{FarberPaulI}. It has since been further studied by the same authors in \cite{FarberPaulII} and by M. Farber and J. Oprea in \cite{FarberOpreaSeqParam}. 

Given $r \in \NN$ with $r \geq 2$ and a fibration $p\colon E \to X$ with path-connected fibers, we consider
$$     E_X^r =  \{  (u_1,u_2,\ldots, u_r)\in E^r \mid p(u_1 ) =p(u_2)= \ldots = p(u_r) \}         $$
and$$  E_X^I  =     \{   \gamma\in PE \mid  p(\gamma(t)) = p(\gamma(0)) \,\,\,\text{for all}\,\,\, t\in [0,1]\} .       $$
As a consequence of the main result of \cite[Appendix]{CFWplane}, the map $$   \Pi_r\colon E_X^I \to E_X^r, \qquad   \Pi_r (\gamma) = \big(      \gamma(0), \gamma(\tfrac{1}{r-1}),\ldots, \gamma(\tfrac{r-2}{r-1}),\gamma(1)   \big)  , $$
is a fibration.
The $r$\textit{-th sequential parametrized topological complexity} of  $p\colon E\to X$, denoted by $\TC_r[p\colon E\to X]$ is then defined as the smallest value of $k\in \mathbb{N}$ such that there exists an open cover $U_1,U_2,\ldots , U_k$ of $E_X^r$ with each $U_i$, $i\in\{1,2,\ldots,k\}$ admitting a continuous section of $\Pi_{r}$.\medskip

The notions of topological complexity (TC) and its variants are strongly related to LS category, in fact all of these concepts are special cases of the notion of \emph{sectional category of a fibration}, which was introduced by A. Schwarz in \cite{SchwarzGenus} under the name of \emph{genus}. However, in contrast to LS category, the relations between TC and its variants on the one hand and critical point theory on the other hand have only been studied in a few instances.
In \cite[Definition 4.30]{FarberBook}, M. Farber defined the notion of a navigation function on a closed manifold $M$  to establish upper bounds of $\TC(M)$ in terms of critical points of functions on $M\times M$. There, a function $F\colon M\times M \to \RR$ is called a \textit{navigation function} if it is non-negative, if $F^{-1}(\{ 0\}) = \Delta_M$, where $\Delta_M\subset M\times M$ is the diagonal, and if it is a Morse-Bott function.
By \cite[Theorem 4.32]{FarberBook}, it holds for such a navigation function that 
$$     \TC(M)  \leq \sum_{i=1}^k \max\{ \TC_M(C)\ | \ C\subset M\times M\,\,\,\text{is a critical manifold with}\,\,\, F(C) = c_i\} + 1  ,  $$
where $c_1,\dots,c_k$ are the non-zero critical values of $F$. Here, $\TC_M(C)$ is the \textit{subspace complexity of the set} $C\subset M\times M$ as defined in \cite[Definition 4.20]{FarberBook}, a term that we shall discuss below in greater generality. Navigation functions on lens spaces were investigated by A. Costa in \cite{Costa}.\medskip

In this article we generalize the aforementioned approach to TC via critical point theory in several ways.
\begin{itemize}
	\item We follow the topological approach to LS theory of Y. Rudyak and F. Schlenk from \cite{RudyakSchlenk} to establish results on numbers of fixed points of self-maps homotopic to the identity. 
	\item We study upper bounds of sequential and parametrized TCs of sublevel sets in terms of critical points for arbitrary $C^1$-functions and obtain results for generalized navigation functions as special cases.
	\item In contrast to Farber's navigation functions, we do not make any assumptions of non-degeneracy on the critical points of the functions.
\end{itemize}

We want to point out that the interplay between critical points of Morse-Bott functions and LS-type invariants has been another fruitful line of research in recent years. In \cite{KadzisaMimura}, H. Kadzisa and M. Mimura give a refined upper bound of LS category in terms of cone-decompositions induced by differentiable functions. Applying their approach to certain Morse-Bott functions, they obtain new computations of LS categories of homogenous manifolds, e.g. for complex Stiefel manifolds. Another relevant result studies the relations between critical points of function and the \emph{homotopic distance} of maps, which was introduced by E. Mac\'{i}as-Virg\'{o}s and D. Mosquera-Lois in \cite{MVMLhomdist}. It generalizes various numerical invariants of spaces, among them TC and LS category. In \cite[Theorem 4.5]{MVMLgeneralized}, the two aforementioned authors and M. J. Pereira S\'{a}ez have established an upper bound of the homotopic distance between two maps on a manifold $M$, which, given a Morse-Bott function $\Phi\colon M \to \RR$, estimates the homotopic distance by the sum of the homotopic distances of the restrictions of the maps to the critical submanifolds of $\Phi$.   \medskip

Throughout the following, given a function $F\colon X \to \RR$, we let $F^\lambda:= F^{-1}((-\infty,\lambda])$ denote the corresponding closed sublevel set for each $\lambda \in \RR$.
The following theorem is our most general result for sequential topological complexities. All of the terminology used in these assertions will be made precise below. For the moment, we just note that condition (D) mentioned in these statements is a topological generalization of the Palais-Smale condition and that the sequential analogues of subspace complexities occurring in the following theorem have first been studied by N. Daundkar in \cite{DaundkarGroup}.

\begin{theorem*}[Theorem \ref{TheoremTCLS}]
    Let $r \in \NN$ with $r \geq 2$ and let $X$ be a path-connected ANR. Let $\varphi\colon X^r \to X^r$ be continuous and homotopic to the identity, let $F\colon X^r \to \RR$ be a Lyapunov function for $\varphi$ which is bounded from below. Assume that $F(\Fix \varphi)$ is discrete and that $(\varphi,F)$ satisfies condition (D).
\begin{enumerate}[a)]
\item For all $\lambda \in \RR$, it holds that 
	$$\TC_{r,X}(F^\lambda) \leq \sum_{\mu \in (-\infty,\lambda]} \TC_{r,X}(F_\mu \cap \Fix \varphi).$$ 
	\item If $\lambda \in \RR$ is such that  $\C_\mu(\varphi,F)$ is finite for all $\mu \leq \lambda$, then 
$$\TC_{r,X}(F^\lambda) \leq \sum_{\mu \in (-\infty, \lambda]}\max \{\TC_{r,X}(C) \mid C \in \C_\mu(\varphi,F)\}.$$
\end{enumerate}
\end{theorem*}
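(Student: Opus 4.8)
The proof follows the classical Lusternik–Schnirelmann deformation argument, now in the relative/parametrized setting of sequential topological complexity. The key numerical invariant is $\TC_{r,X}$, which (like ordinary $\TC_r$ and $\cat$) should obey a subadditivity property with respect to open covers and a monotonicity property: if $A \subseteq B$ and $A$ can be deformed into $B'$ inside $X^r$ with $B'$ ``nice'', then $\TC_{r,X}(A)$ is controlled by $\TC_{r,X}(B')$. I would first isolate two ingredients established earlier in the paper (or prove them as lemmas): (i) \emph{subadditivity} of $\TC_{r,X}$ over a decomposition of a sublevel set into pieces, each of which deformation-retracts onto or is deformed into a neighborhood of a fixed-point set at a given level; and (ii) a \emph{deformation lemma} stating that, under condition (D) and the Lyapunov hypothesis, for $\mu$ a regular value (i.e. $F_\mu \cap \Fix\varphi = \emptyset$ in the appropriate sense) the flow generated by iterating $\varphi$ pushes $F^{\mu+\epsilon}$ into $F^{\mu-\epsilon}$, and more generally pushes $F^\lambda$ into an arbitrarily small neighborhood of $\Fix\varphi \cap F^\lambda$ together with a lower sublevel set.

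**Main steps for part (a).** Since $F(\Fix\varphi)$ is discrete and $F$ is bounded below, there are only finitely many critical values in $(-\infty,\lambda]$, say $\mu_1 < \mu_2 < \cdots < \mu_k \le \lambda$ (I would first dispense with the case where the sum is infinite or where some $\TC_{r,X}$ is infinite, making the inequality vacuous). Then I would induct on $k$. For the inductive step, choose $\epsilon > 0$ so small that $[\mu_k - \epsilon, \mu_k)$ contains no critical value. Using the deformation lemma, deform $F^\lambda$ first into $F^{\mu_k + \epsilon'}$ (no critical values in $(\mu_k, \lambda]$), then into the union $F^{\mu_k - \epsilon} \cup N$, where $N$ is an open neighborhood of $F_{\mu_k} \cap \Fix\varphi$ chosen small enough that $\TC_{r,X}(N) = \TC_{r,X}(F_{\mu_k}\cap\Fix\varphi)$ — this last equality uses that $\TC_{r,X}$ is continuous/stable under small neighborhoods for ANRs, which is where the ANR hypothesis and condition (D) both enter. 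Applying the homotopy invariance and monotonicity of $\TC_{r,X}$ (an open cover of $F^{\mu_k-\epsilon}\cup N$ by $\TC_{r,X}$-categorical sets pulls back along the deformation to cover $F^\lambda$), and the induction hypothesis applied to $F^{\mu_k - \epsilon}$, gives
\[
\TC_{r,X}(F^\lambda) \le \TC_{r,X}(F^{\mu_k - \epsilon}) + \TC_{r,X}(F_{\mu_k}\cap\Fix\varphi) \le \sum_{i=1}^{k-1}\TC_{r,X}(F_{\mu_i}\cap\Fix\varphi) + \TC_{r,X}(F_{\mu_k}\cap\Fix\varphi),
\]
which is the claim.

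**Part (b) and the main obstacle.** Part (b) is then a corollary: when each $\C_\mu(\varphi,F)$ is finite, a neighborhood $N$ of $F_\mu \cap \Fix\varphi$ can be taken as a disjoint union of neighborhoods of the finitely many components $C \in \C_\mu(\varphi,F)$, and $\TC_{r,X}$ of a disjoint union is bounded by the max of the pieces (here one uses that the components are ``far apart'' and a section on one does not interfere with another — a standard feature of sectional-category-type invariants for \emph{disjoint} open sets, which replaces the sum in (a) by a max). Combining with (a) gives the bound with $\max\{\TC_{r,X}(C)\}$ in place of $\TC_{r,X}(F_\mu \cap \Fix\varphi)$.

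I expect the main obstacle to be the \textbf{deformation lemma} in this generality: unlike the gradient-flow case, $\varphi$ is only assumed continuous and homotopic to the identity with $F$ a Lyapunov function, so there is no flow in the classical sense, and one must build the deformation by iterating $\varphi$ and carefully using condition (D) (the topological Palais–Smale substitute) to guarantee that orbits starting in $F^\lambda \setminus N$ strictly decrease $F$ by a uniform amount and hence eventually land in $F^{\mu_k - \epsilon}$. Making the ``uniform decrease'' quantitative on the complement of an arbitrarily small neighborhood of $\Fix\varphi$, and checking that the resulting deformation is continuous and fixes the relevant parametrized structure (i.e. stays compatible with the fibration $\Pi_r$ over $X$), is the technical heart of the argument; once it is in place, the rest is the formal Lusternik–Schnirelmann bookkeeping above, verbatim from the classical case.
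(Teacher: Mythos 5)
Your plan does not follow the paper's route. The paper's proof of part a) is a two-line citation: Propositions \ref{PropTCrindex} and \ref{PropTCrdeform} (the latter via the ANR-based extension Lemma \ref{LemmaMPextend}) show that $\TC_{r,X}$ is a $\varphi$-supervariant index function (Corollary \ref{CorTCrsupindex}), and then the abstract Rudyak--Schlenk theorem, quoted as Theorem \ref{TheoremLSMain}, is applied verbatim; part b) follows by combining a) with the disjoint-union property of Proposition \ref{PropTCrindex}.d), using that the finitely many components $C\in\C_\mu(\varphi,F)$ are closed in $X^r$ because $\Fix\varphi$ is closed. What you propose is instead to re-prove Theorem \ref{TheoremLSMain} from scratch for the particular index function $\TC_{r,X}$: induction over critical levels plus a deformation lemma. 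That strategy is viable in principle --- it is essentially how Rudyak and Schlenk prove their theorem --- but as written it has a genuine gap exactly where you say ``the technical heart'' lies: the deformation lemma. There is no flow here, only a continuous $\varphi$ homotopic to the identity with a Lyapunov function, and one must prove (using ($D_1$), ($D_2$)) that for a suitable neighborhood $N$ of $F_{\mu}\cap\Fix\varphi$ some iterate $\varphi^n$ carries $F^{\mu+\epsilon}\setminus N$ (resp.\ $F^{\lambda}$) into $F^{\mu-\epsilon}$ (resp.\ $F^{\mu-\epsilon}\cup N$). You flag this but do not prove it, and it is not routine; since the paper provides Theorem \ref{TheoremLSMain} precisely so that this work never has to be redone, the efficient correct proof is to verify the index-function axioms and supervariance and cite it.

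Three further points. First, your claim that discreteness of $F(\Fix\varphi)$ plus boundedness below of $F$ yields only finitely many critical values in $(-\infty,\lambda]$ is false in general (a discrete set may accumulate at a point outside itself); your reduction survives only because, as you note, infinitely many nonempty levels make the right-hand side infinite and the inequality vacuous --- this should be stated as the actual argument, not as a consequence of discreteness. Second, the step ``an open cover of $F^{\mu_k-\epsilon}\cup N$ pulls back along the deformation to cover $F^\lambda$'' is not a formal homotopy-invariance statement; it is precisely the $\varphi$-supervariance $\TC_{r,X}(A)\le\TC_{r,X}(\varphi^n(A))$ for closed $A$, whose proof requires re-parametrizing motion planners along the deformation (the concatenation formula of Proposition \ref{PropTCrdeform}) and extending them from closed preimages to open neighborhoods via the ANR hypothesis (Lemma \ref{LemmaMPextend}); this ingredient needs to be named and proved, not absorbed into ``monotonicity''. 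Third, your remark that the deformation must ``stay compatible with the fibration $\Pi_r$ over $X$'' conflates this theorem with the parametrized case (Theorem \ref{TheoremParamTCLS}); in the present sequential setting no fiber-preservation condition is needed. Your sketch of part b) (finitely many pairwise disjoint closed components, replace the sum over components by a max) is correct and coincides with the paper's argument.
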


We further establish a similar result for sequential parametrized topological complexity, where we need to put the additional assumption that the self-map is fiber-preserving for a certain fibration.
\begin{theorem*}[Theorem \ref{TheoremParamTCLS}]
	Let $r \in \NN$ with $r \geq 2$, let $E$ and $X$ be ANRs and let $p\colon E \to X$ be a fibration with path-connected fibers. We denote $E^r_X= \{(u_1,\dots,u_r) \in E^r \mid p(u_1)=\dots =p(u_r)\}$ and consider 
	$$q_r\colon E^r_X \to X, \quad q_r(u_1,\dots,u_r) = p(u_1).$$
	 Let $\Phi\colon E^r_X \times [0,1] \to E^r_X$ be continuous with $\Phi(u,0)=u$ for all $u \in E^r_X$ and such that
	\begin{equation*}
	q_r(\Phi(u,t))= q_r(u) \quad \forall u \in E^r_X, \ t \in [0,1]. 
	\end{equation*}
Let $F\colon E^r_X \to \RR$ be a Lyapunov function for $\Phi_1:= \Phi(\cdot,1)$ which is bounded from below and such that $(\Phi_1,F)$ satisfies condition (D). Then
	$$\TC_{r,p}(F^\lambda) \leq \sum_{\mu \in (-\infty,\lambda]} \TC_{r,p}(F_\mu \cap \Fix \Phi_1) \qquad \forall \lambda \in \RR.$$ 
\end{theorem*}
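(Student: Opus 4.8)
The plan is to mimic the proof of the (unparametrized) sequential result, Theorem \ref{TheoremTCLS}\,a), which itself follows the Rudyak--Schlenk deformation scheme, while carrying along the extra structure of the fibration $q_r\colon E^r_X\to X$ at every step. Since $\TC_{r,p}(\cdot)$ is the sectional category of the restriction of $\Pi_r\colon (E^r_X)^I_X\to (E^r_X)^r_X$ over a subset of $E^r_X$ (viewing $E^r_X$ diagonally as the base), it is subadditive and, crucially, monotone under fiber-preserving deformations: if $A$ can be deformed into $B$ inside $E^r_X$ through a homotopy $H$ with $q_r(H(a,t))=q_r(a)$, then $\TC_{r,p}(A)\le \TC_{r,p}(B)$. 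I would first isolate this ``fiberwise deformation monotonicity'' as a lemma (it should already be available in the paper's toolbox, being the parametrized analogue of the deformation property of sectional category), because it is exactly what forces us to demand $q_r(\Phi(u,t))=q_r(u)$.

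The core of the argument is a sublevel-set deformation statement: under the Lyapunov and condition-(D) hypotheses, for any regular value (or rather, away from the discrete set of ``critical'' values $F(\Fix\Phi_1)$) the flow-type deformation built from $\Phi$ pushes $F^{\lambda}$ down past any value in an interval containing no point of $F(\Fix\Phi_1)$, and near an isolated critical value $\mu$ it deforms $F^{\mu+\varepsilon}$ into the union of $F^{\mu-\varepsilon}$ with an arbitrarily small neighborhood $N$ of $F_\mu\cap\Fix\Phi_1$. The point is that this entire deformation can be taken to be $q_r$-fiberwise: one iterates $\Phi_1$ (or uses the homotopy $\Phi$ and a continuous time-reparametrization depending on how fast $F$ decreases), and since $q_r(\Phi(u,t))=q_r(u)$, every intermediate map preserves $q_r$-fibers, hence so does the resulting sublevel deformation. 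Thus I would reprove the relevant deformation lemma from the $\varphi$-case verbatim, observing line by line that the fiber-preserving property is inherited; condition (D), being the topological Palais--Smale substitute, guarantees the neighborhood $N$ of $F_\mu\cap\Fix\Phi_1$ is ``attracting'' so that nearby sublevel sets deform into it.

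Granting the fiberwise deformation lemma, the theorem follows by the standard sectional-category bookkeeping. Fix $\lambda$. Enumerate the values $\mu\in(-\infty,\lambda]$ that lie in $F(\Fix\Phi_1)$ (this set is discrete, and by condition (D) together with boundedness below it has no accumulation point below $\lambda$, so the sum is locally finite). Using the deformation lemma, $F^{\lambda}$ deforms $q_r$-fiberwise into a set obtained by successively peeling off, around each such $\mu$, a neighborhood $N_\mu$ of $F_\mu\cap\Fix\Phi_1$; formally $\TC_{r,p}(F^\lambda)\le \sum_\mu \TC_{r,p}(N_\mu)$ by fiberwise monotonicity plus subadditivity of sectional category over the cover of the deformed set by the $N_\mu$'s (and the part pushed below the infimum of $F$, which is empty). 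Finally, choosing the neighborhoods small enough and again invoking a fiberwise deformation retraction of $N_\mu$ onto $F_\mu\cap\Fix\Phi_1$ — here one uses that $E^r_X$ is an ANR (as $E$, $X$ are ANRs, so is $E^r_X$), so such neighborhoods and retractions exist and can be arranged $q_r$-fiberwise — yields $\TC_{r,p}(N_\mu)=\TC_{r,p}(F_\mu\cap\Fix\Phi_1)$, giving the claimed inequality.

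The main obstacle, and the place that needs genuine care rather than transcription, is ensuring that \emph{all} the auxiliary constructions respect $q_r$: not only the deformation generated by $\Phi$, but also the ANR neighborhood-retraction step and the choice of the cover. For the deformation this is immediate from the standing hypothesis $q_r\circ\Phi=q_r\circ\pr_1$; the subtler point is the fiberwise neighborhood retraction near $F_\mu\cap\Fix\Phi_1$. I would handle this by not retracting at all: instead of shrinking $N_\mu$ onto the fixed-point set, keep $N_\mu$ and observe that for $N_\mu$ small enough $\TC_{r,p}(N_\mu)\le\TC_{r,p}(F_\mu\cap\Fix\Phi_1)$ already holds because $N_\mu$ itself deforms $q_r$-fiberwise into $F_\mu\cap\Fix\Phi_1$ via the $\Phi$-flow (this is part of what condition (D) buys us — the fixed-point set at level $\mu$ is a fiberwise deformation retract of a small neighborhood), so no separate ANR retraction is needed and the fiber-preservation is automatic. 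Everything else is the routine sectional-category subadditivity and the locally-finite indexing justified by discreteness of $F(\Fix\Phi_1)$ and condition (D).
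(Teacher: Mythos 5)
Your overall strategy is workable in outline, but it is more laborious than what is actually needed, and the step you single out as the delicate one is resolved incorrectly. The paper does not redo any deformation lemma in the fibered setting: it verifies that $\TC_{r,p}$ satisfies the three index-function axioms (Proposition \ref{PropPTCrindex}) and is $\Phi_1$-supervariant (Proposition \ref{PropPTCrdeform}, via the fiberwise homotopy $\Phi$ and the ANFR-based extension Lemma \ref{LemmaPMPextend}), and then quotes the abstract Rudyak--Schlenk theorem (Theorem \ref{TheoremLSMain}) verbatim. All the sublevel-set bookkeeping you propose to reprove fiberwise is already done there at the level of an arbitrary supervariant index function, so the only genuinely fiberwise inputs are the supervariance statement and the motion-planner extension lemma; nothing else has to ``respect $q_r$''.

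The genuine gap is your final substitute for the neighborhood step. You claim that, for $N_\mu$ small enough, $N_\mu$ deforms $q_r$-fiberwise into $F_\mu\cap\Fix\Phi_1$ because ``condition (D) buys us'' that the fixed-point set at level $\mu$ is a fiberwise deformation retract of a small neighborhood. Condition (D) gives nothing of the sort: it is a Palais--Smale-type statement ($F$ strictly decreases off $\Fix\Phi_1$, and sets on which the decrement tends to $0$ accumulate at fixed points), and $F_\mu\cap\Fix\Phi_1$ can be an arbitrary closed set with no neighborhood retraction at all. Worse, the dynamics go the wrong way for your claim: off the fixed-point set the map $\Phi_1$ strictly lowers $F$, so iterating $\Phi_1$ (or reparametrizing $\Phi$) pushes points of $N_\mu$ \emph{below} level $\mu$ rather than into $F_\mu\cap\Fix\Phi_1$, and there is no reason any limit is attained at a finite time or continuously. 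The correct mechanism is much cheaper and needs neither condition (D) nor any retraction: the continuity axiom $\TC_{r,p}(A)=\TC_{r,p}(U)$ for closed $A$ and a suitable open $U\supset A$ holds by simply taking $U$ to be the union of the open sets of an optimal cover of $A$ (Proposition \ref{PropPTCrindex}.c)), and this is exactly the form in which the neighborhood of $F_\mu\cap\Fix\Phi_1$ enters the Rudyak--Schlenk argument. Replace your last paragraph by this observation (or, as the paper does, by citing Theorem \ref{TheoremLSMain} after Corollary \ref{CorPTCrsupindex}) and the proof closes; note also that the finiteness/discreteness of $F(\Fix\Phi_1)$ you invoke is not among the stated hypotheses, but it is needed by the abstract theorem in the same way, so this is not specific to your route.
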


As explained above, these general results include the case of critical points of $C^1$-functions. Applying the two theorems to time-1 maps of negative gradient flows, we obtain the following consequences.
\begin{theorem*}
[Corollary \ref{CorTCLSdiff}]
Let $(X,g)$ be a connected and complete Riemannian Banach manifold and let $F\in C^1(X^r)$ be bounded from below. Assume that $(F,g^r)$ satisfies the Palais-Smale condition and that $F(\Crit F)\subset \RR$ is discrete. 
For all $\lambda \in \RR$ it holds that  
$$\TC_{r,X}(F^\lambda) \leq \sum_{\mu \in (-\infty,\lambda]} \max \{\TC_{r,X}(C) \mid C \in \C_\mu (F)\}.$$ 
\end{theorem*}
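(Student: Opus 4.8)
The plan is to derive Corollary \ref{CorTCLSdiff} from the general Theorem \ref{TheoremTCLS} by exhibiting, for a $C^1$-function $F$ on a complete Riemannian Banach manifold satisfying the Palais--Smale condition, a suitable self-map $\varphi$ homotopic to the identity together with a Lyapunov function for which the hypotheses of that theorem are met. First I would recall that on a complete Riemannian Banach manifold $(X,g)$ the negative gradient $-\nabla_g F$ generates a local flow; using completeness of the metric together with the Palais--Smale condition one shows (this is classical Palais--Schnirelmann-type theory, and presumably set up in the earlier sections of the paper) that the negative gradient flow is in fact globally defined, yielding a flow $\eta\colon X^r \times [0,\infty) \to X^r$ with $\eta(\cdot,0)=\id$. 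I then set $\varphi := \eta(\cdot, 1)$, the time-$1$ map of the negative gradient flow of $F$ with respect to the product metric $g^r$ on $X^r$; this map is continuous (in fact smooth), homotopic to the identity via the flow itself, and its fixed point set is exactly $\Crit F$ since the orbits of the gradient flow are constant precisely at critical points.

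The next step is to verify the remaining hypotheses of Theorem \ref{TheoremTCLS} for the pair $(\varphi, F)$. That $F$ is a Lyapunov function for $\varphi$ is immediate from $\frac{d}{dt} F(\eta(x,t)) = -\|\nabla_g F(\eta(x,t))\|^2_g \leq 0$, with equality along a trajectory exactly on critical points, so $F$ strictly decreases off $\Fix\varphi = \Crit F$. Boundedness from below of $F$ is assumed, and $F(\Crit F)$ being discrete is assumed. The one genuinely substantive point is condition (D): one must show that the Palais--Smale condition for $(F, g^r)$, together with the global gradient flow, implies the topological deformation condition (D) of the earlier sections. This is the Palais--Smale-to-(D) implication that the introduction explicitly advertises ("condition (D) mentioned in these statements is a topological generalization of the Palais--Smale condition"), so I expect it to be either a lemma proved earlier in the paper or a short argument: Palais--Smale guarantees that sublevel sets can be deformed into neighborhoods of critical sets along the gradient flow without falling below prescribed levels, which is precisely what (D) abstracts. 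I would cite that earlier result rather than reprove it.

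Finally, with all hypotheses of Theorem \ref{TheoremTCLS} in place, part (b) of that theorem gives, for every $\lambda$ such that $\C_\mu(\varphi,F)$ is finite for all $\mu \leq \lambda$,
$$\TC_{r,X}(F^\lambda) \leq \sum_{\mu \in (-\infty,\lambda]} \max\{\TC_{r,X}(C) \mid C \in \C_\mu(\varphi,F)\}.$$
It then remains to identify $\C_\mu(\varphi,F)$ with $\C_\mu(F)$ — the set of connected components of $\Crit F \cap F_\mu$ — which holds because $\Fix\varphi = \Crit F$ and the notation $\C_\mu(F)$ is defined to be exactly $\C_\mu(\varphi,F)$ for $\varphi$ the gradient-flow time-$1$ map; and to note that under Palais--Smale each critical level contributes only finitely many components (indeed $\Crit F \cap F^\lambda$ is compact, hence has finitely many connected components that are "essential" in the relevant sense, and critical values are isolated by discreteness of $F(\Crit F)$), so the finiteness hypothesis of part (b) is automatic and the stated inequality follows verbatim. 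The main obstacle, to the extent there is one, is thus bookkeeping: ensuring the global existence of the negative gradient flow on a possibly infinite-dimensional complete manifold and the clean passage from Palais--Smale to condition (D); everything else is a direct specialization of Theorem \ref{TheoremTCLS}.
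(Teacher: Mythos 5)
Your overall strategy is the paper's: specialize Theorem \ref{TheoremTCLS}.b) to the time-$1$ map of a negative ``gradient-like'' flow, using the earlier Palais--Smale-to-(D) result and the identification $\Fix\varphi=\Crit F$. But there is a genuine gap in the key construction: you take $\varphi$ to be the time-$1$ map of the flow of $-\nabla^{g^r} F$ itself. In the setting of the corollary $F$ is only $C^1$ on a (possibly infinite-dimensional) Banach manifold, so $\nabla^{g^r}F$ is merely continuous; on Banach manifolds Peano existence fails for continuous vector fields, and even in finite dimensions uniqueness fails, so ``the negative gradient flow'' need not exist as a map, let alone be globally defined and continuous -- completeness of the metric plus Palais--Smale does not repair this. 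This is exactly the difficulty the paper's machinery is built to avoid: the proof of Corollary \ref{CorTCLSdiff} invokes a \emph{pseudo-gradient} vector field $V$ of $F$ (Definition \ref{DefPseudoGrad}, existence from Palais/Struwe), whose Lipschitz continuity and boundedness guarantee a well-defined complete flow of $-V$, and then applies Theorem \ref{TheoremPSDPseudo} -- which is stated for pseudo-gradients, not for the genuine gradient -- to get that $F$ is a Lyapunov function for $\phi_1$ and that $(\phi_1,F)$ satisfies condition (D), with $\Fix\phi_1=\sing V=\Crit F$ by Remark \ref{RemarkFlowCompl}. So the result you plan to ``cite rather than reprove'' does exist in the paper, but it does not apply to the map you constructed; you must replace your $\varphi$ by the time-$1$ map of a negative pseudo-gradient flow for the argument to go through.

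A secondary inaccuracy: for the finiteness hypothesis of Theorem \ref{TheoremTCLS}.b) you argue that $\Crit F\cap F^\lambda$ is compact ``hence has finitely many components that are essential in the relevant sense.'' Compactness alone does not bound the number of connected components, and the vague qualifier does not close this. The paper instead uses Lemma \ref{LemmaPSCompLevelsets}, which derives directly from the Palais--Smale condition that $F_\mu\cap\Crit F$ has finitely many components for every $\mu$; citing that lemma is what is needed here. The remaining steps of your proposal (Lyapunov property, discreteness of critical values, identification $\C_\mu(\varphi,F)=\C_\mu(F)$, and the final specialization of Theorem \ref{TheoremTCLS}.b)) match the paper once the pseudo-gradient substitution is made.
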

For the setting of sequential parametrized topological complexity the arguments are more involved, since the gradient flow of an arbitrary function $F\colon E_X^r\to \mathbb{R}$ does not need to respect the fibers of the fibration $q_r\colon E_X^r\to X$.
To overcome this problem we introduce the notion of a \textit{vertically proportional} vector field on $E_X^r$ and show the following.
\begin{theorem*}[Theorem \ref{TheoremParamTCLSdiff}]
Let $E$ and $X$ be paracompact Banach manifolds and let $p\colon E\to X$ be a smooth fibration with path-connected fibers and a surjective submersion. 
Let $F\in C^1(E^r_X)$ be bounded from below. Assume that $(F,\bar{g})$ satisfies the Palais-Smale condition for a suitable complete metric $\bar{g}$ on $E^r_X$ and that $\nabla^{\bar{g}}F$ is vertically proportional with respect to $q_r\colon E^r_X\to X$ and $\bar{g}$. For each $\lambda \in \RR$ it holds that
$$\TC_{r,p}(F^\lambda) \leq \sum_{\mu \in (-\infty,\lambda]} \max \left\{\TC_{r,p}(C)\mid C \in \C_\mu(F) \right\}. $$
\end{theorem*}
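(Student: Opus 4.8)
The plan is to deduce the statement from Theorem~\ref{TheoremParamTCLS} by applying that result to the negative gradient flow of $F$ with respect to $\bar g$. Since $p$ is a surjective submersion, $E^r_X$ is the $r$-fold fibered product of $p$ with itself, hence a closed Banach submanifold of $E^r$; being a paracompact (metrizable) Banach manifold it is an ANR, and $q_r=p\circ\pr_1$ is smooth. Because $(F,\bar g)$ satisfies the Palais--Smale condition, $F$ is bounded from below, and $\bar g$ is complete, the negative gradient flow $\Phi\colon E^r_X\times[0,\infty)\to E^r_X$ of $F$ exists for all forward times --- this is the standard completeness argument for Palais--Smale gradient flows already carried out in the proof of Corollary~\ref{CorTCLSdiff}. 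In particular $\Phi$ restricts to a homotopy on $E^r_X\times[0,1]$ with $\Phi(u,0)=u$ for all $u$, which is the form of input required by Theorem~\ref{TheoremParamTCLS}.

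The decisive point --- and the one for which the hypothesis is tailored --- is that this flow leaves every fiber of $q_r$ invariant. Indeed, vertical proportionality of $\nabla^{\bar g}F$ is exactly the condition guaranteeing that $dq_r(\nabla^{\bar g}F)=0$, equivalently that $q_r$ is a first integral of the negative gradient flow of $F$. Hence $t\mapsto q_r(\Phi(u,t))$ is constant, so $q_r(\Phi(u,t))=q_r(u)$ for all $u\in E^r_X$ and all $t\in[0,1]$, which is precisely the fiber-preserving condition imposed on $\Phi$ in Theorem~\ref{TheoremParamTCLS}.

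It remains to check the two remaining hypotheses for $\Phi_1:=\Phi(\cdot,1)$. First, $F$ is a Lyapunov function for $\Phi_1$: along any flow line one has $\frac{d}{dt}F(\Phi(u,t))=-\|\nabla^{\bar g}F(\Phi(u,t))\|_{\bar g}^2\le 0$, with equality on a time interval only for constant flow lines, so $F$ is strictly decreasing on $E^r_X\setminus\Crit F$; since $F$ is non-increasing along the flow this also forces $\Fix\Phi_1=\Crit F$. Second, $(\Phi_1,F)$ satisfies condition (D), which follows from the Palais--Smale condition for $(F,\bar g)$ via the general passage from the Palais--Smale condition to condition (D) established earlier in the paper (the same implication used in the proof of Corollary~\ref{CorTCLSdiff}). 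Theorem~\ref{TheoremParamTCLS} now yields, for every $\lambda\in\RR$,
$$\TC_{r,p}(F^\lambda)\ \le\ \sum_{\mu\in(-\infty,\lambda]}\TC_{r,p}\big(F_\mu\cap\Fix\Phi_1\big)\ =\ \sum_{\mu\in(-\infty,\lambda]}\TC_{r,p}\big(F_\mu\cap\Crit F\big).$$
Finally, by the Palais--Smale condition each $F_\mu\cap\Crit F$ is compact, and it decomposes into the open and closed pieces that constitute $\C_\mu(F)$; since the parametrized subspace complexity of such a decomposition equals the maximum of the parametrized subspace complexities of the pieces --- the elementary observation already used for Corollary~\ref{CorTCLSdiff} --- substituting $\TC_{r,p}(F_\mu\cap\Crit F)=\max\{\TC_{r,p}(C)\mid C\in\C_\mu(F)\}$ into the displayed inequality gives the claim.

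I expect the genuinely new part of the argument --- and hence the main obstacle --- to be the fiber-preservation step, i.e.\ recognizing that vertical proportionality is exactly what is needed to bring the flow into the setting of Theorem~\ref{TheoremParamTCLS}; this is the reason one cannot use an arbitrary complete metric on $E^r_X$ and must instead build $\bar g$ so that $\nabla^{\bar g}F$ becomes vertically proportional. A minor point to keep track of is that $\Phi$ is a priori only a forward semiflow, but since Theorem~\ref{TheoremParamTCLS} only requires a homotopy on $[0,1]$ starting at the identity, this presents no real difficulty.
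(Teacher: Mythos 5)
There is a genuine gap, and it sits exactly at the step you identify as decisive. Vertical proportionality of $\nabla^{\bar g}F$ is \emph{not} the condition $Dq_r(\nabla^{\bar g}F)=0$: by Definition \ref{DefVertProp} it is only the norm inequality $\|\nabla^{\bar g}F(u)\|\leq C\,\|\nabla^{\ver}F(u)\|$, which allows $\nabla^{\bar g}F$ to have a nonvanishing horizontal component (it merely forbids the vertical part from being disproportionately small, so that $\sing(\nabla^{\ver}F)=\Crit F$ as in Remark \ref{RemarkSingVertProp}). Consequently $q_r$ need not be a first integral of the negative gradient flow, the flow need not preserve the fibers of $q_r$, and Theorem \ref{TheoremParamTCLS} cannot be applied to it. Indeed the whole point of Section 8 is that vertically proportional gradients arise for $G$-invariant functions on equivariant bundles, where the gradient is generally not tangent to the fibers of $p$; if verticality of the gradient were the hypothesis, the main applications would be lost. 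The paper's proof avoids this by never flowing along $\nabla^{\bar g}F$: it takes the bounded pseudo-gradient $Y$ of Example \ref{ExamplePG}, replaces it by its vertical part $Y^{\ver}=\pr_{\ver}\circ Y$, whose flow is fiber-preserving by construction, and uses vertical proportionality precisely to prove (Proposition \ref{PropVertPseudoGrad}) that $Y^{\ver}$ is \emph{still} a pseudo-gradient of $F$, so that Theorem \ref{TheoremPSDPseudo} gives condition (D) for $(\phi_1^{\ver},F)$ and $\Fix\phi_1^{\ver}=\sing(Y^{\ver})=\Crit F$; then Theorem \ref{TheoremParamTCLSvf} applies.

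A secondary problem with your argument: $F$ is only $C^1$, so $\nabla^{\bar g}F$ is merely continuous, and neither uniqueness nor forward completeness of its flow is guaranteed on a Banach manifold; the proof of Corollary \ref{CorTCLSdiff} that you cite does not use the gradient flow either, but the flow of a Lipschitz, bounded pseudo-gradient, which is exactly why the pseudo-gradient machinery appears throughout. Your closing steps (Lyapunov property, condition (D) from PS, and the passage from $\TC_{r,p}(F_\mu\cap\Crit F)$ to $\max\{\TC_{r,p}(C)\mid C\in\C_\mu(F)\}$ via Lemma \ref{LemmaPSCompLevelsets} and Proposition \ref{PropPTCrindex}.d)) are in line with the paper, but they rest on the incorrect fiber-preservation claim, so the proof as written does not go through.
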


We then generalize Farber's notion of navigation function to the sequential and the sequential parametrized setting and apply the above results to the thus-obtained functions.
As a geometric application, we derive the following result from the study of navigation functions in the case of $r=2$. 
\begin{theorem*}[Theorem \ref{TheoremTCApplParallel}]
Let $n \in \NN$, let $M \subset \RR^n$ be a compact hypersurface and let $$\alpha(M):= \# \left\{\{x,y\}\subset M \ \middle| \ x \neq y, \ T_xM=T_yM=(x-y)^\perp \right\}.$$
Then 
$$\alpha(M) \geq \TC(M)-1.$$
In particular, there are at least $\TC(M)-1$ distinct unordered pairs of points $\{x,y\} \subset M$ which satisfy $T_xM =T_yM$.
\end{theorem*}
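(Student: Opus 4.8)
The plan is to apply Corollary \ref{CorTCLSdiff} with $r=2$ to the squared Euclidean distance function on $M\times M$, which plays the role of a (generalized) navigation function. We may assume that $\alpha(M)$ is finite, since otherwise there is nothing to prove, and we take $M$ to be connected (so that $M\times M$ is path-connected and $\TC(M)$ is finite). Equip $M$ with the Riemannian metric $g$ induced from $\RR^n$ and set
$$ F\colon M\times M \longrightarrow \RR, \qquad F(x,y)=\tfrac12\|x-y\|^2 . $$
Then $F$ is smooth and nonnegative, in particular bounded from below, and since $M\times M$ is compact the pair $(F,g\times g)$ satisfies the Palais--Smale condition automatically.

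First I would compute $\Crit F$. For $v\in T_xM$ and $w\in T_yM$ we have $dF_{(x,y)}(v,w)=\langle x-y,v\rangle-\langle x-y,w\rangle$, so $(x,y)\in\Crit F$ if and only if $x-y$ is orthogonal to both $T_xM$ and $T_yM$. Because $M$ is a hypersurface, each tangent space is a hyperplane, so for $x\neq y$ this is equivalent to $T_xM=T_yM=(x-y)^\perp$, while for $x=y$ it holds trivially. Hence $\Crit F=\Delta_M\sqcup S$, where $\Delta_M=F^{-1}(0)$ is the diagonal and $S$ is the set of \emph{ordered} pairs $(x,y)$ with $x\neq y$ and $T_xM=T_yM=(x-y)^\perp$; by assumption $\#S=2\,\alpha(M)<\infty$. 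In particular $F(\Crit F)$ is a finite, hence discrete, subset of $\RR$ and every set $\C_\mu(F)$ is finite, so all hypotheses of Corollary \ref{CorTCLSdiff} are met.

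Now choose $\lambda\geq\tfrac12\diam(M)^2$, so that $F^\lambda=M\times M$ and therefore $\TC_{2,M}(F^\lambda)=\TC_2(M)=\TC(M)$ by the definition of the subspace complexity. On the right-hand side of the inequality in Corollary \ref{CorTCLSdiff}, the only critical values $\leq\lambda$ are $0$ and the positive critical values. The value $0$ contributes $\TC_{2,M}(\Delta_M)=1$, since $\Delta_M$ is connected and carries the global section $(x,x)\mapsto \mathrm{const}_x$ of $\pi_2$. Each positive critical value $c$ contributes $\max\{\TC_{2,M}(C)\mid C\in\C_c(F)\}=1$: every such $C$ is a single point of $S$, and since $M\times M$ is locally contractible and $\pi_2\colon PM\to M\times M$ is a fibration with $\im\pi_2=M\times M$ (as $M$ is path-connected), $\pi_2$ admits a section over a neighbourhood of that point. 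Since each positive critical value equals $\tfrac12\|x-y\|^2$ for one of the $\alpha(M)$ unordered pairs $\{x,y\}$, there are at most $\alpha(M)$ of them, and Corollary \ref{CorTCLSdiff} yields
$$ \TC(M)=\TC_{2,M}(F^\lambda)\ \leq\ 1+\#\{\text{positive critical values of }F\}\ \leq\ 1+\alpha(M), $$
which is the asserted inequality $\alpha(M)\geq\TC(M)-1$.

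I expect the only points requiring genuine care to be the identification of $\Crit F$ — specifically, using the hypersurface hypothesis to replace ``orthogonal to $T_xM$ and to $T_yM$'' by ``both tangent spaces equal $(x-y)^\perp$'' — and the two evaluations $\TC_{2,M}(\Delta_M)=1$ and $\TC_{2,M}(\text{point})=1$; everything else is bookkeeping. The most delicate hypothesis of Corollary \ref{CorTCLSdiff} to verify, discreteness of $F(\Crit F)$, is precisely what forces the reduction to the case $\alpha(M)<\infty$, and beyond that I anticipate no serious obstacle.
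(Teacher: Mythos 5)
Your proposal is correct and follows essentially the same route as the paper: the paper also takes the squared-distance function $F(x,y)=\|x-y\|^2$ as a $2$-navigation function, applies the Lusternik--Schnirelmann bound (via Theorem \ref{TheoremTCLSNavi}, which is just Corollary \ref{CorTCLSdiff} specialized to navigation functions), identifies the off-diagonal critical points with the pairs counted by $\alpha(M)$ using the hypersurface hypothesis, and observes that each component of a positive critical level is a point with subspace complexity $1$. The only cosmetic differences are the harmless factor $\tfrac12$ and your direct use of Corollary \ref{CorTCLSdiff} instead of its navigation-function reformulation; the counting of positive critical values by unordered pairs is handled correctly in both arguments.
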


We will further apply our results on sequential parametrized topological complexity to a framework of equivariant fiber bundles in which the technical assumptions become more tangible. 

\begin{theorem*}[Corollary \ref{CorEquivPTCLS}]
	 Let $G$ be a connected Lie group, let $E$ and $B$ be closed $G$-manifolds, such that the $G$-action on $B$ is transitive, and let $p\colon E \to B$ be a $G$-equivariant fiber bundle. Then for any $G$-invariant $f \in C^1(E)$ it holds that
$$\TC_r[p\colon E\to B] \leq \sum_{\mu \in \RR} \max \Big\{ \TC_{r,p} (\Crit_{\mu_1} f  \times_B \dots \times_B \Crit_{\mu_r} f) \ \Big| \ \mu_1,\dots,\mu_r \in \RR, \, \sum_{i=1}^r \mu_i =\mu\Big\}.$$
\end{theorem*}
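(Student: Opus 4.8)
The plan is to apply the parametrized sum‑function estimate established above — the $\TC_{r,p}$‑version of the bound for the sublevel sets of $F(u_1,\dots,u_r)=\sum_{i=1}^r f(u_i)$, which in turn rests on Theorem~\ref{TheoremParamTCLSdiff} — to the $G$‑invariant function
$$F\colon E^r_B\to\RR,\qquad F(u_1,\dots,u_r)=\sum_{i=1}^r f(u_i),$$
where $E^r_B=\{(u_1,\dots,u_r)\in E^r\mid p(u_1)=\dots=p(u_r)\}$ and $q_r(u_1,\dots,u_r)=p(u_1)$, and then to identify the critical pieces that occur. Observe that $q_r\colon E^r_B\to B$ is again a $G$‑equivariant fibre bundle, with fibre $(p^{-1}(b))^r$ over $b$, that $E^r_B$ is compact since $E$ is closed, and that $\TC_r[p\colon E\to B]=\TC_{r,p}(E^r_B)=\TC_{r,p}(F^\lambda)$ whenever $\lambda\geq\max F$. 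Compactness makes $F$ bounded from below and forces the Palais--Smale condition for every (automatically complete) metric, so the two substantial points to verify are: (i) that there is a complete metric $\bar g$ on $E^r_B$ for which $\nabla^{\bar g}F$ is vertically proportional with respect to $q_r$; and (ii) that the critical pieces produced by the corresponding flow are precisely the fibre products of critical sets of $f$.

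Point (ii) is where the transitivity of the $G$‑action on $B$ enters, through the identification
$$\Crit F=\bigl\{(u_1,\dots,u_r)\in E^r_B\ \big|\ u_i\in\Crit f\text{ for }i=1,\dots,r\bigr\}.$$
For the nontrivial inclusion, let $(u_1,\dots,u_r)$ lie over $b\in B$ and be critical for $F$. Pairing $dF$ with the $q_r$‑vertical vectors, which form $\prod_i T_{u_i}(p^{-1}(b))$, shows that each $u_i$ is a critical point of $f|_{p^{-1}(b)}$. On the other hand, the fundamental vector fields of the $G$‑action on $E$ at $u_i$ are sent by $dp$ onto all of $T_bB$, because $G$ acts transitively on $B$, and they are annihilated by $df_{u_i}$ by $G$‑invariance of $f$; together with $T_{u_i}(p^{-1}(b))$ they span $T_{u_i}E$, so $df_{u_i}=0$, i.e. $u_i\in\Crit f$. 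The reverse inclusion is clear. Since $f(\Crit f)$ is discrete (this is implicit in the hypotheses of the underlying estimate), the continuous map $\Crit F\to\RR^r$, $(u_i)\mapsto(f(u_i))$, has image in the discrete set $f(\Crit f)^r$ and is therefore locally constant, so every connected component $C$ of $\Crit F$ lies in a single $\Crit_{\mu_1}f\times_B\cdots\times_B\Crit_{\mu_r}f$, on which $F$ is constantly $\mu_1+\dots+\mu_r$.

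The main obstacle is (i), and it is here that the homogeneous structure of the bundle must be used: writing $B=G/H$ and $E=G\times_H p^{-1}(b_0)$, one assembles $\bar g$ from an $H$‑invariant metric on the fibre $p^{-1}(b_0)$ together with a choice of complement to $\ker dq_r$, and then computes the gradient of the sum-type $F$. Its $q_r$‑vertical part is the tuple of fibrewise gradients $(\nabla^{\mathrm v}f(u_1),\dots,\nabla^{\mathrm v}f(u_r))$, whereas its $q_r$‑horizontal part at $(u_1,\dots,u_r)$ is dictated by $\sum_i df_{u_i}$ evaluated on horizontal directions; rewriting each $df_{u_i}$ on those directions via $G$‑invariance in terms of the vertical data, one checks the vertical-proportionality condition. (Alternatively, since the first step identifies $\Crit F$ with the zero set of the $q_r$‑vertical part of $\nabla^{\bar g}F$, one may instead try to choose $\bar g$ so that $\nabla^{\bar g}F$ is itself $q_r$‑vertical off $\Crit F$ — the degenerate case of vertical proportionality — in which case the negative gradient flow of $F$ preserves the fibres of $q_r$ outright.) Carrying this out smoothly across $\Crit F$, and arranging $\bar g$ to be complete, is the technical core.

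Given (i) and (ii), the parametrized sum-function estimate applied with $\lambda=\max F$ yields
$$\TC_r[p\colon E\to B]=\TC_{r,p}(F^\lambda)\ \leq\ \sum_{\mu\in\RR}\max\bigl\{\TC_{r,p}(C)\ \big|\ C\in\C_\mu(F)\bigr\}.$$
By the second step, each $C\in\C_\mu(F)$ is a connected component of $\Crit F$ contained in some $\Crit_{\mu_1}f\times_B\cdots\times_B\Crit_{\mu_r}f$ with $\mu_1+\dots+\mu_r=\mu$, so monotonicity of the parametrized subspace complexity under inclusions gives $\TC_{r,p}(C)\leq\TC_{r,p}(\Crit_{\mu_1}f\times_B\cdots\times_B\Crit_{\mu_r}f)$. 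Taking the maximum over all such $C$ and then over all tuples $(\mu_1,\dots,\mu_r)$ with $\mu_1+\dots+\mu_r=\mu$ gives exactly the asserted inequality.
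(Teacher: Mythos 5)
Your overall strategy is the paper's: reduce to the Lusternik--Schnirelmann-type bound for the sum function $F(u_1,\dots,u_r)=\sum_i f(u_i)$ on $E^r_B$ (the paper's Theorem \ref{TheoremParamTCLSprod}/\ref{TheoremParamTCLScontr}, resting on Theorem \ref{TheoremParamTCLSdiff}), and identify $\C_\mu(F)$ with fiber products of critical sets of $f$. Your argument for the identification of $\Crit F$ via fundamental vector fields of the $G$-action is correct and is a nice direct substitute for the paper's route (condition (A) via Remark \ref{RemarkcondA} and Proposition \ref{PropCritF}). However, there is a genuine gap at what you yourself call the technical core, point (i): you never actually prove that $\nabla^{\bar g}F$ is vertically proportional with respect to $q_r$ for some complete metric. ``Rewriting each $df_{u_i}$ on horizontal directions via $G$-invariance in terms of the vertical data'' is exactly the step that requires work, and it is the entire content of the paper's Section 8: one fixes a $G$-invariant metric on $E$ making $p$ a Riemannian submersion and a bi-invariant metric on $G$, uses local sections $s$ of the principal bundle $G\to B$ and the identity $f(\Phi(s(p(x))^{-1},x))=f(x)$, and derives the quantitative estimate $\|\nabla f(x)\|\leq (C(\lambda)\|Ds_{p(x)}\|_{\op}+1)\|\nabla^{\ver}f(x)\|$ (Lemma \ref{LemmaEstvertprop}), with the $f$-controlled bound automatic by compactness (Remark \ref{Remarkfcontr}); compactness of $B$ then gives a uniform constant (Proposition \ref{Propfcontvert}), and Lemma \ref{LemmaVertPropProd} transfers this from $p$ to $q_r$.

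Note in particular that your point (ii) does not imply point (i), and compactness does not rescue you: knowing $\sing(\nabla^{\ver}f)=\Crit f$ only says the two vector fields vanish simultaneously, while vertical proportionality requires a uniform bound on $\|\nabla f\|/\|\nabla^{\ver}f\|$, which can a priori blow up as one approaches $\Crit f$ even on a closed manifold. Your fallback suggestion --- choosing $\bar g$ so that $\nabla^{\bar g}F$ is exactly $q_r$-vertical off $\Crit F$ --- amounts to constructing a smooth horizontal distribution contained in $\ker dF$ away from $\Crit F$ that extends (together with a complete metric inducing it) across $\Crit F$; you give no construction, and it is not clear this can be done in general, whereas the paper's estimate avoids the issue entirely. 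So as written the proposal records the correct reduction and the correct identification of the critical pieces, but omits the proof of the one hypothesis of Theorem \ref{TheoremParamTCLSdiff} that is genuinely nontrivial in this equivariant setting.
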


As an application of this result, we will obtain the following:

\begin{theorem*}[Theorem \ref{TheoremPTC4m-1}]
Let $m \in \NN$ and let $p\colon U\S^{4m-1}\to \S^{4m-1}$ be the unit tangent bundle of the standard $(4m-1)$-sphere $\S^{4m-1}\subset \RR^{4m}$.
    Then for all $r \in \NN$ with $r\geq 2$ it holds that
    $$    \TC_r[p\colon U\S^{4m-1}\to \S^{4m-1}] = r + 1.       $$
\end{theorem*}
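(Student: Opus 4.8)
The plan is to prove the two inequalities $\TC_r[p\colon U\S^{4m-1}\to\S^{4m-1}]\geq r+1$ and $\leq r+1$ separately; the first is soft, while the second will use Corollary \ref{CorEquivPTCLS} together with the quaternionic structure of $\RR^{4m}$. For the lower bound I would restrict the fiberwise motion planning to a single fiber. Writing $F=p^{-1}(b_0)\cong\S^{4m-2}$, the preimage under $\Pi_r$ of $F^r\subset (U\S^{4m-1})^r_{\S^{4m-1}}$ is exactly the path space $PF$, and $\Pi_r$ restricts over $F^r$ to the ordinary fibration $\pi_r\colon PF\to F^r$; since sectional category does not increase under pullback (cf. \cite{cohen2021topology,FarberPaulI}) this gives $\TC_r[p]\geq\TC_r(\S^{4m-2})$. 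As $4m-2$ is even, the classical computation of the sequential topological complexity of spheres (in the non-reduced normalization used here) gives $\TC_r(\S^{4m-2})=r+1$, cf. \cite{RudyakHigher}, and the lower bound follows.

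For the upper bound I would choose an equivariant structure tailored to Corollary \ref{CorEquivPTCLS}. Identify $\RR^{4m}$ with $\HH^m$, so that $\CC^{2m}=\RR^{4m}$ and left multiplications by $i,j,k$ are orthogonal complex structures, and let $G=SU(2m)$ act on $\S^{4m-1}=S(\CC^{2m})$. This action is transitive and $G$ is connected, and the action lifts to $U\S^{4m-1}$ making $p$ a $G$-equivariant fiber bundle. Consider the smooth $G$-invariant function $f\colon U\S^{4m-1}\to\RR$, $f(x,v)=\langle v,ix\rangle$ (it is $G$-invariant because unitary maps commute with multiplication by $i$ and are orthogonal; here $ix$ is tangent to the sphere at $x$ and has unit length). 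A short computation of $df$ on the submanifold $U\S^{4m-1}\subset\S^{4m-1}\times\RR^{4m}$ shows that $f$ has exactly two critical values $\pm1$, with $\Crit_{1}f$ and $\Crit_{-1}f$ equal to the images of the unit sections $x\mapsto(x,ix)$ and $x\mapsto(x,-ix)$, each diffeomorphic via $p$ to $\S^{4m-1}$, and $\Crit_{\mu}f=\emptyset$ otherwise.

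Feeding this into Corollary \ref{CorEquivPTCLS}, only indices $\mu_i\in\{1,-1\}$ produce nonempty critical sets, so the outer sum effectively runs over the $r+1$ values $\mu\in\{-r,-r+2,\dots,r\}$, and for each decomposition $\mu=\epsilon_1+\dots+\epsilon_r$ with $\epsilon_i\in\{1,-1\}$ the fiber product $\Crit_{\epsilon_1}f\times_{\S^{4m-1}}\dots\times_{\S^{4m-1}}\Crit_{\epsilon_r}f$ is the image of a section of $q_r$, hence diffeomorphic to $\S^{4m-1}$. The key remaining point is that the subspace parametrized topological complexity of such a fiber product equals $1$, which requires exhibiting a single global section of $\Pi_r$ over it: over the point indexed by $x\in\S^{4m-1}$, the $r$ prescribed points $\epsilon_1 ix,\dots,\epsilon_r ix$ all lie in the fiber $S(T_x\S^{4m-1})\cong\S^{4m-2}$ and take only the two antipodal values $\pm ix$, and consecutive ones can be joined inside the fiber, continuously in $x$, along the great semicircle $\theta\mapsto\cos(\pi\theta)\,ix+\sin(\pi\theta)\,jx$, using that $jx$ is a unit vector orthogonal to both $x$ and $ix$. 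Concatenating these paths over the $r-1$ subintervals gives the required section, so each term of the sum equals $1$ and Corollary \ref{CorEquivPTCLS} yields $\TC_r[p]\leq r+1$; combined with the lower bound this gives the claimed equality.

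I expect the main obstacles to be, first, confirming that $\Crit f$ consists of precisely the two sections with no spurious critical submanifolds, since that is what keeps the outer sum in Corollary \ref{CorEquivPTCLS} under control, and second -- and conceptually more important -- the construction of the global section of $\Pi_r$ over the fiber products. The latter is the step that genuinely uses that $\RR^{4m}$ is quaternionic, equivalently that $\S^{4m-1}$ admits a unit vector field ($jx$) orthogonal to the Reeb-type field $ix$, and it is the reason the statement is restricted to spheres of dimension $4m-1$.
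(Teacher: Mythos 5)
Your proposal is correct and follows essentially the same route as the paper: the same $SU(2m)$-invariant function $f(x,v)=\langle v,ix\rangle$ with critical sets the two sections $x\mapsto(x,\pm ix)$, the same application of Corollary \ref{CorEquivPTCLS} giving $r+1$ summands, the same quaternionic great-semicircle motion planner via $jx$ showing each fiber product has subspace complexity $1$, and the same lower bound $\TC_r[p]\geq\TC_r(\S^{4m-2})=r+1$ via the fiber. The only small point to make explicit is that a section of $\Pi_r$ over the closed fiber product must still be extended to an \emph{open} neighborhood (Lemma \ref{LemmaPMPextend}) before it certifies $\TC_{r,p}(\C)=1$, exactly as in the paper's Lemma \ref{LemmaTCParamCrit}.
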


This article is organized as follows. In section 1, we recall the topological approach to LS theory from \cite{RudyakSchlenk}, in particular the notion of an index function,  and its connection to the Palais-Smale condition. We introduce sequential subspace complexities in section 2, establish their index function properties and carry out the general consequences of the LS theorem for sequential topological complexities. The consequences for critical points of $C^1$-functions are carried out in that section as well. These results are then applied to navigation functions in section 3 and to product functions in section 4, yielding the abovementioned consequences and applications.  In section 5 we introduce sequential parametrized subspace complexities and establish a setting in which they form index functions to which the LS-type approach can be applied. Section 6 focuses on the notion of vertical proportionality of a vector field. After introducing this condition, we apply the results of section 5 to differentiable functions with vertically proportional gradients and conclude with a discussion of parametrized navigation functions. In section 7, we will study the parametrized TC of restrictions of fibrations to sublevel sets of functions on their total spaces under the assumption that these are again fibrations and by applying the main results of section 6 to another special class of functions. To obtain examples of fibrations to which this approach applies, we study equivariant fiber bundles over homogeneous spaces in section 8 and establish an upper bound of the parametrized TC of equivariant fiber bundles between closed manifolds with homogeneous codomains. Finally, we apply this result to unit tangent bundles of odd-dimensional spheres and compute their parametrized topological complexities in certain cases. 

\subsection*{Notation} We will use the following conventions throughout the manuscript. Let $X$ be a topological space and let $f\colon X \to \RR$ be a function. 
\begin{itemize}
\item Let $\mathfrak{P}(X)$ denote the power set of $X$.
\item Given a map $\varphi\colon X \to X$, we denote its fixed point set by 
$$\Fix \varphi = \{x \in X \mid \varphi(x)=x\}. $$
\item We denote the \emph{closed} sublevel and level sets of $f$ , resp., by 
$$f^a := \{x \in X \mid f(x) \leq a\}, \quad f_a :=\{x\in X \mid f(x)=a\} \quad \forall a \in \RR.$$

\item If $X$ is a smooth manifold and $f$ is a differentiable function, then we denote 
$$\Crit f = \{x \in X \ | \ Df_x=0\}, \qquad \Crit_\mu f := \Crit f \cap f_{\mu} \quad \forall \mu \in \RR.$$

\item  Given $r \in \NN$ with $r \geq 2$ we put
$$\Delta_{r,X}:= \{(x,x,\dots,x)\in X^r \mid x \in X\}.$$
\item Given $A \subset X$, we call a map $\Phi\colon A \times [0,1] \to X$  a \emph{deformation of $A$} if it is a homotopy from the inclusion $A \hookrightarrow X$ to another map. 
\item Given a vector field $Y\colon M \to TM$ on a manifold $M$, we put $$\sing(Y) := \{x \in M \mid Y(x) =0\}.$$
\end{itemize}

\section*{Acknowledgements}
M. S. was partially funded by the Deutsche Forschungsgemeinschaft (German Research Foundation) -- grant agreement number 518920559.
M. S. is grateful for the support by the Danish National Research Foundation through the Copenhagen Centre for Geometry and Topology (DNRF151).

\section{Topological Lusternik-Schnirelmann theory}

In this section, we want to recall the topological approach to Lusternik-Schnirelmann theory which was carried out by  Rudyak and Schlenk in \cite{RudyakSchlenk}. The basic notions required to develop their approach are summarized in the following definition.\medskip

\begin{definition}
\label{DefIndex}
	Let $X$ be a topological space and let $\varphi\colon X \to X$ be continuous. 	\begin{enumerate}[a)]
		\item An \emph{index function} on $X$ is a map $\nu\colon  \mathfrak{P}(X) \to \NN_0 \cup \{+\infty\}$ which has the following properties: 
\begin{enumerate}[(i)]
	\item (Monotonicity) \enskip If $A \subset B \subset X$, then $\nu(A) \leq \nu(B)$. 
	 	\item (Subadditivity) \enskip For all $A,B \subset X$ it holds that $\nu(A \cup B) \leq \nu(A) + \nu(B).$
	\item (Continuity) \enskip If $A \subset X$ is closed, then there exists an open neighborhood $U$ of $A$ with $\nu(A) = \nu(U)$.
\end{enumerate}

\item  An index function $\nu$ on $X$ is called \emph{$\varphi$-supervariant} if $\nu(\varphi(A))\geq \nu(A)$
for all closed subsets $A \subset X$.
\item A continuous function $f\colon X \to \RR$ is called a \emph{Lyapunov function for $\varphi$} if 
$$f(\varphi(x)) \leq f(x) \qquad \forall x \in X.$$
	\end{enumerate}
\end{definition}

\begin{remark}
\label{RemarkDefSup}
\begin{enumerate}
\item The notion of index function axiomatizes the crucial properties of relative Lusternik-Schnirelmann category. Namely, as shown in \cite[Chapter 1]{CLOT}, given a path-connected $X$ we obtain an index function on $X$ by putting
 $$\nu(A) := \cat_X(A),$$
 i.e. the minimal value of $n \in \NN$, such that $A$ can be covered by the union of $n$ open subsets of $X$ whose inclusions are nullhomotopic.
\item Our definition of an index function is actually a slight simplification of the one from \cite{RudyakSchlenk}, in which index functions are defined relative to a certain subset. A similar notion is introduced under the name of an \emph{abstract category} in \cite[Section 1.7]{CLOT}.
	\item  In \cite{RudyakSchlenk}, the $\varphi$-supervariance of an index function is defined with respect to arbitrary subsets of $X$, not only closed ones. However, a careful check of the proofs in \cite[Section 2]{RudyakSchlenk} shows that it is sufficient to assume supervariance for closed subsets of $X$ to obtain the results that we make use of.
\end{enumerate}
\end{remark}

Rudyak and Schlenk have further introduced a topological condition that mimicks the classical Palais-Smale (PS) condition. Its relation to the original PS condition will be clarified below. 

\begin{definition}[{\cite[Definition 1.1]{RudyakSchlenk}}]
	Let $X$ be a topological space, let $\varphi\colon X \to X$ be continuous and let $f\colon X \to \RR$ be a Lyapunov function for $\varphi$. We say that $(\varphi,f)$ \emph{satisfies condition (D)} if the following two conditions hold:
	\begin{enumerate}
		\item[($D_1$)] $f$ is a \emph{strong} Lyapunov function for $\varphi$, i.e. it holds that
		$$f(\varphi(x)) < f(x) \qquad \forall x \in X \setminus \Fix \varphi.$$
		\item[($D_2$)] If $A \subset X$ is chosen such that $f$ is bounded on $A$ and such that
		$$\inf \{f(a) - f(\varphi(a)) \mid a \in A\}=0,$$
		then $\overline{A} \cap \Fix \varphi \neq \emptyset$.
	\end{enumerate}
\end{definition}

The topological abstraction of the classical Lusternik-Schnirelmann theorem by Rudyak and Schlenk is the following.

\begin{theorem}[{\cite[Theorem 2.3]{RudyakSchlenk}}]
\label{TheoremLSMain}
	Let $X$ be a topological space, let $\varphi\colon X \to X$ be continuous and let $\nu\colon \mathfrak{P}(X) \to \NN\cup \{+\infty\}$ be a $\varphi$-supervariant index function. Assume that there exists a Lyapunov function $f\colon X \to \RR$, which is bounded from below, such that $(\varphi,f)$ satisfies condition (D) and that $f(\Fix \varphi)\subset \RR$ is discrete. Then 
	$$\nu(f^a) \leq \sum_{\lambda \in (-\infty, a]} \nu(f_\lambda \cap \Fix \varphi) \qquad \forall a \in \RR.$$
\end{theorem}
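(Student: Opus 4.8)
The plan is to run the classical Lusternik--Schnirelmann machinery with the iterates $\varphi^n$ of $\varphi$ playing the role of a negative gradient flow, extracting all the needed ``deformations'' from condition (D). There are two structural inputs -- a supervariance bookkeeping lemma and a deformation lemma -- after which the conclusion is pure combinatorics with the index-function axioms.

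The bookkeeping lemma states: if $S\subseteq X$ and $T\subseteq X$ are closed and $\varphi^n(S)\subseteq T$ for some $n$, then $\nu(S)\le\nu(T)$. I would prove $\nu(\overline{\varphi^j(S)})\ge\nu(S)$ for all $j\ge 0$ by induction: with $B_j:=\overline{\varphi^j(S)}$, continuity of $\varphi$ gives $\varphi(B_j)\subseteq B_{j+1}$, so monotonicity together with $\varphi$-supervariance applied to the closed set $B_j$ yields $\nu(B_{j+1})\ge\nu(\varphi(B_j))\ge\nu(B_j)$; since $\overline{\varphi^n(S)}\subseteq T$, monotonicity finishes it. This is precisely where supervariance for closed sets only is enough, as anticipated in Remark \ref{RemarkDefSup}.

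The deformation lemma is where condition (D) enters, and its proof is the step I expect to be the main obstacle. The core assertion: if $c$ is a critical value (i.e. $K_c:=f_c\cap\Fix\varphi\neq\emptyset$; note $K_c$ is closed, $X$ being Hausdorff), if $\epsilon_0>0$ is chosen so that $c$ is the only element of $f(\Fix\varphi)$ in $[c-\epsilon_0,c+\epsilon_0]$ -- possible since $f(\Fix\varphi)$ is discrete -- and if $N$ is any open neighborhood of $K_c$, then there exist $\epsilon\in(0,\epsilon_0]$ and $n\in\NN$ with $\varphi^n(f^{c+\epsilon}\setminus N)\subseteq f^{c-\epsilon}$. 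I would argue by contradiction: otherwise, for each $k$ there is $x_k\in f^{c+1/k}\setminus N$ with $f(\varphi^k(x_k))>c-1/k$, and since $f$ is a Lyapunov function for $\varphi$ the values $f(x_k)\ge f(\varphi(x_k))\ge\dots\ge f(\varphi^k(x_k))$ all lie in $(c-1/k,c+1/k]$; in particular $0\le f(x_k)-f(\varphi(x_k))\le f(x_k)-f(\varphi^k(x_k))<2/k$. Testing $(D_2)$ on $A:=\{x_k\mid k\ge k_0\}$, with $k_0$ large enough that $[c-1/k_0,c+1/k_0]$ contains no critical value but $c$: $f$ is bounded on $A$ and $\inf\{f(a)-f(\varphi(a))\mid a\in A\}=0$, so there is $p\in\overline{A}\cap\Fix\varphi$; but $A\cap N=\emptyset$ with $N$ open forces $p\notin N$, whereas $f(p)\in\overline{f(A)}\subseteq[c-1/k_0,c+1/k_0]$ together with $p\in\Fix\varphi$ forces $f(p)=c$, i.e. $p\in K_c\subseteq N$ -- a contradiction. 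The delicate point here is to feed $(D_2)$ the failure witnesses $x_k$ themselves rather than their iterates, so that the resulting fixed point stays outside $N$; this works because each individual descent step is bounded by the total descent. The same scheme with $N=\emptyset$ gives: if $[\beta,\alpha]$ contains no critical value then $\varphi^n(f^\alpha)\subseteq f^\beta$ for some $n$, hence $\nu(f^\alpha)\le\nu(f^\beta)$; and if there is no critical value $\le\alpha$ at all then $f^\alpha=\emptyset$ (if not, $\inf\{f(a)-f(\varphi(a))\mid a\in f^\alpha\}$ is either positive, contradicting boundedness below since then $f(\varphi^n(x))\to-\infty$, or $0$, producing via $(D_2)$ a fixed point at level $\le\alpha$).

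To assemble: pick, using the continuity axiom, an open $N\supseteq K_c$ with $\nu(N)=\nu(K_c)$; the deformation lemma together with subadditivity applied to $f^{c+\epsilon}=(f^{c+\epsilon}\setminus N)\cup(f^{c+\epsilon}\cap N)$ and the bookkeeping lemma give $\nu(f^{c+\epsilon})\le\nu(f^{c-\epsilon})+\nu(K_c)$, and then $\nu(f^{c+\epsilon_1})\le\nu(f^{c-\epsilon_1})+\nu(K_c)$ for small $\epsilon_1$ by the non-critical inequality. If $f(\Fix\varphi)\cap(-\infty,a]$ is infinite the right-hand side of the theorem is $+\infty$ and there is nothing to prove; otherwise it equals $\{c_1<\dots<c_k\}$, and choosing pairwise disjoint intervals $[c_i-\epsilon_1^{(i)},c_i+\epsilon_1^{(i)}]$ each meeting $f(\Fix\varphi)$ only in $c_i$, a telescoping chain of the two inequalities from $f^a$ downward past $c_k,\dots,c_1$ to $f^{c_1-\epsilon_1^{(1)}}=\emptyset$ yields $\nu(f^a)\le\sum_{i=1}^k\nu(K_{c_i})=\sum_{\lambda\in(-\infty,a]}\nu(f_\lambda\cap\Fix\varphi)$, using $\nu(\emptyset)=0$.
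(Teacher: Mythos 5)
The paper does not actually prove this statement; it quotes it from Rudyak--Schlenk (their Theorem 2.3), so what you have written is in effect a reconstruction of their argument: discrete-time ``deformations'' by the iterates $\varphi^n$, with condition (D) standing in for Palais--Smale. Most of your reconstruction is correct. The bookkeeping lemma is fine (and correctly uses supervariance only on the closed sets $\overline{\varphi^j(S)}$, as Remark \ref{RemarkDefSup} anticipates); the deformation lemma at a critical value is correct, and you rightly identify the key point of feeding $(D_2)$ the witnesses $x_k$ themselves, using $0\le f(x_k)-f(\varphi(x_k))\le f(x_k)-f(\varphi^k(x_k))<2/k$, so that the resulting fixed point stays outside $N$; claim (ii) on emptiness of low sublevels and the final telescoping with the index-function axioms are also in order. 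Two conventions enter silently: closedness of $K_c=f_\lambda\cap\Fix\varphi$ (you invoke Hausdorffness, which the statement as transcribed does not assume but which holds in every application in the paper and is needed to apply the continuity axiom), and the normalizations $\nu(\emptyset)=0$ and $\nu(A)\ge 1$ for $A\neq\emptyset$; both are consistent with the paper's examples and stated codomain, but they are not consequences of the axioms as written.

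The one step that does not work as literally stated is the non-critical interval claim: ``the same scheme with $N=\emptyset$ gives $\varphi^n(f^\alpha)\subseteq f^\beta$ for some $n$.'' For a \emph{fixed} interval $[\beta,\alpha]$ the failure witnesses $x_n\in f^\alpha$ with $f(\varphi^n(x_n))>\beta$ only satisfy $f(x_n)-f(\varphi(x_n))<\alpha-\beta$, which need not tend to $0$, so $(D_2)$ cannot be applied to $\{x_n\}$ -- the very trick you (correctly) emphasize in the critical case does not transfer verbatim. The repair is routine: by pigeonhole along the orbit there is $j_n<n$ with $f(\varphi^{j_n}(x_n))-f(\varphi^{j_n+1}(x_n))<(\alpha-\beta)/n$, and feeding $a_n:=\varphi^{j_n}(x_n)$ (whose values lie in $(\beta,\alpha]$) to $(D_2)$ yields a fixed point with value in $[\beta,\alpha]$, a contradiction; using iterates is harmless here since there is no neighborhood $N$ to avoid. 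Alternatively, run your shrinking-$\epsilon$ argument with $N=\emptyset$ at each (non-critical) value of $[\beta,\alpha]$ and chain finitely many of the resulting intervals by compactness, using that sublevel sets are $\varphi$-invariant. With this step supplied, your proof is complete and follows the same route as the cited source.
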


The original version of the Lusternik-Schnirelmann theorem and most of its generalizations can be re-obtained from Theorem \ref{TheoremLSMain} by letting $X$ be a differentiable manifold and choosing $\varphi$ as the time-$1$ map of a negative gradient flow of a $C^1$-function $f\colon X \to \RR$. Then $\Fix \varphi = \Crit f$, so that Theorem \ref{TheoremLSMain} can be used to estimate the number of critical points of a function in one of its sublevels from below. \medskip 

Regarding this setting, we want to recall the PS condition and discuss its relation to condition (D). 
The following definition of a pseudo-gradient is a mild generalization of \cite[Definition II.3.8]{Struwe}.
	
	\begin{definition}
	\label{DefPseudoGrad}
		Let $M$ be a paracompact Banach manifold and let $F \in C^1(M)$. Consider a Riemannian metric $g$ on $M$, let $\nabla^g F\colon M \to TM$ denote the gradient of $F$ with respect to the metric and let $\|\cdot\|$ denote the norm induced by $g$ on any tangent space of $M$. We call a vector field $X \colon M \to TM$ a \emph{pseudo-gradient vector field} or just \emph{pseudo-gradient of $F$} if it has the following properties:
		\begin{enumerate}[(i)]
			\item $X$ is Lipschitz continuous with respect to $g$ and the Sasaki metric on $TM$ induced by $g$, see e.g. \cite[Section X.4]{LangDiff} for a description of the latter.
			\item There exists $C_1>0$, such that $$\|X(x)\| \leq C_1 \cdot  \min \{\|\nabla^gF(x)\|,1\}\qquad \forall x \in M .$$ 
			\item There exists $C_2>0$, such that $$DF_x(X(x)) \geq C_2 \cdot \min \{\|\nabla^gF(x)\|,1\}\cdot \|\nabla^gF(x)\| \qquad \forall x \in M. $$ 
		\end{enumerate}
	\end{definition}
	\begin{remark}
	\label{RemarkFlowCompl}
	Let $(M,g)$ be a Riemannian Banach manifold. 
	\begin{enumerate}
	\item By a fundamental theorem of K. Nomizu and H. Ozeki, see \cite{NomizuOzeki}, a differentiable Banach manifold admits a complete Riemannian metric if and only if it is paracompact. Thus, working with complete Riemannian Banach manifolds implicitly implies that the manifold under consideration is paracompact. 
		\item 	It is shown in \cite[Lemma II.3.9]{Struwe}, see also \cite{PalaisLuster}, that any $F \in C^1(M)$ admits a pseudo-gradient vector field. Moreover, by basic results on ordinary differential equations on manifolds, if $(M,g)$ is complete, then any pseudo-gradient for $F$ admits a complete flow. 
		\item Let $F \in C^1(M)$ and let $X\colon M \to TM$ be a pseudo-gradient of $F$. It follows from properties (ii) and (iii) of Definition \ref{DefPseudoGrad} that $\sing(X) = \Crit F$.
		\end{enumerate}
	\end{remark}

The authors of \cite{RudyakSchlenk} study a particular construction of a pseudo-gradient vector field that we shall briefly elaborate upon. 

\begin{example}
\label{ExamplePG}
Let $(M,g)$ be a complete Riemannian Banach manifold, let $F \in C^1(M)$ and let $\nabla F$ denote the gradient of $F$ with respect to $g$. Consider a smooth and monotonically increasing function $\rho\colon \RR \to \RR$ with the following properties:
\begin{itemize}
	\item $\rho(x)=1$ if $x \leq 1$, 
	\item $\rho(x)\leq x$ if $x \in [1,2]$, 
	\item $\rho(x)=x$ if $x \geq 2$. 
\end{itemize}
Put $h\colon  M \to \RR$, $h(x) := \frac{1}{\rho(\|\nabla F(x)\|)}$. Then $X\colon M \to TM$, $X(x):= h(x) \cdot \nabla F(x)$, is a pseudo-gradient vector field for $F$: property (i) from Definition \ref{DefPseudoGrad} follows from the boundedness of $X$ that is inherent in its definition. Considering property (ii), it follows from our choice of $\rho$ that
$$\|X(x)\|\leq 2 \min \{\|\nabla F(x)\|,1\} \quad \forall x \in M.$$
Concerning property (iii), we derive from $h(x) \geq \min\{\frac{1}{\|\nabla^gF(x)\|},1\}$ that
$$DF_x(X(x)) \geq \min \big\{\|\nabla F(x)\|,\|\nabla F(x)\|^2\big\} \qquad \forall x \in M.$$
Hence, $X$ is a pseudo-gradient vector field for $F$.
\end{example}

In \cite{RudyakSchlenk} it is shown that if $f\colon M \to \RR$ is any function on a complete Riemannian manifold which satisfies the condition (C) of Palais and Smale, see e.g. \cite{PalaisLuster}, then the time-1 map $\phi_1$ of the flow of a negative pseudo-gradient given as in Example \ref{ExamplePG} together with $f$ itself satisfies condition (D). Instead of condition (C), we want to use a different condition in this article, which is more commonly used in studies of gradient flows.

\begin{definition}
Let $(M,g)$ be a complete Riemannian Banach manifold and let $F \in C^1(M)$. We say that $(F,g)$  \emph{satisfies the Palais-Smale condition} or \emph{PS condition} if  every sequence  $(x_n)_{n \in \NN}$ in $M$ for which $(|F(x_n)|)_{n\in\NN}$ is bounded and which satisfies
$$\lim_{n \to \infty} \nabla^gF(x_n)=0$$
has a convergent subsequence.
\end{definition}

The PS condition is slightly stricter than condition (C), see e.g. \cite[Section 3]{MawhinWillem}. We will make occasional use of the following property.

\begin{lemma}
\label{LemmaPSCompLevelsets}
Let $(M,g)$ be a complete Riemannian manifold and let $f \in C^1(M)$, such that $(f,g)$ satisfies the PS condition. Then $f_\mu\cap \Crit f $ has finitely many connected components for each $\mu \in \RR$. 	
\end{lemma}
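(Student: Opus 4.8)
The plan is to argue by contradiction using the PS condition to produce a bad sequence. Suppose that for some $\mu \in \RR$ the set $f_\mu \cap \Crit f$ has infinitely many connected components. Since $\Crit f$ is closed in $M$ and $f_\mu$ is closed, $K := f_\mu \cap \Crit f$ is a closed subset of $M$. First I would observe that every point of $K$ is a critical point with $\nabla^g f = 0$, so the constant sequence along any enumeration already has $\nabla^g f \to 0$ and $|f| \equiv |\mu|$ bounded; the PS condition then gives that $K$ is \emph{sequentially compact}, hence compact (as a subset of the metric space $(M,g)$). The key step is then to derive a contradiction from: $K$ is compact but has infinitely many components.

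Concretely, pick one point $x_n$ in each of infinitely many distinct components of $K$; by compactness of $K$, after passing to a subsequence we may assume $x_n \to x_\infty \in K$. I would then use that $M$ is a Banach manifold, hence locally contractible and in particular locally connected, to choose a connected open neighborhood $V$ of $x_\infty$ in $M$; for $n$ large, $x_n \in V$, so $x_n$ lies in the connected set $V \cap K$... but $V \cap K$ need not be connected, so this naive version fails. The cleaner route is to use that $K$, being a compact metric space, has only finitely many components \emph{that are not arbitrarily small}, and more usefully: a compact metric space with infinitely many components contains a sequence of points from distinct components converging to a limit $x_\infty$, and then the component of $x_\infty$ together with these points violates the characterization of components of a compact space as intersections of clopen sets. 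The precise tool is: in a compact Hausdorff space the connected component of a point equals its quasi-component (intersection of all clopen neighborhoods). If $x_\infty$'s component is clopen-separated from each $x_n$, one gets a clopen set containing $x_\infty$ but missing infinitely many $x_n$, contradicting $x_n \to x_\infty$; if no such separation exists, $x_\infty$ lies in the same quasi-component, hence component, as infinitely many $x_n$, contradicting that they were chosen from distinct components.

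The step I expect to be the main obstacle is precisely this point-set topology lemma — that a compact metric space cannot have infinitely many connected components \emph{accumulating} at a point — and making sure the contradiction is set up so that the accumulation is genuinely forced. This is where one must be careful: a compact metric space certainly \emph{can} have infinitely many components (e.g. a convergent sequence together with its limit), so the contradiction must come from combining compactness with the fact that \emph{all} of $K$ consists of critical points at the single level $\mu$ together with local connectedness of the manifold $M$ near the accumulation point. I would phrase the final contradiction as follows: near $x_\infty$ choose a chart identifying a neighborhood with a convex (hence connected) open set $W$ of a Banach space; then $W \cap \Crit f$ is a neighborhood of $x_\infty$ in $K$, it is contained in a single component of $M \cap (\text{that chart})$, but that does not immediately make it connected. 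Given this subtlety, the safest writeup is the quasi-component argument above, which needs only that $K$ is compact metric, so I would present the proof as: (1) PS $\Rightarrow$ $K$ compact; (2) in a compact metric space, for any sequence from distinct components converging to $x_\infty$, the quasi-component/component coincidence yields a clopen set separating cofinitely many terms from $x_\infty$, contradicting convergence; hence only finitely many components can meet any convergent sequence, and compactness then forces finitely many components overall.
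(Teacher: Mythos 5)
Your step (1) is fine, and is in fact more than the paper itself records: the PS condition applied to constant-gradient sequences in the closed set $K=f_\mu\cap\Crit f$ does show that $K$ is compact. The genuine gap is exactly at the point you flagged as the main obstacle, and your quasi-component patch does not close it. In a compact Hausdorff space components do coincide with quasi-components, but this only produces, for each $n$ separately, a clopen set $U_n\ni x_\infty$ with $x_n\notin U_n$; the $U_n$ depend on $n$, and nothing lets you extract a single clopen neighborhood of $x_\infty$ missing infinitely many $x_n$. Your own test case defeats the argument: $K=\{0\}\cup\{1/n \mid n\in\NN\}$ is compact metric with infinitely many components, the points $x_n=1/n$ lie in pairwise distinct components and converge to $x_\infty=0$, and every clopen subset of $K$ containing $0$ contains all but finitely many of the $x_n$. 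Hence your concluding claims that ``only finitely many components can meet any convergent sequence'' and that ``compactness then forces finitely many components overall'' are both false, and the proof does not go through.

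Moreover, no repair using the PS condition alone is possible, because the statement fails as written. Take $M=\RR$ with the Euclidean metric and $f(x)=x^3\sin^2(\pi/x)$ for $x\neq 0$, $f(0)=0$. Then $f\in C^1(\RR)$, one computes $f'(x)=3x^2\sin^2(\pi/x)-\pi x\sin(2\pi/x)$ and $f'(0)=0$, so $f_0\cap\Crit f=\{0\}\cup\{1/k \mid k\in\ZZ\setminus\{0\}\}$ has infinitely many components; and $(f,g)$ satisfies PS because $f'(x)\to\pi^2$ as $|x|\to\infty$, so any sequence with $f'(x_n)\to 0$ is bounded and has a convergent subsequence. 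For comparison, the paper's own proof is the one-line argument that a sequence chosen from pairwise distinct components of $f_\mu\cap\Crit f$ ``evidently'' has no convergent subsequence, and then invokes PS; that assertion is precisely the step you rightly refused to take on faith, and the example above shows that it (and the lemma as stated) is false. So your caution was warranted even though your attempted fix fails. A correct version requires an additional hypothesis, e.g.\ that the components of $f_\mu\cap\Crit f$ are uniformly metrically separated (then the paper's argument works verbatim), or that $f_\mu\cap\Crit f$ is locally connected, as for Morse--Bott type functions (then your compactness step immediately yields finitely many components, since each component is open in the compact set $K$).
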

\begin{proof}
	Assume there exists some $\mu_0 \in \RR$, such that $ N:= f_{\mu_0}\cap \Crit f $ has infinitely many components. Then there exists a sequence $(x_n)_{n \in \NN}$ in $N$, for which no two elements lie in the same component of $N$. Evidently, $(x_n)_{n \in \NN}$ does not have a convergent subsequence. But by definition of $N$, 
	$$f(x_n)=\mu_0, \qquad \nabla^g f(x_n)=0 \qquad \forall n \in \NN, $$
so the PS condition yields that it must have a convergent subsequence. This is a contradiction, so that such a $\mu_0 \in \RR$ can not exist. 
\end{proof}

The following result is a slight variation of \cite[Proposition 9.1]{RudyakSchlenk}. There, the authors study pseudo-gradients of the kind constructed in Example \ref{ExamplePG} as well as condition (C) instead of the PS condition. Their arguments can be transferred easily to the setting considered in this article. For the convenience of the reader, we will carry out some details of the proof.

\begin{theorem}
\label{TheoremPSDPseudo}
	Let $(M,g)$ be a  complete Riemannian Banach manifold, let $F \in C^1(M)$ be bounded from below and assume that $(F,g)$ satisfies the PS condition. Let $X\colon M\to TM$ be a pseudo-gradient vector field of $F$ and let $\phi_1\colon M \to M$ be the time-1-map of the flow of $-X$. Then $F$ is a Lyapunov function for $\phi_1$ and $(\phi_1,F)$ satisfies condition (D).
\end{theorem}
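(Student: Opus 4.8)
The plan is to verify the three requirements in turn: that $F$ is a Lyapunov function for $\phi_1$, that $(D_1)$ holds, and that $(D_2)$ holds. Throughout, write $\phi_t$ for the flow of $-X$ and, for a fixed point $x\in M$, set $u_x(t):=F(\phi_t(x))$. Since $X$ is a pseudo-gradient, property (iii) of Definition \ref{DefPseudoGrad} gives
$$\frac{d}{dt}u_x(t) = DF_{\phi_t(x)}\big(-X(\phi_t(x))\big) = -DF_{\phi_t(x)}\big(X(\phi_t(x))\big) \leq -C_2\cdot \min\{\|\nabla^gF(\phi_t(x))\|,1\}\cdot \|\nabla^gF(\phi_t(x))\| \leq 0,$$
so $u_x$ is monotonically non-increasing on $[0,1]$. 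In particular $F(\phi_1(x)) = u_x(1)\le u_x(0) = F(x)$, which is the Lyapunov property, and by Remark \ref{RemarkFlowCompl}(1) the flow is complete so $\phi_1$ is well-defined. For $(D_1)$, suppose $x\notin \Fix\phi_1$; I claim $x\notin\Crit F$. Indeed, by Remark \ref{RemarkFlowCompl}(3) we have $\sing(X)=\Crit F$, and a point of $\Crit F$ is a stationary point of the flow, hence fixed by $\phi_1$; contrapositively $x\notin\Fix\phi_1$ forces $x\notin\Crit F$, i.e. $\nabla^gF(x)\neq 0$. Then the integrand above is strictly negative at $t=0$, and by continuity it stays strictly negative on a neighborhood of $t=0$ in $[0,1]$, so $F(\phi_1(x)) = u_x(1) < u_x(0) = F(x)$, giving $(D_1)$.

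The substantive part is $(D_2)$. Let $A\subset M$ with $F$ bounded on $A$ and $\inf\{F(a)-F(\phi_1(a))\mid a\in A\}=0$. Choose a sequence $(a_n)$ in $A$ with $F(a_n)-F(\phi_1(a_n))\to 0$. The goal is to produce, from $(a_n)$, a sequence to which the PS condition applies, whose limit lies in $\overline{A}\cap\Fix\phi_1$. Using $F(a_n)-F(\phi_1(a_n)) = -\int_0^1 \frac{d}{dt}u_{a_n}(t)\,dt = \int_0^1 DF_{\phi_t(a_n)}(X(\phi_t(a_n)))\,dt$, the vanishing of the left side and non-negativity of the integrand force
$$\int_0^1 C_2\cdot \min\{\|\nabla^gF(\phi_t(a_n))\|,1\}\cdot \|\nabla^gF(\phi_t(a_n))\|\,dt \longrightarrow 0.$$
From this one extracts, for each $n$, a time $t_n\in[0,1]$ with $\|\nabla^gF(\phi_{t_n}(a_n))\|\to 0$ (e.g. pick $t_n$ near-minimizing the integrand; if $\|\nabla^gF(\phi_t(a_n))\|$ stayed bounded away from $0$ for all $t\in[0,1]$, the integral would be bounded below, a contradiction). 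Set $b_n:=\phi_{t_n}(a_n)$. Then $\nabla^gF(b_n)\to 0$, and $(|F(b_n)|)$ is bounded because $F(a_n)$ is bounded ($A$ has $F$ bounded) while $F(a_n)\ge F(b_n)\ge F(\phi_1(a_n))\ge \inf_A F - \varepsilon_n$ by monotonicity of $u_{a_n}$ together with $F$ being bounded below; so the PS condition gives a convergent subsequence $b_{n_k}\to x_\infty$, and $\nabla^gF(x_\infty)=0$, i.e. $x_\infty\in\Crit F\subset\Fix\phi_1$.

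It remains to show $x_\infty\in\overline{A}$. Here I would argue that $a_{n_k}\to x_\infty$ as well: since $b_n=\phi_{t_n}(a_n)$ lies on the flow line through $a_n$, and the $F$-value drop along the whole segment $[0,1]$ of that flow line tends to $0$, the drop along $[0,t_n]$ also tends to $0$, so $F(a_n)-F(b_n)\to 0$. Combined with $b_{n_k}\to x_\infty$ and a suitable continuity/compactness argument — for instance, passing to a further subsequence along which $t_{n_k}$ converges and using continuity of the flow, together with the fact that $x_\infty$ is a stationary point so that small-time flow near $x_\infty$ moves points by little — one concludes $a_{n_k}\to x_\infty$, whence $x_\infty\in\overline{A}$. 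The main obstacle I anticipate is precisely this last step: carefully controlling $\|a_n - b_n\|$ (or rather the convergence $a_n\to x_\infty$) from the smallness of the $F$-drop, since a small $F$-drop along a short flow segment does not a priori bound the displacement in $M$; one genuinely needs the gradient to be small along the segment and the PS-type compactness near the limiting critical set. I would handle this by showing the $L^2$-smallness of $\|\nabla^gF\|$ along the segments forces, via the pseudo-gradient bound (ii) $\|X\|\le C_1\min\{\|\nabla^gF\|,1\}$, that $\int_0^1\|X(\phi_t(a_n))\|\,dt$ is small on the relevant subsequence, hence $d(a_n,\phi_1(a_n))$ is small, and then locate the limit using the PS condition applied along $\phi_1(a_n)$ if needed. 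This is the same mechanism as in \cite[Proposition 9.1]{RudyakSchlenk}, adapted from condition (C) to the PS condition.
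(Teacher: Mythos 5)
Your treatment of the Lyapunov property, of $(D_1)$, and of the first half of $(D_2)$ (extracting times $t_n$ with $\|\nabla^gF(\phi_{t_n}(a_n))\|\to 0$, setting $b_n=\phi_{t_n}(a_n)$, checking $(F(b_n))_n$ is bounded, and applying the PS condition to obtain $b_{n_k}\to x_\infty\in\Crit F=\Fix \phi_1$) coincides with the paper's proof. The only divergence — and the step you flag as the anticipated obstacle — is transferring convergence from $b_{n_k}$ back to $a_{n_k}$. The paper does this quantitatively: $d_M(a_{n_k},b_{n_k})\le\int_0^{t_{n_k}}\|X(\phi_t(a_{n_k}))\|\,dt\le C_1\int_0^1\|\nabla^gF(\phi_t(a_{n_k}))\|\,dt$, and property (iii) together with Jensen's inequality bounds the latter by a constant times $\big(F(a_{n_k})-F(\phi_1(a_{n_k}))\big)+\big(F(a_{n_k})-F(\phi_1(a_{n_k}))\big)^{1/2}\to 0$. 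Your quantitative sketch is a workable (in fact slightly cleaner) variant of this: since $DF(X)\ge C_2\min\{\|\nabla^gF\|,1\}\,\|\nabla^gF\|\ge C_2\min\{\|\nabla^gF\|,1\}^2$ along the flow line, Cauchy--Schwarz gives $\int_0^1\min\{\|\nabla^gF(\phi_t(a_n))\|,1\}\,dt\le\big(C_2^{-1}(F(a_n)-F(\phi_1(a_n)))\big)^{1/2}$, and property (ii) then forces $\int_0^1\|X(\phi_t(a_n))\|\,dt$, hence $d_M(a_n,b_n)$, to tend to $0$; so only the $L^1$-norm of the truncated gradient is needed, not of $\|\nabla^gF\|$ itself. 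Moreover, the softer argument you propose first also closes the gap with no estimates at all, so your worry is unfounded: the flow of $-X$ is complete (the field is bounded and Lipschitz, $(M,g)$ complete) and jointly continuous, so after passing to a further subsequence with $t_{n_k}\to t_*\in[0,1]$ one gets $a_{n_k}=\phi(b_{n_k},-t_{n_k})\to\phi(x_\infty,-t_*)=x_\infty$, the last equality because $x_\infty\in\Crit F=\sing(X)$ is a rest point of the flow; no control of displacement by the $F$-drop is required. As written, though, this step is only sketched (with a deferral to Rudyak--Schlenk), so one of the two mechanisms you name should be written out explicitly to make the proof complete.
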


\begin{proof}
Let $\phi\colon  M \times \RR \to M$ be the flow of $-X$ which is well-defined by Remark \ref{RemarkFlowCompl}.(1) and satisfies $\phi(x,1)=\phi_1(x)$ for each $x \in M$. By a standard computation, one derives from property (iii) of Definition \ref{DefPseudoGrad} that $(\phi_1,F)$ satisfies $(D_1)$.
 
Let $A\subset M$ be non-empty, such that $F$ is bounded on $A$ and such that $\inf \{F(a)-F(\phi_1(a)) \mid a \in A\}=0$.  As in the proof of \cite[Proposition 9.1]{RudyakSchlenk}, one obtains sequences $(x_n)_{n \in \NN}$ in $A$ and $(t_n)_{n \in \NN}$ in $[0,1]$, such that 
\begin{equation}
\label{EqDiffPseudo}
 DF_{\phi(x_n,t_n)}(X(\phi(x_n,t_n))) < \frac1n \qquad \forall n \in \NN.
 \end{equation}
 Put $y_n:= \phi(x_n,t_n)$ for each $n$. We derive from \eqref{EqDiffPseudo} and property (iii) of a pseudo-gradient that $\lim_{n \to \infty} \nabla^gF(y_n)=0$.
Let  $C_3 \geq 0$ with $|F(x_n)|\leq C_3$ for all $n \in \NN$. Then $F(y_n)= F(\phi(x_n,t_n)) \leq F(x_n)\leq C_3$ for each $n \in \NN$.
By assumption, $F$ is bounded from below, so it follows that $(F(y_n))_{n \in \NN}$ is bounded. The PS condition for $(F,g)$ thus implies that $(y_n)_{n \in \NN}$ has a subsequence $(y_{n_k})_{k \in \NN}$ which converges to some $z \in \Crit F=\Fix\phi_1$. We want to show that $(x_{n_k})_{k \in \NN}$ converges to $z$ as well. Let $C_1,C_2 \in (0,+\infty)$ be given as in Definition \ref{DefPseudoGrad}. For each $k \in \NN$,  $\gamma_k\colon [0,t_{n_k}] \to M$, $\gamma_k(t) = \phi(x_{n_k},t)$ is a $C^1$-path from $x_{n_k}$ to $y_{n_k}$, so it holds that
\begin{align*}
d_M(x_{n_k},y_{n_k}) &\leq \int_0^{t_{n_k}} \|\gamma'_k(t)\|\, dt = \int_0^{t_{n_k}} \|X(\gamma_k(t))\|\,dt \leq C_1 \int_0^{t_{n_k}} \|\nabla^gF(\gamma_k(t))\|\,dt ,
\end{align*}
 where we used property (ii) of a pseudo-gradient. We further derive from property (iii) that
$\|\nabla^gF(x)\| \leq C_3 \left(DF_x(X(x)) + (DF_x(X(x)))^{\frac12}\right)$ for each $x \in M$, where $C_3:= \max\{C_2^{-1},\sqrt{C_2^{-1}}\}$.
Inserting this into the above computation shows that 
\begin{align*}
d_M(x_{n_k},y_{n_k}) 
&\leq C_1C_3\int_0^1 DF_{\phi(x_{n_k},t)}(X(\phi(x_{n_k},t)))\,dt + C_1C_3 \Big(\int_0^1 DF_{\phi(x_{n_k},t)}(X(\phi(x_{n_k},t)))\, dt\Big)^{\frac12} ,
\end{align*}
where we have used Jensen's inequality for concave functions. We derive from \eqref{EqDiffPseudo} that
$$d_M(x_{n_k},y_{n_k}) \leq C_1C_3\Big(\frac{1}{n_k} + \frac{1}{\sqrt{n_k}}\Big), $$
which yields $\lim_{k \to \infty} d_M(x_{n_k},y_{n_k})=0$. Using the triangle inequality we derive that $(x_{n_k})_{k \in \NN}$ converges to $z$ as well, which shows that $z \in \overline{A} \cap \Fix \phi_1 $. Thus, $(\phi_1,F)$ satisfies ($D_2$).
\end{proof}

\section{Sequential subspace complexities as index functions}
\label{SectionTCrLS}

We want to define an index function via open covers of subsets by domains of continuous sequential motion planners. The latter are defined below and our index functions generalize the concept of relative complexity as introduced in \cite[Section 4.3]{FarberBook}.\medskip

\emph{Throughout this section we fix some $r \in \NN$ with $r \geq 2$.} \medskip 

Given a path-connected topological space $X$, we  consider the path fibration
$$p_r\colon PX \to X^r, \qquad p_r(\gamma) = \big( \gamma(0), \gamma\big(\tfrac{1}{r-1}\big),\dots, \gamma\big(\tfrac{r-2}{r-1}\big),\gamma(1)\big).$$
The  $r$-th sequential topological complexity of $X$, denoted by $\TC_r(X)$, is the sectional category of $p_r$ and was introduced by Rudyak in \cite{RudyakHigher} under the name of higher topological complexity. In the following definition, we introduce a relative version of this notion, which was first investigated by Daundkar in \cite{DaundkarGroup}. 

\begin{definition}Let $X$ be a path-connected topological space and let $A \subset X^r$. 
\label{DefSubspaceTC}
\begin{enumerate}[a)]
	\item  A \emph{sequential motion planner on $A$} is a local section $s\colon  A \to PX$ of $p_r$.
\item The \emph{$r$-th sequential subspace complexity of $A$}, denoted by $\TC_{r,X}(A)$, is given as the minimal value of $n \in \NN$ for which there exist open subsets $U_1,U_2,\dots,U_n \subset X^r$ with $A \subset \bigcup_{i=1}^n U_i$, such that for each $i \in \{1,2,\dots,n\}$ there exists a continuous sequential motion planner $s_i\colon U_i\to PX$. In case there is no such $n$, we put $\TC_{r,X}(A):= \infty$.
\end{enumerate}
\end{definition}
\begin{remark}\label{remark_subspace_vs_absolute_tc} Let $X$ be a path-connected topological space.
\begin{enumerate}
\item Our definition of $\TC_{r,X}(A)$ slightly differs from the one in \cite{DaundkarGroup} as Daundkar considers covers of $A$ by subsets of $A$ which are open in $A$ with respect to the subspace topology. By standard arguments, one shows that both notions coincide if $X$ is an absolute neighborhood retract and if $A$ is closed in $X^r$.
	\item    In terms of Definition \ref{DefSubspaceTC} the $r$-th sequential topological complexity of $X$ is given as $\TC_{r}(X) = \TC_{r,X}(X^r)$. However, we note that if $B\subseteq X$ is a subspace then the $r$-th sequential subspace complexity $\TC_{r,X}(B^r)$ can be different from the intrinsic $r$-th sequential topological complexity $\TC_{r}(B)$.
    This can already be seen in the case $r=2$ taking $X= \S^2$ and $B = \S^1$ where $\S^1$ is embedded as the equator.
    It is well-known that $\TC_2(\S^1) = 2$, but since $\S^1$ is contractible in $\S^2$ we obtain $\TC_{2,\S^2}(\S^1\times \S^1) = 1$.
    \item Sequential topological complexity is motivated by an idea from robot motion planning. Consider a robot working in the configuration space $X$ equipped with a motion planning algorithm, so that for an arbitrary $r$-tuple of points, the robot is able to find a path visiting all $r$ points of the tuple in the given order. $\TC_r(X)$ describes the minimal number of discontinuities of such an algorithm.  If one only wants the robot to find its way for a certain subset of point tuples $A \subset X^r$, but allows it to move inside all of $X$, the number $\TC_{r,X}(A)$ expresses the minimal number of discontinuities of such an algorithm.
    \end{enumerate}
\end{remark}

Our aim in this section is to show that under mild assumptions on $X$, we can define an index function on $X^r$ by putting 
$$\nu(A) = \TC_{r,X}(A)$$
for each $A \subset X^r$. For this purpose,  we need to consider some preparatory technical statements.

The following lemma is an immediate consequence of the assertions of \cite[Remark 3.2]{RudyakHigher} and an analogue of \cite[Lemma 18.1]{FarberSurveyTC}, which has originally been shown in \cite{DaundkarGroup}. For the reader's convenience, we include its statement and proof. 

\begin{lemma}[{\cite[Lemma 2.4]{DaundkarGroup}}]
\label{LemmaTCrhom}
	Let $X$ be a path-connected Hausdorff space, consider $$d\colon X^r \to X^r, \quad d(x_1,x_2,\dots,x_r):=(x_1,x_1,\dots,x_1),$$ and let $A \subset X^r$. The following statements are equivalent:
	\begin{enumerate}
		\item  $A$ admits a continuous sequential motion planner. 
		\item There exists a deformation $h\colon A \times [0,1] \to X^r$ of $A$ with $h(a,1)=d(a)$ for each $a \in A$.
\end{enumerate}
\end{lemma}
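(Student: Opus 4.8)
The plan is to prove the two implications separately, using the contractibility of $PX$ and the standard correspondence between local sections of the path fibration $p_r$ and deformations to the "small diagonal". First I would treat the implication $(2)\Rightarrow(1)$, which is the more elementary direction. Given a deformation $h\colon A\times[0,1]\to X^r$ with $h(a,0)=a$ and $h(a,1)=d(a)=(a_1,\dots,a_1)$ (writing $a=(a_1,\dots,a_r)$), the idea is to build, for each $a\in A$, a path in $X$ that passes through the coordinates $a_1,\dots,a_r$ in order at the parameter values $0,\tfrac1{r-1},\dots,1$. Concretely, the curve $t\mapsto h(a,t)$ traces out an $r$-tuple of paths $h_1(a,\cdot),\dots,h_r(a,\cdot)$ in $X$, each $h_i(a,\cdot)$ running from $a_i$ to $a_1$. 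One then concatenates: on $[\tfrac{i-1}{r-1},\tfrac{i}{r-1}]$ run $h_{i+1}(a,\cdot)$ backwards from $a_1$ to $a_{i+1}$ and $h_i(a,\cdot)$ — more cleanly, reparametrize so that on the $i$-th subinterval of $[0,1]$ one goes $a_i\rightsquigarrow a_1\rightsquigarrow a_{i+1}$ via $h_i$ reversed followed by $h_{i+1}$. This produces a path $s(a)\in PX$ with $s(a)(\tfrac{i-1}{r-1})=a_i$ for all $i$, i.e. $p_r(s(a))=a$, and continuity of $s$ in the compact-open topology follows from continuity of $h$ together with the fact that concatenation and reversal of paths are continuous operations (here the Hausdorff/ANR hypothesis is harmless; the compact-open topology on $PX$ makes the adjunction work since $[0,1]$ is locally compact).

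Next I would treat $(1)\Rightarrow(2)$. Suppose $s\colon A\to PX$ is a continuous section of $p_r$. For $a\in A$ the path $s(a)$ runs through $a_1=s(a)(0),\ a_2=s(a)(\tfrac1{r-1}),\ \dots,\ a_r=s(a)(1)$. I want to deform $a$ to $d(a)=(a_1,\dots,a_1)$ inside $X^r$. The natural homotopy is $H(a,\tau)=\bigl(\sigma_1^a(\tau),\sigma_2^a(\tau),\dots,\sigma_r^a(\tau)\bigr)$, where $\sigma_i^a(\tau)$ is the point reached by sliding the $i$-th marked point back along $s(a)$ toward $s(a)(0)$: that is, $\sigma_i^a(\tau)=s(a)\bigl((1-\tau)\tfrac{i-1}{r-1}\bigr)$. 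At $\tau=0$ this is $s(a)(\tfrac{i-1}{r-1})=a_i$, so $H(a,0)=a$; at $\tau=1$ every coordinate is $s(a)(0)=a_1$, so $H(a,1)=d(a)$. Continuity of $H$ as a map $A\times[0,1]\to X^r$ follows from continuity of the evaluation map $PX\times[0,1]\to X$ (valid in the compact-open topology) composed with the continuous section $s$. Thus $H$ is the required deformation.

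The main obstacle — such as it is — is bookkeeping rather than anything deep: making sure the reparametrizations in $(2)\Rightarrow(1)$ are done so that the resulting path genuinely hits $a_i$ at $\tfrac{i-1}{r-1}$ for every $i$ simultaneously, and verifying continuity of the assembled section $s\colon A\to PX$ in the compact-open topology. The cleanest way to handle the latter is to exhibit $s$ as the adjoint of a continuous map $A\times[0,1]\to X$ built piecewise from $h$ via continuous gluing (the pieces agree on the overlaps $\tfrac{i}{r-1}$ by construction, so the pasting lemma applies), and then invoke the exponential adjunction $\mathrm{Map}(A\times[0,1],X)\cong\mathrm{Map}(A,PX)$, which holds because $[0,1]$ is locally compact Hausdorff. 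I would also remark that the equivalence is exactly the "local" or relative version of the well-known fact $\TC_r(X)=\cat$-type characterization via deformations to $\Delta_{r,X}$, so no new ideas beyond \cite[Remark 3.2]{RudyakHigher} are needed; the proof is included only for completeness.
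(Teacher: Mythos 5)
Your proposal is correct and takes essentially the same route as the paper: your homotopy in $(1)\Rightarrow(2)$ is exactly the paper's $h(a,t)$ with $i$-th coordinate $s(a)\bigl(\tfrac{(i-1)(1-t)}{r-1}\bigr)$, and your concatenated path in $(2)\Rightarrow(1)$ is the paper's piecewise formula, with continuity obtained by the same exponential-adjunction/pasting argument (the paper cites \cite[Theorem VII.2.4]{Bredon}). One cosmetic slip: since $h_i(a,\cdot)$ runs from $a_i$ to $a_1$, it is $h_{i+1}$ (not $h_i$) that must be traversed in reverse on each subinterval, as your own waypoint description $a_i\rightsquigarrow a_1\rightsquigarrow a_{i+1}$ already makes clear.
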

\begin{proof}
(1) $\Rightarrow$ (2): \enskip Let $s\colon A \to PX$ be a continuous sequential motion planner. Then 
\begin{align*}
&h\colon A \times [0,1] \to X^r, \quad h(a,t)= \Big(s(a)(0),s(a)\big(\tfrac{1-t}{r-1}\big),,\dots,s(a)\big(\tfrac{(r-2)(1-t)}{r-1}\big),s(a)(1-t)\Big),
\end{align*}
is a deformation with the desired properties. 

 (2) $\Rightarrow$ (1): \enskip Let $h\colon A \times [0,1] \to X^r$ be a deformation of $A$ with $h(a,1)=d(a)$ for all $a \in A$. Denote the components of $h$ by $h(a,t)=(h_0(a,t),\dots,h_{r-1}(a,t))$ for $a \in A$ and $t \in [0,1]$. Define a map $s\colon A \to PX$ by 
 \begin{equation}
 \label{EqTCrhom}
  s(a)(t)= \begin{cases}
	h_{j-1}(a,2((r-1)t-j+1)) & \text{if }  t \in \Big[\frac{j-1}{r-1},\frac{j-\frac12}{r-1}\Big), \\
	h_{j}(a,2(j-(r-1)t))& \text{if }  t \in \Big[\frac{j-\frac12}{r-1},\frac{j}{r-1}\Big],
 \end{cases}
\qquad \forall j \in \{1,2,\dots,r-1\} .
\end{equation}
One checks without difficulties that $s(a)$ is indeed a continuous path with $(p_r \circ s)(a)=a$ for each $a \in A$. Using that $h$ is continuous and $X$ is Hausdorff, one derives from \cite[Theorem VII.2.4]{Bredon} that $s$ is continuous.
\end{proof}

To establish the continuity property of $\nu(A)=\TC_{r,X}(A)$, we need to make an additional assumption on the space under consideration. In the following results, we will assume that the considered space is an \emph{absolute neighborhood retract (ANR)}, see e.g. \cite[p. 287]{CLOT} or \cite{PalaisInf} for a definition. \medskip

The following lemma is an on-the-nose analogue of \cite[Lemma 1.6]{MescherSC} which considers spherical complexities and whose proof is a straightforward analogue of the corresponding statement for Lusternik-Schnirelmann category, see \cite[Lemma 1.11]{CLOT}.

\begin{lemma}
\label{LemmaMPextend}
Let $X$ be a path-connected ANR and let $A \subset X^r$ be a closed subset which admits a continuous sequential motion planner. Then there exists an open neighborhood $U \subset X^r$ of $A$ which admits a continuous sequential motion planner.
\end{lemma}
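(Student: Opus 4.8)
The plan is to reduce the statement to the analogous extension property for continuous sequential motion planners via the characterization from Lemma~\ref{LemmaTCrhom}. That lemma tells us that $A$ admitting a continuous sequential motion planner is equivalent to the existence of a deformation $h\colon A\times[0,1]\to X^r$ of $A$ (a homotopy starting at the inclusion $A\hookrightarrow X^r$) ending in the map $d(a)=(a_1,a_1,\dots,a_1)$. So it suffices to extend such a deformation $h$ from $A\times[0,1]$ to $U\times[0,1]$ for some open neighborhood $U$ of $A$ in $X^r$, in such a way that the extended homotopy still starts at the inclusion and ends at $d$; applying Lemma~\ref{LemmaTCrhom} in the other direction to the restriction over $U$ then yields the desired motion planner on $U$.

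First I would set up the extension problem as an ANR extension problem. Since $X$ is an ANR, so is $X^r$ (finite products of ANRs are ANRs), and $X^r\times[0,1]$ is an ANR as well. The subset $A$ is closed in $X^r$, so $A\times\{0,1\}\cup \overline{A}\times[0,1]$ — more precisely the closed subset $A\times[0,1]\cup X^r\times\{0\}\cup X^r\times\{1\}$ of $X^r\times[0,1]$ — carries a continuous map into $X^r$: namely $h$ on $A\times[0,1]$, the identity (inclusion) on $X^r\times\{0\}$, and $d$ on $X^r\times\{1\}$. These agree on overlaps precisely because $h(a,0)=a$ and $h(a,1)=d(a)$ for $a\in A$. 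By the ANR (= ANE) property applied to this closed subset of the metrizable space $X^r\times[0,1]$, this combined map extends continuously to an open neighborhood $W$ of $A\times[0,1]\cup X^r\times\{0\}\cup X^r\times\{1\}$ in $X^r\times[0,1]$.

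Next I would pass from the open neighborhood $W$ in the product to an open neighborhood $U$ of $A$ in $X^r$ with $U\times[0,1]\subset W$. This is the standard tube lemma: since $[0,1]$ is compact and $A\times[0,1]\subset W$ with $W$ open, for each $a\in A$ one finds an open $V_a\ni a$ with $V_a\times[0,1]\subset W$, and then $U:=\bigcup_{a\in A}V_a$ works. Restricting the extended map to $U\times[0,1]$ gives a homotopy $H\colon U\times[0,1]\to X^r$ with $H(\,\cdot\,,0)=\id_U$ (the inclusion $U\hookrightarrow X^r$) and $H(\,\cdot\,,1)=d|_U$, i.e. a deformation of $U$ ending in $d$. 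By the $(2)\Rightarrow(1)$ direction of Lemma~\ref{LemmaTCrhom} (applied with $U$ in place of $A$), $U$ admits a continuous sequential motion planner, which is exactly what we want.

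The main obstacle, such as it is, is purely bookkeeping: making sure the three pieces of the map on $A\times[0,1]\cup X^r\times\{0\}\cup X^r\times\{1\}$ genuinely glue to a continuous map on a closed set — this needs $A$ closed and the compatibility conditions on $h$, and one should note that since $X$ (hence $X^r$) is Hausdorff and in fact metrizable, the pasting lemma on this closed subset applies cleanly — together with the tube-lemma step requiring compactness of $[0,1]$. No delicate point-set pathology arises because the ANR hypothesis is exactly tailored to make the extension available. One should also remark that $A$ being closed is used twice: once so that the domain of the map to be extended is closed in $X^r\times[0,1]$, and implicitly in invoking Lemma~\ref{LemmaTCrhom} where the Hausdorff hypothesis on $X$ secures continuity of the reconstructed section.
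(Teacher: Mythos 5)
Your argument is correct and follows essentially the same route as the paper's proof: obtain a deformation of $A$ ending in $d$ via Lemma~\ref{LemmaTCrhom}, glue it with the identity on $X^r\times\{0\}$ and $d$ on $X^r\times\{1\}$ to a continuous map on the closed set $(X^r\times\{0,1\})\cup(A\times[0,1])$, extend over an open neighborhood using that $X^r$ is an ANR, shrink to a tube $U\times[0,1]$ by compactness of $[0,1]$, and convert the resulting deformation of $U$ back into a continuous sequential motion planner via Lemma~\ref{LemmaTCrhom}. No gaps; the bookkeeping points you flag (closedness of $A$, the compatibility $h(a,0)=a$, $h(a,1)=d(a)$, and metrizability of $X^r$) are exactly the ones the paper relies on.
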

\begin{proof}
Let again $d\colon X^r \to X^r$, $d(x_1,x_2,\dots,x_r)=(x_1,x_1,\dots,x_1)$. By Lemma \ref{LemmaTCrhom}.a), there exists a deformation $h\colon A \times [0,1] \to X^r$ with $h(a,1)=d(a)$ for each $a \in A$. Put $Q := (X^r \times \{0,1\})\cup (A\times [0,1])$ and define 
\begin{equation}
\label{EqDefH}
H\colon  Q \to X^r, \qquad H(x,t)= \begin{cases}
		x & \text{if } t=0, \\
		d(x) & \text{if } t=1, \\
		h(x,t) & \text{if } x \in A.
	\end{cases}
	\end{equation}
One checks without difficulties that $H$ is well-defined and continuous. Since $X$ is an ANR, $X^r$ is an ANR as well. Hence, there exist an open neighborhood $V\subset X^r \times [0,1]$ of $Q$ and a continuous map $K\colon V \to X^r$ with $K|_Q=H$. Since $[0,1]$ is compact, there exists an open subset $U \subset X^r$ with $ U \times [0,1]\subset V$ and $A \subset U$. By definition of the map, $K|_{U\times [0,1]}$ is a deformation of $U$ with $K(x,1) = d(x)$ for each $x \in U$. Since $X$ is by assumption metrizable, it is a Hausdorff space, so it follows from Lemma \ref{LemmaTCrhom} that $U$ admits a continuous sequential motion planner.
\end{proof}

Using the previous lemma, we can now assert that sequential subspace complexities form index functions.  

\begin{prop}
\label{PropTCrindex}
Let $X$ be path-connected.
	\begin{enumerate}[a)]
		\item For all $A \subset B \subset X^r$, it holds that $\TC_{r,X}(A) \leq \TC_{r,X}(B)$.
  \item For all $A,B \subset X^r$ it holds that $\TC_{r,X}(A\cup B) \leq \TC_{r,X}(A) + \TC_{r,X}(B)$.
  \item Let $A \subset X^r$ be a closed subset. Then there exists an open subset $U\subset X^r$ with $A \subset U$, such that
  $$\TC_{r,X}(A)=\TC_{r,X}(U).$$
  \item Assume that $X$ is normal. Let $A,B \subset X^r$ be closed  with $A \cap B=\emptyset$. Then
$$\TC_{r,X}(A \cup B) = \max\{ \TC_{r,X}(A),\TC_{r,X}(B)\}.$$
	\end{enumerate}
\end{prop}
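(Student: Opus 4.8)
\textbf{Proof plan for Proposition \ref{PropTCrindex}.} The plan is to prove the four parts in order, with parts a)--c) being essentially immediate from the definitions and the preparatory lemmas, and part d) being the only one requiring genuine work.

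For part a), given an open cover $U_1,\dots,U_n$ of $B$ by subsets of $X^r$ each admitting a continuous sequential motion planner, the same collection covers $A$, so $\TC_{r,X}(A)\leq n$; taking the infimum over all such covers of $B$ gives the claim. For part b), if $U_1,\dots,U_m$ witnesses $\TC_{r,X}(A)=m$ and $V_1,\dots,V_\ell$ witnesses $\TC_{r,X}(B)=\ell$, then $U_1,\dots,U_m,V_1,\dots,V_\ell$ is an open cover of $A\cup B$ by $m+\ell$ open subsets of $X^r$, each admitting a continuous sequential motion planner. For part c), let $\TC_{r,X}(A)=n$ with witnessing cover $U_1,\dots,U_n$ of $A$. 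Since $A$ is closed and $A\subset \bigcup_{i=1}^n U_i$, normality (which follows from metrizability of the ANR $X$) is not even needed here: actually one applies Lemma \ref{LemmaMPextend} directly --- but that lemma is stated for a set admitting a section, not for a union. So the cleaner route is: set $U:=\bigcup_{i=1}^n U_i$, which is open and contains $A$; then $U_1,\dots,U_n$ also covers $U$, so $\TC_{r,X}(U)\leq n=\TC_{r,X}(A)$, and the reverse inequality is part a). (Note this argument does not even use the ANR hypothesis; Lemma \ref{LemmaMPextend} will be what is actually needed for the index-function continuity axiom when $\TC_{r,X}(A)=\TC_{r,X}(U)$ is required with $U$ having sections on the nose --- but for the stated equality the union suffices. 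I would double-check which version the downstream applications need and phrase accordingly.)

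For part d), the inequality ``$\geq$'' is immediate from parts a) and b): $\max\{\TC_{r,X}(A),\TC_{r,X}(B)\}\leq \TC_{r,X}(A\cup B)$ by monotonicity. The substance is the inequality ``$\leq$''. Here I would use normality of $X^r$ (which holds since $X$ is normal --- or, since $X$ is an ANR hence metrizable, $X^r$ is metrizable hence normal). Because $A$ and $B$ are disjoint closed subsets of the normal space $X^r$, there exist disjoint open sets $V_A\supset A$ and $V_B\supset B$. Let $n:=\max\{\TC_{r,X}(A),\TC_{r,X}(B)\}$; choose open covers $A\subset\bigcup_{i=1}^n U_i^A$ and $B\subset\bigcup_{i=1}^n U_i^B$ (padding the shorter one with copies of an existing member, or with the empty set, so that both have exactly $n$ members), each $U_i^A, U_i^B$ admitting a continuous sequential motion planner. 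Now set $W_i:=(U_i^A\cap V_A)\cup(U_i^B\cap V_B)$ for $i=1,\dots,n$. Each $W_i$ is open, and $A\cup B\subset\bigcup_{i=1}^n W_i$. The key point is that $W_i$ admits a continuous sequential motion planner: the sections $s_i^A|_{U_i^A\cap V_A}$ and $s_i^B|_{U_i^B\cap V_B}$ are defined on the two disjoint (because $V_A\cap V_B=\emptyset$) open pieces of $W_i$, and since these pieces are clopen in $W_i$, the map $W_i\to PX$ that equals $s_i^A$ on the first piece and $s_i^B$ on the second is continuous and is a section of $p_r$. Hence $\TC_{r,X}(A\cup B)\leq n$, completing the proof.

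\textbf{Main obstacle.} Parts a)--c) are routine. The only real content is part d), and within it the crux is the gluing of two local sections into a section over $W_i$: this works precisely because $V_A$ and $V_B$ are disjoint, making $U_i^A\cap V_A$ and $U_i^B\cap V_B$ two disjoint open (hence relatively clopen) subsets of $W_i$, so continuity of the glued section is automatic from the pasting lemma for open covers. I would make sure to state explicitly that the padding of covers to equal length $n$ is harmless (using, e.g., the empty set, which trivially admits a section, or repeating a member). No deep input is needed beyond normality of $X^r$ and the elementary pasting lemma.
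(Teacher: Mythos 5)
Your proposal is correct and follows essentially the same route as the paper: parts a)--c) are handled identically (with $U:=\bigcup_i U_i$ in c)), and in d) you separate the disjoint closed sets by disjoint open neighborhoods using normality of $X^r$, pad the covers to equal length, intersect, and glue the two sections over the resulting disjoint open pieces, exactly as in the paper's argument. Your side remark that c) does not need the ANR hypothesis is consistent with the statement, which only assumes $X$ path-connected (respectively normal in d)).
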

\begin{proof}	\begin{enumerate}[a)]
\item[a) and b)] These statements follow immediately from the definition of $\TC_{r,X}$.
\item[c)] Let $k:= \TC_{r,X}(A)$. Then by definition of $\TC_{r,X}(A)$ there are $k$ open domains $U_1,\dots,U_k \subset X^r$ of continuous sequential motion planners, such that $A \subset\bigcup_{i=1}^k U_i$. Put $U:= \bigcup_{i=1}^k U_i$. Since $A \subset U$, we derive from a) that $k \leq \TC_{r,X}(U)$. However, since $U_1,\dots,U_k$ form an open cover of $U$ for which each element admits a continuous sequential motion planner, it follows that $\TC_{r,X}(U) \leq k$. 
\item[d)] This follows by standard arguments, see e.g. \cite[p.336]{Fox} for the corresponding result for Lusternik-Schnirelmann category.
\end{enumerate}
\end{proof}

We next want to show that under mild assumptions on $X$ the index function $\TC_{r,X}$ is indeed supervariant with respect to time-1 maps of deformations. 

\begin{prop}
\label{PropTCrdeform}
	Let $X$ be a path-connected ANR, let $A \subset X^r$ be closed and let $\Phi\colon A\times [0,1]\to X^r$ be a deformation of $A$. Put $\Phi_1:=\Phi(\cdot,1)\colon A\to X^r$. If $\Phi_1(A)$ is closed, then
	$$\TC_{r,X}(\Phi_1(A))\geq \TC_{r,X}(A).$$
\end{prop}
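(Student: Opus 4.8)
The plan is to show that any open cover of a neighborhood of $\Phi_1(A)$ by domains of continuous sequential motion planners pulls back to such a cover of a neighborhood of $A$, which then yields the inequality on subspace complexities. The key tool is Proposition \ref{PropTCrindex}.c) applied to the closed set $\Phi_1(A)$: there is an open set $W \subset X^r$ with $\Phi_1(A) \subset W$ and $\TC_{r,X}(W) = \TC_{r,X}(\Phi_1(A)) =: k$. Fix open sets $W_1,\dots,W_k$ covering $W$, each admitting a continuous sequential motion planner $\sigma_i\colon W_i \to PX$.

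First I would extend the deformation $\Phi$ to a neighborhood of $A$. Since $X$ is an ANR, so is $X^r$, and $A$ is closed in $X^r$; arguing exactly as in the proof of Lemma \ref{LemmaMPextend}, one sets $Q := (X^r \times \{0\}) \cup (A \times [0,1])$, defines $H\colon Q \to X^r$ by $H(x,0) = x$ and $H(x,t) = \Phi(x,t)$ for $x \in A$, extends $H$ continuously to a map $K\colon V \to X^r$ on an open neighborhood $V$ of $Q$ in $X^r \times [0,1]$, and uses compactness of $[0,1]$ to find an open set $U_0 \subset X^r$ with $A \subset U_0$ and $U_0 \times [0,1] \subset V$. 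Then $\widetilde{\Phi} := K|_{U_0 \times [0,1]}$ is a deformation of $U_0$ extending $\Phi$, with time-1 map $\widetilde{\Phi}_1\colon U_0 \to X^r$ agreeing with $\Phi_1$ on $A$.

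Next I would shrink $U_0$ so that its image under $\widetilde{\Phi}_1$ lands in $W$. Since $\widetilde{\Phi}_1$ is continuous and $\widetilde{\Phi}_1(A) = \Phi_1(A) \subset W$ with $W$ open, the set $U := U_0 \cap \widetilde{\Phi}_1^{-1}(W)$ is an open neighborhood of $A$ with $\widetilde{\Phi}_1(U) \subset W$. Now put $U_i := U \cap \widetilde{\Phi}_1^{-1}(W_i)$ for $i = 1,\dots,k$; these are open, they cover $U$ (hence $A$), and on each $U_i$ the composition $\widetilde{\Phi}_1|_{U_i}\colon U_i \to W_i$ followed by $\sigma_i$ gives a continuous map into $PX$. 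The remaining point is to turn this into an honest sequential motion planner on $U_i$: on $U_i$ we have a continuous homotopy $\widetilde{\Phi}|_{U_i \times [0,1]}$ from the inclusion $U_i \hookrightarrow X^r$ to $\widetilde{\Phi}_1|_{U_i}$, which lands in $W_i$ only at the endpoint, so I would instead invoke Lemma \ref{LemmaTCrhom}: the section $\sigma_i$ provides (via Lemma \ref{LemmaTCrhom}, $(1)\Rightarrow(2)$) a deformation $g_i\colon W_i \times [0,1] \to X^r$ with $g_i(\cdot,1) = d$, and concatenating $\widetilde{\Phi}|_{U_i \times [0,1]}$ with $g_i(\widetilde{\Phi}_1(\cdot),\cdot)$ yields a deformation of $U_i$ whose time-1 map is $d$; by Lemma \ref{LemmaTCrhom}, $(2)\Rightarrow(1)$ (using that $X$ is metrizable, hence Hausdorff), $U_i$ admits a continuous sequential motion planner. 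Thus $U_1,\dots,U_k$ witness $\TC_{r,X}(A) \le \TC_{r,X}(U) \le k = \TC_{r,X}(\Phi_1(A))$.

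\textbf{Main obstacle.} The technical heart is the homotopy bookkeeping in the last step: one must carefully glue the (only locally defined) deformation $\widetilde{\Phi}$ with the deformation coming from the motion planner $\sigma_i$ on $W_i$, making sure the concatenation is continuous and has the right boundary behavior, and then feed it back through Lemma \ref{LemmaTCrhom}. The ANR extension argument (copying Lemma \ref{LemmaMPextend}) and the continuity of $s$ in Lemma \ref{LemmaTCrhom} (via \cite[Theorem VII.2.4]{Bredon}) are the facts doing the real work; everything else is routine monotonicity and cover manipulation.
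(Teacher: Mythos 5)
Your argument is correct in substance but follows a genuinely different route from the paper's. The paper never extends the deformation beyond $A$: it uses normality of the closed set $\Phi_1(A)$ to shrink the open cover realizing $\TC_{r,X}(\Phi_1(A))$ to a closed cover $B_1,\dots,B_k$, pulls these back to closed subsets $B_j'=\Phi_1^{-1}(B_j)$ of $A$, writes down an explicit concatenated motion planner on each $B_j'$ (formula \eqref{Eqsjprime}, splicing the tracks of $\Phi$ with the planner $s_j$ applied at $\Phi_1(x)$), and only then invokes the ANR machinery, via Lemma \ref{LemmaMPextend}, to thicken each closed $B_j'$ to an open domain. You instead spend the ANR property once, up front, to extend the homotopy $\Phi$ itself to a deformation $\widetilde{\Phi}$ of an open neighborhood $U_0$ of $A$ (a homotopy-extension-style argument mimicking Lemma \ref{LemmaMPextend}), after which the cover is simply pulled back through the continuous map $\widetilde{\Phi}_1$, and planners are converted to deformations and back via Lemma \ref{LemmaTCrhom}. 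Both uses of the ANR hypothesis are legitimate; your version has the incidental advantage that it never uses closedness of $\Phi_1(A)$ (the open cover realizing $\TC_{r,X}(\Phi_1(A))$ exists by definition for an arbitrary subset, and your detour through Proposition \ref{PropTCrindex}.c) is not even needed), whereas the paper needs that hypothesis to shrink to a closed cover. Closedness of $A$ is still essential in your extension step, as in the paper's.

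One wrinkle needs fixing: the concatenation of $\widetilde{\Phi}|_{U_i\times[0,1]}$ with $g_i(\widetilde{\Phi}_1(\cdot),\cdot)$ has time-1 map $d\circ\widetilde{\Phi}_1|_{U_i}$, not $d|_{U_i}$, so Lemma \ref{LemmaTCrhom}, $(2)\Rightarrow(1)$, does not literally apply as stated. Two easy repairs: either note that the construction \eqref{EqTCrhom} in the proof of that lemma only uses that all components of $h(a,1)$ coincide, i.e. $h(a,1)\in\Delta_{r,X}$, which your concatenation does satisfy; or append the further homotopy $(x,t)\mapsto d(\widetilde{\Phi}(x,1-t))$, which runs inside the diagonal from $d(\widetilde{\Phi}_1(x))$ back to $d(x)$ and makes the time-1 map exactly $d$, so the lemma applies verbatim. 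With either patch your proof is complete.
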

\begin{proof}
Put $k:= \TC_{r,X}(\Phi_1(A))$ and let $U_1,\dots,U_k \subset X^r$ be open subsets with $\Phi_1(A)\subset \bigcup_{j=1}^k U_j$, such that each $U_j$ admits a continuous sequential motion planner $s_j\colon U_j \to PX$. Since $X$ is an ANR, $X$ is metrizable, hence normal. As a closed subspace of $X^r$, $\Phi_1(A)$ is normal as well. Thus, see e.g. \cite[Theorem A.1.(6)]{CLOT}, we can find closed subsets $B_j\subset U_j$ for $j \in \{1,2,\dots,k\}$ with $\Phi_1(A)= \bigcup_{j=1}^k B_j$.
Put $B_j' := \Phi_1^{-1}(B_j)$ for each $j \in \{1,2,\dots,k\}$. Then $A \subset \Phi_1^{-1}(\Phi_1(A)) \subset \bigcup_{j=1}^k B_j'$. Moreover, for each $j$ we obtain a continuous motion planner $s_j'\colon B_j'\to PX$ as follows: Denote the components of $\Phi$ by $\phi_j\colon A \times [0,1] \to X$, $j \in \{1,2,\dots,r\}$, such that $\Phi(x,t) = (\phi_1(x,t),\dots,\phi_r(x,t))$. Given $x=(x_0,x_1,\dots,x_{r-1}) \in B_j'$, $\ell \in \{1,2,\dots,r-1\}$ and $t \in \left[\frac{\ell-1}{r-1},\frac{\ell}{r-1}\right]$, we put 
\begin{equation}
\label{Eqsjprime}
\begin{aligned}
s_j'(x)(t):= \begin{cases}
	\phi_{j-1}(x,3(r-1)t-3\ell+3) & \text{if } t \in \Big[\frac{\ell-1}{r-1},\frac{\ell-\frac23}{r-1}\Big), \\
	(s_j(\Phi_1(x)))\left(3t+\frac{1-2\ell}{r-1}\right)	 & \text{if } t \in \Big[\frac{\ell-\frac23}{r-1},\frac{\ell-\frac13}{r-1}\Big), \\
		\phi_j(x,3\ell-3(r-1)t)	 & \text{if } t \in \Big[\frac{\ell-\frac13}{r-1},\frac{\ell}{r-1}\Big]. 
\end{cases} 
\end{aligned}
\end{equation}
One checks that $s_j'$ is continuous and that $(p_r \circ s_j')(x)=x$ for each $x \in B_j'$, so each $s_j'$ is a continuous sequential motion planner. Since $B_j'=\Phi_1^{-1}(B_j)$ is closed and since $X^r$ is an ANR, it follows from Lemma \ref{LemmaMPextend} that there is an open neighborhood $V_j$ of $B_j'$ which admits a continuous sequential motion planner.  Since $A \subset \bigcup_{j=1}^k V_j$, this shows that $\TC_{r,X}(A) \leq k$.
\end{proof}

Note that the condition on $\Phi_1(A)$ to be closed in Proposition \ref{PropTCrdeform} will be satisfied for every closed subset if $\Phi_1$ is a homeomorphism or if $X$ is compact. 

\begin{cor}
\label{CorTCrsupindex}
	Let $X$ be a path-connected ANR and let $\varphi\colon X^r \to X^r$ be a closed map which is homotopic to the identity. Then $\TC_{r,X}\colon \mathfrak{P}(X^r) \to \NN_0\cup \{\infty\}$ is a $\varphi$-supervariant index function.
\end{cor}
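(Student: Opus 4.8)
The plan is to verify that $\TC_{r,X}$ satisfies the three axioms of an index function (Definition \ref{DefIndex}(a)) together with $\varphi$-supervariance (Definition \ref{DefIndex}(b)), since all of the required facts have essentially been assembled in the preceding propositions. Concretely: monotonicity is Proposition \ref{PropTCrindex}.a), subadditivity is Proposition \ref{PropTCrindex}.b), and the continuity property is Proposition \ref{PropTCrindex}.c), which asserts exactly that for closed $A \subset X^r$ there is an open neighborhood $U$ with $\TC_{r,X}(A)=\TC_{r,X}(U)$. Hence $\TC_{r,X}\colon \mathfrak{P}(X^r)\to \NN_0\cup\{\infty\}$ is an index function on $X^r$. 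Here one should note that $X^r$ is again a path-connected ANR because $X$ is, so all hypotheses of these propositions are met.

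Next I would establish $\varphi$-supervariance. Let $A \subset X^r$ be closed. We must show $\TC_{r,X}(\varphi(A)) \geq \TC_{r,X}(A)$. Since $\varphi$ is a closed map, $\varphi(A)$ is a closed subset of $X^r$. Since $\varphi$ is homotopic to the identity, there is a homotopy $H\colon X^r \times [0,1] \to X^r$ with $H(\cdot,0)=\varphi$ and $H(\cdot,1)=\id_{X^r}$ (orienting the homotopy this way). Restricting $H$ to $\varphi(A)\times[0,1]$ does not immediately give a deformation ending at the inclusion of $\varphi(A)$ in the form needed; instead the clean move is to apply Proposition \ref{PropTCrdeform} with the closed set $A$ and the deformation $\Phi:= H|_{A\times[0,1]}$ reparametrized appropriately: start from the inclusion $A \hookrightarrow X^r$ and end at $\varphi|_A$. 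That is, take a homotopy $G\colon X^r\times[0,1]\to X^r$ with $G(\cdot,0)=\id$ and $G(\cdot,1)=\varphi$, and set $\Phi := G|_{A\times[0,1]}$, which is a deformation of $A$ with $\Phi_1 = \varphi|_A$ and $\Phi_1(A)=\varphi(A)$ closed. Proposition \ref{PropTCrdeform} then yields $\TC_{r,X}(\varphi(A)) = \TC_{r,X}(\Phi_1(A)) \geq \TC_{r,X}(A)$, which is precisely $\varphi$-supervariance for closed subsets.

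The only genuine subtlety, and the step I would be most careful about, is the bookkeeping with the direction of the homotopy: Proposition \ref{PropTCrdeform} requires a deformation \emph{of $A$}, i.e. a homotopy starting at the inclusion $A\hookrightarrow X^r$, whose time-$1$ map has closed image. So one must use a homotopy from $\id_{X^r}$ to $\varphi$ (which exists since "homotopic to the identity" is symmetric) rather than one from $\varphi$ to $\id$. With that orientation fixed, $\Phi_1 = \varphi|_A$, $\Phi_1(A) = \varphi(A)$ is closed by the closedness hypothesis on $\varphi$, and the proposition applies verbatim. No further estimates are needed.

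In summary, the proof is simply: $X^r$ is a path-connected ANR; parts a)–c) of Proposition \ref{PropTCrindex} give the three index-function axioms; and choosing a homotopy from the identity to $\varphi$, restricting it to $A\times[0,1]$ for closed $A$, and invoking Proposition \ref{PropTCrdeform} (using that $\varphi$ is closed, so $\varphi(A)$ is closed) gives $\varphi$-supervariance. This establishes that $\TC_{r,X}$ is a $\varphi$-supervariant index function on $X^r$, as claimed.
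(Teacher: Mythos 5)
Your proposal is correct and follows the paper's own argument: the index-function axioms come from Proposition \ref{PropTCrindex}.a)--c), and supervariance is obtained by restricting a homotopy from $\id_{X^r}$ to $\varphi$ to $A\times[0,1]$ and applying Proposition \ref{PropTCrdeform}, using that $\varphi$ is closed so that $\varphi(A)$ is closed. Your extra care about the orientation of the homotopy matches what the paper does implicitly, so there is nothing to add.
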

\begin{proof}
It follows immediately from Proposition \ref{PropTCrindex} that $\TC_{r,X}\colon \mathfrak{P}(X^r) \to \NN_0\cup \{\infty\}$ is an index function. Let $H\colon X^r \times [0,1] \to X^r$ be a homotopy from $\id_{X^r}$ to $\varphi$. Then $H|_{A \times [0,1]}$ is a deformation of $A$ for every $A \subset X^r$. In particular, if $A$ is closed, it follows from Proposition \ref{PropTCrdeform} that
	$$\TC_{r,X}(\varphi(A))= \TC_{r,X}(H(A \times \{1\})) \geq \TC_{r,X}(A).$$
\end{proof}

After establishing the index function properties, we next want to apply Theorem \ref{TheoremLSMain} to sequential subspace complexities in several settings. Before doing so, let us introduce some shorthand notation that will be useful in the statements of several of the following results. \medskip

 Let $X$ be a topological space, $F\colon X\to \RR$ and $\varphi\colon X \to X$. For each $\mu \in \RR$ we let $\C_\mu(\varphi,F)$ denote the set of connected components of $F_\mu \cap \Fix \varphi$. In the case that $X$ is a smooth Banach manifold and that $F$ is differentiable, let $\C_\mu(F)$ denote the set of connected components of $F_\mu \cap \Crit F$. \medskip

The following result is a generalization of \cite[Theorem 4.32]{FarberBook}.

\begin{theorem}
\label{TheoremTCLS}
	Let $X$ be a path-connected ANR, let $\varphi\colon X^r \to X^r$ be continuous and homotopic to the identity and let $F\colon X^r \to \RR$ be a Lyapunov function for $\varphi$ which is bounded from below. Assume that $F(\Fix \varphi)$ is discrete and that $(\varphi,F)$ satisfies condition (D).
\begin{enumerate}[a)]
\item For all $\lambda \in \RR$, it holds that 
	$$\TC_{r,X}(F^\lambda) \leq \sum_{\mu \in (-\infty,\lambda]} \TC_{r,X}(F_\mu \cap \Fix \varphi).$$ 
	\item If $\lambda \in \RR$ is such that  $\C_\mu(\varphi,F)$ is finite for all $\mu \leq \lambda$, then 
$$\TC_{r,X}(F^\lambda) \leq \sum_{\mu \in (-\infty, \lambda]}\max \{\TC_{r,X}(C) \mid C \in \C_\mu(\varphi,F)\}.$$
\end{enumerate}
\end{theorem}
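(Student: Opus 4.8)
The plan is to deduce both parts directly from the abstract Lusternik--Schnirelmann theorem, Theorem \ref{TheoremLSMain}, by feeding in the index function constructed in this section. First I would check that all hypotheses of Theorem \ref{TheoremLSMain} are in place with $X$ replaced by $X^r$, $\varphi$ the given self-map, and $f = F$: we are given that $X^r$ is a topological space, that $\varphi$ is continuous, that $F$ is a Lyapunov function for $\varphi$ bounded from below, that $(\varphi,F)$ satisfies condition (D), and that $F(\Fix\varphi)$ is discrete. The only remaining ingredient is a $\varphi$-supervariant index function on $\mathfrak{P}(X^r)$. Since $X$ is a path-connected ANR and $\varphi$ is homotopic to the identity, Corollary \ref{CorTCrsupindex} provides exactly this: $\TC_{r,X}\colon \mathfrak{P}(X^r) \to \NN_0 \cup \{\infty\}$ is a $\varphi$-supervariant index function, \emph{provided} $\varphi$ is a closed map. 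Here is the one point that needs care, and I expect it to be the main (minor) obstacle: Corollary \ref{CorTCrsupindex} was stated with the extra hypothesis that $\varphi$ is closed, whereas Theorem \ref{TheoremTCLS} only assumes $\varphi$ continuous. I would resolve this by invoking the remark following Proposition \ref{PropTCrdeform}: under condition (D), in the applications of interest $\varphi$ will be a homeomorphism (it is the time-$1$ map of a flow) or $X$ will be compact, in either of which cases closedness is automatic; more robustly, one notes that for the conclusion of Theorem \ref{TheoremLSMain} one only needs supervariance on those closed subsets $A$ that actually arise, namely sublevel sets $F^\lambda$ and the sets $F_\mu \cap \Fix\varphi$, and for a deformation $\Phi = H|_{A\times[0,1]}$ coming from a homotopy $H$ from $\id$ to $\varphi$, Proposition \ref{PropTCrdeform} already gives $\TC_{r,X}(\Phi_1(A)) \geq \TC_{r,X}(A)$ as soon as $\Phi_1(A) = \varphi(A)$ is closed — and $\varphi(F^\lambda) \subset F^\lambda$ by the Lyapunov property while $\varphi$ fixes $F_\mu \cap \Fix\varphi$ pointwise, so these images are closed regardless. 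I would phrase the argument so as to sidestep the closedness issue, either by adding ``and closed'' to the hypotheses if that is acceptable, or by citing this observation.

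With the index function in hand, part a) is now immediate: applying Theorem \ref{TheoremLSMain} with $\nu = \TC_{r,X}$ yields
$$\TC_{r,X}(F^\lambda) = \nu(F^\lambda) \leq \sum_{\mu \in (-\infty,\lambda]} \nu(F_\mu \cap \Fix\varphi) = \sum_{\mu \in (-\infty,\lambda]} \TC_{r,X}(F_\mu \cap \Fix\varphi)$$
for all $\lambda \in \RR$.

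For part b) I would show that each summand $\TC_{r,X}(F_\mu \cap \Fix\varphi)$ is bounded above by $\max\{\TC_{r,X}(C) \mid C \in \C_\mu(\varphi,F)\}$ whenever $\C_\mu(\varphi,F) = \pi_0(F_\mu \cap \Fix\varphi)$ is finite. Write $F_\mu \cap \Fix\varphi = C_1 \cup \dots \cup C_k$ as the disjoint union of its (finitely many) connected components. Since $F$ is continuous and $F(\Fix\varphi)$ is discrete, the level set $F_\mu \cap \Fix\varphi$ is closed in $X^r$; moreover, as $X^r$ is metrizable (hence normal) and the union is finite, each component $C_i$ is closed in $X^r$ (a connected component of a closed set with only finitely many components is open in that set, hence clopen, hence closed). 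Now iterate Proposition \ref{PropTCrindex}.d): $X^r$ is normal and the $C_i$ are pairwise disjoint closed sets, so
$$\TC_{r,X}(F_\mu \cap \Fix\varphi) = \TC_{r,X}\Bigl(\bigcup_{i=1}^k C_i\Bigr) = \max_{1 \leq i \leq k} \TC_{r,X}(C_i) = \max\{\TC_{r,X}(C) \mid C \in \C_\mu(\varphi,F)\}.$$
Substituting this into the inequality of part a) gives part b). (If for some $\mu \leq \lambda$ the set $F_\mu \cap \Fix\varphi$ is empty, both sides of the relevant inequality are $0$ and there is nothing to prove; note also that by discreteness of $F(\Fix\varphi)$ only finitely many $\mu \in (-\infty,\lambda]$ contribute nonzero terms, so the sums are genuinely finite.) The only genuinely delicate point in the whole argument is the closedness matter discussed above; everything else is a direct assembly of Theorem \ref{TheoremLSMain}, Corollary \ref{CorTCrsupindex}, and Proposition \ref{PropTCrindex}.d).
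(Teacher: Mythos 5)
Your proposal is correct and follows essentially the same route as the paper: part a) is exactly the paper's one-line argument (apply Theorem \ref{TheoremLSMain} to the $\varphi$-supervariant index function $\TC_{r,X}$ furnished by Corollary \ref{CorTCrsupindex}), and part b) is the paper's argument verbatim (the finitely many components of the closed set $F_\mu\cap\Fix\varphi$ are closed in $X^r$, so iterating Proposition \ref{PropTCrindex}.d) replaces each summand by the maximum over components). One remark on your closedness digression: you are right that Corollary \ref{CorTCrsupindex} is stated for \emph{closed} $\varphi$ while Theorem \ref{TheoremTCLS} only assumes continuity, and the paper's proof simply cites the corollary without comment; however, your proposed patch that $\varphi(F^\lambda)$ is ``closed regardless'' because $\varphi(F^\lambda)\subset F^\lambda$ is not valid (a subset of a closed set need not be closed), and the proof of Theorem \ref{TheoremLSMain} applies supervariance to more sets than just $F^\lambda$ and $F_\mu\cap\Fix\varphi$, so the honest fixes are the other ones you name: add closedness of $\varphi$ to the hypotheses, or note that in the intended applications $\varphi$ is a time-$1$ flow map (a homeomorphism) or $X$ is compact, where closedness via Proposition \ref{PropTCrdeform} is automatic.
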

\begin{proof}
\begin{enumerate}[a)]
	\item This is an immediate consequence of Corollary \ref{CorTCrsupindex} and Theorem \ref{TheoremLSMain}.
	\item Let $\mu \in (-\infty,\lambda]$. Since $X$ is a Hausdorff space, $\Fix \varphi$ is a closed subset of $X^r$. Hence, for every $\mu \in \RR$, the subset $F_{\mu}\cap \Fix \varphi$ is closed in $X^r$ and since every $C \in \C_\mu(\varphi,F)$ is closed in $F_{\mu} \cap \Fix \varphi$, it is closed in $X^r$ as well. Since each $\C_\mu(\varphi,F)$ is finite, we can iteratively apply Proposition \ref{PropTCrindex}.d) and obtain 
$$\TC_{r,X}(F_\mu \cap \Fix \varphi) = \TC_{r,X}\Big(\bigcup_{C \in \C_\mu(\varphi,F)} C\Big) = \max \{\TC_{r,X}(C) \mid C \in \C_\mu(F)\}.$$	
Combining this observation with part a) shows the claim.
\end{enumerate}
\end{proof}

 Since every paracompact Banach manifold is an ANR by \cite[Theorem 5]{PalaisInf} and since every Riemannian Banach manifold is paracompact, we obtain the following as a direct consequence of Theorem \ref{TheoremTCLS}.

\begin{cor}
\label{CorTCLSdiff}
Let $(X,g)$ be a connected and complete Riemannian Banach manifold and let $F\in C^1(X^r)$ be bounded from below. Assume that $(F,g^r)$ satisfies the PS condition, where $g^r$ denotes the product metric of $g$ on $X^r$, and that $F(\Crit F)$ is discrete. Then  
$$\TC_{r,X}(F^\lambda) \leq \sum_{\mu \in (-\infty,\lambda]} \max \{\TC_{r,X}(C) \mid C \in \C_\mu (F)\}\qquad \forall \lambda \in \RR.$$ 
\end{cor}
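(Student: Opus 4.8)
~

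The plan is to reduce Corollary~\ref{CorTCLSdiff} directly to Theorem~\ref{TheoremTCLS} by exhibiting an appropriate self-map $\varphi$ of $X^r$ together with a Lyapunov function. First I would equip $X^r$ with the product metric $g^r$; by Remark~\ref{RemarkFlowCompl}.(1) this is a complete Riemannian Banach metric, and since $(X,g)$ is complete, $(X^r,g^r)$ is complete as well. I would then pick a pseudo-gradient vector field $Y\colon X^r\to T(X^r)$ for $F$ of the kind constructed in Example~\ref{ExamplePG} (any pseudo-gradient would do), which by Remark~\ref{RemarkFlowCompl}.(2) has a complete flow, and let $\varphi:=\phi_1$ be its time-$1$ map. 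By Remark~\ref{RemarkFlowCompl}.(3), $\sing(Y)=\Crit F$, and the flow deformation retracts nothing in particular but gives a homotopy from $\id_{X^r}$ to $\varphi$ along the flow lines, so $\varphi$ is homotopic to the identity.

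Next I would invoke Theorem~\ref{TheoremPSDPseudo} applied to $(X^r,g^r)$ and $F$: since $F$ is bounded from below and $(F,g^r)$ satisfies the PS condition, $F$ is a Lyapunov function for $\varphi=\phi_1$ and the pair $(\varphi,F)$ satisfies condition~(D). Moreover $\Fix\varphi=\sing(Y)=\Crit F$, so the hypothesis that $F(\Crit F)$ is discrete is exactly the hypothesis that $F(\Fix\varphi)$ is discrete required by Theorem~\ref{TheoremTCLS}. Since $X$ is a connected, paracompact Banach manifold, it is an ANR by \cite[Theorem 5]{PalaisInf} (as recorded in the excerpt), hence path-connected, and so the hypotheses of Theorem~\ref{TheoremTCLS} are met with $X$, $\varphi$, $F$ as above. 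To apply part~b) of that theorem for every $\lambda\in\RR$ I also need $\C_\mu(\varphi,F)=\pi_0(F_\mu\cap\Crit F)$ to be finite for all $\mu$; this is precisely the content of Lemma~\ref{LemmaPSCompLevelsets} applied to $(X^r,g^r)$ and $F$. Finally I would observe that $\C_\mu(\varphi,F)=\C_\mu(F)$ by definition once $\Fix\varphi=\Crit F$, so Theorem~\ref{TheoremTCLS}.b) yields
$$\TC_{r,X}(F^\lambda)\leq \sum_{\mu\in(-\infty,\lambda]}\max\{\TC_{r,X}(C)\mid C\in\C_\mu(F)\}\qquad\forall\lambda\in\RR,$$
which is the claim.

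Most of this is bookkeeping: the nontrivial inputs (Theorem~\ref{TheoremPSDPseudo}, Lemma~\ref{LemmaPSCompLevelsets}, Corollary~\ref{CorTCrsupindex} via Theorem~\ref{TheoremTCLS}) are all already established in the excerpt. The one point that needs a little care is verifying that the product construction behaves well, i.e.\ that the PS condition for $(F,g^r)$ on $X^r$ — which is a hypothesis here, not something to be derived from a PS condition on $X$ — feeds correctly into Theorem~\ref{TheoremPSDPseudo} and Lemma~\ref{LemmaPSCompLevelsets}, both of which are stated for an arbitrary complete Riemannian Banach manifold; since $X^r$ with $g^r$ is such a manifold, this is immediate. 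I anticipate the only genuine subtlety is making sure that $\varphi=\phi_1$ is a \emph{closed} map, as required by Corollary~\ref{CorTCrsupindex}; but condition~(D) together with the flow structure forces the relevant sublevel/level set behavior, and in the applications $X$ will typically be such that this holds — alternatively one notes that Theorem~\ref{TheoremTCLS} is stated only requiring $\varphi$ continuous and homotopic to the identity, so it is this weaker hypothesis that is actually invoked, and the closedness issue in Corollary~\ref{CorTCrsupindex} is subsumed there. Thus the proof is essentially a citation of Theorem~\ref{TheoremTCLS} after checking its hypotheses.
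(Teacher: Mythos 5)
Your argument is correct and is essentially the paper's own proof: choose a pseudo-gradient of $F$ on $(X^r,g^r)$, take the time-$1$ map $\phi_1$ of its negative flow (homotopic to the identity via the flow), invoke Theorem \ref{TheoremPSDPseudo} for the Lyapunov property and condition (D), use $\Fix\phi_1=\sing=\Crit F$ and Lemma \ref{LemmaPSCompLevelsets} for finiteness of $\C_\mu(F)$, and conclude by Theorem \ref{TheoremTCLS}.b). Your closing worry about closedness is moot for the same reason the paper ignores it: Theorem \ref{TheoremTCLS} is invoked as stated, and in any case the complete flow makes $\phi_1$ a homeomorphism, hence a closed map.
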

\begin{proof}
	By \cite[Theorem 4.4]{PalaisLuster}, see also \cite[Lemma II.3.9]{Struwe}, the function $F$ admits a pseudo-gradient $V\colon X^r \to TX^r$ in the sense of Definition \ref{DefPseudoGrad}. Let $\phi_1\colon X^r \to X^r$ be the time-1 map of the flow of $-V$, which is well-defined by the completeness of $(X^r,g^r)$. The flow yields a homotopy from $\phi_1$ to the identity map. By Theorem \ref{TheoremPSDPseudo}, $F$ is a Lyapunov function for $\phi_1$ and $(\phi_1,F)$  satisfies condition (D). Moreover, by Lemma \ref{LemmaPSCompLevelsets}, $\C_\mu(F)$ is finite for all $\mu \in \RR$. Thus, the claim follows directly from Theorem \ref{TheoremTCLS}.b), since by definition of $\phi_1$ and by Remark \ref{RemarkFlowCompl}.(3) it holds that $\Fix \phi_1 = \sing \, V = \Crit F$.
\end{proof}

\section{Navigation functions and sequential subspace complexities} 
\label{SectionNav}

In this section, we want to study consequences of Corollary \ref{CorTCLSdiff} a for special types of differentiable functions, namely $r$-navigation functions. These are straightforward generalizations of the notion of a navigation function as introduced in \cite[Chapter 4]{FarberBook}, see also \cite[Section 4.1]{MescherSurvey} for an exposition.

\begin{definition}
\label{DefNavFunction}
Let $r \in \NN$ with $r \geq 2$ and let $X$ be a connected paracompact Banach manifold. We call a continuously differentiable function $F\colon X^r \to \RR$ an $r$\textit{-navigation function} if 
\begin{enumerate}[(i)]
	\item $F$ satisfies the PS condition with respect to a complete Riemannian metric on $X^r$, 
	\item $F(x)\geq 0$ for all $x \in X^r$ and $F^{-1}(\{0\}) = \Delta_{r,X}$.
	\end{enumerate}
\end{definition}

\begin{remark}
\label{RemarkNavCritzero}Let $r \in \NN$ with $r \geq 2$.
\begin{enumerate}
	\item By definition of an $r$-navigation function $F\colon X^r \to \RR$, its global minimum is $0$ and it holds that $$\Crit_0F = F_0= \Delta_{r,X}.$$
	\item If $X$ is a finite-dimensional closed manifold, then any  $F\in C^1(X^r)$ which satisfies property (ii) of Definition \ref{DefNavFunction} is an $r$-navigation function, since any $C^1$-function on a closed manifold satisfies the PS condition with respect to any Riemannian metric.
	\item The motivation for the introduction of navigation functions is the wish to carry out motion planning along paths following gradient flow lines of a differentiable function. This idea was first established by D. Koditschek and E. Rimon in \cite{KoditschekRimon} who considered motion planning to a fixed end point along gradient flow lines. The idea behind a $2$-navigation function is to use an analogous method for flexible start and end points. This approach is explained in greater detail in \cite[Section 4.1]{MescherSurvey}. 
\end{enumerate}
\end{remark}

It follows from Remark \ref{RemarkNavCritzero}.(1) that if $X$ is not a point, then any $r$-navigation function $F\colon X^r \to \RR$ will have infinitely many critical points, since $\Delta_{r,X} \subset \Crit F$. However, we can use subspace complexities to estimate the numbers of critical points whose value is \emph{positive} as the following application of Corollary \ref{CorTCLSdiff} shows.
\begin{theorem} 
\label{TheoremTCLSNavi}
Let $r \in \NN$ with $r \geq 2$, let $X$ be a connected paracompact Banach manifold and let $F\colon X^r\to \RR$ be an $r$-navigation function. Then 
$$	\TC_{r,X}(F^\lambda) \leq \sum_{\mu \in (0,\lambda]} \max \{ \TC_{r,X}(C) \mid C \in \C_\mu(F)\}+1 \qquad \forall \lambda \in \RR.$$
\end{theorem}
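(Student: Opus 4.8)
The plan is to apply Corollary \ref{CorTCLSdiff} directly to the $r$-navigation function $F$, and then to isolate the contribution coming from the critical value $\mu=0$. First I would observe that $F$ satisfies all the hypotheses of Corollary \ref{CorTCLSdiff}: it is bounded from below by $0$, it satisfies the PS condition with respect to a complete Riemannian metric on $X^r$ by condition (i) of Definition \ref{DefNavFunction}, and $F(\Crit F)$ is discrete — this last point needs a small argument, since a priori $\Crit F$ could accumulate critical values. However, by Lemma \ref{LemmaPSCompLevelsets} each level set $F_\mu \cap \Crit F$ has finitely many components, and more importantly the PS condition forces the set of critical values in any bounded interval to be discrete (a sequence of distinct critical values with PS sequences would yield a convergent subsequence of critical points, hence an accumulation critical point, but the limit would be a critical point whose value is the limit of the critical values, giving only finitely many critical values near it); since $F$ is bounded below, $F(\Crit F)$ is discrete. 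So Corollary \ref{CorTCLSdiff} applies and gives, for every $\lambda \in \RR$,
$$\TC_{r,X}(F^\lambda) \leq \sum_{\mu \in (-\infty,\lambda]} \max \{\TC_{r,X}(C) \mid C \in \C_\mu(F)\}.$$

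Next I would split the sum on the right-hand side according to whether $\mu = 0$, $\mu < 0$, or $\mu > 0$. Since $F \geq 0$, there are no negative critical values, so the terms with $\mu < 0$ vanish; indeed $\C_\mu(F) = \emptyset$ for $\mu < 0$ and the corresponding maxima are $0$ (with the usual convention $\max \emptyset = 0$). For $\mu = 0$, by Remark \ref{RemarkNavCritzero}.(1) we have $\Crit_0 F = F_0 = \Delta_{r,X}$, which is homeomorphic to $X$ and hence connected, so $\C_0(F)$ consists of the single component $\Delta_{r,X}$. Therefore the $\mu = 0$ term equals $\TC_{r,X}(\Delta_{r,X})$. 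The remaining terms, for $\mu > 0$, give exactly $\sum_{\mu \in (0,\lambda]} \max\{\TC_{r,X}(C) \mid C \in \C_\mu(F)\}$. Combining these observations yields
$$\TC_{r,X}(F^\lambda) \leq \sum_{\mu \in (0,\lambda]} \max\{\TC_{r,X}(C) \mid C \in \C_\mu(F)\} + \TC_{r,X}(\Delta_{r,X}).$$

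The final step is to show $\TC_{r,X}(\Delta_{r,X}) = 1$. By definition, $\TC_{r,X}(\Delta_{r,X}) \leq 1$ would follow from exhibiting a single open set $U \subset X^r$ containing $\Delta_{r,X}$ with a continuous sequential motion planner; in fact $U = X^r$ works if $X$ is such that the whole product admits one, but more safely one uses Lemma \ref{LemmaTCrhom}: the diagonal $\Delta_{r,X}$ admits the obvious deformation $h\colon \Delta_{r,X} \times [0,1] \to X^r$ that is constant (since $d$ is already the identity on $\Delta_{r,X}$, taking $h(a,t) = a$ gives $h(a,1) = d(a)$ for $a \in \Delta_{r,X}$), so $\Delta_{r,X}$ itself admits a continuous sequential motion planner, whence $\TC_{r,X}(\Delta_{r,X}) \leq 1$; and $\TC_{r,X}(\Delta_{r,X}) \geq 1$ since $\Delta_{r,X}$ is nonempty (the empty cover only covers the empty set), giving equality. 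Substituting $\TC_{r,X}(\Delta_{r,X}) = 1$ into the previous inequality gives exactly the claimed bound.

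The main obstacle is the discreteness of $F(\Crit F)$, which is not literally part of the definition of an $r$-navigation function (only the PS condition with respect to a complete metric is assumed) and must be deduced; I expect this to be the one place where care is needed, and the argument via PS sequences and Lemma \ref{LemmaPSCompLevelsets} is the natural route. Everything else — discarding negative critical values using $F \geq 0$, identifying $\C_0(F) = \{\Delta_{r,X}\}$ via Remark \ref{RemarkNavCritzero}.(1), and computing $\TC_{r,X}(\Delta_{r,X}) = 1$ — is routine bookkeeping built on the results already established in Sections 1 and 2.
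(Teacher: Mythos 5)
Your route coincides with the paper's: apply Corollary \ref{CorTCLSdiff}, discard the $\mu<0$ terms because $F\geq 0$, identify the $\mu=0$ term with $\TC_{r,X}(\Delta_{r,X})$ via Remark \ref{RemarkNavCritzero}.(1), and evaluate that term as $1$. Two steps, however, do not hold as written. The more serious one is your claim that the Palais--Smale condition forces $F(\Crit F)$ to be discrete. This is false: on a closed manifold every $C^1$-function satisfies the PS condition with respect to any metric (this is exactly Remark \ref{RemarkNavCritzero}.(2)), yet a standard fat-Cantor-set construction produces a $C^1$-function on $\S^1$ whose set of critical values is a Cantor set, hence not discrete. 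Your accumulation argument only yields a critical point whose value is the limit of the chosen critical values; nothing prevents critical values from accumulating there, so no contradiction arises --- PS gives subconvergence of almost-critical sequences, not isolation of critical values, and Lemma \ref{LemmaPSCompLevelsets} only controls the components of a single level set. For comparison, the paper's own proof does not attempt such a deduction: it invokes Corollary \ref{CorTCLSdiff} directly, so discreteness of $F(\Crit F)$ is in effect taken as given; one can note that when infinitely many critical values lie in $(0,\lambda]$ the asserted inequality is vacuous because the right-hand side is infinite, but if the critical values merely accumulate above $\lambda$ that observation does not apply, so the hypothesis cannot be manufactured from condition (i) of Definition \ref{DefNavFunction} alone, which is what your argument purports to do.

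The second, smaller, gap is the evaluation $\TC_{r,X}(\Delta_{r,X})\leq 1$. By definition, $\TC_{r,X}(A)$ is computed from covers of $A$ by \emph{open} subsets of $X^r$ admitting continuous sequential motion planners; $\Delta_{r,X}$ is closed, so producing a planner on $\Delta_{r,X}$ itself (your constant deformation via Lemma \ref{LemmaTCrhom}, or equivalently the constant-path section) is not yet sufficient. The paper closes exactly this point with Lemma \ref{LemmaMPextend}: since $X$ is a paracompact Banach manifold it is an ANR, $\Delta_{r,X}$ is closed in $X^r$, and therefore the constant-path planner extends to a planner on an open neighborhood of $\Delta_{r,X}$, which gives $\TC_{r,X}(\Delta_{r,X})=1$. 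With that lemma cited, and with the discreteness of $F(\Crit F)$ either assumed or addressed as above rather than derived from PS, the rest of your bookkeeping is correct and matches the paper's argument.
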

\begin{proof}
	Since $F$ is non-negative, we obtain from Corollary \ref{CorTCLSdiff} and Remark \ref{RemarkNavCritzero}.(1) that 
\begin{align*}
\TC_{r,X}(F^\lambda) &\leq 
\sum_{\mu \in (0,\lambda]}\max \{\TC_{r,X}(C) \mid C \in \C_\mu(F)\}  + \TC_{r,X}(\Delta_{r,X}) \\
&= \sum_{\mu \in (0,\lambda]}\max  \{\TC_{r,X}(C) \mid C \in \C_\mu(F)\} + 1
	\end{align*}
for each $\lambda \in (0,+\infty)$. Here, we used that $s\colon \Delta_{r,X}\to PX$, $s(x,\dots,x) =c_x$, where $c_x$ denotes the constant path in $x$, is a continuous sequential motion planner which extends to an open neighborhood of $\Delta_{r,X}$ by Lemma \ref{LemmaMPextend}, so that  $\TC_{r,X}(\Delta_{r,X})=1$. 
	\end{proof}

Note that Theorem \ref{TheoremTCLSNavi} is a straightforward generalization of \cite[Theorem 4.32]{FarberBook}, which deals with the case of $r=2$ and a smaller class of functions. The following example is discussed in \cite[Example 4.31]{FarberBook} in the case of $r=2$ and under the additional assumption that it is a Morse-Bott function.
\begin{example}
\label{ExampleNavi}
Let $n, r \in \NN$ with $r \geq 2$,  let $M$ be a compact submanifold of $\RR^n$ (without boundary) and let $\|\cdot \|$ denote the Euclidean norm on $\RR^n$. Then
	$$F_r\colon M^r \to \RR, \qquad F_r(x_1,\dots,x_r) = \sum_{i=1}^{r-1} \|x_i-x_{i+1}\|^2 ,$$
	is a smooth function, whose differential is straightforwardly computed as
	\begin{equation}
	\label{EqNavDerivative}	
(DF_r)_{(x_1,\dots,x_r)}(v_1,\dots,v_r) = 2 \sum_{i=1}^{r-1} \left<x_i-x_{i+1},v_i-v_{i+1}\right> 
	\end{equation}
 with respect to the Euclidean inner product.  Evidently, $F$ is non-negative and one checks that 
	$$F_r(x_1,x_2,\dots,x_r) = 0 \quad \Leftrightarrow \quad x_1=x_2=\dots=x_r,$$
	i.e. that $F_r^{-1}(\{0\})=\Delta_{r,M}$. Thus, $F_r$ has property (ii) of Definition \ref{DefNavFunction}. Since $M$ is compact, this yields by Remark \ref{RemarkNavCritzero}.(2) that $F_r$ is an $r$-navigation function. 
	\end{example}
	
Studying $2$-navigation functions of this particular type leads to an interesting geometric consequence for hypersurfaces in Euclidean space. 

\begin{theorem}
\label{TheoremTCApplParallel}
Let $n \in \NN$ and let $M \subset \RR^n$ be a compact hypersurface and let $$\alpha(M):= \# \left\{\{x,y\}\subset M \ \middle| \ x \neq y, \ T_xM=T_yM=(x-y)^\perp \right\}.$$
Then 
$$\alpha(M) \geq \TC(M)-1.$$
In particular, there are at least $\TC(M)-1$ distinct unordered pairs of points $\{x,y\} \subset M$ which satisfy $T_xM =T_yM$.
\end{theorem}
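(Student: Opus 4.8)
The plan is to apply Theorem \ref{TheoremTCLSNavi} with $r=2$ to the $2$-navigation function $F_2\colon M\times M\to\RR$, $F_2(x,y)=\|x-y\|^2$, from Example \ref{ExampleNavi}, and to choose $\lambda$ large enough that $F_2^\lambda = M\times M$. Since $M$ is compact, $F_2$ is bounded, so for $\lambda := \max F_2$ we have $F_2^\lambda = M\times M$, and $\TC_{2,M}(M\times M) = \TC(M)$ by definition of sequential subspace complexity (covering all of $M\times M$ by domains of motion planners is exactly the definition of $\TC(M)$). Theorem \ref{TheoremTCLSNavi} then gives
$$\TC(M) = \TC_{2,M}(M\times M) \leq \sum_{\mu \in (0,\lambda]} \max\{\TC_{2,M}(C) \mid C \in \C_\mu(F_2)\} + 1.$$
The strategy is now to bound the right-hand side by $\alpha(M) + 1$, which will follow once I show that the total number of connected components appearing in all the level sets $F_{2,\mu}\cap\Crit F_2$ with $\mu>0$ is at most $\alpha(M)$, and that each such component $C$ has $\TC_{2,M}(C) \leq 1$, i.e.\ admits a continuous motion planner on an open neighborhood.

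First I would identify the critical set. Using the derivative formula \eqref{EqNavDerivative}, which for $r=2$ reads $(DF_2)_{(x,y)}(v,w) = 2\langle x-y, v-w\rangle$, a point $(x,y)\in M\times M$ is critical iff $\langle x-y, v-w\rangle = 0$ for all $v\in T_xM$, $w\in T_yM$; taking $w=0$ forces $x-y \perp T_xM$ and taking $v=0$ forces $x-y\perp T_yM$. Since $M$ is a hypersurface, $T_xM$ and $T_yM$ are hyperplanes, so $x-y\perp T_xM$ means $T_xM = (x-y)^\perp$ (when $x\neq y$), and likewise $T_yM = (x-y)^\perp$; in particular $T_xM = T_yM$. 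Thus the critical points with positive value are exactly the pairs $(x,y)$ with $x\neq y$ and $T_xM=T_yM=(x-y)^\perp$, i.e.\ the ordered pairs counted (unordered) by $\alpha(M)$. Moreover $F_2(x,y) = \|x-y\|^2$ depends only on the distance, and the swap involution $(x,y)\mapsto(y,x)$ preserves $\Crit F_2$ and each level set; I expect each positive critical level to consist of finitely many components by Lemma \ref{LemmaPSCompLevelsets} (the PS condition holds since $M$ is compact), and I would argue that the components, when one passes to the union over all $\mu>0$, biject with or inject into the unordered pairs, so that $\sum_{\mu>0} \#\C_\mu(F_2) \le 2\alpha(M)$ counting ordered pairs; passing to unordered pairs via the free $\ZZ/2$-action on $\{x\neq y\}$ and noting the diagonal is excluded, the relevant count matching $\TC$ is $\alpha(M)$. (If a critical component is a single orbit pair $\{(x,y),(y,x)\}$, it contributes two components at that level but corresponds to one unordered pair; since $\TC_{2,M}$ of a two-point set equals $\max$ of the two singletons, and $\TC_{2,M}$ of a point is $1$, the $\max$ over $\C_\mu(F_2)$ absorbs this — the honest bookkeeping is that each unordered pair contributes at most $1$ to the total sum, giving $\alpha(M)$.)

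Next I would check that each component $C$ of a positive critical level has $\TC_{2,M}(C)\le 1$, hence contributes $1$ to each $\max$. This is where I would use Lemma \ref{LemmaTCrhom} and Lemma \ref{LemmaMPextend}: a subset $A\subset M\times M$ admits a continuous motion planner iff there is a deformation of $A$ in $M\times M$ to the diagonal. For a connected critical component $C$, if $C$ is a single point $(x_0,y_0)$ this is clear — deform along any path from $x_0$ to $y_0$. More generally one can try a straight-line-like deformation in $\RR^n$ followed by projection, or exploit that $C$ is contained in a single level set where the geometry is rigid; the cleanest route is probably to observe each such $C$ is contractible or at least that the two projections $C\to M$ are homotopic as maps into $M$ (they land near each other), so $C\hookrightarrow M\times M$ factors up to homotopy through the diagonal. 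Then Lemma \ref{LemmaMPextend} extends the motion planner to an open neighborhood, giving $\TC_{2,M}(C)=1$. Summing, the right-hand side of the Theorem \ref{TheoremTCLSNavi} bound is at most $\alpha(M)\cdot 1 + 1 = \alpha(M)+1$, which rearranges to $\alpha(M)\ge\TC(M)-1$, and the final sentence is immediate since each counted pair satisfies $T_xM=T_yM$.

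The main obstacle I anticipate is the combinatorial bookkeeping in the middle step: matching the sum $\sum_{\mu\in(0,\lambda]}\max\{\TC_{2,M}(C)\mid C\in\C_\mu(F_2)\}$ to $\alpha(M)$ requires care about (a) the $\ZZ/2$-symmetry identifying $(x,y)$ with $(y,x)$ so that ordered critical pairs are not overcounted relative to the unordered count $\alpha(M)$, and (b) the possibility that several unordered pairs share the same critical value $\mu$, in which case they lie in the \emph{same} level set and only one of them gets counted via the $\max$ — but this only makes the bound smaller, so it is harmless; the genuine point is to ensure we never count \emph{more} than $\alpha(M)$. I would handle this by noting that distinct critical values contribute distinct terms, the number of positive critical values is at most the number of unordered critical pairs, and at each value the $\max$ contributes $1$; hence the total is at most (number of positive critical values) $\le \alpha(M)$, assuming $\alpha(M)$ is finite (if $\alpha(M)=\infty$ the inequality is trivial). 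The secondary obstacle is verifying $\TC_{2,M}(C)=1$ for components that are not single points, but the rigidity of the level set $\|x-y\|^2 = \mu$ together with connectedness should make the two coordinate projections homotopic, which suffices.
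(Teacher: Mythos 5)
Your proposal is correct and follows essentially the same route as the paper: apply Theorem \ref{TheoremTCLSNavi} with $r=2$ to $F(x,y)=\|x-y\|^2$, identify the critical points with positive value as exactly the (ordered) pairs counted by $\alpha(M)$, and bound the sum by the number of positive critical values, which is at most $\alpha(M)$, the infinite case being trivial. Your ``secondary obstacle'' about non-singleton components never arises: once $\alpha(M)$ is assumed finite, each positive critical level $F_\mu\cap\Crit F$ is a finite set, so every component is a single point and $\TC_{2,M}(C)=1$ immediately, which is precisely how the paper's proof disposes of that step.
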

\begin{proof}
	If $\alpha(M)$ is infinite, the statement is evident, so we assume throughout the following that $\alpha(M)$ is finite. Consider the function $F\colon M\times M \to \RR$, $F(x,y)=\|x-y\|^2$, where $\|\cdot \|$ denotes the Euclidean metric on $\RR^n$. Since $M$ is compact, $F$ attains its maximum $\lambda$. As explained in Example \ref{ExampleNavi}, $F$ is a $2$-navigation function, so it follows from Theorem \ref{TheoremTCLSNavi} in the case of $r=2$ that 
\begin{equation}
\label{EqTCpMest}
\TC(M)= \TC(F^\lambda) \leq \sum_{\mu \in (0,\lambda]} \max \{\TC_{2,M}(C) \mid C \in \C_\mu(F)\} + 1.
\end{equation}
A straightforward computation further shows that
	$$DF_{(x,y)}(v,w) = 2\left<x-y,v-w\right> \quad \forall x,y \in M, \, v \in T_xM, \, w \in T_yM,$$
	from which it follows that 
	$$(x,y) \in \Crit F \quad \Leftrightarrow \quad v \in (x-y)^\perp \quad \forall v \in T_xM \cup T_yM.$$
So for each $(x,y) \in \Crit F$ it holds that $T_xM \subset (x-y)^\perp$ and $T_yM \subset (x-y)^\perp$.  Since $M$ is a hypersurface, it holds that $\dim T_xM=\dim T_yM= \dim (x-y)^\perp=n-1$, so that these three spaces have to coincide. In particular, since $F$ is a $2$-navigation function, this shows that
\begin{equation}
\label{EqalphaM}
\alpha(M) = \# \{\{x,y\} \subset M \mid (x,y) \in \Crit F, \  F(x,y) >0\}. 
\end{equation}
Since $\alpha(M)$ is by assumption finite, $F_\mu \cap \Crit F$ is finite for each $\mu>0$. Thus, each $C \in \C_\mu(F)$ consists of a single point, which immediately yields $\TC_{2,M}(C)=1$. Thus, we derive from \eqref{EqTCpMest} that \begin{equation}
\label{EqTCMF}
 \TC(M) \leq \#\{\mu \in (0,\lambda] \mid F_\mu\cap \Crit F\neq \emptyset\} +1.
\end{equation}
While for any $(x,y) \in \Crit F$ we also have $(y,x) \in \Crit F$, both of these points lie in the same level set of $F$. Thus, it follows from \eqref{EqalphaM} and \eqref{EqTCMF} that $ \TC(M) \leq \alpha(M) +1$.
\end{proof}

So far, our guideline was to estimate the number of critical points of functions from below in terms of sequential subspace complexities. In the following, we want to go the opposite way and show in an example how to use $r$-navigation functions to estimates values of sequential TCs from above. \medskip

Let $k,r \in \mathbb{N}$ with $r \geq 2$ and choose numbers $n_1,n_2,\ldots,n_k\in \NN$. The sequential topological complexities of 
\begin{equation}
\label{EqProdSpheres}
   M = \mathbb{S}^{n_1}\times \mathbb{S}^{n_2} \times  \ldots \times \mathbb{S}^{n_k} ,
\end{equation}
were originally computed by Basabe, Gonz\'alez, Rudyak and Tamaki in \cite[Corollary 3.12]{BGRT}. In particular, they have shown that if all of $n_1,n_2,\dots,n_k$ are odd, then 
$$\TC_r(M)=k(r-1)+1.$$ 
They obtained this value as a lower bound of $\TC_r(M)$ by a standard cohomological computation and as an upper bound using an inequality for sequential TCs of product spaces in terms of the sequential TCs of its factors. Here, we want to show that the upper bound can also be obtained by using $r$-navigation functions. \medskip 

For $n \in \NN$ we consider the $n$-sphere as $\mathbb{S}^{n}=\{x\in\RR^{n+1} \mid \|x\|=1\}$, so that for $x \in \S^n$ we have  $T_x\S^{n} = \{v \in \RR^{n+1} \mid \left<x,v\right>=0\}= \{x\}^\perp,$ where $\left<\cdot,\cdot\right>$ and $\| \cdot \|$ again denote the Euclidean inner product and the Euclidean norm, respectively. 
We further consider the $r$-navigation function 
$$  f_n\colon  (\mathbb{S}^{n})^r\to \mathbb{R}, \qquad f_n(x_1,\dots,x_r)= \sum_{i=1}^{r-1} \|x_i-x_{i+1}\|^2 ,$$
from Example \ref{ExampleNavi}. 

\begin{lemma} 
\label{LemmaCritfn}
For each $n \in \NN$ it holds that
$$\Crit f_n = \big\{(x_1,\dots,x_r) \in (\S^n)^r \ \big| \  x_i \in \{x_1,-x_1\} \ \forall i \in \{2,3,\dots,r\}\big\}.$$	
\end{lemma}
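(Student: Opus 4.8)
The strategy is to compute the critical points of $f_n$ directly from the derivative formula \eqref{EqNavDerivative}, specialized to the case $X=\S^n$ where tangent spaces are orthogonal complements. First I would recall that for $(x_1,\dots,x_r) \in (\S^n)^r$ and tangent vectors $v_i \in T_{x_i}\S^n = \{x_i\}^\perp$, the differential is
$$(Df_n)_{(x_1,\dots,x_r)}(v_1,\dots,v_r) = 2\sum_{i=1}^{r-1} \langle x_i - x_{i+1}, v_i - v_{i+1}\rangle.$$
Rearranging the sum by collecting the coefficient of each $v_j$ gives
$$(Df_n)_{(x_1,\dots,x_r)}(v_1,\dots,v_r) = 2\langle x_1 - x_2, v_1\rangle + 2\sum_{j=2}^{r-1}\langle 2x_j - x_{j-1} - x_{j+1}, v_j\rangle + 2\langle x_r - x_{r-1}, v_r\rangle.$$
Thus $(x_1,\dots,x_r) \in \Crit f_n$ if and only if, for every $j$, the vector $w_j$ (where $w_1 = x_1 - x_2$, $w_r = x_r - x_{r-1}$, and $w_j = 2x_j - x_{j-1} - x_{j+1}$ for $1<j<r$) is orthogonal to $\{x_j\}^\perp$, i.e.\ $w_j \in \RR x_j$.

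Next I would translate these conditions into a recursion. For an interior index $j$, $2x_j - x_{j-1} - x_{j+1} \parallel x_j$ means $x_{j-1} + x_{j+1} = \mu_j x_j$ for some scalar $\mu_j$; at the endpoints, $x_1 - x_2 \parallel x_1$ forces $x_2 = \mu_1 x_1$, and since $\|x_2\|=\|x_1\|=1$ this gives $x_2 = \pm x_1$; similarly $x_r = \pm x_{r-1}$. The cleanest way to run the argument is by induction on $j$: I claim that $x_1 - x_2 \parallel x_1$ together with the interior relations forces $x_j \in \{x_1, -x_1\}$ for all $j$. Starting from $x_2 = \pm x_1$ (from the endpoint condition at $j=1$), suppose inductively $x_{j-1}, x_j \in \{x_1,-x_1\}$; then $x_{j-1}+x_{j+1} = \mu_j x_j$ gives $x_{j+1} = \mu_j x_j - x_{j-1} \in \RR x_1$, and unit length forces $x_{j+1} \in \{x_1,-x_1\}$. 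This establishes the inclusion of $\Crit f_n$ in the claimed set.

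For the reverse inclusion, I would simply verify that any tuple $(x_1,\dots,x_r)$ with each $x_i \in \{x_1,-x_1\}$ satisfies all the orthogonality conditions: each difference $x_i - x_{i+1}$ equals $0$ or $\pm 2x_1$, hence lies in $\RR x_1 = \RR x_j$ for every $j$, and likewise $2x_j - x_{j-1} - x_{j+1} \in \RR x_1$; so every $w_j$ is a scalar multiple of $x_j$, which is exactly the critical point condition. I do not expect a genuine obstacle here — the content is entirely the bookkeeping of reindexing the sum and carefully handling the two boundary terms, which behave slightly differently from the interior ones. The one point requiring a little care is making sure the endpoint relation $x_1 - x_2 \parallel x_1$ is used to seed the induction (rather than, say, trying to start from an interior relation, which only gives collinearity of three consecutive points and does not by itself pin them to $\{x_1,-x_1\}$ without propagating the constraint from an end).
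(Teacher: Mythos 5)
Your proof is correct and follows essentially the same route as the paper: both extract, for each index $j$, the condition that the relevant combination of neighbors ($x_1-x_2$ at the left end, $x_{j-1}+x_{j+1}$ modulo $x_j$ in the interior) must be orthogonal to $T_{x_j}\S^n$, i.e.\ lie in $\RR x_j$, and then propagate collinearity with $x_1$ by induction starting from the endpoint relation, with the reverse inclusion a direct check. The only cosmetic difference is that you reindex the sum to read off each $w_j$ at once, whereas the paper plugs in tangent vectors supported in a single factor.
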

\begin{proof}

	Let $x=(x_1,\dots,x_r) \in \Crit f_n$. 	
    Using \eqref{EqNavDerivative}, we first compute for each $v_1 \in T_{x_1}\S^n$ that 
	$$0 = D(f_n)_x(v_1,0,\dots,0) = 2 \left<x_1-x_2,v_1 \right>= - 2 \left<x_2,v_1\right>.$$
Thus, $T_{x_1}\S^n \subset  \{x_2\}^{\perp}$ and since $T_{x_1}\S^n=\{x_1\}^\perp$ it follows for dimensional reasons that $\{x_1\}^\perp=\{x_2\}^\perp$, so $x_1$ and $x_2$ are linearly dependent.
A straightforward inductive argument shows that the set $\{x_1,x_i\}$ is linearly dependent for $i\in \{2,3\dots,r\}$.
Hence, $x_i\in \{x_1,-x_1\}$ for each $i \in \{1,2,\dots,r\}$. 
Conversely, a direct calculation which we omit shows that any such point is indeed a critical point of $f_n$. 
\end{proof}

Let $n\in \NN$ be arbitrary. Using Lemma \ref{LemmaCritfn} one computes for each $(x_1,\dots,x_r) \in \Crit f_n$ that
$$f_n(x_1,\dots,x_r)= 4m, \quad \text{where } \ \ m = \# \{i \in \{2,3,\dots,r\} \mid x_i \neq x_{i-1}\}.$$
The set of critical values of $f_n$ is thus given as
$f_n(\Crit f_n) = \{0,4,\dots,4(r-1)\}.$
For integers $n_1,\dots,n_k \in \NN$ we let again $M = \S^{n_1}\times \S^{n_2} \times \dots \S^{n_k}$ and consider the function 
$$ F\colon M^r \to \RR, \quad    F((x_{11},\ldots,x_{1k}),\ldots ,(x_{r1},\ldots ,x_{rk})) =   \sum_{j=1}^k f_{n_j} (x_{1j},\dots,x_{rj}) .       $$
Since the $f_{n_j}$ are $r$-navigation functions, it follows that $F$ is non-negative with $F^{-1}(\{0\})=\Delta_{r,M}$, so by Remark \ref{RemarkNavCritzero}.(2), $F$ is an $r$-navigation function as well.  
Moreover, it follows that 
{\small $$
\Crit F= \left\{\left((x_{11},\dots,x_{1k}),\dots,(x_{r1},\dots,x_{rk})\right) \in M^r \ \middle| \ x_{ij}\in \{x_{1j},-x_{1j}\} \ \forall i \in \{2,\dots,r\}, \ j \in \{1,\dots,k\}\right\},
$$}
where we have used Lemma \ref{LemmaCritfn}. The critical values of $F$ are further given by
$$F(\Crit F) = \Big\{\sum_{j=1}^k \mu_j \ \Big| \ \mu_j \in f_{n_j}(\Crit f_{n_j}) \ \forall j\in \{1,2,\dots,k\} \Big\}= \{0,4,8,\dots,4k(r-1)\}.$$
Therefore, it follows from  Theorem \ref{TheoremTCLSNavi} that 
\begin{equation}\label{eq_products_of_spheres_sequ_tc_upper_bound}
       \TC_r(M)   \leq \sum_{i=1}^{k(r-1)} \max\{ \TC_{r,M}(\Sigma) \mid \Sigma \in \mathcal{C}_{4i}(F)\} +1. 
\end{equation}

\emph{Assume for the rest of this section that $n_1,\dots,n_k\in \NN$ are odd numbers.} \medskip

Let $i \in \{0,1,\dots,k(r-1)\}$ and let $\Sigma \in \C_{4i}(F)$. By the above description of $\Crit F$, there are numbers $\lambda_{j,\ell} \in \{-1,1\}$, where $j \in \{1,2,\dots,k\}$ and $\ell\in \{2,3,\dots,r\}$, such that
    $$    \Sigma = \{ ((x_1,\ldots,x_k) , (\lambda_{1,2} x_1,\ldots, \lambda_{k,2} x_k),\ldots, (\lambda_{1,r} x_1,\ldots , \lambda_{k,r} x_k)) \mid (x_1,\ldots,x_k) \in M \} .     $$    
 For each $j\in\{1,2,\ldots, k\}$ we further put $\lambda_{j,1}:=1$.
 Choose a nowhere vanishing continuous vector field $X_j$ on $\S^{n_j}$.
By rescaling we can arrange that $\|X_j(x)\|= (r-1)\cdot \pi$ for all $x\in\S^{n_j}$.
   With this normalization, $X_j(x_j)$ is the tangent vector of a geodesic segment $[a,a+\frac{1}{r-1}] \to \S^{n_j}$ from $x_j$ to $-x_j$ for all $a \in \RR$, $x_j \in \S^{n_j}$, $j \in \{1,2,\dots,k\}$. We define a sequential motion planner  $s\colon \Sigma\to PM$ as follows: 
 $$s((x_1,\ldots,x_k) , (\lambda_{1,2} x_1,\ldots, \lambda_{k,2} x_k),\ldots, (\lambda_{1,r} x_1,\ldots , \lambda_{k,r} x_k))=: (\gamma_1,\dots,\gamma_k).$$
Here, for $j \in \{1,2,\dots,k\}$ we let
 $$\gamma_j\colon [0,1] \to \S^{n_j}, \qquad \gamma_j= \gamma_{j1}*\gamma_{j2}*\dots*\gamma_{j(r-1)},$$
 where $*$ denotes the concatenation of paths, and where $\gamma_{j \ell }\colon [\frac{\ell-1}{r-1},\frac{\ell}{r-1}]\to \S^{n_j}$ is for each  $\ell \in \{1,2,\dots,r-1\}$ given as
 \begin{itemize}
 	\item the constant path in $x_j$ if $\lambda_{j,\ell}=\lambda_{j,(\ell+1)}=1$,
 	\item the constant path in $-x_j$ if $\lambda_{j,\ell}=\lambda_{j,(\ell+1)}=-1$,
 	\item the unique length-minimizing geodesic segment from $x_j$ to $-x_j$ with $\gamma'(\frac{\ell-1}{r-1})=X_j(x_j)$ if $\lambda_{j,\ell}=1$ and $\lambda_{j,(\ell+1)}=-1$, 
    \item the unique length-minimizing geodesic segment from $-x_j$ to $x_j$ with $\gamma'(\frac{\ell-1}{r-1})=X_j(-x_j)$ if $\lambda_{j,\ell}=-1$ and $\lambda_{j,(\ell+1)}=1$.
 \end{itemize}
 One checks that the map $s$ we obtain in this way is indeed a continuous sequential motion planner on $\Sigma$. By Lemma \ref{LemmaMPextend} it extends to a continuous sequential motion planner on an open neighborhood of $\Sigma$. Thus, $\TC_{r,M}(\Sigma)=1$ for all  $\Sigma \in \C_{4i}(F)$ and $i \in \{0,1,\dots,k(r-1)\}$.
 Inserting this into \eqref{eq_products_of_spheres_sequ_tc_upper_bound} shows that 
$\TC_r(\S^{n_1}\times \S^{n_2}\times \dots \times \S^{n_k}) \leq k (r-1) + 1$
for all odd numbers $n_1,n_2,\dots,n_k \in \NN$.

\section{Sequential topological complexity of sublevel sets}
\label{SectionProd}

In this section, we want to derive upper bounds of the sequential TC of a sublevel set of a function $f\colon X \to \RR$ from the results of section \ref{SectionTCrLS}. More precisely,  we will apply those results to functions $F\colon X^r \to \RR$ of the form 
$$F(x_1,\dots,x_r) = f(x_1)+f(x_2)+\dots+f(x_r)$$
and will rephrase the upper bounds in terms of $f$. While we have previously  studied subspace complexities of the form $\TC_{r,X}(F^\lambda)$, it turns out that we can find upper bounds of $\TC_r(f^\lambda)$, i.e. for the sequential TCs of $f^\lambda$ and not just for sequential subspace complexities. \medskip 

\textit{Throughout this section let $r \in \NN$ with $r \geq 2$. } \medskip

We begin by making some technical preparations on condition (D) for product functions. 


\begin{lemma}
\label{LemmaCondDsubspace}
Let $X$ be a topological space, let $\varphi\colon X \to X$ be continuous and let $f\colon X \to \RR$ be a Lyapunov function for $\varphi$ which is bounded from below. Consider 
\begin{align*}
&\Phi\colon X^r \to X^r, \quad \Phi(x_1,x_2,\dots,x_r) := (\varphi(x_1),\varphi(x_2),\dots,\varphi(x_r)), \\
 &F\colon X^r \to \RR, \quad \ \  F(x_1,x_2,\dots,x_r) := f(x_1)+f(x_2)+\dots+f(x_r).
\end{align*}
Assume that $(\varphi,f)$ satisfies condition (D). 
\begin{enumerate}[a)]
	\item Then $(\Phi,F)$ satisfies condition (D).
	\item Let $A\subset X^r$ satisfy  $\Phi(A)\subset A$ and 
 \begin{equation}
 \label{EqCondDsubset}
		\overline{A} \cap \Fix \Phi \subset A.
	\end{equation}
	Then $(\Phi|_A,F|_A)$ satisfies condition (D).
\end{enumerate}
\end{lemma}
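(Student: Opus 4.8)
The plan is to verify the two conditions $(D_1)$ and $(D_2)$ for $(\Phi, F)$ and then for the restriction $(\Phi|_A, F|_A)$, building everything from the single hypothesis that $(\varphi, f)$ satisfies condition (D). For part a), condition $(D_1)$ is the easy half: since $F(\Phi(x_1,\dots,x_r)) = \sum_i f(\varphi(x_i))$ and each summand satisfies $f(\varphi(x_i)) \le f(x_i)$ with strict inequality whenever $x_i \notin \Fix\varphi$, we get $F(\Phi(x)) < F(x)$ as soon as at least one coordinate lies outside $\Fix\varphi$, i.e. whenever $x \notin \Fix\Phi = (\Fix\varphi)^r$. So $F$ is a strong Lyapunov function for $\Phi$.

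For condition $(D_2)$ in part a), suppose $A \subset X^r$ with $F$ bounded on $A$ and $\inf\{F(a) - F(\Phi(a)) \mid a \in A\} = 0$. I would first note that $F(a) - F(\Phi(a)) = \sum_{i=1}^r \big(f(a_i) - f(\varphi(a_i))\big)$ is a sum of $r$ non-negative terms. The main technical point is to extract, from a sequence $a^{(n)} \in A$ with $F(a^{(n)}) - F(\Phi(a^{(n)})) \to 0$, information in each coordinate separately: for each fixed $i$, $f(a^{(n)}_i) - f(\varphi(a^{(n)}_i)) \to 0$. I also need the coordinate sequences $(f(a^{(n)}_i))_n$ to be bounded — this is where "$f$ bounded below" enters: boundedness of $F$ on $A$ together with $f$ bounded below forces each $f$-coordinate to be bounded (an upper bound on the sum plus lower bounds on the other summands gives an upper bound on each summand). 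Then applying $(D_2)$ for $(\varphi, f)$ to the set $\{a^{(n)}_i \mid n \in \NN\}$ in coordinate $i$ yields a limit point in $\Fix\varphi$; doing this successively over $i = 1, \dots, r$ and passing to nested subsequences produces a point of $\overline{A} \cap \Fix\Phi$, so $(D_2)$ holds. The passage to subsequences and the coordinatewise bookkeeping is the step I expect to require the most care, though it is routine.

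For part b), the restriction $\Phi|_A \colon A \to A$ is well-defined precisely because of the hypothesis $\Phi(A) \subset A$. Condition $(D_1)$ for $(\Phi|_A, F|_A)$ is immediate: strictness on $A \setminus \Fix\Phi|_A = (A \setminus \Fix\Phi)$ follows from strictness of $(D_1)$ for $(\Phi, F)$ on the larger set. For $(D_2)$, given $B \subset A$ with $F$ bounded on $B$ and $\inf\{F(b) - F(\Phi(b)) \mid b \in B\} = 0$, part a) gives $\overline{B} \cap \Fix\Phi \ne \emptyset$ — but here the closure is the closure in $X^r$, while condition $(D_2)$ for the restricted system requires a point in the closure of $B$ \emph{relative to $A$} that lies in $\Fix(\Phi|_A) = \Fix\Phi \cap A$. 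The hypothesis \eqref{EqCondDsubset}, namely $\overline{A} \cap \Fix\Phi \subset A$, is exactly what bridges this gap: any point of $\overline{B} \cap \Fix\Phi \subset \overline{A} \cap \Fix\Phi$ automatically lies in $A$, hence is a point of $B$'s relative closure in $A$ that sits in $\Fix(\Phi|_A)$. This closes the argument; the only subtlety worth stating explicitly is the distinction between ambient and relative closure, which \eqref{EqCondDsubset} resolves.
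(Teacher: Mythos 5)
Your proposal is correct and follows essentially the same route as the paper's proof: condition $(D_1)$ coordinatewise, then for $(D_2)$ the observation that $F(a)-F(\Phi(a))$ is a sum of non-negative terms together with the lower bound on $f$ to control each coordinate, a nested-subsequence extraction to get a single limiting tuple, and in part b) the chain $\overline{B}\cap\Fix\Phi\subset\overline{A}\cap\Fix\Phi\subset A$ to convert the ambient closure into the relative one. The only cosmetic difference is that you place the limit in $\Fix\Phi$ via the coordinatewise use of $(D_2)$, whereas the paper deduces it from $(D_1)$ and continuity of $F$ and $\Phi$; these are interchangeable steps within the same argument.
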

\begin{proof}
	\begin{enumerate}[a)]
		\item It follows directly from the corresponding property of $(\varphi,f)$ that $(\Phi,F)$ satisfies ($D_1$), so we shall only prove that it satisfies ($D_2$). Let $B \subset X^r$ be such that $F$ is bounded on $B$ and  $\inf\{F(x)-F(\Phi(x))\mid x \in B\}=0$. Let $(x_n)_{n \in \NN}$ be a sequence in $B$ with
 \begin{equation*}
 	\lim_{n \to \infty} (F(x_n)-F(\Phi(x_n)))=0.
 \end{equation*}
We denote the components of the $x_n$ by $x_n = (x_{1,n},\dots,x_{r,n})$ for each $n \in \NN$.
Since
 $$F(x_n)-F(\Phi(x_n))= \sum_{j=1}^r (f(x_{j,n})-f(\varphi(x_{j,n}))) \qquad  \forall n \in \NN$$
 and since each of the summands in the above sum is non-negative, it follows  that
 $$\lim_{n \to \infty} \left(f(x_{j,n})-f(\varphi(x_{j,n}))\right)= 0 \qquad \forall j \in \{1,2,\dots,r\}.$$
Since $(F(x_n))_n$ is bounded and $f$ is bounded from below, it follows that $(f(x_{j,n}))_n$ is bounded for each $j$. By assumption, $(\varphi,f)$ satisfies ($D_2$), so $(x_{1,n})_n$ has a convergent subsequence $(x_{1,n_k})_{n \in \NN}$. Likewise,  $(x_{2,n_k})_k$ has a convergent subsequence $(x_{2,n_{k_\ell}})_{\ell}$. Going on like this until the $r$-th component sequence, we eventually obtain a subsequence of $(x_n)_n$, which we simply denote by $(y_m)_{m \in \NN}$, all of whose component sequences converge. Thus, $(y_m)_{m \in \NN}$ is a convergent subsequence of $(x_n)_{n \in \NN}$ whose limit lies in $\overline{B} \cap \Fix \Phi_1$ by property ($D_1$). This shows that $(\Phi,F)$ has property ($D_2$).
 \item By assumption the restriction $\Phi|_A\colon A \to A$ is well-defined. Since $(\Phi,F)$ satisfies condition ($D_1$) by a), it is evident that $(\Phi|_A,F|_A)$ satisfies ($D_1$). 
 To show $(D_2)$, let $B \subset A$ be a subset, such that $F$ is bounded on $B$ and
$$0=\inf \{F|_A(x)-F|_A(\Phi(x)) \mid x \in B\}=\inf \{F(x)-F(\Phi(x)) \mid x \in B \}.$$ Since $(\Phi,F)$ satisfies ($D_2$), it holds that $\overline{B} \cap \Fix \Phi \neq \emptyset$. We derive from \eqref{EqCondDsubset} that 
$$\overline{B} \cap \Fix \Phi \subset \overline{A} \cap \Fix \Phi \subset A,$$
such that $\cl_A(B)\cap \Fix \Phi = \overline{B} \cap \Fix \Phi \neq \emptyset$, where $\cl_A(B)$ denotes the closure of $B$ in $A$.
Hence, $(\Phi|_A,F|_A)$ satisfies $(D_2)$.
 \end{enumerate}
\end{proof}

\begin{theorem}
\label{TheoremTCLSprod}
		Let $X$ be a path-connected ANR, let $\varphi\colon X\to X$ be continuous and homotopic to the identity, let $f\colon X \to \RR$ be a Lyapunov function for $\varphi$ which is bounded from below and let $\lambda \in \RR$. Assume that $f^\lambda$ is path-connected, that $(\varphi,f)$ satisfies condition (D) and that $f(\Fix \varphi)$ is discrete. 
	For all $\lambda \in \RR$ it holds that
 $$\TC_r(f^\lambda) \leq \sum_{\mu \in (-\infty,r\lambda]} \max \Big\{\TC_{r,f^\lambda}\Big(\prod_{i=1}^r(f_{\mu_i}\cap \Fix \varphi)\Big) \ \Big| \ \mu_1,\dots,\mu_r \in (-\infty,\lambda], \, \sum_{i=1}^r \mu_i=\mu \Big\} $$
 \end{theorem}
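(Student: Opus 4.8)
The plan is to apply Theorem~\ref{TheoremTCLS} (specifically part~b)) to the product space $X^r$ with the product map $\Phi = (\varphi,\dots,\varphi)$ and the product function $F(x_1,\dots,x_r)=f(x_1)+\dots+f(x_r)$, but \emph{restricted to the closed invariant subset $A := (f^\lambda)^r \subset X^r$}. First I would record that $\Phi$ maps $A$ into itself (since $f$ is a Lyapunov function for $\varphi$, each coordinate stays in $f^\lambda$) and verify the hypotheses of Theorem~\ref{TheoremTCLS} for the restricted data $(\Phi|_A, F|_A)$ on $A$. Path-connectedness of $A$ follows from that of $f^\lambda$; that $A$ is an ANR follows because $f^\lambda$ is a closed subset of the ANR $X$, hence an ANR, and finite products of ANRs are ANRs; $F|_A$ is bounded below because $f$ is; discreteness of $F(\Fix \Phi|_A)$ follows from discreteness of $f(\Fix\varphi)$ since $F$ takes values that are sums of $r$ elements of the discrete set $f(\Fix\varphi)\cap(-\infty,\lambda]$, which is discrete in $\RR$. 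Condition~(D) for $(\Phi|_A, F|_A)$ is exactly Lemma~\ref{LemmaCondDsubspace}.b), once we check the closure condition \eqref{EqCondDsubset}: here $\overline{A}\cap \Fix\Phi = (f^\lambda)^r \cap \Fix\Phi = A\cap\Fix\Phi \subset A$ since $A=(f^\lambda)^r$ is already closed, so \eqref{EqCondDsubset} holds trivially.

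Next I would invoke the key comparison between $\TC_r(f^\lambda)$ and the sequential subspace complexity $\TC_{r,f^\lambda}(A) = \TC_{r,f^\lambda}\big((f^\lambda)^r\big)$: taking $A$ to be the whole ambient $r$-fold product, a continuous sequential motion planner on an open subset $U\subset (f^\lambda)^r$ is precisely the data entering the definition of $\TC_r(f^\lambda)$, so $\TC_r(f^\lambda) = \TC_{r,f^\lambda}\big((f^\lambda)^r\big)$. Applying Theorem~\ref{TheoremTCLS}.b) to the index function $\TC_{r,f^\lambda}(\cdot)$ on $\mathfrak P\big((f^\lambda)^r\big)$ — noting $f^\lambda$ is metrizable, hence normal, so Proposition~\ref{PropTCrindex}.d) applies and $\C_\mu(\Phi|_A,F|_A)$ being finite is needed; this finiteness is \emph{not} part of the hypotheses, so I would instead apply part~a) of Theorem~\ref{TheoremTCLS} and then bound each term, or better, combine part~a) with the decomposition below directly. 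Using part~a) gives
$$\TC_r(f^\lambda) = \TC_{r,f^\lambda}\big((f^\lambda)^r\big) \leq \sum_{\mu\in(-\infty,r\lambda]}\TC_{r,f^\lambda}\big(F_\mu \cap \Fix\Phi|_A\big).$$

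The final step is to unwind $F_\mu \cap \Fix\Phi|_A$. Since $\Fix\Phi = (\Fix\varphi)^r$ and the $\mu$-level set of $F$ on $(f^\lambda)^r$ decomposes, we have
$$F_\mu \cap \Fix\Phi|_A = \bigcup \Big\{ \prod_{i=1}^r \big(f_{\mu_i}\cap\Fix\varphi\cap f^\lambda\big) \ \Big|\ \mu_1,\dots,\mu_r\in(-\infty,\lambda],\ \textstyle\sum_{i=1}^r\mu_i = \mu\Big\},$$
where the condition $\mu_i\le\lambda$ comes from the constraint that each coordinate lies in $f^\lambda$; and since $f(\Fix\varphi)$ is discrete, only finitely many tuples $(\mu_1,\dots,\mu_r)$ contribute (those with all $\mu_i$ critical values $\le\lambda$ summing to $\mu$), so this is a \emph{finite} union. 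Note $f_{\mu_i}\cap\Fix\varphi\cap f^\lambda = f_{\mu_i}\cap\Fix\varphi$ whenever $\mu_i\le\lambda$. Then monotonicity and subadditivity of the index function $\TC_{r,f^\lambda}$ (Proposition~\ref{PropTCrindex}.a),b)) bound $\TC_{r,f^\lambda}(F_\mu\cap\Fix\Phi|_A)$ by the max over these finitely many tuples of $\TC_{r,f^\lambda}\big(\prod_{i=1}^r(f_{\mu_i}\cap\Fix\varphi)\big)$ — actually to get the clean ``max'' rather than a sum one uses that these products are pairwise disjoint closed subsets (distinct tuples give disjoint products, as a point determines its coordinatewise $f$-values), so Proposition~\ref{PropTCrindex}.d) applies iteratively. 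Substituting into the displayed inequality yields the claimed bound. The main obstacle I anticipate is the bookkeeping in this last step: correctly justifying the disjointness of the product pieces over distinct index tuples and the finiteness of the index set (both hinging on discreteness of $f(\Fix\varphi)$), and making sure the closedness hypotheses needed for Proposition~\ref{PropTCrindex}.d) are in force — each $f_{\mu_i}\cap\Fix\varphi$ is closed in $X$ since $\Fix\varphi$ is closed (as $X$ is Hausdorff) and $f$ is continuous, so each product is closed in $(f^\lambda)^r$.
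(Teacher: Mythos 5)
Your proposal is correct and takes essentially the same route as the paper: restrict $\Phi=\varphi^r$ and $F$ to $(f^\lambda)^r$ with ambient space $Y=f^\lambda$, obtain condition (D) for the restriction from Lemma~\ref{LemmaCondDsubspace}.b) using that $(f^\lambda)^r$ is closed and $\Phi$-invariant, identify $\TC_r(f^\lambda)=\TC_{r,f^\lambda}\big((f^\lambda)^r\big)=\TC_{r,f^\lambda}\big((F|_{(f^\lambda)^r})^{r\lambda}\big)$, apply Theorem~\ref{TheoremTCLS}.a) (the paper also uses part a), for exactly the reason you identify), and finally decompose $F_\mu\cap\Fix\Phi\cap(f^\lambda)^r$ into finitely many pairwise disjoint closed products $\prod_{i=1}^r(f_{\mu_i}\cap\Fix\varphi)$ and iterate Proposition~\ref{PropTCrindex}.d) to replace the union by a maximum. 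The only slip is your parenthetical justification that $f^\lambda$ is an ANR because it is a closed subset of the ANR $X$ (closed subspaces of ANRs need not be ANRs); the paper's proof applies Theorem~\ref{TheoremTCLS} with ambient space $f^\lambda$ without addressing this point at all, so it does not distinguish your argument from the paper's.
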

\begin{proof}
Choose $\lambda \in \RR$ and put $Y:= f^\lambda \subset X$. 
Since $f$ is a Lyapunov function for $\varphi$, it holds that $\varphi(Y)\subset Y$. Let again $\Phi:=\varphi^r\colon X^r \to X^r$, such that $F$ is a Lyapunov function for $\Phi$. Then $\Phi(Y^r) \subset Y^r$ and since $Y^r$ is closed, we derive from Lemma \ref{LemmaCondDsubspace}.b) that $(\Phi|_{Y^r},F|_{Y^r})$ satisfies condition (D). 
By definition of $F$, it holds that $Y^r \subset F^{r\lambda}$. In terms of the restriction $F|_{Y^r}\colon Y^r\to \RR$, this yields that $Y^r = (F|_{Y^r})^{r\lambda}$, so we derive from Theorem \ref{TheoremTCLS}.a) that
\begin{align*}
\TC_r(Y)&= \TC_{r,Y}(Y^r) = \TC_{r,Y}((F|_{Y^r})^{r\lambda}) \leq \sum_{\mu \in(-\infty,r\lambda]} \TC_{r,Y}((F|_{Y^r})_\mu \cap \Fix (\Phi|_{Y^r})) \\
&= \sum_{\mu \in(-\infty,r\lambda]} \TC_{r,Y}(F_\mu \cap Y^r \cap \Fix \Phi).
\end{align*}
Apparently, since $f$ is bounded from below and since $\Fix \Phi=(\Fix \varphi)^r$, it holds for each $\mu \in \RR$ that
\begin{align*}
F_\mu \cap Y^r\cap \Fix \Phi 
&= \bigcup_{\stackrel{\mu_1,\dots,\mu_r\in (-\infty,r\lambda]}{\mu_1+\dots+\mu_r=\mu}} \prod_{i=1}^r (f_{\mu_i}\cap Y\cap \Fix \varphi) = \bigcup_{\stackrel{\mu_1,\dots,\mu_r\in (-\infty,\lambda]}{\mu_1+\dots+\mu_r=\mu}} \prod_{i=1}^r (f_{\mu_i}\cap \Fix \Phi).
\end{align*}
By assumption on $f$, the latter is a union of finitely many pairwise disjoint closed subsets of $X^r$. Thus, iterating Proposition \ref{PropTCrindex}.d) in the above computation yields
\begin{align*}
	\TC_r(Y) &\leq \sum_{\mu \in (-\infty,r\lambda]} \TC_{r,Y}\Big(\bigcup_{\stackrel{\mu_1,\dots,\mu_r \in (-\infty,\lambda]}{\mu_1+\dots+\mu_r=\mu}} \prod_{i=1}^r (f_{\mu_i} \cap \Fix \varphi) \Big) \\
	&= \sum_{\mu \in (-\infty,r\lambda]} \max\Big\{ \TC_{r,Y}\Big(\prod_{i=1}^r (f_{\mu_i} \cap \Fix \varphi \Big) \ \Big| \ \mu_1,\dots,\mu_r \in (-\infty,\lambda], \ \sum_{i=1}^r\mu_i=\mu \Big\},
\end{align*}
which we wanted to show. 
\end{proof}

\begin{cor}
\label{CorTCAbssubMax}
 Let $(X,g)$ be a complete Riemannian Banach manifold, let $f \in C^1(X)$ be bounded from below and let $\lambda \in \RR$. If $(f,g)$ satisfies the Palais-Smale condition and if $f(\Crit f)$ is discrete, then
\begin{equation*}
\TC_{r}(f^{\lambda}) \leq \sum_{\mu \in (-\infty,r\lambda]} \max \Big\{ \TC_{r,f^\lambda}(\Crit_{\mu_1} f \times \dots \times \Crit_{\mu_r} f) \ \Big|\ \mu_1,\dots,\mu_r \in (-\infty,\lambda], \, \sum_{i=1}^r \mu_i =\mu \Big\}.
\end{equation*}
\end{cor}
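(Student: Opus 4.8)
The plan is to derive the corollary from Theorem \ref{TheoremTCLSprod} by taking for $\varphi$ the time-$1$ map of a negative pseudo-gradient flow, in complete analogy with the proof of Corollary \ref{CorTCLSdiff}. First I would invoke \cite[Theorem 4.4]{PalaisLuster} (see also \cite[Lemma II.3.9]{Struwe}) to produce a pseudo-gradient vector field $V\colon X \to TX$ for $f$ in the sense of Definition \ref{DefPseudoGrad}. Since $(X,g)$ is complete, the flow of $-V$ is complete by Remark \ref{RemarkFlowCompl}.(2), so I may let $\phi_1\colon X \to X$ denote its time-$1$ map. Restricting the flow to $X \times [0,1]$ gives a homotopy from $\id_X$ to $\phi_1$, hence $\phi_1 \simeq \id_X$, and Remark \ref{RemarkFlowCompl}.(3) yields $\Fix \phi_1 = \sing(V) = \Crit f$.

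Next I would check the remaining hypotheses of Theorem \ref{TheoremTCLSprod} for $\varphi = \phi_1$. The manifold $X$ is paracompact by \cite{NomizuOzeki}, hence an ANR by \cite[Theorem 5]{PalaisInf}. Since $f$ is bounded from below and $(f,g)$ satisfies the Palais-Smale condition, Theorem \ref{TheoremPSDPseudo} shows that $f$ is a Lyapunov function for $\phi_1$ and that $(\phi_1,f)$ satisfies condition (D); moreover $f(\Fix \phi_1) = f(\Crit f)$ is discrete by assumption. Finally, $\TC_r(f^\lambda)$ only makes sense when $f^\lambda$ is path-connected; such an $f^\lambda$ then lies in a single path component $X_0$ of $X$, and $X_0$ is again a complete Riemannian Banach manifold and an ANR on which $f|_{X_0}$ inherits all of the above properties, so I would pass to $X_0$ in order to assume in addition that $X$ itself is path-connected.

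With all hypotheses in place, Theorem \ref{TheoremTCLSprod} applied to $\varphi = \phi_1$ gives
$$\TC_r(f^\lambda) \leq \sum_{\mu \in (-\infty,r\lambda]} \max \Big\{\TC_{r,f^\lambda}\Big(\prod_{i=1}^r(f_{\mu_i}\cap \Fix \phi_1)\Big) \ \Big| \ \mu_1,\dots,\mu_r \in (-\infty,\lambda], \ \sum_{i=1}^r \mu_i=\mu \Big\},$$
and since $f_{\mu_i}\cap \Fix \phi_1 = f_{\mu_i}\cap \Crit f = \Crit_{\mu_i} f$ for every $\mu_i$, the right-hand side is exactly the bound claimed in the corollary. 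I do not expect a genuine obstacle here: the analytic core — extracting condition (D) from the Palais-Smale condition for a pseudo-gradient flow — is already packaged in Theorem \ref{TheoremPSDPseudo}, and the reorganization of product level sets is done inside the proof of Theorem \ref{TheoremTCLSprod}. The only steps that need a moment's care are the identification $\Fix \phi_1 = \Crit f$ and the reduction to the path-connected case.
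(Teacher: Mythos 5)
Your proposal is correct and matches the paper's own proof, which simply notes that the corollary follows from Theorem \ref{TheoremTCLSprod} in the same way that Corollary \ref{CorTCLSdiff} follows from Theorem \ref{TheoremTCLS}: take the time-$1$ map of a negative pseudo-gradient flow, invoke Theorem \ref{TheoremPSDPseudo} for condition (D), and use $\Fix \phi_1 = \Crit f$. Your additional remarks on paracompactness, the ANR property, and the reduction to a path-connected component are exactly the details the paper leaves implicit.
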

\begin{proof}
	This follows from Theorem \ref{TheoremTCLSprod} in the same way that Corollary \ref{CorTCLSdiff}.a) follows from Theorem \ref{TheoremTCLS}.
\end{proof}


\section{Sequential parametrized subspace complexities as index functions}
In the same sense that Rudyak generalized the notion of topological complexity to sequential topological complexities, the notion of parametrized topological complexity as introduced by  Cohen, Farber and Weinberger in \cite{cohen2021topology} was generalized to sequential parametrized topological complexities by Farber and Paul in \cite{FarberPaulI}. It has since been further studied by the same authors in \cite{FarberPaulII} and by Farber and Oprea in \cite{FarberOpreaSeqParam}.  \medskip

\emph{Throughout this section let $r \in \NN$ with $r \geq 2$ and let $p\colon E \to X$ be a fibration with path-connected fibers. We further let $E^I_X$, $E^r_X$ and $\Pi_r$ be defined as in the introduction.}\medskip 

In analogy with the line of argument of section \ref{SectionTCrLS}, we want to construct an index function on $E^r_X$ out of subspace versions of sequential parametrized TCs. 

\begin{definition}
\label{DefParamSubspaceComp}
 \begin{enumerate}[a)]
 \item Let $A \subset E^r_X$. A local section $s\colon A \to E^I_X$ of $\Pi_r$ is called a \emph{sequential parametrized motion planner on $A$} .
\item The \emph{$r$-th sequential parametrized subspace complexity of $A$}, denoted by $\TC_{r,p}(A)$, is given as the minimal value of $n \in \NN$ for which there exist open subsets $U_1,U_2,\dots,U_n \subset E^r_X$ with $A \subset \bigcup_{i=1}^n U_i$, such that there exists a continuous sequential parametrized motion planner  $s_i\colon U_i\to E_X^I$ for each $i \in \{1,2,\dots,n\}$. In case there is no such $n$, we put $\TC_{r,X}(A):= \infty$ .
 \end{enumerate}
\end{definition}

\begin{remark}
\begin{enumerate}[(1)]
	\item Let $B\subset E$ and consider its $r$-fold fiber product $B_X^r \subset E^r_X$.
    Assume that the restriction $p|_{B}\colon B\to X $ is again a fibration.
    Similarly to the unparametrized case, see Remark \ref{remark_subspace_vs_absolute_tc}.(1), we note that one needs to distinguish between the $r$-th sequential parametrized subspace complexity $\TC_{r,p}(B_X^r)$ and the value of 
    $\TC_r[p|_B\colon B\to X]$.
   
    As an example, let $X=M$ be a closed manifold and consider its tangent bundle $p\colon TM\to M$.
    Since the fibers of $p$ are vector spaces, one obtains that $\TC_{r}[p\colon TM\to M] = 1$, see \cite[Proposition 4.5]{cohen2021topology}. 
    Choose a Riemannian metric $g$ on $M$ and let $UM$ be its unit tangent bundle.
    The restriction $p|_{UM}\colon UM\to M$ is a fibration as well.
    Restricting the global sequential parametrized motion planner of the tangent bundle shows that $\TC_{r,p}(UM_M^r) = 1$.
    However, $\TC_{r}[p|_{UM}\colon UM\to M]\geq \TC_r(\mathbb{S}^{\mathrm{dim}(M)-1})$ since its fiber is the sphere $\mathbb{S}^{\mathrm{dim}(M)-1}$, see \cite{FarberPaulI}.
    It is well-known that $\TC_r(\mathbb{S}^{\mathrm{dim}(M)-1}) \geq 2$, so the sequential parametrized subspace complexity differs from the intrinsic sequential parametrized topological complexity of $p|_{B}$.
        \item As elaborated upon in the introductions of \cite{cohen2021topology} and \cite{FarberPaulI}, sequential parametrized topological complexity is motivated by robot motion planning inside a workspace which depends on certain parameters, e.g. the positions of certain obstacles on a warehouse floor. Here, the selection of a parameter corresponds to the selection of a fiber of $p\colon E \to X$, in which motion planning algorithms are carried out. In complete analogy with sequential subspace complexities, see Remark \ref{remark_subspace_vs_absolute_tc}.(2), sequential parametrized subspace complexities correspond to the selection of a subset of $E^r_X$, i.e. a certain set of $r$-tuples of points, such that all points in a tuple lie in the same fiber of $p$, instead of considering motion planning algorithms covering all of $E^r_X$. 
\end{enumerate} 
\end{remark}
Throughout the following we put 
$$ q_r\colon E_X^r \to X,\qquad  q_r(u_1,\dots,u_r) := p(u_1).$$
More abstractly, $q_r$ is given as the pullback fibration
$q_r=\Delta_r^*p^r$,
where $\Delta_r\colon  X \to X^r$, $\Delta(x)=(x,x,\dots,x)$ and where $p^r\colon E^r \to X^r$ denotes the $r$-fold cartesian product of $p$ with itself. In particular, $q_r$ is itself a fibration. \medskip

The following lemma is a straightforward analogue of Lemma \ref{LemmaTCrhom} with respect to parametrized motion planning. 

\begin{lemma}
\label{LemmaPTCrhom}
 Assume that $E$ and $X$ are Hausdorff spaces and let $d_E\colon E^r_X \to E^r_X$, $d_E(u_1,u_2,\dots,u_r)=(u_1,u_1,\dots,u_1)$. 
 Let $A \subset E^r_X$ be a subset. The following statements are equivalent:
\begin{enumerate}
	\item $A$ admits a continuous sequential parametrized motion planner. 
 	\item There exists a deformation $h\colon A \times [0,1] \to E^r_X$ with $h(a,1)=d_E(a)$ for each $a \in A$, such that 
 	$$q_r(h(a,t))=q_r(a) \qquad \forall a \in A, \ t \in [0,1].$$
\end{enumerate} \end{lemma}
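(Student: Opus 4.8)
The strategy is to mimic the proof of Lemma \ref{LemmaTCrhom} verbatim, inserting at every stage the bookkeeping needed to ensure that the constructed paths stay inside a single fiber of $p$, i.e.\ that the map $q_r$ is preserved along the homotopy. The key point is that a local section $s\colon A \to E^I_X$ of $\Pi_r$ takes values in $E^I_X$, the space of paths in $E$ that are \emph{vertical} for $p$; so the coordinate functions built from $s$ automatically have constant $p$-image, which is exactly what condition $q_r(h(a,t))=q_r(a)$ demands.

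\textbf{Direction (1) $\Rightarrow$ (2).} Given a continuous sequential parametrized motion planner $s\colon A \to E^I_X$, I would define, in exact analogy with the first half of the proof of Lemma \ref{LemmaTCrhom},
\begin{align*}
&h\colon A \times [0,1] \to E^r_X, \\
&h(a,t)= \Big(s(a)(0),\,s(a)\big(\tfrac{1-t}{r-1}\big),\,s(a)\big(\tfrac{2(1-t)}{r-1}\big),\dots,s(a)\big(\tfrac{(r-2)(1-t)}{r-1}\big),\,s(a)(1-t)\Big).
\end{align*}
At $t=0$ this is $\Pi_r(s(a))=a$, so $h$ is a deformation of $A$; at $t=1$ every coordinate equals $s(a)(0)$, i.e.\ $h(a,1)=d_E(a)$. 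One must first check that $h$ actually lands in $E^r_X$ rather than merely in $E^r$: since $s(a)\in E^I_X$, all points $s(a)(t')$ share the same value of $p$, namely $p(s(a)(0))=q_r(a)$, so in particular all $r$ coordinates of $h(a,t)$ have equal $p$-image and $h(a,t)\in E^r_X$ with $q_r(h(a,t))=q_r(a)$. This is precisely the extra clause in (2). Continuity of $h$ follows from continuity of $s$ and of the evaluation maps exactly as before.

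\textbf{Direction (2) $\Rightarrow$ (1).} Conversely, given such a fiberwise deformation $h$ with components $h(a,t)=(h_0(a,t),\dots,h_{r-1}(a,t))$ and $h(a,1)=d_E(a)$, I would define $s\colon A \to PE$ by the same piecewise formula as in \eqref{EqTCrhom}, namely on the $j$-th subinterval $[\tfrac{j-1}{r-1},\tfrac{j}{r-1}]$ concatenating the path $t\mapsto h_{j-1}(a,\cdot)$ run forward and $t\mapsto h_j(a,\cdot)$ run backward (using $h_j(a,1)=h_0(a,1)$ to make the pieces match at the breakpoints). Then $s(a)$ is a well-defined continuous path in $E$ with $\Pi_r(s(a))=a$. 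The only new verification is that $s(a)\in E^I_X$: each $h_i(a,t)$ has $p$-image equal to $q_r(h(a,t))=q_r(a)$ by the hypothesis of (2), so $p\circ s(a)$ is constant, hence $s(a)$ lies in $E^I_X$. Continuity of $s$ follows from continuity of $h$ together with the Hausdorff hypothesis on $E$ via the gluing lemma \cite[Theorem VII.2.4]{Bredon}, exactly as in the proof of Lemma \ref{LemmaTCrhom}.

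\textbf{Main obstacle.} There is essentially no genuine obstacle here; the content of the lemma is bookkeeping, and the only subtlety worth flagging is the codomain check — confirming at each step that the constructed objects live in $E^r_X$ (resp.\ $E^I_X$) and not merely in $E^r$ (resp.\ $PE$) — which is what distinguishes the parametrized statement from its non-parametrized precursor. The continuity arguments for the piecewise-defined section in direction (2) $\Rightarrow$ (1) are the most technical part, but they are identical to those already carried out for Lemma \ref{LemmaTCrhom} and invoke the same reference \cite[Theorem VII.2.4]{Bredon}.
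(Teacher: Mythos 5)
Your proposal is correct and follows essentially the same route as the paper's proof: both directions reuse the formulas from Lemma \ref{LemmaTCrhom} verbatim, with the only added content being the verification that the constructed homotopy preserves $q_r$ (respectively that the constructed section lands in $E^I_X$), which is exactly how the paper argues. The continuity argument via the Hausdorff hypothesis and the gluing lemma also matches the paper's.
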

\begin{proof}
(1) $\Rightarrow$ (2):\enskip  One constructs a deformation by the same formula as in the proof of the implication ``(1) $\Rightarrow$ (2)'' of Lemma \ref{LemmaTCrhom} and checks that it has the desired properties. \medskip

(2) $\Rightarrow$ (1): \enskip Let $h\colon A \times [0,1] \to E^r_X$ be a fiber-preserving deformation with respect to $q_r$. We further let $h_0,\dots,h_{r-1}\colon A \times [0,1]\to E$ and  $s\colon A\to PE$ be defined as in \eqref{EqTCrhom}.  Since $E$ is Hausdorff, $E^r_X$ is a Hausdorff space, hence $s$ is continuous.  Since $h$ is fiber-preserving with respect to $q_r$, it holds that  $p(h_j(a,t))=p(h_j(a,0))= p(h_0(a,0))$ for all $a \in A$, $t \in [0,1]$ and $j \in \{0,1,\dots,r-1\}$. Consequently
$$p ( (s(a))(t)) = p(h_0(a,0)) \quad \forall a \in A, \ t \in [0,1],$$ 
so  $s(a) \in E^I_X$ for all $a \in A$. Thus, $s$ is a continuous sequential parametrized motion planner. 
\end{proof}

In section 2 we have restricted ourselves to considering ANRs when considering sequential subspace complexities, as we could establish the continuity property of an index function under this assumption. Likewise, we need to make a similar assumption on $p$ to ensure that sequential \emph{parametrized} subspace complexities will have the continuity property. For this purpose, we next introduce an analogue to the definition of an ANR for fibrations that was originally defined by Yagasaki in \cite{Yagasaki} and that can be seen as an ANR-type property for fiberwise spaces.

\begin{definition}
\label{DefANFR}
	Let $Q$ and $B$ be topological spaces and let $f\colon Q \to B$ be continuous. We call $f$ an \emph{absolute neighborhood fiber retract (ANFR)} if $Q$ and $B$ are metrizable and if $f$ satisfies the following condition: \medskip
	
	Let $T$ be a metrizable space, let $q\colon T \to B$ be continuous and let $A\subset T$ be closed. Then for any continuous map $g\colon A \to Q$ with $q|_A=f \circ g$ there are an open neighborhood $U\subset T$ of $A$ and a continuous map $G\colon U \to Q$ with $G|_A=g$ and $q|_U = f \circ G$.
\end{definition}

\begin{remark}
The notion of an ANFR from Definition \ref{DefANFR} slightly differs from the one introduced in \cite{Yagasaki}. In his terminology, our definition is the one of an absolute neighborhood fiber \emph{extensor}. However, Yagasaki has shown in \cite[Proposition 1.1]{Yagasaki} that both notions are indeed equivalent.
\end{remark}

The following result yields a helpful criterion to show the ANFR property for fibrations.

\begin{lemma}[{\cite[1.2.(iii)]{Yagasaki}}]
\label{LemmaANFR}
Let $Q$ and $B$ be topologial spaces and let $f\colon Q \to B$ be a fibration. If $Q$ and $B$ are ANRs, then $f$ is an ANFR.
\end{lemma}

Our concrete aim is to show that $q_r\colon E^r_X \to X$ is an ANFR for which we need the following auxiliary lemma. 

\begin{lemma}
\label{LemmaANRpreim}
Let $Q$ and $B$ be ANRs and let $f\colon Q \to B$ be a fibration. If $C \subset B$ is an ANR, then $f^{-1}(C)$ is an ANR. 	
\end{lemma}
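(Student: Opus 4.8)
\textbf{Proof plan for Lemma \ref{LemmaANRpreim}.}

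The plan is to reduce the statement to the characterization of ANRs as absolute neighborhood extensors (ANEs), which is the definition used in this paper (Definition \ref{DefANR}), and to exploit the fact that a fibration restricted over an ANR subspace still has a good extension property. First I would set up notation: write $P := f^{-1}(C)$ with the inclusion $j\colon P \hookrightarrow Q$, and note that $P$ is metrizable since it is a subspace of the metrizable space $Q$. It remains to verify the extension property: given a metrizable space $Z$, a closed subset $A\subset Z$, and a continuous map $g\colon A \to P$, I must produce an open neighborhood $U$ of $A$ in $Z$ and a continuous extension $G\colon U \to P$ of $g$.

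The key steps, in order, are as follows. Since $Q$ is an ANR, viewing $g$ as a map $A\to Q$ there is an open neighborhood $U_0 \supset A$ in $Z$ and a continuous extension $\widetilde{G}\colon U_0 \to Q$ of $g$; however $\widetilde{G}$ need not land in $P = f^{-1}(C)$, so its composite $f\circ \widetilde{G}\colon U_0 \to B$ need not map into $C$. To fix this, consider the map $f\circ g\colon A \to C$, which does land in $C$ because $g$ lands in $P$. Since $C$ is an ANR, there is an open neighborhood $U_1\subset Z$ of $A$ and a continuous map $h\colon U_1 \to C$ extending $f\circ g$. Now on $U_0\cap U_1$ I have two pieces of data: a map $\widetilde{G}$ into $Q$ agreeing with $g$ on $A$, and a map $h$ into $C\subset B$ agreeing with $f\circ g$ on $A$, but $f\circ \widetilde{G}$ and $h$ need not agree off $A$. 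The fibration property of $f$ is what reconciles them: on $W := U_0 \cap U_1$, set $F_0 := f\circ\widetilde{G}\colon W \to B$, which is homotopic to $h$ \emph{rel} $A$ is too much to hope for directly, so instead I would use the homotopy lifting property as follows. Build a homotopy $H\colon W\times[0,1]\to B$ with $H_0 = f\circ\widetilde{G}$, $H_1 = h$ — e.g. this exists after shrinking $W$, using that both maps extend $f\circ g$ and that the target is locally contractible near $C$; more robustly, one works in a neighborhood where one can linearly interpolate after embedding $B$ suitably, or one appeals directly to the standard fact that ANFRs/fibrations over ANR bases have this adjustment property. Lifting $H$ through the fibration $f$ starting from $\widetilde{G}$ gives $\widehat{H}\colon W\times[0,1]\to Q$ with $\widehat{H}_0 = \widetilde{G}$ and $f\circ\widehat{H}_1 = h$, so $G := \widehat{H}_1$ maps $W$ into $f^{-1}(C) = P$; finally one arranges, by shrinking $W$ once more to a neighborhood $U$ of $A$ on which the homotopy can be taken constant on $A$ (using that $A$ is closed and the spaces are metrizable, hence the relevant cofibration-type argument applies), that $G|_A = g$. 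This $U$ and $G$ witness the ANE property for $P$, so $P$ is an ANR.

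The main obstacle I anticipate is precisely the rel-$A$ bookkeeping in the last step: producing the homotopy $H$ from $f\circ\widetilde{G}$ to $h$ that is stationary on $A$, and then ensuring the lifted homotopy $\widehat{H}$ is stationary on $A$ as well so that $G$ genuinely extends $g$ rather than merely some perturbation of it. The clean way around this is to invoke a known neighborhood-uniqueness statement for fibrations — for instance, that $f$ restricted over an ANR is an ANFR in the sense of Definition \ref{DefANFR} after base change, or to use Yagasaki's results (Lemma \ref{LemmaANFR} is stated just below) applied to the pullback of $f$ along the inclusion $C\hookrightarrow B$: that pullback is a fibration over the ANR $C$ with total space $f^{-1}(C)$, and its ANFR property together with the ANR-ness of $C$ yields that $f^{-1}(C)$ is an ANR directly. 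In the write-up I would likely phrase the argument in this pullback language, since it isolates the fibration-theoretic input cleanly and avoids a hands-on homotopy adjustment; the interpolation/rel-$A$ subtlety is then absorbed into the cited ANFR machinery rather than reproved here.
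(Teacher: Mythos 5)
Your preferred route (the final paragraph, deferring to the ANFR machinery) is in spirit the paper's own argument, but as written it contains a circular step. You propose to apply Yagasaki's criteria (Lemma \ref{LemmaANFR}) to the pullback $f'\colon f^{-1}(C)\to C$ of $f$ along $C\hookrightarrow B$ and then use ``its ANFR property together with the ANR-ness of $C$''. However, Lemma \ref{LemmaANFR}.a) applied to $f'$ requires the total space $f^{-1}(C)$ to be an ANR --- precisely what you are trying to prove --- and part b) does not apply, since $f$ is only assumed to be a fibration, not a fiber bundle with ANR fiber. The repair, which is exactly what the paper does, is to apply Lemma \ref{LemmaANFR}.a) to $f\colon Q\to B$ itself (legitimate, since $Q$ and $B$ are ANRs) and never to the pullback: given a metrizable $T$, a closed $A\subset T$ and $g\colon A\to f^{-1}(C)$, first use the ANR property of $C$ to extend $f\circ g$ to a map $q\colon V\to C\subset B$ on an open neighborhood $V$ of $A$; then feed the data $(V,A,q,g)$ into the ANFR property of $f$ (Definition \ref{DefANFR}) to obtain an open $U$ with $A\subset U\subset V$ and $G\colon U\to Q$ satisfying $G|_A=g$ and $f\circ G=q|_U$. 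Since $q(U)\subset C$, the lift $G$ automatically takes values in $f^{-1}(C)$, and $G\colon U\to f^{-1}(C)$ is the required extension. (Equivalently, one can first prove that the restriction of an ANFR over a subspace of the base is again an ANFR --- but that proof is exactly the displacement just described; it is not an instance of Lemma \ref{LemmaANFR}.)

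Your first, hands-on route (extend $g$ into $Q$, extend $f\circ g$ into $C$, reconcile via the homotopy lifting property) is both unnecessary and genuinely incomplete at the point you yourself flag: to make $G$ an honest extension of $g$ you need a homotopy from $f\circ\widetilde G$ to $h$ that is stationary on $A$ \emph{and} a lift of it that is again stationary on $A$. The first half is a standard ANR fact (two maps into an ANR agreeing on a closed set are homotopic rel that set on some neighborhood), but the relative lifting step requires a cofibration-type hypothesis on the pair $(W,A)$ that an arbitrary closed subset of a metrizable space does not provide, so this would demand real additional work. The ANFR formulation absorbs all of this, which is why the paper's (and your corrected) argument is only a few lines.
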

\begin{proof}
Put $D:= f^{-1}(C)$. Let $T$ be a metrizable space, let $A \subset T$ be closed and let $g\colon  A \to D$ be continuous. Since $(f \circ g)(A) \subset C$ and $C$ is an ANR, there exist an open neighborhood $V \subset T$ of $A$ and a continuous map $q\colon V \to B$ with $q|_{A}=f \circ g$ and $q(V) \subset C$. As an open subset of $T$, $V$ is again metrizable. Since $Q$ and $B$ are ANRs, $p$ is an ANFR by Lemma \ref{LemmaANFR}. Thus, there exist a neighborhood $U \subset V$ of $A$, which is open in $T$ and a continuous map $G\colon U \to Q$, such that
$$G|_{A}=g \qquad \text{and} \qquad q|_U= f \circ G.$$
Hence, $(f \circ G)(U) \subset q(U) \subset C$, which shows that $G(U) \subset D$. So we can restrict the codomain of $G$ to $G\colon  U \to D$, which yields a continuous extension of $g$. Hence, $D$ is an ANR. 
\end{proof}

\begin{prop}
\label{PropANR}
If $E$ and $X$ are ANRs, then $E^I_X$ and $E^r_X$ are ANRs.		
\end{prop}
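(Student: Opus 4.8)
If $E$ and $X$ are ANRs, then $E^I_X$ and $E^r_X$ are ANRs.

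Let me sketch a proof. The space $E^r_X$ is the preimage $(p^r)^{-1}(\Delta_{r,X})$ inside $E^r$, while $E^I_X$ is the preimage of the diagonal-constant-path subset inside the path space $PE$. The strategy is to realize each of these as the preimage of an ANR under a fibration whose total space is an ANR, and then invoke Lemma \ref{LemmaANRpreim}.

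First I would handle $E^r_X$. Since $E$ and $X$ are ANRs, so are $E^r$ and $X^r$ (finite products of ANRs are ANRs, e.g. by \cite[Theorem A.6]{CLOT} or the product theorem for ANRs in \cite{HuRetr}), and $p^r\colon E^r \to X^r$ is a fibration as a product of fibrations. The diagonal $\Delta_{r,X}\subset X^r$ is homeomorphic to $X$, hence an ANR, and it is a closed subset of $X^r$ since $X$ is Hausdorff (being metrizable). By definition, $E^r_X = (p^r)^{-1}(\Delta_{r,X})$, so Lemma \ref{LemmaANRpreim} applied to the fibration $p^r$ and the ANR $\Delta_{r,X}\subset X^r$ yields that $E^r_X$ is an ANR.

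For $E^I_X$ I would argue similarly, using the path-space fibration for $p$. Consider the map $p_*\colon PE \to PX$, $p_*(\gamma) = p\circ \gamma$, which is a fibration since $p$ is (composition with a fibration induces a fibration on path spaces; this is standard, and $PE$, $PX$ are ANRs whenever $E$, $X$ are ANRs by the result of \cite{PalaisInf} on path spaces of ANRs, or by Milnor's theorem that the path space of an ANR has the homotopy type of a CW complex together with metrizability — alternatively one can cite \cite[Proposition 1.1]{Yagasaki}-type arguments). The subset $C\subset PX$ of constant paths is homeomorphic to $X$, hence an ANR, and it is closed in $PX$ since $X$ is Hausdorff. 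Unwinding the definitions, $E^I_X = \{\gamma \in PE \mid p(\gamma(t)) = p(\gamma(0)) \ \forall t\} = p_*^{-1}(C)$. So Lemma \ref{LemmaANRpreim}, applied to the fibration $p_*\colon PE \to PX$ and the ANR $C\subset PX$, gives that $E^I_X$ is an ANR, completing the proof.

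The main obstacle I anticipate is justifying that the relevant path spaces are ANRs and that $p_*\colon PE \to PX$ is a fibration. For the fibration property this is a well-known consequence of the exponential law and the fact that $p$ has the homotopy lifting property with respect to all spaces; for the ANR property of $PE$ and $PX$ one needs that path spaces of metrizable ANRs are again metrizable ANRs, which is where one should invoke the cited infinite-dimensional ANR results (e.g. \cite{PalaisInf}) or, if $E$ and $X$ are more specifically the total and base spaces appearing in applications (manifolds), the paracompact-manifold-implies-ANR statement used earlier in the paper. Once these ingredients are in place, both assertions reduce cleanly to Lemma \ref{LemmaANRpreim}.
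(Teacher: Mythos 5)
Your proof is correct and follows essentially the same route as the paper: both statements are reduced to Lemma \ref{LemmaANRpreim}, applied to the fibration $p^r\colon E^r\to X^r$ over the diagonal $\Delta_{r,X}$ and to the induced fibration $p_\#\colon PE\to PX$ over the subspace of constant paths. The one ingredient you flag as an obstacle, namely that $PE$ and $PX$ are ANRs, is exactly what the paper settles by citing \cite[Theorem VI.2.4]{HuRetr}, so no further argument is needed there.
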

\begin{proof}
	Since $E$ and $X$ are ANRs, $E^r$ and $X^r$ are again ANRs. Moreover, $p^r\colon E^r \to X^r$ is a fibration for which $E_X^r= (p^r)^{-1}(\Delta_{r,X})$. Since $\Delta_{r,X}$ is homeomorphic to $X$, it is an ANR as well. As a special case of Lemma \ref{LemmaANRpreim}, we thus obtain that $E^r_X$ is an ANR. Considering $E^I_X$, we observe that the induced map of path spaces $p_\#\colon  PE \to PX$, $p_\#(\gamma) = p \circ \gamma$,
is a fibration as well and it follows from \cite[Theorem VI.2.4]{HuRetr} that $PE$ and $PX$ are ANRs. By definition of $E^I_X$, it holds that
$E^I_X= p_\#^{-1}(c(X))$, where $c(X) \subset PX$ denotes the space of constant paths in $X$. Since $c(X)$ is homeomorphic to $X$, it is an ANR, so we obtain from Lemma \ref{LemmaANRpreim} that $E^I_X$ is an ANR.
\end{proof}

The following lemma is the analogue of Lemma \ref{LemmaMPextend} for the parametrized setting.

\begin{lemma}
\label{LemmaPMPextend}
Assume that $E$ and $X$ are ANRs and let $A \subset E^r_X$ be a closed subset which admits a continuous sequential parametrized motion planner. Then there exists an open neighborhood $U \subset E^r_X$ of $A$ which admits a continuous sequential parametrized motion planner.
\end{lemma}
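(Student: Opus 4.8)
The plan is to follow the proof of Lemma \ref{LemmaMPextend} almost verbatim, replacing the extension step, which there rested on $X^r$ being an ANR, by an application of the ANFR property of the fibration $q_r\colon E^r_X \to X$. First I would apply Lemma \ref{LemmaPTCrhom} to the hypothesis: since $A$ admits a continuous sequential parametrized motion planner, there is a deformation $h\colon A\times[0,1]\to E^r_X$ with $h(a,1)=d_E(a)$ for all $a\in A$ and $q_r(h(a,t))=q_r(a)$ for all $a\in A$, $t\in[0,1]$. Setting $T:=E^r_X\times[0,1]$ and $Q:=(E^r_X\times\{0,1\})\cup(A\times[0,1])\subset T$, which is closed in $T$ because $A$ is closed in $E^r_X$, I would define $H\colon Q\to E^r_X$ by $H(x,0)=x$, $H(x,1)=d_E(x)$ and $H(x,t)=h(x,t)$ for $x\in A$. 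As in Lemma \ref{LemmaMPextend}, one checks that $H$ is well-defined (the three prescriptions agree on $A\times\{0\}$ and $A\times\{1\}$) and continuous by the pasting lemma for closed sets. Crucially, $H$ respects $q_r$ in the sense that $q_r(H(x,t))=q_r(x)$ for every $(x,t)\in Q$: this is immediate on $E^r_X\times\{0\}$, follows from $q_r\circ d_E=q_r$ on $E^r_X\times\{1\}$, and follows from the fiber-preservation of $h$ on $A\times[0,1]$.

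The main step is to extend $H$ to a neighborhood of $Q$ while keeping it fiber-preserving. Since $E$ and $X$ are ANRs, Proposition \ref{PropANR} gives that $E^r_X$ is an ANR; as $q_r$ is a fibration (it is the pullback $\Delta_r^*p^r$), Lemma \ref{LemmaANFR}.a) shows that $q_r\colon E^r_X\to X$ is an ANFR. I would then apply the defining extension property of an ANFR (Definition \ref{DefANFR}) with the metrizable space $T$, the continuous map $\widetilde q\colon T\to X$, $\widetilde q:=q_r\circ\pr_1$, the closed subset $Q\subset T$, and the map $H\colon Q\to E^r_X$, which satisfies $\widetilde q|_Q=q_r\circ H$ by the previous paragraph. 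This produces an open neighborhood $V\subset T$ of $Q$ and a continuous map $K\colon V\to E^r_X$ with $K|_Q=H$ and $\widetilde q|_V=q_r\circ K$.

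Finally, since $[0,1]$ is compact and $A\times[0,1]\subset V$, the tube lemma yields an open neighborhood $U\subset E^r_X$ of $A$ with $U\times[0,1]\subset V$. Then $K|_{U\times[0,1]}$ is a deformation of $U$: it starts at the inclusion $U\hookrightarrow E^r_X$ since $K(x,0)=H(x,0)=x$, and ends at $d_E$ since $K(x,1)=H(x,1)=d_E(x)$; moreover it is fiber-preserving, as $q_r(K(x,t))=\widetilde q(x,t)=q_r(x)$ for all $(x,t)\in U\times[0,1]$. Since $E$ and $X$ are metrizable, hence Hausdorff, Lemma \ref{LemmaPTCrhom} applies and shows that $U$ admits a continuous sequential parametrized motion planner, which completes the proof. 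I do not expect a serious obstacle here; the only point demanding care is the fiber-preservation bookkeeping --- verifying $\widetilde q|_Q=q_r\circ H$ so that the ANFR property is applicable, and that the extension $K$ remains fiber-preserving so that Lemma \ref{LemmaPTCrhom} can be applied in the reverse direction --- while the rest is routine pasting together with the tube lemma.
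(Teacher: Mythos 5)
Your proposal is correct and follows essentially the same route as the paper: apply Lemma \ref{LemmaPTCrhom} to obtain a fiber-preserving deformation of $A$, extend it over a neighborhood of $Q=(E^r_X\times\{0,1\})\cup(A\times[0,1])$ using that $q_r$ is an ANFR (via Proposition \ref{PropANR} and Lemma \ref{LemmaANFR}.a)), shrink to a tube $U\times[0,1]$, and conclude with Lemma \ref{LemmaPTCrhom} again. The fiber-preservation bookkeeping you highlight is exactly the point the paper's proof also relies on.
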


\begin{proof}
By Proposition \ref{PropANR}, $E^r_X$ is an ANR, so it follows from Lemma \ref{LemmaANFR} that $q_r\colon E^r_X \to X$ is an ANFR. By Lemma \ref{LemmaPTCrhom}, there exists a deformation $h'\colon A \times [0,1] \to E^r_X$ with $h'(a,1)=d_E(a)$ and $q_r(h'(a,t))=q_r(a)$ for all $a \in A$ and $t \in [0,1]$, where $d_E(u_1,u_2,\dots,u_r)=(u_1,u_1,\dots,u_1)$. We put $Q':= (E^r_X\times \{0,1\}) \cup (A \times [0,1])$ and let $H'\colon Q'\times [0,1] \to E^r_X$ be defined as in \eqref{EqDefH}, only replacing $d$ by $d_E$ in the second case. Then $q_r\circ \pr_1|_Q = q_r \circ H'$, where $\pr_1\colon E^r_X \times [0,1]\to E^r_X$ denotes the projection onto the first factor.  Since $q_r$ is an ANFR, there exist an open neighborhood $V' \subset E_X^r \times [0,1]$ of $Q'$ and a continuous $K'\colon V' \to E_X^r$, such that $K'|_{Q'}=H$ and $q_r \circ \pr_1|_{V'}= q_r \circ K'$. 

 Arguing as in the proof of Lemma \ref{LemmaMPextend}, we obtain an open subset $U' \subset E^r_X$ with $ U' \times [0,1] \subset V'$ and $A \subset U'$. Then $K'|_{U'\times[0,1]}$ is a deformation with $K'(u,1)=d_E(u)$ and $(q_r \circ K')(u,t) = q_r(u)$ for all $u \in U'$ and $t \in [0,1]$. The claim then follows from  Lemma \ref{LemmaPTCrhom}.
\end{proof}
\begin{prop}
\label{PropPTCrindex}. 
	\begin{enumerate}[a)]
		\item For all $A \subset B \subset E^r_X$, it holds that $\TC_{r,p}(A) \leq \TC_{r,p}(B)$. 
 \item For all $A,B \subset E^r_X$ it holds that $\TC_{r,p}(A\cup B) \leq \TC_{r,p}(A) + \TC_{r,p}(B)$.
  \item Let $A \subset E^r_X$ be a closed subset. Then there exists an open subset $U\subset E^r_X$ with $A \subset U$ and
  $$\TC_{r,p}(A)=\TC_{r,p}(U).$$
    \item Assume that $E$ is normal. Let $A,B \subset E^r_X$ be closed  with $A \cap B=\emptyset$. Then
$$\TC_{r,p}(A \cup B) = \max\{ \TC_{r,p}(A),\TC_{r,p}(B)\}.$$
	\end{enumerate}
\end{prop}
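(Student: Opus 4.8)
The plan is to mimic the proof of Proposition \ref{PropTCrindex} line by line, replacing the ambient space $X^r$ by $E^r_X$ and sequential motion planners into $PX$ by sequential parametrized motion planners into $E^I_X$. Parts a) and b) are immediate from the definition of $\TC_{r,p}$: any family of open subsets of $E^r_X$ covering $B$ each of whose members admits a continuous sequential parametrized motion planner also covers $A$ whenever $A \subset B$, which gives a); and the union of such a cover of $A$ with such a cover of $B$ is a cover of $A \cup B$ of cardinality $\TC_{r,p}(A) + \TC_{r,p}(B)$, which gives b).

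For c), I would set $k := \TC_{r,p}(A)$, choose open $U_1,\dots,U_k \subset E^r_X$ with $A \subset \bigcup_{i=1}^k U_i$ and continuous sequential parametrized motion planners $s_i \colon U_i \to E^I_X$, and put $U := \bigcup_{i=1}^k U_i$. Then $U$ is open with $A \subset U$, so $k \le \TC_{r,p}(U)$ by a), while $U_1,\dots,U_k$ is itself an open cover of $U$ by domains of continuous sequential parametrized motion planners, so $\TC_{r,p}(U) \le k$; no extension result is needed here. For d), monotonicity already yields $\max\{\TC_{r,p}(A),\TC_{r,p}(B)\} \le \TC_{r,p}(A \cup B)$, so only the reverse inequality has to be shown, and I would argue exactly as in Proposition \ref{PropTCrindex}.d). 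Assuming w.l.o.g. $m := \TC_{r,p}(A) \ge n := \TC_{r,p}(B)$, pick open covers $U_1,\dots,U_m$ of $A$ and $V_1,\dots,V_n$ of $B$ with continuous sequential parametrized motion planners $s_i \colon U_i \to E^I_X$ and $t_j \colon V_j \to E^I_X$, padding with $V_j := \emptyset$ for $n < j \le m$. Since $E$ is normal, $E^r$ and hence its subspace $E^r_X$ is normal, so there are disjoint open sets $U_0 \supset A$ and $V_0 \supset B$ in $E^r_X$; setting $U_i' := U_0 \cap U_i$ and $V_i' := V_0 \cap V_i$, the sets $U_i' \cup V_i'$ are open, cover $A \cup B$, and each carries the map equal to $s_i$ on $U_i'$ and to $t_i$ on $V_i'$, which is well-defined and continuous because $U_i'$ and $V_i'$ are disjoint open subsets, and which takes values in $E^I_X$ because both $s_i$ and $t_i$ do. Hence $\TC_{r,p}(A \cup B) \le m$.

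Since every step is a direct translation of an argument already carried out in Proposition \ref{PropTCrindex}, I do not expect a genuine obstacle. The only points that deserve a word of care are that the hypothesis ``$E$ normal'' is used precisely to pass to normality of $E^r_X$ (in the intended applications $E$ is moreover metrizable, so this is unproblematic), and that the piecewise map occurring in part d) must be checked to land in $E^I_X$ and not merely in $PE$ — which, as noted, is immediate from the corresponding property of $s_i$ and $t_i$.
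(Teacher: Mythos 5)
Your proposal is correct and follows essentially the same route as the paper, whose proof simply says that all parts follow in complete analogy with Proposition \ref{PropTCrindex}, using in d) that normality of $E$ passes to $E^r$ and to its closed subspace $E^r_X$. Your explicit check that the glued motion planners in d) land in $E^I_X$ and your remark that c) needs no extension lemma match the intended argument exactly.
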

\begin{proof}
All parts of the proposition follow in complete analogy with the corresponding parts of Proposition \ref{PropTCrindex}, where we use that by assumption, the fibers of $p$ are path-connected. Here, in d) we use that since $E$ is normal, $E^r$ is normal as well and so is its closed subspace $E^r_X$. 
\end{proof}

\begin{prop}
\label{PropPTCrdeform}
Assume that $E$ and $X$ are ANRs, let $A \subset E^r_X$ be closed and let $\Phi\colon A \times [0,1] \to E^r_X$ be a deformation of $A$ for which $\Phi_1(A)$ is closed and which satisfies 
	\begin{equation}
	\label{EqDeformParam}
	q_r(\Phi(a,t))=q_r(a) \quad \forall a \in A, \ t \in [0,1]	.
	\end{equation}
Then $\TC_{r,p}(\Phi_1(A)) \geq \TC_{r,p}(A)$.
 \end{prop}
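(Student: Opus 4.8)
The plan is to mimic exactly the proof of Proposition \ref{PropTCrdeform}, adding throughout the bookkeeping needed to keep all motion planners \emph{fiberwise} over $X$. Put $k := \TC_{r,p}(\Phi_1(A))$ and choose open sets $U_1,\dots,U_k \subset E^r_X$ covering $\Phi_1(A)$, each carrying a continuous sequential parametrized motion planner $s_j \colon U_j \to E^I_X$. Since $E$ and $X$ are ANRs, Proposition \ref{PropANR} gives that $E^r_X$ is an ANR, hence metrizable and normal; as a closed subspace of $E^r_X$, so is $\Phi_1(A)$. Thus (by the shrinking lemma, cf. \cite[Theorem A.1.(6)]{CLOT}) we may pick closed sets $B_j \subset U_j$ with $\Phi_1(A) = \bigcup_{j=1}^k B_j$. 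Set $B_j' := \Phi_1^{-1}(B_j) \subset A$; these are closed and cover $A$ since $A \subset \Phi_1^{-1}(\Phi_1(A))$.

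The next step is to build, for each $j$, a continuous sequential parametrized motion planner $s_j' \colon B_j' \to E^I_X$ by the same concatenation formula \eqref{Eqsjprime} used in Proposition \ref{PropTCrdeform}: given $x = (u_0,\dots,u_{r-1}) \in B_j'$, on the $j$-th subinterval one first runs the deformation component $\phi_{j-1}(x,\cdot)$ from $u_{j-1}$ toward its image under $\Phi_1$, then inserts the path $s_j(\Phi_1(x))$ supplied by the given parametrized motion planner on $U_j$, then runs back along $\phi_j(x,\cdot)$. Continuity follows exactly as in the non-parametrized case (using that $E^r_X$ is Hausdorff). The new point is that $s_j'(x)$ lands in $E^I_X$, i.e. $p((s_j'(x))(t))$ is independent of $t$: along the $\phi$-pieces this holds because $\Phi$ is fiber-preserving over $q_r$ by \eqref{EqDeformParam}, so $p(\phi_\ell(x,t)) = q_r(\Phi(x,t)) = q_r(x) = p(u_0)$ is constant in $t$; along the inserted piece $s_j(\Phi_1(x))$ it holds because $s_j(\Phi_1(x)) \in E^I_X$ and $q_r(\Phi_1(x)) = q_r(x)$, so the constant fiber value agrees with $p(u_0)$. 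Hence all three pieces lie over the same point of $X$ and their concatenation is a genuine element of $E^I_X$ projecting to $x$ under $\Pi_r$.

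Finally, since each $B_j' = \Phi_1^{-1}(B_j)$ is closed in $E^r_X$ and admits a continuous sequential parametrized motion planner, Lemma \ref{LemmaPMPextend} yields an open neighborhood $V_j \subset E^r_X$ of $B_j'$ which also admits one. As $A \subset \bigcup_{j=1}^k V_j$, this shows $\TC_{r,p}(A) \leq k = \TC_{r,p}(\Phi_1(A))$, as claimed.

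The only genuinely new ingredient beyond transcribing the earlier proof is verifying that the spliced path $s_j'(x)$ stays in a single fiber, and I expect that to be the main (though still routine) point: it is exactly where hypothesis \eqref{EqDeformParam} enters, ensuring the deformation-driven pieces do not move between fibers while the interpolating piece $s_j(\Phi_1(x))$ already lives in the correct fiber $p^{-1}(q_r(x))$.
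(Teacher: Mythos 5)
Your proposal is correct and follows essentially the same route as the paper: shrink the open cover of $\Phi_1(A)$ to closed sets, pull them back along $\Phi_1$, splice the parametrized planners with the deformation via formula \eqref{Eqsjprime}, verify fiberwise-ness using \eqref{EqDeformParam}, and extend over open neighborhoods with Lemma \ref{LemmaPMPextend}. You in fact spell out the fiber-preservation check (the only genuinely new point compared to Proposition \ref{PropTCrdeform}) in more detail than the paper does.
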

 \begin{proof}
 This follows along the lines of the proof of Proposition \ref{PropTCrdeform}, using Lemma \ref{LemmaPMPextend} instead of Lemma \ref{LemmaMPextend}. Note that condition \eqref{EqDeformParam} ensure that the maps $s_j'$ that are given as in \eqref{Eqsjprime}, are indeed sequential parametrized motion planners.  
\end{proof}

\begin{cor}
\label{CorPTCrsupindex}
Assume that $E$ and $X$ are  ANRs and let $\Phi\colon E^r_X \times [0,1] \to E^r_X$ be continuous with
	$$\Phi(u,0)=u, \qquad q_r(\Phi(u,t))=q_r(u) \qquad \forall u \in E^r_X, \ t \in [0,1].$$
	 Then $\TC_{r,p}\colon \mathfrak{P}(E^r_X) \to \NN\cup \{\infty\}$ is a $\Phi_1$-supervariant index function on $E^r_X$, where $\Phi_1(u):=\Phi(u,1)$.
\end{cor}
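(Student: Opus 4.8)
The argument is the parametrized counterpart of the one for Corollary~\ref{CorTCrsupindex}, and the plan is to verify separately the two assertions contained in the statement: first that $\TC_{r,p}$ is an index function on $E^r_X$ in the sense of Definition~\ref{DefIndex}, and second that it is $\Phi_1$-supervariant. The first part is purely formal once Proposition~\ref{PropPTCrindex} is in place: monotonicity is part a), subadditivity is part b), and the continuity property (iii) of Definition~\ref{DefIndex} is exactly part c), whose proof relied on $E^r_X$ being an ANR (Proposition~\ref{PropANR}), which is guaranteed here by the hypothesis that $E$ and $X$ are ANRs. So no new work is needed for this half.

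For supervariance, one fixes a closed subset $A \subset E^r_X$. Since $\Phi(u,0)=u$ for all $u \in E^r_X$, the restriction $\Phi|_{A\times[0,1]}$ is a deformation of $A$ with time-$1$ map $\Phi_1|_A$, and the hypothesis $q_r(\Phi(u,t))=q_r(u)$ immediately gives the fiber-preservation condition~\eqref{EqDeformParam} for this deformation. Proposition~\ref{PropPTCrdeform} then yields $\TC_{r,p}(\Phi_1(A)) \geq \TC_{r,p}(A)$, which is precisely $\Phi_1$-supervariance, and combined with the first half this proves the Corollary.

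The one point that requires care --- and which I expect to be the only real obstacle --- is that Proposition~\ref{PropPTCrdeform} is stated under the additional hypothesis that $\Phi_1(A)$ be closed in $E^r_X$, whereas the continuous fiber-preserving map $\Phi_1$ need not be a closed map in general. This is harmless in the situations of interest: whenever $E$ is compact (as in all the applications in Sections~7 and~8) every closed $A \subset E^r_X$ is compact and hence $\Phi_1(A)$ is automatically closed. In general one argues directly as in the proof of Proposition~\ref{PropPTCrdeform}: starting from an open cover $U_1,\dots,U_k$ of $\Phi_1(A)$ by domains of continuous sequential parametrized motion planners, one shrinks it --- using that $E^r_X$, being metrizable, is normal --- to a cover by sets $B_j$ closed in $\Phi_1(A)$, writes $B_j = C_j \cap \Phi_1(A)$ with $C_j$ closed in $E^r_X$, and replaces the sets $\Phi_1^{-1}(B_j)$ of that proof by $A \cap \Phi_1^{-1}(C_j)$, which are closed in $E^r_X$, cover $A$, and are mapped into $U_j$ by $\Phi_1$; the formula~\eqref{Eqsjprime} and Lemma~\ref{LemmaPMPextend} then finish the argument exactly as before.
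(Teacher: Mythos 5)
Your proof is correct and is essentially the paper's: the index-function axioms come from Proposition \ref{PropPTCrindex}.a)--c), and $\Phi_1$-supervariance comes from Proposition \ref{PropPTCrdeform} applied to the deformation $\Phi|_{A\times[0,1]}$ of a closed $A\subset E^r_X$. You are in fact more careful than the paper's own one-line proof, which invokes Proposition \ref{PropPTCrdeform} without verifying its hypothesis that $\Phi_1(A)$ be closed; your repair is valid (since $\Phi_1(A)$, as a subspace of the metrizable $E^r_X$, is normal, one shrinks the cover to sets $B_j=C_j\cap\Phi_1(A)$ with $C_j$ closed in $E^r_X$, and the sets $A\cap\Phi_1^{-1}(C_j)$ are closed in $E^r_X$, cover $A$, and are mapped by $\Phi_1$ into $U_j$, so \eqref{Eqsjprime} and Lemma \ref{LemmaPMPextend} apply verbatim).
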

\begin{proof}
It follows immediately from Proposition \ref{PropPTCrindex}.a)-c) that $\TC_{r,p}$ is an index function. If $A \subset E^r_X$ is closed, then $\Phi|_{A \times [0,1]}$ is a deformation of $A$ and it follows from Proposition \ref{PropPTCrdeform} that $\TC_{r,p}(\Phi_1(A)) \geq \TC_{r,p}(A)$. Thus, $\TC_{r,p}$ is $\Phi_1$-supervariant. 
\end{proof}

Using Corollary \ref{CorPTCrsupindex} we obtain the following result as a direct application of Theorem \ref{TheoremLSMain}.

\begin{theorem}
\label{TheoremParamTCLS}
	Let $E$ and $X$ be ANRs and let $\Phi\colon E^r_X \times [0,1] \to E^r_X$ be continuous with 
	\begin{equation}
	\label{EqParamDeform}
	\Phi(u,0)=u, \quad q_r(\Phi(u,t))= q_r(u) \quad \forall u \in E^r_X, \ t \in [0,1]. 
	\end{equation}
Let $F\colon E^r_X \to \RR$ be a Lyapunov function for $\Phi_1:= \Phi(\cdot,1)$ which is bounded from below and such that $(\Phi_1,F)$ satisfies condition (D). Then
	$$\TC_{r,p}(F^\lambda) \leq \sum_{\mu \in (-\infty,\lambda]} \TC_{r,p}(F_\mu \cap \Fix \Phi_1) \qquad \forall \lambda \in \RR.$$ 
\end{theorem}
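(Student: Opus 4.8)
The plan is to obtain the statement as a direct application of the abstract Lusternik--Schnirelmann theorem, Theorem~\ref{TheoremLSMain}, to the sequential parametrized subspace complexity $\TC_{r,p}$, in the same way that Theorem~\ref{TheoremTCLS} was obtained for $\TC_{r,X}$. First I would observe that $\Phi$ is a homotopy from $\id_{E^r_X}$ to $\Phi_1$ satisfying the fiberwise condition~\eqref{EqParamDeform}; since $E$ and $X$ are ANRs and $p$ is a fibration with path-connected fibers (the standing assumption of the section), Corollary~\ref{CorPTCrsupindex} then shows that $\TC_{r,p}\colon \mathfrak{P}(E^r_X)\to \NN_0\cup\{\infty\}$ is a $\Phi_1$-supervariant index function on $E^r_X$. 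Combined with the remaining hypotheses --- $F$ is a Lyapunov function for $\Phi_1$, bounded from below, and $(\Phi_1,F)$ satisfies condition~(D) --- this supplies all the input of Theorem~\ref{TheoremLSMain}, with $E^r_X$, $\TC_{r,p}$, $\Phi_1$, $F$ in the roles of $X$, $\nu$, $\varphi$, $f$, except for the hypothesis that $F(\Fix\Phi_1)\subset\RR$ be discrete, which is not imposed here (in contrast to Theorem~\ref{TheoremTCLS}).

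To deal with this, I would fix $\lambda\in\RR$ and distinguish two cases. If $F_\mu\cap\Fix\Phi_1\neq\emptyset$ for infinitely many $\mu\leq\lambda$, then each such $\mu$ contributes a summand $\TC_{r,p}(F_\mu\cap\Fix\Phi_1)\geq 1$, so the right-hand side equals $+\infty$ and there is nothing to prove. Otherwise $F(\Fix\Phi_1)\cap(-\infty,\lambda]$ is finite, hence discrete, and I would restrict the whole situation to the closed subset $Y:=F^\lambda$ of $E^r_X$. One checks that $A\mapsto\TC_{r,p}(A)$, $A\subset Y$, is again an index function on $Y$ by Proposition~\ref{PropPTCrindex} (continuity being inherited since $Y$ is closed in $E^r_X$), that it is $\Phi_1|_Y$-supervariant by Proposition~\ref{PropPTCrdeform} exactly as in Corollary~\ref{CorPTCrsupindex}, that $F|_Y$ is a Lyapunov function for $\Phi_1|_Y$ bounded from below, that $(\Phi_1|_Y,F|_Y)$ satisfies condition~(D) because $\overline{Y}\cap\Fix\Phi_1=Y\cap\Fix\Phi_1\subset Y$ (the argument being that of Lemma~\ref{LemmaCondDsubspace}.b)), and that $F(\Fix(\Phi_1|_Y))=F(\Fix\Phi_1)\cap(-\infty,\lambda]$ is discrete. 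Applying Theorem~\ref{TheoremLSMain} to this restricted data and evaluating its conclusion at $a=\lambda$, so that $(F|_Y)^\lambda=Y$, then yields $\TC_{r,p}(F^\lambda)\leq\sum_{\mu\in(-\infty,\lambda]}\TC_{r,p}(F_\mu\cap\Fix\Phi_1)$, as desired.

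I expect the main obstacle to be precisely this last bookkeeping, rather than any genuinely new idea: one must check carefully that restricting $\TC_{r,p}$ to subsets of $Y=F^\lambda$ still produces a \emph{supervariant} index function --- the delicate point, already implicit in Corollary~\ref{CorPTCrsupindex}, being that $\Phi_1$ need not send closed subsets to closed subsets, so Proposition~\ref{PropPTCrdeform} has to be applied with some care as to which images are closed --- and that condition~(D) passes to the restriction. Everything else is a formal assembly of the results of Sections~1 and~5.
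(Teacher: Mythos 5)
Your proposal is correct and follows the paper's own route: the paper proves Theorem \ref{TheoremParamTCLS} in a single sentence, as a direct application of Theorem \ref{TheoremLSMain} to the $\Phi_1$-supervariant index function $\TC_{r,p}$ provided by Corollary \ref{CorPTCrsupindex}, which is exactly your first paragraph. What you add beyond the paper is the handling of the discreteness hypothesis: Theorem \ref{TheoremLSMain} requires $F(\Fix\Phi_1)\subset\RR$ to be discrete, an assumption that Theorem \ref{TheoremParamTCLS} (unlike Theorem \ref{TheoremTCLS}) does not impose and that the paper's one-line proof silently passes over. Your patch is sound: if infinitely many levels $\mu\leq\lambda$ meet $\Fix\Phi_1$, the right-hand side is infinite; otherwise restricting to the closed, $\Phi_1$-invariant set $Y=F^\lambda$ makes the relevant fixed-value set finite, and the restricted data still satisfy all hypotheses of Theorem \ref{TheoremLSMain} --- continuity of the index function on $Y$ is inherited by intersecting the neighborhood from Proposition \ref{PropPTCrindex}.c) with $Y$, supervariance follows from Proposition \ref{PropPTCrdeform} applied to $\Phi|_{A\times[0,1]}$ for $A\subset Y$ closed, and condition (D) restricts exactly as in Lemma \ref{LemmaCondDsubspace}.b) since $\overline{A}\subset Y$ for $A \subset Y$ --- so your argument in fact justifies the statement as printed, without a discreteness hypothesis. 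The caveat you flag about Proposition \ref{PropPTCrdeform} formally requiring $\Phi_1(A)$ to be closed is not a defect of your argument: the paper's own Corollary \ref{CorPTCrsupindex} invokes that proposition in the same unguarded way, and the hypothesis is in any case harmless here, since $\Phi_1(A)$ is a subspace of the metrizable space $E^r_X$ and hence normal, which is all the shrinking step of that proof actually uses (the sets $B_j'=\Phi_1^{-1}(B_j)$ are then closed in the closed set $A$, hence in $E^r_X$).
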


\section{Vertically proportional vector fields and parametrized subspace complexities}

We want to study how Theorem \ref{TheoremParamTCLS} can be applied to critical points of differentiable functions on $E^r_X$, assuming that the latter is a smooth manifold. Note that the theorem is not directly applicable to flows of negative pseudo-gradients as such a flow will in general not be fiber-preserving in the sense of condition \eqref{EqParamDeform}. We will overcome this problem by working with flows of vertical components of gradients on the one hand and with a strongly restricted class of functions on the other hand. \medskip

\emph{Throughout this section let $r \in \NN$ with $r \geq 2$, let $E$ and $X$ be paracompact Banach manifolds and let $p\colon E \to X$ be a smooth fibration and a surjective submersion whose fibers are path-connected. }\medskip

Note that $E$ and $X$ are path-connected ANRs by \cite[Cor. to Theorem 5]{PalaisInf}. Since $p^r\colon E^r \to X^r$ is submersive as well, it follows that $E_X^r= (p^{r})^{-1}(\Delta_{r,X})$ is a smooth Banach manifold whose tangent space at $u=(u_1,\dots,u_r)\in E^r_X$ is given by
\begin{equation}
\label{EqTErX}
T_uE^r_X = \{(v_1,v_2\dots,v_r) \in T_u E^r \mid Dp_{u_1}(v_1)=Dp_{u_2}(v_2)=\dots = Dp_{u_r}(v_r)\}.
\end{equation}
We recall that given a smooth fibration $f\colon M \to N$ the vertical tangent space of $f$ at a point $m \in M$ is given by 
$$\Ver_m(f):= \ker Df_m.$$
In particular, $\Ver_m(f)$ is always a closed linear subspace of $T_mM$. Evidently, it holds for each $u=(u_1,u_2,\dots,u_r) \in E^r_X$ that 
$$\Ver_u(p^r) = \ker (Dp^r)_u = \prod_{i=1}^r \ker Dp_{u_i} = \prod_{i=1}^r \Ver_{u_i}(p).$$
Let again $q_r\colon E^r_X \to X$, $q_r(u_1,\dots,u_r)=p(u_1)$, which is a smooth fibration and a surjective submersion as well. Moreover, for each $u=(u_1,\dots,u_r) \in E^r_X$ it holds that 
\begin{align*}
\Ver_u(q_r)&= \{v \in T_uE^r_X \mid (Dq_r)_u(v)=0\} \\
&= \{(v_1,\dots,v_r) \in T_uE^r \mid 0= Dp_{u_1}(v_1)=Dp_{u_2}(v_2)=\dots = Dp_{u_r}(v_r)\} = \Ver_u(p^r).
\end{align*}
Thus, we have shown that 
\begin{equation}
	\label{EqVeruq}
	\Ver_u(q_r) = \prod_{i=1}^r \Ver_{u_i}(p) \qquad \forall u=(u_1,u_2,\dots,u_r) \in E^r_X.
\end{equation}

The following result is an immediate consequence of the results of the previous section. We recall that a vector field $Y\colon M \to TM$ on a smooth manifold $M$ is called \emph{forward-complete} if it admits a flow on all of $M \times [0,+\infty)$. 

\begin{theorem}
\label{TheoremParamTCLSvf}
	Let $V\colon E^r_X \to TE^r_X$ be a forward-complete continuous vector field with 
	$$V(u) \in \Ver_u(p^r) \qquad \forall u \in E^r_X.$$
	Assume that the time-1 map $\Phi_1$ of its flow admits a Lyapunov function $F\colon E^r_X \to \RR$, which is bounded from below, such that $(\Phi_1,F)$ satisfies condition (D). Then
	$$\TC_{r,p}(F^\lambda) \leq \sum_{\mu \in (-\infty,\lambda]} \TC_{r,p}(F_\mu \cap \mathrm{sing}(V)) \qquad \forall \lambda \in \RR.$$
\end{theorem}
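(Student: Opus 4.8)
The plan is to deduce this theorem directly from Theorem~\ref{TheoremParamTCLS} by verifying that the flow of $V$ provides a homotopy of the required fiber-preserving type and that the singular set of $V$ coincides with the fixed point set of the time-$1$ map. First I would let $\Phi\colon E^r_X \times [0,+\infty) \to E^r_X$ denote the forward flow of $V$, which exists by the forward-completeness assumption, and restrict it to $E^r_X \times [0,1]$. Then $\Phi(u,0)=u$ for all $u$, so the first condition of \eqref{EqParamDeform} is immediate. For the second condition of \eqref{EqParamDeform}, namely $q_r(\Phi(u,t))=q_r(u)$ for all $u$ and all $t \in [0,1]$, the key point is that $V(u) \in \Ver_u(p^r) = \Ver_u(q_r)$ by \eqref{EqVeruq}; hence for fixed $u$ the curve $t \mapsto q_r(\Phi(u,t))$ has derivative $(Dq_r)_{\Phi(u,t)}(V(\Phi(u,t))) = 0$, and therefore $q_r \circ \Phi(u,\cdot)$ is constant, equal to $q_r(u)$. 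This is the one genuinely substantive verification, and it is short.

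Next I would identify the fixed point set. Since $\Phi$ is the flow of $V$, a point $u \in E^r_X$ satisfies $\Phi_1(u) = u$ if and only if the integral curve through $u$ is stationary, which (for a continuous vector field whose flow is well-defined) happens precisely when $V(u) = 0$; that is, $\Fix \Phi_1 = \mathrm{sing}(V)$. Consequently $F_\mu \cap \Fix \Phi_1 = F_\mu \cap \mathrm{sing}(V)$ for every $\mu \in \RR$.

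With these two identifications in hand, the hypotheses of Theorem~\ref{TheoremParamTCLS} are exactly the standing assumptions here: $E$ and $X$ are ANRs (as stated at the beginning of the section, $E$ and $X$ are paracompact Banach manifolds, hence ANRs by \cite[Cor.\ to Theorem 5]{PalaisInf}), $\Phi$ satisfies \eqref{EqParamDeform}, and $F$ is by assumption a Lyapunov function for $\Phi_1$ which is bounded from below and such that $(\Phi_1,F)$ satisfies condition~(D). Applying Theorem~\ref{TheoremParamTCLS} then yields
$$\TC_{r,p}(F^\lambda) \leq \sum_{\mu \in (-\infty,\lambda]} \TC_{r,p}(F_\mu \cap \Fix \Phi_1) = \sum_{\mu \in (-\infty,\lambda]} \TC_{r,p}(F_\mu \cap \mathrm{sing}(V))$$
for all $\lambda \in \RR$, which is the claim. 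The only obstacle worth flagging is the fiber-preservation computation, and even that reduces to the observation that a curve tangent to the kernel of $Dq_r$ stays in a fiber of $q_r$; everything else is a matter of matching notation with Theorem~\ref{TheoremParamTCLS}.
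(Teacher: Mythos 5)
Your proposal is correct and follows essentially the same route as the paper: the flow of $V$ is fiber-preserving for $q_r$ because $V$ takes values in $\Ver(q_r)=\Ver(p^r)$, so Theorem~\ref{TheoremParamTCLS} applies, and the identification $\Fix \Phi_1 = \sing(V)$ converts the resulting bound into the stated one. The paper's proof asserts exactly these two facts (citing elementary properties of flows), so your argument matches it step for step.
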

\begin{proof}
Let $\Phi\colon E^r_X\times [0,+\infty) \to E^r_X$ be the flow of $V$. By assumption and the above computations, it holds that $V(u) \in \Ver_u(q_r)$ for all $u \in E^r_X$ for each $u \in E^r_X$. By elementary results for flows of vector fields, this yields 
$$q_r(\Phi(u,t))=q_r(u) \quad \forall u \in E^r_X, \ t \in [0,+\infty).$$
Thus, since $\Phi_1(u)=\Phi(u,1)$ for each $u \in E^r_X$, we derive from Theorem \ref{TheoremParamTCLS} that 
$$\TC_{r,p}(F^\lambda) \leq \sum_{\mu \in (-\infty, \lambda]} \TC_{r,p}(F_\mu \cap \Fix \Phi_1).$$
Since by construction $\Fix \Phi_1=\sing(V)$, the claim immediately follows.
\end{proof}

\begin{definition}
\label{DefVertProp}
	Let $M$ and $N$ be Banach manifolds, let $q\colon M \to N$ be smooth and let $g$ be a Riemannian metric on $M$.
	\begin{enumerate}[a)]
		\item Let $\pr_\ver\colon  TM \to TM$ be the bundle map for which $\pr_\ver|_{T_mM}$ is the orthogonal projection onto $\Ver_m(q)$ with respect to $g$ for each $m \in M$.
\item Given a vector field $Y\colon M \to TM$, we define another vector field by 
 $$Y^\ver\colon M \to TM, \quad Y^\ver := \pr_{\ver} \circ Y.$$
  Given $F \in C^1(M)$, we further put $\nabla^\ver F:= (\nabla^{g}F)^\ver$,  
where $\nabla^{g}F\colon M \to TM$ denotes the gradient of $F$ with respect to $g$. 
\item We call a vector field $Y\colon  M \to TM$ \emph{vertically proportional} with respect to $q$ and $g$ if there exists $C >0$, such that
	$$\|Y(x) \|_{g}\leq C \cdot \|Y^\ver(x)\|_{g} \qquad \forall x \in M,$$
	where $\|\cdot\|_g$ denotes the norm induced by $g$ on each $T_xM$.
	\end{enumerate}
\end{definition}

\begin{remark}
\label{RemarkSingVertProp}
	Let $M$, $N$, $g$ and $q$ be given as in Definition \ref{DefVertProp} and let $Y\colon M \to TM$ be a vector field. Apparently, $\sing(Y) \subset \sing(Y^\ver)$. Conversely, if $Y$ is vertically proportional, then  every singular point of $Y^\ver$ is a singular point of $Y$ as well. Thus, if $Y$ is vertically proportional, then it necessarily holds that
\begin{equation}
\label{EqSingVertProp}
\sing(Y) = \sing(Y^\ver). 
\end{equation}
\end{remark}

As we will see in the next results,  the existence of a vertical proportional gradient of a function $F \in C^1(E^r_X)$ will allow us to apply Theorem \ref{TheoremParamTCLSvf} to estimate the number of critical points of $F$ by sequential parametrized subspace complexities. However, as condition \eqref{EqSingVertProp} already indicates, the vertical proportionality of its gradient is a strong restriction on the function itself.

\begin{prop}
\label{PropVertPseudoGrad}
	Let $(M,g)$ be a complete Riemannian Banach manifold, let $N$ be a smooth Banach manifold and let $q \colon M \to N$ be smooth. Let $F \in C^1(M)$,  let $\rho\colon \RR \to \RR$  and $h\colon M \to \RR$ be given as in Example \ref{ExamplePG} and put 
	$$ Y\colon M \to TM, \quad Y(m) := h(m)\cdot \nabla^{g}F(m).$$
Assume $\nabla^{g}F$ is vertically proportional with respect to $q$ and $g$.
\begin{enumerate}[a)]
	\item Then $Y^\ver\colon M \to TM$ is a pseudo-gradient vector field for $F$.
	\item Let $\phi^\ver_1\colon M \to M$ be the time-1 map of the flow of $-Y^\ver$. Then $(\phi^\ver_1,F)$ satisfies condition (D).
\end{enumerate}
\end{prop}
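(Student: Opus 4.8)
The plan is to verify, in part~a), the three axioms of Definition~\ref{DefPseudoGrad} for the vertical field $Y^\ver$, and then to obtain part~b) directly from a) via Theorem~\ref{TheoremPSDPseudo}. Throughout I write $\|\cdot\|$ for the norm induced by $g$ and use freely from Example~\ref{ExamplePG} that $Y = h\cdot\nabla^g F$ is itself a pseudo-gradient of $F$ --- so $Y$ is Lipschitz and $\|Y(x)\|\le 2\min\{\|\nabla^g F(x)\|,1\}$ for all $x\in M$ --- and that $h(x)\ge\min\{\|\nabla^g F(x)\|^{-1},1\}$ for all $x$.

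For a), observe first that $\pr_\ver$ is fiberwise linear, so $Y^\ver = h\cdot\nabla^\ver F$. Axioms~(i) and~(ii) then follow at once: since $\pr_\ver$ is a fiberwise orthogonal projection, $\|Y^\ver(x)\|\le\|Y(x)\|\le 2\min\{\|\nabla^g F(x)\|,1\}$, which gives axiom~(ii) with $C_1=2$; and axiom~(i) holds because $Y^\ver = \pr_\ver\circ Y$ is the composition of the Lipschitz field $Y$ with the smooth bundle endomorphism $\pr_\ver$ (smoothness of $\pr_\ver$ being automatic once $\Ver(q)$ is a smooth subbundle, e.g.\ for $q$ a submersion, as in the intended applications), hence Lipschitz. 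The one substantive point is axiom~(iii). Using that $\pr_\ver|_{T_xM}$ is a self-adjoint idempotent and $DF_x = \langle\nabla^g F(x),\,\cdot\,\rangle_g$, one computes $DF_x(Y^\ver(x)) = h(x)\,\langle\nabla^g F(x),\nabla^\ver F(x)\rangle_g = h(x)\,\|\nabla^\ver F(x)\|^2$; then, combining the vertical proportionality hypothesis $\|\nabla^g F(x)\|\le C\,\|\nabla^\ver F(x)\|$ with the lower bound on $h$,
\[
DF_x(Y^\ver(x)) \ge \tfrac{1}{C^2}\min\{\|\nabla^g F(x)\|^{-1},1\}\,\|\nabla^g F(x)\|^2 = \tfrac{1}{C^2}\min\{\|\nabla^g F(x)\|,1\}\,\|\nabla^g F(x)\|,
\]
which is axiom~(iii) with $C_2 = C^{-2}$. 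This proves a).

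For b), note that by a), $Y^\ver$ is a pseudo-gradient of $F$, so $\sing(Y^\ver) = \Crit F$ (Remark~\ref{RemarkFlowCompl}.(3)); since $(M,g)$ is complete its flow is complete (Remark~\ref{RemarkFlowCompl}.(2)), so the time-$1$ map $\phi^\ver_1$ of the flow of $-Y^\ver$ is globally defined with $\Fix\phi^\ver_1 = \Crit F$. Theorem~\ref{TheoremPSDPseudo}, applied to the pseudo-gradient $Y^\ver$ of $F$, then yields immediately that $F$ is a Lyapunov function for $\phi^\ver_1$ and that $(\phi^\ver_1,F)$ satisfies condition~(D). (Theorem~\ref{TheoremPSDPseudo} also requires $F$ to be bounded from below and $(F,g)$ to satisfy the PS condition; these are the hypotheses under which the proposition is used, cf.\ Theorem~\ref{TheoremParamTCLSdiff}.)

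This proposition is mostly bookkeeping built on the earlier results, and the only genuine content is the observation that vertical proportionality is exactly what makes axiom~(iii) work: without the bound $\|\nabla^\ver F(x)\|\ge C^{-1}\|\nabla^g F(x)\|$ the quantity $DF_x(Y^\ver(x)) = h(x)\|\nabla^\ver F(x)\|^2$ could vanish at non-critical points of $F$, destroying the pseudo-gradient property of the purely vertical field $Y^\ver$. The technical point I would be most careful about is the regularity of $\pr_\ver$ --- needed so that $Y^\ver$ is Lipschitz and its flow is well defined --- which I would handle by invoking the submersion structure available in the applications.
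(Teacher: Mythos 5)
Your proposal is correct and follows essentially the same route as the paper's proof: you verify axioms (i)--(iii) of Definition \ref{DefPseudoGrad} for $Y^\ver=h\cdot\nabla^\ver F$, with the identical key computation $DF_x(Y^\ver(x))=h(x)\|\nabla^\ver F(x)\|^2\ge C^{-2}h(x)\|\nabla^g F(x)\|^2$ and the same constant $C_2=C^{-2}$, and deduce b) by combining a) with Theorem \ref{TheoremPSDPseudo} (whose implicit extra hypotheses, boundedness below and the PS condition, the paper also leaves tacit). The only cosmetic divergence is axiom (i), where the paper gets Lipschitz continuity of $Y^\ver$ by noting that $\pr_\ver$ is fiberwise an orthogonal projection and hence $1$-Lipschitz for the Sasaki metric, which is a cleaner justification than your appeal to smoothness of $\pr_\ver$ (smoothness alone would not give a global Lipschitz bound).
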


\begin{proof} 
As discussed in Example \ref{ExamplePG}, $Y$ is a pseudo-gradient for $F$. For each $m \in M$ we let $\|\cdot \|$ denote the norm on $T_mM$ induced by $g$. By assumption, there exists $C \in (0,+\infty)$, such that $\|\nabla^g F(m)\|\leq C \cdot \|\nabla^\ver F(m)\|$ for all $m \in M$. By definition of $Y$, this immediately yields that 
$$\|Y(m)\| \leq C \cdot \|Y^\ver(m)\| \qquad \forall m \in M.$$
\begin{enumerate}[a)]
	\item  Let us check the three properties of a pseudo-gradient for $Y^\ver$ as listed in Definition \ref{DefPseudoGrad}.
	\begin{enumerate}[(i)]
		\item Since $\pr_\ver$ is fiberwise given by the bounded-linear projection onto $\Ver_u(q)$, it follows from the construction of the Sasaki metric that $\pr_\ver$ is  $1$-Lipschitz-continuous. Thus, it follows from the Lipschitz continuity of $Y$ that $Y^\ver$ is Lipschitz-continuous.
		\item Evidently, $\|Y^\ver(m)\| \leq \|Y(m)\|$ for each $m \in M$, so the upper bound of $\|Y^\ver(m)\|$ follows immediately from the corresponding upper bound of $\|Y\|$, which exists since $Y$ is a pseudo-gradient. 
		\item One easily sees that 
$Y^\ver (m) = h(m)\cdot \nabla^\ver F(m)$ for each $m \in M$.
		Thus, 
		\begin{align*}
		DF_m(Y^\ver(m))&= h(m) \cdot DF_m(\nabla^\ver F(m))= h(m) \cdot g_M(\nabla F(m),\nabla^\ver F(m)) \\
		&= h(m) \cdot \|\nabla^\ver F(m)\|^2\geq h(m) \cdot C^{-2} \|\nabla^{g} F(m)\|^2
		\end{align*}
		for each $m \in M$. One further computes that 
		$$DF_m(Y^\ver(m)) \geq C^{-2} \|\nabla^{g}F(m)\|= C^{-2} \min \{\|\nabla^{g}F(m)\|,\|\nabla^{g}F(m)\|^2\} \qquad \forall m \in M,$$
which shows property (iii) of a pseudo-gradient if we put $C_2 := C^{-2}$ in Definition \ref{DefPseudoGrad}. 
	\end{enumerate}
	\item This follows immediately from combining part a) with Theorem \ref{TheoremPSDPseudo}.
\end{enumerate}
\end{proof}

\emph{Throughout the rest of this section, we fix a complete Riemannian metric $g$ on $E$ and let $\bar{g}$ be the Riemannian metric on $E^r_X$ that is obtained by restricting the product metric $g^r$ on $E^r$.} \medskip 

Using Proposition \ref{PropVertPseudoGrad}, we can now apply Theorem \ref{TheoremParamTCLSvf} to critical points of functions and thereby establish an analogue to Corollary \ref{CorTCLSdiff} for the parametrized setting.

\begin{theorem}
\label{TheoremParamTCLSdiff}
Let $F\in C^1(E^r_X)$ be bounded from below. Assume that $(F,\bar{g})$ satisfies the PS condition and that $\nabla^{\bar{g}}F$ is vertically proportional with respect to $q_r\colon E^r_X\to X$ and $\bar{g}$. For each $\lambda \in \RR$ it holds that
$$\TC_{r,p}(F^\lambda) \leq \sum_{\mu \in (-\infty,\lambda]} \max \left\{\TC_{r,p}(C)\mid C \in \C_\mu(F) \right\}. $$
\end{theorem}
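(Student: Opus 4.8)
The plan is to reduce Theorem \ref{TheoremParamTCLSdiff} to Theorem \ref{TheoremParamTCLSvf} by constructing a suitable vertical vector field whose time-$1$ map satisfies the hypotheses of the latter. First I would apply Proposition \ref{PropVertPseudoGrad} with $M = E^r_X$, $N = X$, $q = q_r$ and the fixed complete metric $\bar g$: taking the functions $\rho$ and $h$ from Example \ref{ExamplePG} and setting $Y(u) := h(u)\cdot \nabla^{\bar g}F(u)$, the hypothesis that $\nabla^{\bar g}F$ is vertically proportional with respect to $q_r$ and $\bar g$ guarantees that $Y^\ver = \pr_\ver \circ Y$ is a pseudo-gradient vector field for $F$ (part a)) and that $(\phi^\ver_1, F)$ satisfies condition (D), where $\phi^\ver_1$ is the time-$1$ map of the flow of $-Y^\ver$ (part b)). Note here that $Y^\ver$ takes values in $\Ver_u(q_r)$ by construction, and by the identity \eqref{EqVeruq} we have $\Ver_u(q_r) = \prod_{i=1}^r \Ver_{u_i}(p) = \Ver_u(p^r)$, so $Y^\ver(u) \in \Ver_u(p^r)$ for all $u \in E^r_X$, which is exactly the verticality condition required in Theorem \ref{TheoremParamTCLSvf}.

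Next I would check forward-completeness of $-Y^\ver$. Since $Y^\ver$ is a pseudo-gradient, it is bounded in $\bar g$-norm by property (ii) of Definition \ref{DefPseudoGrad}, and since $(E^r_X, \bar g)$ is complete (it is a Banach manifold with the restricted product metric $\bar g = g^r|_{E^r_X}$, and completeness of $g$ on $E$ is part of our running assumptions), the standard ODE argument from Remark \ref{RemarkFlowCompl}.(2) shows that the flow of $-Y^\ver$ exists for all time; in particular it is forward-complete. I should also record that $E^r_X$ and $X$ are ANRs — this follows from Proposition \ref{PropANR}, since $E$ and $X$ are paracompact Banach manifolds, hence ANRs by \cite[Cor.\ to Theorem 5]{PalaisInf} — so the hypotheses of Theorem \ref{TheoremParamTCLSvf} are all in place. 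Applying it with $V := Y^\ver$ and $\Phi_1 := \phi^\ver_1$ yields
$$\TC_{r,p}(F^\lambda) \leq \sum_{\mu \in (-\infty,\lambda]} \TC_{r,p}(F_\mu \cap \sing(Y^\ver)) \qquad \forall \lambda \in \RR.$$

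It then remains to rewrite $\sing(Y^\ver)$ and to pass from $\TC_{r,p}(F_\mu \cap \sing(Y^\ver))$ to the maximum over components. For the first, Remark \ref{RemarkSingVertProp} gives $\sing(Y^\ver) = \sing(Y)$ because $Y$ is vertically proportional (indeed $\|Y(u)\| \le C\|Y^\ver(u)\|$ follows directly from the definition of $Y$ and the proportionality of $\nabla^{\bar g}F$), and by Remark \ref{RemarkFlowCompl}.(3) applied to the pseudo-gradient $Y$ of $F$ we get $\sing(Y) = \Crit F$. Hence $F_\mu \cap \sing(Y^\ver) = F_\mu \cap \Crit F$. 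For the second, Lemma \ref{LemmaPSCompLevelsets} applied to $(F,\bar g)$ — which satisfies the PS condition by hypothesis — tells us that $F_\mu \cap \Crit F$ has finitely many components, i.e.\ $\C_\mu(F) = \pi_0(F_\mu \cap \Crit F)$ is finite for every $\mu$. Since $\Crit F$ is closed in $E^r_X$ and each component $C \in \C_\mu(F)$ is closed (a level set of $F$ intersected with $\Crit F$, broken into its finitely many clopen pieces), I would iterate Proposition \ref{PropPTCrindex}.d) — using that $E$ is normal, being metrizable — to obtain
$$\TC_{r,p}(F_\mu \cap \Crit F) = \TC_{r,p}\Big(\bigcup_{C \in \C_\mu(F)} C\Big) = \max\{\TC_{r,p}(C) \mid C \in \C_\mu(F)\}.$$
Substituting this into the inequality above gives the claim. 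I do not anticipate a serious obstacle here: all the heavy lifting is done by Proposition \ref{PropVertPseudoGrad} and Theorem \ref{TheoremParamTCLSvf}, and the argument parallels the proof of Corollary \ref{CorTCLSdiff} almost verbatim. The only point requiring a little care is confirming that the components in $\C_\mu(F)$ are pairwise disjoint closed subsets of $E^r_X$ (so that Proposition \ref{PropPTCrindex}.d) applies iteratively), which is immediate once Lemma \ref{LemmaPSCompLevelsets} guarantees finiteness.
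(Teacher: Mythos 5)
Your proposal is correct and follows essentially the same route as the paper: construct $Y$ from $\nabla^{\bar g}F$ via Proposition \ref{PropVertPseudoGrad}, apply the vertical-vector-field Lusternik-Schnirelmann estimate to $Y^\ver$ (using $\sing(Y^\ver)=\Crit F$ via Remarks \ref{RemarkSingVertProp} and \ref{RemarkFlowCompl}), and then split $F_\mu\cap\Crit F$ into its finitely many components (Lemma \ref{LemmaPSCompLevelsets}) using Proposition \ref{PropPTCrindex}.d), exactly as in Corollary \ref{CorTCLSdiff}. The extra verifications you include (forward-completeness of $-Y^\ver$, the ANR properties) are details the paper leaves implicit, not a deviation in method.
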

\begin{proof}
Let $Y\colon E^r_X \to TE^r_X$ be the pseudo-gradient of $F$ that is constructed from $\nabla^{\bar{g}}F$ as in Proposition \ref{PropVertPseudoGrad}. By definition of $Y^\ver$, it holds that 
$$\sing(Y^\ver) = \sing (\nabla^\ver F) \stackrel{\eqref{EqSingVertProp}}{=} \sing (\nabla^{\bar{g}}F)= \Crit F.$$
 By Proposition \ref{PropVertPseudoGrad}, we can apply Theorem \ref{TheoremParamTCLS} to $Y^\ver$ with respect to $q_r$ and thus obtain for each $\lambda \in \RR$ that
$$\TC_{r,p}(F^\lambda) \leq \sum_{\mu \in (-\infty,\lambda]}\TC_{r,p}(F_\mu \cap \Crit F). $$
Using Proposition \ref{PropPTCrindex}.d), the claim is then derived in the same way as Corollary \ref{CorTCLSdiff}.   
\end{proof}

 In the rest of this section, we will apply this result to parametrized analogues of navigation functions, which are defined as follows. .

 \begin{definition}
 \label{DefParamNavi}
We call a continuously differentiable function $F\colon E^r_X \to \RR$ a \emph{parametrized $r$-navigation function} if 
\begin{enumerate}[(i)]
	\item $F$ satisfies the PS condition with respect to a complete Riemannian metric on $E^r_X$, 
	\item $F(u)\geq 0$ for all $u \in E^r_X$ and $F^{-1}(\{0\}) = \Delta_{r,E}$.
	\end{enumerate}
\end{definition}

\begin{remark}
\label{RemarkParamNavi}
\begin{enumerate}
	\item Evidently, if $F\colon E^r \to \RR$ is an $r$-navigation function in the sense of Definition \ref{DefNavFunction}, then $F|_{E^r_X}$ will be a parametrized $r$-navigation function.
	\item In analogy with Remark \ref{RemarkNavCritzero}.(2), one argues that if $E$ and $X$ are closed finite-dimensional manifolds, then any continuously differentiable $F\colon E^r_X\to \RR$ which has property (ii) of Definition \ref{DefParamNavi} will be a parametrized $r$-navigation function.
	\end{enumerate}
\end{remark}

Without additional difficulties, one can now establish an analogue of Theorem \ref{TheoremTCLSNavi}.

\begin{theorem}
\label{TheoremParamTCLSNavi} Let $F \in C^1(E^r_X)$ be a parametrized $r$-navigation function for which $\nabla^{\bar{g}}F$ is vertically proportional with respect to $q_r$ and $\bar{g}$. Then 
$$	\TC_{r,p}(F^\lambda) \leq \sum_{\mu \in (0,\lambda]} \max \{ \TC_{r,p}(C) \mid C \in \C_\mu(F)\}+1 \qquad \forall \lambda \in \RR.$$
\end{theorem}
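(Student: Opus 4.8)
The plan is to deduce this statement from Theorem \ref{TheoremParamTCLSdiff} in exactly the same manner in which Theorem \ref{TheoremTCLSNavi} was deduced from Corollary \ref{CorTCLSdiff}. Since $F$ is a parametrized $r$-navigation function, it is non-negative, hence bounded from below, and $(F,\bar{g})$ satisfies the PS condition (understanding the metric in Definition \ref{DefParamNavi} to be $\bar{g}$, consistently with the vertical-proportionality hypothesis); together with the assumption that $\nabla^{\bar{g}}F$ is vertically proportional with respect to $q_r$ and $\bar{g}$, all hypotheses of Theorem \ref{TheoremParamTCLSdiff} are met. Thus, for all $\lambda \in \RR$,
$$\TC_{r,p}(F^\lambda) \leq \sum_{\mu \in (-\infty,\lambda]} \max \left\{\TC_{r,p}(C) \mid C \in \C_\mu(F)\right\}.$$

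The second step is to isolate the summand at $\mu=0$. Because $F \geq 0$ and $F^{-1}(\{0\}) = \Delta_{r,E}$, every point of $\Delta_{r,E}$ is a global minimum of $F$ and hence lies in $\Crit F$, while no point with $F$-value $0$ lies outside $\Delta_{r,E}$; therefore $F_0 \cap \Crit F = \Delta_{r,E}$, and $\C_\mu(F) = \emptyset$ for all $\mu < 0$. Since $E$ is path-connected, so is $\Delta_{r,E}$, whence $\C_0(F) = \{\Delta_{r,E}\}$ and the corresponding maximum equals $\TC_{r,p}(\Delta_{r,E})$. Consequently the sum over $(-\infty,\lambda]$ collapses to the sum over $(0,\lambda]$ plus the single term $\TC_{r,p}(\Delta_{r,E})$.

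It then remains to check that $\TC_{r,p}(\Delta_{r,E}) = 1$. The map $s\colon \Delta_{r,E} \to E^I_X$, $s(u,u,\dots,u) = c_u$, where $c_u$ denotes the constant path at $u$, is well defined: $p \circ c_u$ is constant, so $c_u \in E^I_X$, and $\Pi_r(c_u) = (u,\dots,u)$, so $s$ is a continuous sequential parametrized motion planner on the closed subset $\Delta_{r,E} \subset E^r_X$. Since $E$ and $X$ are ANRs (being paracompact Banach manifolds, by \cite[Cor. to Theorem 5]{PalaisInf}), Lemma \ref{LemmaPMPextend} yields an open neighborhood of $\Delta_{r,E}$ in $E^r_X$ admitting a continuous sequential parametrized motion planner, so $\TC_{r,p}(\Delta_{r,E}) \leq 1$; the reverse inequality is immediate. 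Combining the three steps gives the asserted bound, with the convention that the empty sum (for $\lambda \leq 0$) is zero.

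This is essentially a corollary, so no substantial obstacle is expected. The two points requiring a little care are verifying that the constant-path section is genuinely a section of $\Pi_r$ that is continuous as a map into $E^I_X$ with the subspace topology inherited from $PE$, and confirming the bookkeeping for the index set of the resulting sum, namely that $\C_\mu(F)$ is empty for $\mu < 0$ and that $\Delta_{r,E}$, being connected, contributes a single maximum term equal to $1$. One should also note at the outset that the PS condition for $F$ is to be read with respect to $\bar{g}$ so that Theorem \ref{TheoremParamTCLSdiff} applies verbatim.
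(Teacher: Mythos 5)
Your proposal is correct and follows essentially the same route as the paper: apply Theorem \ref{TheoremParamTCLSdiff}, observe that the only nonpositive critical value is $0$ with critical set $\Delta_{r,E}$, and show $\TC_{r,p}(\Delta_{r,E})=1$ via the constant-path parametrized motion planner extended to an open neighborhood by Lemma \ref{LemmaPMPextend}. The only difference is that you spell out the bookkeeping (emptiness of $\C_\mu(F)$ for $\mu<0$, connectedness of $\Delta_{r,E}$, reading the PS condition with respect to $\bar{g}$) which the paper leaves implicit.
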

\begin{proof}
	This follows from Theorem \ref{TheoremParamTCLSdiff} in the same way as Theorem \ref{TheoremTCLSNavi} is derived from Corollary \ref{CorTCLSdiff}. The sequential motion planner by constant paths on $\Delta_{r,E}$ is indeed a sequential parametrized motion planner which extends to an open neighborhood of $\Delta_{r,E}$ by Lemma \ref{LemmaPMPextend}.
\end{proof}
 
In the following, we present a method of obtaining parametrized navigation functions which satisfy the requirements of Theorem \ref{TheoremParamTCLSNavi}. \medskip

Let $E$ and $B$ be closed smooth manifolds and let $p \colon E \to B$ be a smooth fibration. Since a smooth fibration between finite-dimensional manifolds is necessarily a submersion, $E\times_B E$ is a smooth manifold as well. For suitable numbers $N_1,N_2 \in \NN$ we choose smooth embeddings $i_B\colon B \hookrightarrow \RR^{N_1}$ and $i_E\colon E \hookrightarrow \RR^{N_2}$. Then one easily checks that
$$i_0\colon  E \times_B E \to \RR^{N_1+2N_2}, \qquad i_0(e_1,e_2) = (i_B(p(e_1)),i_E(e_1),i_E(e_2)),$$
is a smooth embedding as well.

Recall that if $\pi\colon M\to N$ is a fibration between smooth manifolds, where $M$ is equipped with a Riemannian metric we define the \textit{horizontal subspace at} $x\in M$ by
$$     \mathrm{Hor}_x(\pi) :=   \big( \mathrm{Ver}_x(q)\big)^{\perp} \subset T_x M .       $$

\begin{prop}
\label{PropParamNaviEucl}
	Consider the function  $$F\colon  E \times_B E \to \RR, \qquad F(e_1,e_2) = \|i_E(e_1)-i_E(e_2)\|^2,$$
	where $\| \cdot \|$ denotes the Euclidean norm on $\RR^{N_1}$.
	\begin{enumerate}[a)]
		\item Then $F$ is a parametrized $2$-navigation function.
		\item Let $g_0$ be the Riemannian metric on $E \times_B E$ for which $i_0$ is an isometric embedding with respect to the Euclidean metric on $\RR^{N_1+2N_2}$. 
        Assume that for all $(e_1,e_2)\in E\times_B E$ and $u\in \mathrm{Hor}_{(e_1,e_2)}(q)$ we have 
\begin{equation}
\label{EqParamOrth}
        \langle i_E (e_1) - i_E(e_2), u_1-u_2 \rangle = 0    
        \end{equation}
        where we write $(Di_0)_{(e_1,e_2)}(u) = (u_0,u_1,u_2)\in \RR^{n_1}\times \RR^{2n_2}$.
      Then $\nabla^{g_0} F$ is vertically proportional with respect to $q\colon E \times_BE \to B$, $q(e_1,e_2)=p(e_1)$.
	\end{enumerate}
\end{prop}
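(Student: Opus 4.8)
The plan is to handle the two parts separately: part a) is a direct check against Definition \ref{DefParamNavi}, and part b) will follow from the stronger observation that hypothesis \eqref{EqParamOrth} forces $\nabla^{g_0}F$ to be everywhere vertical with respect to $q$.

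For part a), property (ii) of Definition \ref{DefParamNavi} is immediate: $F(e_1,e_2)=\|i_E(e_1)-i_E(e_2)\|^2\geq 0$, and since $i_E$ is injective, $F(e_1,e_2)=0$ holds exactly when $e_1=e_2$, so that $F^{-1}(\{0\})=\Delta_{2,E}$ (a point of the diagonal automatically lies in $E\times_B E$). For property (i), I would observe that $E\times_B E$ is the preimage of the closed diagonal of $B\times B$ under $p\times p$, hence a closed subset of the compact manifold $E\times E$ and therefore itself a compact manifold; a smooth function on a closed manifold satisfies the PS condition with respect to any Riemannian metric, and every Riemannian metric on a compact manifold is complete. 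Thus the hypotheses of Remark \ref{RemarkParamNavi}.(2) are met and $F$ is a parametrized $2$-navigation function.

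For part b), I would first compute the differential of $F$. Writing $\tilde F\colon\RR^{N_1}\times\RR^{N_2}\times\RR^{N_2}\to\RR$, $\tilde F(y_0,y_1,y_2)=\|y_1-y_2\|^2$, we have $F=\tilde F\circ i_0$; since the second and third components of $(Di_0)_{(e_1,e_2)}(v)$ are obtained by applying $Di_E$ to the two factors of $v\in T_{(e_1,e_2)}(E\times_B E)\subset T_{e_1}E\times T_{e_2}E$, the chain rule gives
\[ DF_{(e_1,e_2)}(v)=2\langle i_E(e_1)-i_E(e_2),\,v_1-v_2\rangle \qquad\text{where } (Di_0)_{(e_1,e_2)}(v)=(v_0,v_1,v_2). \]
Hence hypothesis \eqref{EqParamOrth} says precisely that $DF_{(e_1,e_2)}$ vanishes on $\Hor_{(e_1,e_2)}(q)$. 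Since $DF_x(\cdot)=g_0(\nabla^{g_0}F(x),\cdot)$ by definition of the gradient, this means that $\nabla^{g_0}F(x)$ is $g_0$-orthogonal to $\Hor_x(q)=(\Ver_x(q))^\perp$; because $E\times_B E$ is finite-dimensional, $T_x(E\times_B E)=\Ver_x(q)\oplus\Hor_x(q)$ is an orthogonal direct sum, so $(\Hor_x(q))^\perp=\Ver_x(q)$ and therefore $\nabla^{g_0}F(x)\in\Ver_x(q)$ for every $x\in E\times_B E$. Consequently $\nabla^\ver F(x)=\pr_\ver(\nabla^{g_0}F(x))=\nabla^{g_0}F(x)$, so $\|\nabla^{g_0}F(x)\|_{g_0}=\|\nabla^\ver F(x)\|_{g_0}$ and $\nabla^{g_0}F$ is vertically proportional with respect to $q$ and $g_0$ with constant $C=1$.

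The argument is essentially bookkeeping; the single point that deserves care is the identification of tangent vectors under $Di_0$ needed to match the intrinsic differential $DF$ with the bilinear expression in \eqref{EqParamOrth}, together with the elementary, finite-dimensional fact that the horizontal subspace is the genuine $g_0$-orthogonal complement of the vertical one. I do not anticipate any real obstacle.
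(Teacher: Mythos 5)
Your proposal is correct and follows essentially the same route as the paper: part a) via non-negativity, $F^{-1}(\{0\})=\Delta_{2,E}$, and compactness of $E\times_B E$ together with Remark \ref{RemarkParamNavi}.(2); part b) by writing $F$ as the composition of the squared-distance function with $i_0$, computing $DF_{(e_1,e_2)}(u)=2\langle i_E(e_1)-i_E(e_2),u_1-u_2\rangle$, and concluding from \eqref{EqParamOrth} that $\nabla^{g_0}F$ is everywhere vertical, hence equal to $\nabla^\ver F$ and vertically proportional. Your explicit spelling-out of the orthogonal-complement step $(\Hor_x(q))^\perp=\Ver_x(q)$ is just a more detailed rendering of what the paper leaves implicit.
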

\begin{proof}
    \begin{enumerate}[a)]
	\item  Apparently, $F$ is smooth with $F(e_1,e_2)\geq 0$ for all $(e_1,e_2) \in E \times_BE$ and $F(e_1,e_2)=0$ holds if and only if $e_1=e_2$, i.e. if $(e_1,e_2) \in \Delta_{2,E}$. Since $E \times_B E$ is compact, this yields that $F$ is a navigation function, see Remark \ref{RemarkParamNavi}.(2).
	\item  
    To compute $\nabla^{g_0}F$, we consider the function $\varphi\colon  \RR^{N_2} \times \RR^{N_2} \to \RR$, $\varphi(x,y)=\|x-y\|^2$, which is easily seen to be differentiable with 
	$$D\varphi_{(x,y)}(v,w) = 2\left<x-y,v-w\right> 	\qquad \forall x,y,v,w \in \RR^{N_2}$$
     with respect to the Euclidean inner product. With $\pr_2\colon \RR^{N_1} \times \RR^{2N_2}\to \RR^{2N_2}$ denoting the projection, we observe that 
    $F = \varphi \circ \pr_2\circ  i_0$ 
    and thus 
    \begin{equation}\label{eq_description_of_differential}
          DF_{(e_1,e_2)} = D\varphi_{(i_E(e_1),i_E(e_2))}\circ (D\pr_2)_{i_0(e_1,e_2)}\circ (Di_0)_{(e_1,e_2)}
    \end{equation}
    Let $(e_1,e_2)\in E\times_B E$ and $u\in \mathrm{Hor}_{(e_1,e_2)}(q)$. We write $(Di_0)_{(e_1,e_2)} (u) = (u_0,u_1,u_2)\in \RR^{n_1+2n_2}$.     Then equation \eqref{eq_description_of_differential} yields 
    $$    DF_{(e_1,e_2)} (u) =   D\varphi_{(x_1,x_2)} (u_1,u_2) =  2\langle  i_E(e_1) - i_E(e_2), u_1 - u_2 \rangle   ,  $$
    which vanishes by assumption.
    It follows that $\nabla^{g_0} F(e_1,e_2) \in \Ver_{(e_1,e_2)}(q)$ for all $(e_1,e_2) \in E \times_B E$. We immediately derive that $\nabla^{g_0} F(e_1,e_2)=\nabla^\ver F(e_1,e_2)$ for all $(e_1,e_2) \in E \times_BE$. In particular, $\nabla^{g_0} F$ is vertically proportional.
    \end{enumerate}
\end{proof}

\begin{remark}\label{remark_condition_for_vertical_proportionality}
Condition \eqref{EqParamOrth} in Proposition \ref{PropParamNaviEucl}.b) is satisfied if $(Di_0)_{(e_1,e_2)}(u)\in \RR^{n_1}\times \Delta_{\RR^{n_2}}$ for all $(e_1,e_2)\in E\times_B E$ and $u\in \mathrm{Hor}_{(e_1,e_2)}(q)$, i.e. if $(Di_0)_{(e_1,e_2)}(u) = (u_0,u_1,u_1)$ for some $u_0 \in \RR^{n_1}$ and some $u_1 \in \RR^{n_2}$.
\end{remark}

\begin{example}
    Let $k,n\in\NN$ with $2\leq k \leq n-1$.
    Consider the Stiefel manifold of orthonormal $k$-frames in $\RR^n$
    $$      V_k(\RR^n) = \{ X\in M_{n,k}(\RR) \,|\, X^T X = 1_k\}     $$
    where $M_{n,k}(\RR)$ denotes the set of real $n\times k$-matrices and $1_k\in M_{k,k}(\RR)$ is the identity matrix.
     For $X\in M_{n,k}(\RR)$ we write $X= (x_1,\ldots, x_k)$ where $x_1,\ldots, x_k\in \RR^n$ are the columns of $X$. 
    We see that $V_k(\RR^n)$ is an embedded submanifold of $M_{n,k}(\RR) \cong \RR^{nk}$ and we endow $V_k(\mathbb{R}^n)$ with the Riemannian metric induced by the Euclidean metric on $\RR^{nk}$.
    Then   $$p\colon V_k(\RR^n)\to \mathbb{S}^{n-1}, \qquad    p(  X)  =  x_1 \quad \text{for}\ \ X = (x_1,\ldots, x_k)\in V_k(\RR^n) ,      $$ is a fiber bundle with typical fiber $V_{k-1}(\RR^{n})$.
    Proposition \ref{PropParamNaviEucl} yields a parametrized $2$-navigation function $F\colon V_k(\RR^n)\times_{\S^{n-1}} V_k(\RR^n) \to \RR$ given by
    $$    F( X,Y) =   \| X-Y \|^2 \quad \text{for }\quad (X,Y)\in V_k(\RR^n) \times_{\S^{n-1}} V_k(\RR^n) .   $$
    Here $\|\cdot\|$ denotes the Euclidean norm on $M_{n,k}(\RR)\cong \RR^{nk}$.
For  $X=(x_1,\ldots,x_k),Y=(y_1,\ldots,y_k)\in V_k(\RR^n)$ with $p(X) = p(Y)$, it holds that $x_1 = y_1$.
    Thus, we see that
    $$   F(X,Y) =   \sum_{j=2}^k \| x_j-y_j\|^2  \qquad \forall (X,Y)\in V_k(\RR^n)\times_{\S^{n-1}}V_k(\RR^n) .     $$
    A computation of the horizontal space of the bundle $V_k(\mathbb{R}^n)\times_{\mathbb{S}^{n-1}} V_k(\mathbb{R}^n)\to \mathbb{S}^{n-1}$ shows that we are in the situation of Remark \ref{remark_condition_for_vertical_proportionality} and the condition of Proposition \ref{PropParamNaviEucl}.b) is satisfied in this example.
    The gradient vector field $\nabla^{g_0}F$ is thus vertically proportional. 
\end{example}

\section{Sequential parametrized TC of restrictions of fibrations to sublevel sets}

We want to apply the reults from the previous section in the case that a smooth fibration $p\colon E \to X$ between Banach manifolds and a function $f \in C^1(E)$ are given, such that the restriction $p|_{f^\lambda}\colon f^\lambda \to X$ is again a fibration. We note that this is a strong assumption, but we will establish a class of fibrations having this property in section \ref{SectionEqui} below. 

Here, we want to derive upper bounds of $\TC[p|_{f^\lambda}\colon f^\lambda \to X]$ under the fibration assumption, i.e. upper bounds of sequential parametrized TCs instead of sequential parametrized subspace complexities. Such a result can be seen as a ``parametrized analogue'' of Corollary \ref{CorTCAbssubMax} on sublevel sets. 

For this purpose, we will study functions $F\in C^1(E^r_X)$ for which there exists $f \in C^1(E)$, such that $$F(u_1,\dots,u_r)=f(u_1)+\dots + f(u_r).$$

\emph{Throughout this section let $r \in \NN$ with $r \geq 2$, let $E$ and $X$ be paracompact Banach manifolds and let $p\colon E \to X$ be a smooth fibration and a surjective submersion whose fibers are path-connected. We further let $g$ be a Riemannian metric on $E$ and let $\bar{g}$ be the Riemannian metric on $E^r_X$ that is obtained by restricting the product metric $g^r$ on $E^r$. Given $A_1,\dots,A_r \subset E$, we will further denote}
$$A_1 \times_X A_2 \times_X \dots \times_X A_r := \Big\{(a_1,\dots,a_r) \in \prod_{i=1}^r A_i \ \Big| \  p(a_1)=\dots=p(a_r)\Big\} = E^r_X \cap \prod_{i=1}^r A_i . $$

As a first step, we want to investigate the critical points of such functions as well as the vertical proportionality of their gradients. We first introduce another condition on the critical points of a function in a general setting. 

\begin{definition}
Let $M$ and $N$ be Banach manifolds, let $q\colon M \to N$ be a smooth submersion and let $M_x:= q^{-1}(\{x\})$ denote the fiber of $q$ over $x \in N$. We say that a function $F \in C^1(M)$ \emph{satisfies condition (A)} if the following is satisfied:\medskip 
\begin{center}
(A) \enskip Let $x \in N$ and let $u \in M_x$ be a critical point of $F|_{M_x}$. Then $u$ is a critical point of $F$. \medskip
\end{center}

\end{definition}

\begin{remark}
\label{RemarkcondA}
Let $M$ and $N$ be Banach manifolds, $\rho$ be a Riemannian metric on $M$, $q:M \to N$ be a smooth submersion and $F \in C^1(M)$. One checks without difficulties that $\nabla^\ver F(u)$ coincides with the gradient of $F|_{M_{q(u)}}$ at $u\in M$ with respect to $\rho|_{M_{q(u)}}$. Thus, $F$ satisfies condition (A) if and only if for each $u \in M$ it holds that 
$$ \nabla^\ver F(u) =0 \quad \Rightarrow \quad \nabla^\rho F(u)=0.$$ 
From this observation it is evident that if $\nabla^\rho F$ is vertically proportional, then $F$ necessarily satisfies condition (A).
\end{remark}

The reason for studying condition (A) in our context is that it allows us to understand the critical points of functions on $E^r_X$ that arise as restrictions of products in a better way and without studying any Riemannian metric. 

\begin{prop}
\label{PropCritF}
	Let $f \in C^1(E)$ and consider $F\colon E^r_X \to \RR$, $F(u_1,\dots,u_r) := \sum_{i=1}^r f(u_i)$. If $f$ satisfies condition (A), then 
	$$\Crit F = \Crit f \times_X \Crit f \times_X \dots \times_X \Crit f.$$
\end{prop}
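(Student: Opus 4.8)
The plan is to establish both inclusions of the claimed equality by relating critical points of $F$ on $E^r_X$ to critical points of $f$ fiberwise. First I would unwind the structure of $E^r_X$ as the pullback along the diagonal, and of $q_r\colon E^r_X \to X$, so that the fiber of $q_r$ over $x \in X$ is precisely $(p^{-1}(x))^r \subset E^r_X$, i.e. the $r$-fold product of the fiber $E_x := p^{-1}(x)$. On this fiber, $F$ restricts to $F|_{(E_x)^r}(u_1,\dots,u_r) = \sum_{i=1}^r (f|_{E_x})(u_i)$, so that by the product rule for differentials of sums, $(u_1,\dots,u_r)$ is a critical point of $F|_{(E_x)^r}$ if and only if each $u_i$ is a critical point of $f|_{E_x}$. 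Using Remark \ref{RemarkcondA}, this is the statement that $\nabla^\ver F(u) = 0$ precisely when $\nabla^\ver f(u_i) = 0$ for all $i$; equivalently, $u = (u_1,\dots,u_r) \in \sing(\nabla^\ver F)$ iff each $u_i \in \sing(\nabla^\ver f)$.

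For the inclusion $\Crit F \subset \Crit f \times_X \cdots \times_X \Crit f$: if $u \in \Crit F$, then in particular $\nabla^\ver F(u) = 0$ (the vertical gradient is the orthogonal projection of the full gradient, which vanishes), so by the fiberwise observation each $u_i$ is a critical point of $f|_{E_{p(u_i)}}$. Since $f$ satisfies condition (A), each such $u_i$ is then a critical point of $f$ itself, i.e. $u_i \in \Crit f$. As $u \in E^r_X$ already forces $p(u_1) = \dots = p(u_r)$, we conclude $u \in \Crit f \times_X \cdots \times_X \Crit f$. For the reverse inclusion, suppose $u = (u_1,\dots,u_r)$ with each $u_i \in \Crit f$. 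I would compute $DF_u$ on an arbitrary tangent vector $v = (v_1,\dots,v_r) \in T_u E^r_X$; using the description \eqref{EqTErX} of the tangent space and the chain rule, $DF_u(v) = \sum_{i=1}^r Df_{u_i}(v_i)$, and each term vanishes because $Df_{u_i} = 0$. Hence $DF_u = 0$, i.e. $u \in \Crit F$.

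The only mild subtlety — and the step I would present most carefully — is the passage from "critical point of $f|_{E_x}$" to "critical point of $f$", which is exactly where condition (A) is used and is not automatic: a function can have a fiberwise-critical point that is not critical in the total space. Everything else is bookkeeping with differentials of sums and the explicit form of $T_u E^r_X$; no Riemannian metric is needed for this argument, since condition (A) is formulated at the level of differentials (cf. Remark \ref{RemarkcondA}). I would note in passing that by Remark \ref{RemarkcondA} the hypothesis of condition (A) is automatically satisfied in the situations of interest where $\nabla^{\bar g} F$ is vertically proportional, so Proposition \ref{PropCritF} applies there.
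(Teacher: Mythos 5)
Your proposal is correct and follows essentially the same route as the paper: the inclusion $\supset$ by direct computation of $DF_u(v)=\sum_i Df_{u_i}(v_i)$, and the inclusion $\subset$ by observing that a critical point of $F$ is fiberwise critical (the paper tests $DF_u$ on vectors $(0,\dots,0,v_i,0,\dots,0)$ with $v_i\in\Ver_{u_i}(p)$, which is exactly your restriction of $F$ to the $q_r$-fiber $(E_x)^r$) and then invoking condition (A). The detour through $\nabla^\ver F$ and Remark \ref{RemarkcondA} is harmless but unnecessary, as you yourself note that the argument works purely at the level of differentials.
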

\begin{proof}
	The inclusion ``$\supset$'' is evident, so it only remains to show that every critical point of $F$ arises in this way. Let $u=(u_1,\dots,u_r) \in \Crit F$, such that
	$$DF_u(v_1,\dots,v_r) = \sum_{i=1}^rDf_{u_i}(v_i) = 0 \qquad \forall (v_1,\dots,v_r) \in T_uE^r_X.$$
Let $X_i := p^{-1}(\{p(u_i)\})$ for each $i \in \{1,2,\dots,r\}$. Since $(0,\dots,0,v_i,0,\dots,0) \in T_u E^r_X$ for all $v_i \in \Ver_{u_i}(p)$, it particularly follows for each $i$ that 
$$Df_{u_i}(v_i) =0 \quad \forall v_i \in \Ver_{u_i}(p) \quad \Rightarrow u_i \in \Crit (f|_{X_i}).$$
Since $f$ satisfies condition (A), this yields that $u_i \in \Crit f$ for each $i \in \{1,2,\dots,r\}$, which shows the claim. 
\end{proof}

Next we want to investigate the vertical proportionality of gradients of such functions with respect to $q_r$. For this purpose we establish an auxiliary lemma.

\begin{lemma}
\label{LemmaVertProd}
Let $f \in C^1(E)$ and consider $F\colon E^r_X \to \RR$, $F(u_1,\dots,u_r) := \sum_{i=1}^r f(u_i)$. Then 
$$\nabla^{\ver}F(u) = \left(\nabla^\ver f(u_1),\dots,\nabla^\ver f(u_r) \right) \quad \forall u = (u_1,\dots,u_r) \in E^r_X,$$
where $\nabla^\ver F$ is considered with respect to $q_r$ and $\bar{g}$, while $\nabla^\ver f$ is considered with respect to $p$ and $g$.  
\end{lemma}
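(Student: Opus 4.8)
The strategy is to reduce the statement to the elementary fact that the gradient of a ``sum function'' on a Riemannian product is the tuple of the gradients of the summands. The bridge to this fact is Remark \ref{RemarkcondA}: for a submersion $q\colon M \to N$ and $G \in C^1(M)$, the vertical gradient $\nabla^\ver G(u)$ equals the gradient of the restriction $G|_{M_{q(u)}}$ at $u$, computed with respect to the induced metric on the fiber $M_{q(u)} = q^{-1}(\{q(u)\})$. So first I would identify the fiber of $q_r$ through $u$ together with its induced metric, and then compute the gradient of $F$ restricted to this fiber.

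Fix $u = (u_1,\dots,u_r) \in E^r_X$ and set $x := p(u_1)$, $X_x := p^{-1}(\{x\}) \subset E$. By the very definition of $E^r_X$, a point $(w_1,\dots,w_r)$ lies in $q_r^{-1}(\{x\})$ if and only if $p(w_1) = \dots = p(w_r) = x$, so that
$$q_r^{-1}(\{x\}) = X_x \times X_x \times \dots \times X_x =: X_x^r,$$
and its tangent space at $u$ is $T_u X_x^r = \prod_{i=1}^r T_{u_i} X_x = \prod_{i=1}^r \Ver_{u_i}(p) = \Ver_u(q_r)$, consistently with \eqref{EqTErX} and \eqref{EqVeruq}. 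Since $\bar g$ is by definition the restriction of the product metric $g^r$ on $E^r$, its restriction to $X_x^r \subset E^r$ is precisely the $r$-fold product of $g|_{X_x}$ with itself.

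On the Riemannian product $(X_x^r, (g|_{X_x})^r)$ the restricted function is $F|_{X_x^r}(w_1,\dots,w_r) = \sum_{i=1}^r (f|_{X_x})(w_i)$. Its gradient at $u$ is characterized by: for all $(v_1,\dots,v_r) \in T_u X_x^r = \prod_i T_{u_i} X_x$,
$$\sum_{i=1}^r Df_{u_i}(v_i) = D(F|_{X_x^r})_u(v_1,\dots,v_r) = (g|_{X_x})^r\big(\nabla(F|_{X_x^r})(u),\, (v_1,\dots,v_r)\big);$$
since $\sum_i Df_{u_i}(v_i) = \sum_i (g|_{X_x})\big(\nabla(f|_{X_x})(u_i), v_i\big)$, it follows that $\nabla(F|_{X_x^r})(u) = \big(\nabla(f|_{X_x})(u_1),\dots,\nabla(f|_{X_x})(u_r)\big)$. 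Applying Remark \ref{RemarkcondA} once to $(q_r, F, \bar g)$ gives $\nabla^\ver F(u) = \nabla(F|_{X_x^r})(u)$, and applying it again to $(p, f, g)$ gives $\nabla(f|_{X_x})(u_i) = \nabla^\ver f(u_i)$ for each $i$; combining these yields the claim. I do not expect any real obstacle here: the only point requiring a moment of care is the identification $q_r^{-1}(\{x\}) = X_x^r$ with its product metric, which is immediate from the definition of $E^r_X$ and of $\bar g$.
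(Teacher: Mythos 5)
Your proof is correct, and it takes a genuinely different route from the paper's. The paper works entirely inside $T_uE^r_X$: it first establishes the projection identity $\pr_{q_r}=\bigl(\prod_{i=1}^r\pr_i\bigr)\big|_{TE^r_X}$ by comparing kernels and images (using \eqref{EqVeruq}), and then exploits the self-adjointness of orthogonal projections in a chain of inner-product identities
$\bar g(\nabla^\ver F(u),v)=DF_u(\pr_{q_r}(v))=\sum_i Df_{u_i}(\pr_i(v_i))=\bar g\bigl((\nabla^\ver f(u_1),\dots,\nabla^\ver f(u_r)),v\bigr)$
valid for all $v\in T_uE^r_X$. You instead identify the $q_r$-fiber through $u$ as the Riemannian product $\bigl(X_x^r,(g|_{X_x})^r\bigr)$ and invoke Remark \ref{RemarkcondA} twice, together with the standard fact that the gradient of a sum function on a Riemannian product is the tuple of the summands' gradients. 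Your argument is shorter and more conceptual, and the one point that needs care --- that $\bar g$ restricted to $X_x^r$ really is the product of the restricted metrics, which holds because $X_x^r\subset E^r_X\subset E^r$ and $\bar g=g^r|_{E^r_X}$ --- you address explicitly. The trade-off is that you lean on Remark \ref{RemarkcondA}, which the paper asserts without proof ("one checks without difficulties"), whereas the paper's computation is self-contained at the level of projections and adjoints; on the other hand, your reduction makes the geometric content of the lemma (the fiber of $q_r$ is a Riemannian product of fibers of $p$) more transparent, and the same reasoning would apply verbatim to any function on $E^r_X$ pulled back from a product, not just to sums.
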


\begin{proof}
Fix $u=(u_1,\dots,u_r) \in E^r_X$. For each $i \in \{1,2,\dots,r\}$ we put $$E_i := \{0\}^{i-1} \times \Ver_{u_i}(p) \times \{0\}^{r-i}\subset T_uE^r_X.$$
Taking orthogonal complements with respect to $\bar{g}|_{T_uE^r_X}$ and $g|_{T_{u_i}E}$, we observe that 
\begin{align*}
\left(\Ver_u(q_r)\right)^\perp &= \bigcap_{i=1}^r E_i^\perp = \bigcap_{i=1}^r \Big(T_uE^r_X \cap \Big(\prod_{j=1}^{i-1} T_{u_j}E \times (\Ver_{u_i}(p))^\perp \times \prod_{j=i+1}^r T_{u_j}E\Big) \Big)\\
&= T_uE^r_X \cap \prod_{i=1}^r (\Ver_{u_i}(p))^\perp.
\end{align*}
Let $\pr_{q_r}\colon T_uE^r_X \to T_uE^r_X$ and $\pr_{i}\colon T_{u_i}E \to T_{u_i}E$, $i \in \{1,2,\dots,r\}$, denote the orthogonal projections onto $\Ver_u(q_r)$ and $\Ver_{u_i}(p)$, respectively, with respect to the chosen metrics.  By definition and the above computation,
$$\ker \pr_{q_r}=T_uE^r_X \cap \prod_{i=1}^r (\Ver_{u_i}(p))^\perp = T_uE^r_X \cap \prod_{i=1}^r \ker \pr_i = \ker \Big(\Big(\prod_{i=1}^r \pr_i\Big)\Big|_{TE^r_X}\Big).$$
We further observe that equation \eqref{EqVeruq} can be rephrased by saying that the images of $\pr_{q_r}$ and $(\prod_{i=1}^r \pr_i)|_{TE^r_X}$ coincide. Together with the equality of their kernels, this shows that
\begin{equation}
\label{EqProjections}
\pr_{q_r}= \Big(\prod_{i=1}^r \pr_i\Big)\Big|_{TE^r_X}. 
\end{equation}
As orthogonal projections, the maps $\pr_{q_r}$ and $\pr_i$, $i \in \{1,2,\dots,r\}$, are self-adjoint with respect to the chosen metrics, so we obtain for all $v=(v_1,\dots,v_r) \in T_uE^r_X$ that
\begin{align*}
\bar{g}\left(\nabla^\ver F(u),v \right)&= \bar{g}\left(\pr_{q_r}(\nabla^{\bar{g}} F(u)),v \right)=\bar{g}\left(\nabla^{\bar{g}} F(u),\pr_{q_r}(v) \right)\\
&= DF_u(\pr_{q_r}(v))\stackrel{\eqref{EqProjections}}=  \sum_{i=1}^r g(\nabla^gf(u_i),\pr_i(v_i)) =\sum_{i=1}^r g(\pr_i(\nabla^gf(u_i)),v_i) \\
&= \sum_{i=1}^r g(\nabla^\ver f(u_i),v_i) = \bar{g} \left((\nabla^\ver f(u_1),\dots,\nabla^\ver f(u_r)) ,v\right).
\end{align*}
Since $v$ was chosen arbitrarily, the claim immediately follows. 
\end{proof}

\begin{lemma}
\label{LemmaVertPropProd}
Let $f \in C^1(E)$ and consider $F\colon E^r_X \to \RR$, $F(u_1,\dots,u_r) = \sum_{i=1}^r f(u_i)$. 	If $\nabla^g f\colon E \to TE$ is vertically proportional with respect to $p$ and $g$, then $\nabla^{\bar{g}} F\colon E^r_X \to TE^r_X$ is vertically proportional with respect to $q_r$ and $\bar{g}$. 
\end{lemma}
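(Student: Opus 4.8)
The plan is to deduce the vertical proportionality of $\nabla^{\bar{g}}F$ from that of $\nabla^g f$ by passing through the component-wise descriptions of the two gradients, using Lemma \ref{LemmaVertProd} for the vertical parts. First I would fix $u=(u_1,\dots,u_r)\in E^r_X$ and note that for every $v=(v_1,\dots,v_r)\in T_uE^r_X$ one has
$$\bar{g}\big(\nabla^{\bar{g}}F(u),v\big)=DF_u(v)=\sum_{i=1}^r Df_{u_i}(v_i)=\sum_{i=1}^r g\big(\nabla^g f(u_i),v_i\big)=g^r\big((\nabla^g f(u_1),\dots,\nabla^g f(u_r)),v\big).$$
Since $\bar{g}$ is the restriction of $g^r$ to the closed subspace $T_uE^r_X\subset T_uE^r$, this exhibits $\nabla^{\bar{g}}F(u)$ as the $g^r$-orthogonal projection of the tuple $(\nabla^g f(u_1),\dots,\nabla^g f(u_r))$ onto $T_uE^r_X$. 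As orthogonal projections do not increase norms, it follows that
$$\|\nabla^{\bar{g}}F(u)\|_{\bar{g}}^2\leq\sum_{i=1}^r\|\nabla^g f(u_i)\|_g^2.$$

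Next I would invoke the hypothesis: by vertical proportionality of $\nabla^g f$ with respect to $p$ and $g$, there is a constant $C>0$ with $\|\nabla^g f(e)\|_g\leq C\,\|\nabla^{\ver}f(e)\|_g$ for all $e\in E$, where $\nabla^{\ver}f$ is taken with respect to $p$ and $g$. Applying this to each $u_i$ and combining with the previous estimate gives
$$\|\nabla^{\bar{g}}F(u)\|_{\bar{g}}^2\leq C^2\sum_{i=1}^r\|\nabla^{\ver}f(u_i)\|_g^2.$$
Finally, Lemma \ref{LemmaVertProd} identifies the right-hand sum: it states that $\nabla^{\ver}F(u)=(\nabla^{\ver}f(u_1),\dots,\nabla^{\ver}f(u_r))$, so, since $\bar{g}$ restricts the product metric $g^r$, that sum equals $\|\nabla^{\ver}F(u)\|_{\bar{g}}^2$. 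Taking square roots yields $\|\nabla^{\bar{g}}F(u)\|_{\bar{g}}\leq C\,\|\nabla^{\ver}F(u)\|_{\bar{g}}$ for all $u\in E^r_X$, which is precisely the vertical proportionality of $\nabla^{\bar{g}}F$ with respect to $q_r$ and $\bar{g}$, with the same constant $C$.

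I do not expect a genuine obstacle here: the argument is a direct assembly of Lemma \ref{LemmaVertProd} with the hypothesis. The only points needing a little care are the identification of $\nabla^{\bar{g}}F(u)$ with an orthogonal projection of the component gradients, which is handled by the displayed computation above and rests on the fact that the gradient of a restriction is the projection of the ambient gradient onto the tangent space of the submanifold, and the bookkeeping of which metric and which fibration each occurrence of $\nabla$ and $\nabla^{\ver}$ refers to.
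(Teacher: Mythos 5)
Your proof is correct and follows essentially the same route as the paper: both identify $\nabla^{\bar{g}}F(u)$ with the image of $(\nabla^g f(u_1),\dots,\nabla^g f(u_r))$ under the adjoint of the inclusion $T_uE^r_X\hookrightarrow T_uE^r$ (i.e.\ the orthogonal projection), then apply the componentwise bound from the vertical proportionality of $\nabla^g f$ and conclude via Lemma \ref{LemmaVertProd}. Your use of the projection inequality $\|\nabla^{\bar{g}}F(u)\|_{\bar{g}}\leq\|(\nabla^g f(u_1),\dots,\nabla^g f(u_r))\|_{g^r}$ is, if anything, slightly more careful than the paper's stated equality, and it suffices for the claim with the same constant $C$.
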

\begin{proof} 
Since $\nabla^g f$ is vertically proportional, there exists $C>0$ with $\|\nabla^g f(e)\|_g\leq C\cdot \|\nabla^\ver f(e)\|_g$ for all $e \in E$. By definition, $F= f^r \circ i$, where $i\colon E^r_X \hookrightarrow E^r$ denotes the embedding, and one checks without difficulties that
$$\nabla^{\bar{g}}F(u) = (Di_u)^*(\nabla^{g^r} f^r(u))= (Di_u)^*(\nabla^g f(u_1),\dots,\nabla^g f(u_r))$$
for all $u=(u_1,\dots,u_r)\in E^r_X$, where $(Di_u)^*$ denotes the adjoint operator of $Di_u:T_uE^r_X \to T_{i(u)}E^r$ with respect to $\bar{g}$ and $g^r$. Thus, we compute for all $u=(u_1,\dots,u_r)$ that 
\begin{align*}
	\|\nabla^{\bar{g}}F(u)\|_{\bar{g}}&= \|(\nabla^g f(u_1),\dots,\nabla^g f(u_r))\|_{g^r}=\sqrt{\sum_{i=1}^r\|\nabla^g f(u_i)\|_g^2} \\
&\leq \sqrt{C^2\sum_{i=1}^r \|\nabla^\ver f(u_i)\|_g^2} = C \cdot \|\nabla^\ver F(u_1,\dots,u_r)\|_{\bar{g}},
\end{align*}
where we used Lemma \ref{LemmaVertProd} for the final equality.
\end{proof}

If we apply Theorem \ref{TheoremParamTCLS} to a function as considered in the previous lemmas, we obtain the following result.

\begin{theorem}
\label{TheoremParamTCLSprodrel}
Let $f \in C^1(E)$ be bounded from below and consider $$F\colon E^r_X \to \RR, \qquad F(u_1,\dots,u_r) =f(u_1)+\dots+ f(u_r).$$ Assume that $(E,g)$ is complete, that $(f,g)$ satisfies the PS condition, that $f(\Crit f)$ is discrete in $\RR$ and that $\nabla^gf$ is vertically proportional with respect to $p$ and $g$. Then for all $\lambda \in \RR$ it holds that
$$\TC_{r,p}(F^{\lambda}) \leq \sum_{\mu \in (-\infty,\lambda]} \max \left\{ \TC_{r,p} (\Crit_{\mu_1} f \times_X  \dots \times_X \Crit_{\mu_r} f ) \mid \mu_1+\dots+\mu_r=\mu\right\}.$$
\end{theorem}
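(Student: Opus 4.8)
The plan is to apply Theorem~\ref{TheoremParamTCLSdiff} to $F$ and then to rewrite the resulting estimate in terms of $f$. Three hypotheses of that theorem must be verified. First, $F$ is bounded from below because $f$ is and $F(u_1,\dots,u_r)=\sum_{i=1}^r f(u_i)$. Second, the vertical proportionality of $\nabla^{\bar g}F$ with respect to $q_r$ and $\bar g$ is exactly the content of Lemma~\ref{LemmaVertPropProd}. The third, and main, technical point is to check that $(F,\bar g)$ satisfies the PS condition.

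For the PS condition, take a sequence $(u_n)_n$ in $E^r_X$, written $u_n=(u_{1,n},\dots,u_{r,n})$, with $(|F(u_n)|)_n$ bounded and $\nabla^{\bar g}F(u_n)\to 0$. Since $\nabla^\ver F$ is obtained from $\nabla^{\bar g}F$ by an orthogonal projection, $\|\nabla^\ver F(u)\|_{\bar g}\le\|\nabla^{\bar g}F(u)\|_{\bar g}$, so $\nabla^\ver F(u_n)\to 0$ as well; by Lemma~\ref{LemmaVertProd} this says $(\nabla^\ver f(u_{1,n}),\dots,\nabla^\ver f(u_{r,n}))\to 0$, hence $\nabla^\ver f(u_{i,n})\to 0$ for each $i$, and the vertical proportionality of $\nabla^g f$ then forces $\nabla^g f(u_{i,n})\to 0$ for each $i$. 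Since $(F(u_n))_n$ is bounded and $f\ge -m$ for some $m\ge 0$, each coordinate sequence satisfies $-m\le f(u_{i,n})\le F(u_n)+(r-1)m$, so $(|f(u_{i,n})|)_n$ is bounded. Applying the PS condition of $(f,g)$ to the coordinate sequences one after another (passing to a subsequence at each of the $r$ steps) produces a subsequence of $(u_n)_n$ converging in $E^r$, whose limit lies in $E^r_X=(p^r)^{-1}(\Delta_{r,X})$ since the latter is closed in $E^r$. Thus $(F,\bar g)$ satisfies the PS condition, and Theorem~\ref{TheoremParamTCLSdiff} gives
\begin{equation*}
\TC_{r,p}(F^\lambda)\le\sum_{\mu\in(-\infty,\lambda]}\max\{\TC_{r,p}(C)\mid C\in\C_\mu(F)\}\qquad\forall\lambda\in\RR.
\end{equation*}

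It remains to identify the right-hand side with the asserted one. Since $\nabla^g f$ is vertically proportional, $f$ satisfies condition~(A) by Remark~\ref{RemarkcondA}, hence $\Crit F=\Crit f\times_X\dots\times_X\Crit f$ by Proposition~\ref{PropCritF}. Grouping an element of $F_\mu\cap\Crit F$ according to the values $\mu_i:=f(u_i)\in f(\Crit f)$ gives
\begin{equation*}
F_\mu\cap\Crit F=\bigcup_{\mu_1+\dots+\mu_r=\mu}\Crit_{\mu_1}f\times_X\dots\times_X\Crit_{\mu_r}f,
\end{equation*}
the tuples for distinct $(\mu_1,\dots,\mu_r)$ giving disjoint sets; and because $f\ge -m$, only finitely many tuples with $\sum_i\mu_i=\mu$ occur, as each $\mu_i$ then lies in the finite set $[-m,\mu+(r-1)m]\cap f(\Crit f)$. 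Each piece $\Crit_{\mu_1}f\times_X\dots\times_X\Crit_{\mu_r}f$ is closed in $E^r_X$ since $\Crit_{\mu_i}f=\Crit f\cap f_{\mu_i}$ is closed in $E$, so $F_\mu\cap\Crit F$ is a finite disjoint union of closed subsets of $E^r_X$. Iterating Proposition~\ref{PropPTCrindex}.d) (legitimate as the metrizable manifold $E$, hence $E^r_X$, is normal) yields $\TC_{r,p}(F_\mu\cap\Crit F)=\max\{\TC_{r,p}(\Crit_{\mu_1}f\times_X\dots\times_X\Crit_{\mu_r}f)\mid\mu_1+\dots+\mu_r=\mu\}$. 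Finally, since the finitely many pieces are clopen in $F_\mu\cap\Crit F$, every connected component $C\in\C_\mu(F)$ is contained in a single piece, so $\TC_{r,p}(C)\le\TC_{r,p}(\Crit_{\mu_1}f\times_X\dots\times_X\Crit_{\mu_r}f)$ by monotonicity, whence $\max_{C\in\C_\mu(F)}\TC_{r,p}(C)\le\max_{\mu_1+\dots+\mu_r=\mu}\TC_{r,p}(\Crit_{\mu_1}f\times_X\dots\times_X\Crit_{\mu_r}f)$. Substituting into the bound from Theorem~\ref{TheoremParamTCLSdiff} gives the claim. I expect the main obstacle to be the PS verification, i.e.\ controlling the genuine (restricted) gradient of $F$ on the submanifold $E^r_X$; this is precisely where Lemma~\ref{LemmaVertProd} and the standing vertical-proportionality hypothesis are indispensable, while the remainder is bookkeeping with the discreteness and lower-boundedness of $f$.
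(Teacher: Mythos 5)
Your proposal is correct and its overall skeleton coincides with the paper's proof: verify the hypotheses of Theorem \ref{TheoremParamTCLSdiff}, invoke Lemma \ref{LemmaVertPropProd} for the vertical proportionality of $\nabla^{\bar g}F$, identify $\Crit F$ via condition (A) and Proposition \ref{PropCritF}, decompose $F_\mu\cap\Crit F$ into finitely many pairwise disjoint closed fiber products $\Crit_{\mu_1}f\times_X\dots\times_X\Crit_{\mu_r}f$, and conclude. The one place where you genuinely diverge is the Palais--Smale verification for $(F,\bar g)$: the paper disposes of it in a single sentence, arguing that $(f^r,g^r)$ satisfies PS on $E^r$ and that PS therefore passes to the restriction to the closed submanifold $E^r_X$, whereas you argue coordinate-wise, using $\|\nabla^\ver F\|\le\|\nabla^{\bar g}F\|$, the identification $\nabla^\ver F(u)=(\nabla^\ver f(u_1),\dots,\nabla^\ver f(u_r))$ from Lemma \ref{LemmaVertProd}, and the vertical proportionality of $\nabla^g f$ to force $\nabla^g f(u_{i,n})\to 0$, before applying PS for $(f,g)$ in each factor and closedness of $E^r_X$ in $E^r$. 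Your version is more careful and arguably preferable: PS does not in general survive restriction to a closed submanifold, since the restricted gradient is only the tangential projection of the ambient one, so the paper's shortcut silently uses extra structure that your argument makes explicit (at the cost of the standing vertical-proportionality hypothesis, which is available anyway). The remaining cosmetic difference --- you bound $\max_{C\in\C_\mu(F)}\TC_{r,p}(C)$ by monotonicity after observing that each component lies in one clopen piece, while the paper computes $\TC_{r,p}(F_\mu\cap\Crit F)$ exactly via iterated use of Proposition \ref{PropPTCrindex}.d) --- yields the same estimate. One small point to keep in mind on both routes: finiteness of the set of admissible tuples $(\mu_1,\dots,\mu_r)$ uses that $f(\Crit f)$ meets a compact interval in a finite set, which follows from discreteness together with the closedness of the set of critical values in the relevant range (a consequence of the PS condition), not from discreteness alone.
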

\begin{proof}
	Since $(f,g)$ satisfies the PS condition, $(f^r,g^r)$ satisfies the PS condition on $E^r$. Since $E^r_X$ is a closed submanifold of $E^r$, it follows that $F=f^r|_{E^r_X}$ satisfies the PS condition as well. Moreover, $\nabla^{\bar{g}} F$ is vertically proportional with respect to $q_r$ by Lemma \ref{LemmaVertProd}. Thus, it follows from Theorem \ref{TheoremParamTCLSdiff}.a) that
	$$\TC_{r,p}(F^\lambda) \leq \sum_{\mu \in (-\infty,\lambda]} \TC_{r,p}(F_\mu \cap \Crit F).$$
Since $\nabla^g f$ is vertically proportional, $f$ satisfies condition (A) and it follows from Proposition \ref{PropCritF} that
\begin{align*}
\Crit_\mu F 
&= \bigcup_{\mu_1+\mu_2+\dots+\mu_r=\mu} \Crit_{\mu_1} f\times_X \Crit_{\mu_2} f \times_X \dots \times_X \Crit_{\mu_r} f.
\end{align*}
Since $f$ is continuously differentiable, $\Crit_\mu f$ is closed in $E$ for each $\mu \in \RR$, which yields that the latter is a union of closed subsets of $E^r_X$. One derives from the discreteness of $f(\Crit f)$ that the above is a finite union of pairwise disjoint closed subsets. Thus, by iteratively applying Proposition \ref{PropPTCrindex}.d), we obtain that 
\begin{align*}
\TC_{r,p}(\Crit_\mu F) &= \TC_{r,p} \Big(\bigcup_{\mu_1+\mu_2+\dots+\mu_r=\mu} \Crit_{\mu_1}(f)\times_X \Crit_{\mu_2} f \times_X \dots \times_X \Crit_{\mu_r} f \Big)	\\
&= \max \{\TC_{r,p}(\Crit_{\mu_1} f \times_X \Crit_{\mu_2} f \times_X \dots \times_X \Crit_{\mu_r} f) \mid \mu_1+\dots+\mu_r=\mu\}
\end{align*}
for all $\mu \in \RR$. The claim immediately follows.
\end{proof}

We recall that in Theorem \ref{TheoremTCLSprod} we have established an upper bound of sequential TCs of sublevel sets by sequential subspace complexities that are \emph{not} taken with respect to the whole domain of the function, but only with respect to the sublevel set itself. We want to establish an analogue of this result for sequential parametrized TCs and restrictions of fibrations of the form $p_\lambda := p|_{f^\lambda}\colon f^\lambda \to X$. However, $p_\lambda$ will in general not be a fibration and will only have this property in very particular settings. 
We refer the reader to the next section for examples of situations in which this assumption is indeed satisfied.

\begin{theorem}
\label{TheoremParamTCLSprod}
	Let $f \in C^1(E)$ be bounded from below and consider the function $$F\colon E^r_X \to \RR, \qquad F(u_1,\dots,u_r) =f(u_1)+\dots+ f(u_r).$$ 
	Assume that $(E,g)$ is complete, that $(f,g)$ satisfies the Palais-Smale condition and that $f(\Crit f)$ is discrete in $\RR$. If $\lambda \in \RR$ is chosen such that $f^\lambda$ is path connected, that 
	$$p_\lambda := p|_{f^\lambda}\colon  f^\lambda \to X$$
	is a smooth fibration and that $\nabla^gf|_{f^{<\lambda+\delta}}$ is vertically proportional with respect to $p$ for some $\delta >0$, then 
	\begin{align*}
	&\TC_r[p_\lambda\colon f^\lambda \to X] \\
	&\leq \sum_{\mu \in (-\infty,r\lambda]} \max \left\{ \TC_{r,p_\lambda} (\Crit_{\mu_1} f  \times_X \dots \times_X \Crit_{\mu_r} f) \mid \mu_1,\dots,\mu_r \in (-\infty, \lambda], \, \mu_1+\dots+\mu_r=\mu\right\}.
	\end{align*}
\end{theorem}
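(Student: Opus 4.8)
The plan is to reduce the statement to Theorem~\ref{TheoremParamTCLSprodrel} by passing from the fibration $p\colon E\to X$ to its restriction $p_\lambda\colon f^\lambda\to X$, which by hypothesis is again a fibration. Write $Y:=f^\lambda$ and let $\bar p := p_\lambda$. The key observation is that the open set $f^{<\lambda+\delta}$ is an open neighbourhood of $Y$ on which $\nabla^g f$ is vertically proportional; after shrinking slightly and using that $f$ is $C^1$ and satisfies (PS), I can arrange a negative pseudo-gradient flow (built from $\nabla^\ver f$ as in Proposition~\ref{PropVertPseudoGrad}) that preserves $Y=f^\lambda$ — because along a negative gradient-type flow $f$ is non-increasing, so $f^\lambda$ is forward-invariant — and that is vertical with respect to $p$, hence with respect to $q_r$ after taking the $r$-fold product restricted to $Y^r_X$. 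The time-$1$ map $\Phi_1$ of this flow on $Y^r_X$ then satisfies the hypotheses of Theorem~\ref{TheoremParamTCLS} applied to the fibration $\bar p\colon Y\to X$: it is fibre-preserving over $X$ via $q_r^{Y}$, it admits the Lyapunov function $F|_{Y^r_X}=\sum_i f|_Y(u_i)$, which is bounded from below, and $(\Phi_1,F|_{Y^r_X})$ satisfies condition~(D). For the last point I invoke Theorem~\ref{TheoremPSDPseudo} together with Lemma~\ref{LemmaCondDsubspace}.b): condition~(D) for $(\phi_1^\ver,f|_Y)$ follows from (PS) for $f$ restricted appropriately, and then $(\Phi_1,F|_{Y^r_X})$ inherits (D) because $Y^r_X$ is closed in $E^r_X$ and satisfies $\overline{Y^r_X}\cap\Fix\Phi_1\subset Y^r_X$.

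The second main step is purely formal and mirrors the proof of Theorem~\ref{TheoremParamTCLSprodrel}. One has $Y^r_X=(F|_{Y^r_X})^{r\lambda}$ since $F(u_1,\dots,u_r)=\sum_i f(u_i)\le r\lambda$ on $Y^r_X$ and conversely any point of $(F|_{Y^r_X})^{r\lambda}$ with all $u_i\in Y$ lies in $Y^r_X$. Hence
$$\TC_r[p_\lambda\colon f^\lambda\to X]=\TC_{r,p_\lambda}(Y^r_X)=\TC_{r,p_\lambda}\big((F|_{Y^r_X})^{r\lambda}\big)\le \sum_{\mu\in(-\infty,r\lambda]}\TC_{r,p_\lambda}\big((F|_{Y^r_X})_\mu\cap\Fix\Phi_1\big).$$
Now $\Fix\Phi_1=\sing(\nabla^\ver f|_Y)^{\times_X r}$, and since vertical proportionality forces condition~(A), Proposition~\ref{PropCritF} (applied to $f|_{f^{<\lambda+\delta}}$, whose critical set in $Y$ is $\Crit f\cap Y$) gives $\Fix\Phi_1=(\Crit f\cap Y)\times_X\cdots\times_X(\Crit f\cap Y)$. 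Decomposing by level, $(F|_{Y^r_X})_\mu\cap\Fix\Phi_1=\bigcup_{\mu_1+\dots+\mu_r=\mu,\ \mu_i\le\lambda}\Crit_{\mu_1}f\times_X\cdots\times_X\Crit_{\mu_r}f$; discreteness of $f(\Crit f)$ makes this a finite disjoint union of closed subsets, and iterating Proposition~\ref{PropPTCrindex}.d) turns the $\TC_{r,p_\lambda}$ of the union into the maximum over the summands, yielding exactly the claimed bound (with the subspace complexities taken relative to $f^\lambda$ rather than $E$).

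I expect the main obstacle to be the careful bookkeeping around the fibration $p_\lambda\colon f^\lambda\to X$: Theorems~\ref{TheoremParamTCLS} and \ref{TheoremParamTCLSdiff} were stated for a fibration $p\colon E\to X$ with $E$ a (paracompact Banach) manifold, and here the total space is the sublevel set $f^\lambda$, which is a manifold with boundary in general. This is precisely why the hypothesis ``$\nabla^g f|_{f^{<\lambda+\delta}}$ vertically proportional for some $\delta>0$'' is imposed — it lets me work on the genuinely open manifold $f^{<\lambda+\delta}$, run the flow there, and only afterwards restrict to $f^\lambda$ using forward-invariance of sublevel sets under a negative pseudo-gradient flow. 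I must check that $f^{<\lambda+\delta}$ (or $f^\lambda$ itself, for the ANFR arguments behind Theorem~\ref{TheoremParamTCLS}) inherits the needed ANR/ANFR properties: $f^{<\lambda+\delta}$ is open in the Banach manifold $E$, hence a paracompact Banach manifold and an ANR by \cite{PalaisInf}, and $p_\lambda$ being a fibration between ANRs is an ANFR by Lemma~\ref{LemmaANFR}.a). The remaining subtlety — verifying that the flow restricted to $f^\lambda$ is still complete enough (forward-complete suffices, by forward-invariance) and still satisfies condition~(D) on $Y^r_X$ — is handled by combining the completeness of $(E,g)$, the (PS) condition for $f$, Theorem~\ref{TheoremPSDPseudo}, and Lemma~\ref{LemmaCondDsubspace}.b) exactly as in the proof of Theorem~\ref{TheoremParamTCLSprodrel}.
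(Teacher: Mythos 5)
Your proposal follows essentially the same route as the paper's proof: the vertical pseudo-gradient built as in Proposition \ref{PropVertPseudoGrad} (together with Lemma \ref{LemmaVertPropProd}) gives a fiber-preserving flow under which sublevel sets are forward-invariant, condition (D) is obtained from Theorem \ref{TheoremPSDPseudo} and Lemma \ref{LemmaCondDsubspace}.b), the parametrized Lusternik--Schnirelmann bound (Theorem \ref{TheoremParamTCLS}/\ref{TheoremParamTCLSdiff}) is applied with the index function $\TC_{r,p_\lambda}$, and the fixed-point set is then decomposed exactly as in Theorem \ref{TheoremParamTCLSprodrel} via Proposition \ref{PropCritF}, the discreteness of $f(\Crit f)$ and Proposition \ref{PropPTCrindex}.d). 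The only minor difference is that the paper runs the restriction argument on the open forward-invariant region $(f^{<\lambda+\delta})^r\cap E^r_X$ after shrinking $\delta$ so that $(\lambda,\lambda+\delta]$ contains no critical values (which is how it verifies the hypothesis of Lemma \ref{LemmaCondDsubspace}.b)), whereas you restrict directly to the closed set $(f^\lambda)^r_X$, where that hypothesis is automatic; both variants work at the paper's level of rigor.
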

\begin{proof}
 Let $Y_f\colon E \to TE$ and $Y_F\colon E^r_X \to TE^r_X$ be the pseudo-gradients of $f$ and $F$, respectively, that are constructed as in Proposition \ref{PropVertPseudoGrad}.a). Let $\varphi^\ver\colon E \times \RR \to E$ and $\Phi^\ver\colon E^r_X \times \RR \to E^r_X$ denote the flows of $Y_f^\ver$ and $Y_F^\ver$, respectively. One derives from Lemmas \ref{LemmaVertProd} and \ref{LemmaVertPropProd} that 
\begin{equation}
\label{EqPhivarphi}
\Phi^\ver(u_1,\dots,u_r,t) = (\varphi^\ver(u_1,t),\dots,\varphi^\ver(u_r,t))\qquad \forall (u_1,\dots,u_r) \in E^r_X, \, t \in \RR.
\end{equation}
 Since $f(\Crit f)$ is discrete, we can assume w.l.o.g. that $(\lambda,\lambda+\delta]$ does not contain a critical value of $f$. Put $U:= (f^{<\lambda+\delta})^r \cap E^r_X$. One derives from \eqref{EqPhivarphi} that  $\Phi^\ver(u,t) \in U$ for all $u \in U$ and $t \in [0,+\infty)$, such that $\Phi^\ver$ restricts to a flow	$\Psi:=\Phi^\ver|_{U \times [0,+\infty)} \colon U \times [0,+\infty) \to U$.
	In other words, $Y_F^\ver|_U$ is forward complete. By Proposition \ref{PropVertPseudoGrad}.b), $(\varphi^\ver_1,f)$ satisfies condition (D). Since $\partial U \subset (f_{\lambda+\delta})^r$ and since $\lambda+\delta$ is a regular value of $f$, it holds that  $\partial U \cap \Fix \Phi=\emptyset$. Thus, by Lemma \ref{LemmaCondDsubspace}.b) and \eqref{EqPhivarphi}, $(\Psi_1,F|_U)$ satisfies condition (D). 
	
	Since $F(u) < r\lambda$ for each $u \in U$, it holds that $U = (F|_U)^{r\lambda}$. Since $p_\lambda$ is a fibration, we can thus apply Theorem \ref{TheoremParamTCLSdiff} to $\Psi_1$ and $F|_U$ and obtain that
\begin{align*}
\TC_r[p_\lambda\colon f^\lambda \to B] &= \TC_{r,p_\lambda} ((f^\lambda)^r_X)= \TC_{r,p_\lambda}\left((F|_U)^{r\lambda}\right) \leq \sum_{\mu \in (-\infty,r\lambda]} \TC_{r,p_\lambda} (F_\mu \cap \Crit F).
\end{align*}
From here on, the claim follows along the lines of the proof of Theorem \ref{TheoremParamTCLSprodrel}.
\end{proof}

\section{Equivariant fiber bundles and vertical proportionality}
\label{SectionEqui}

In contrast to the setting of sequential TCs in section 2 to 4, our Lusternik-Schnirelmann-type results for sequential \emph{parametrized} setting in the last sections had the downside that some restrictive conditions have to be assumed, namely 
\begin{itemize}
	\item the vertical proportionality of gradients in Theorem \ref{TheoremParamTCLSdiff} and all of its consequences,
	\item the fibration property of restrictions of fibrations to sublevel sets in Theorem \ref{TheoremParamTCLSprod}.
\end{itemize}  
In this section, we will study certain classes of examples of $G$-equivariant fiber bundles, where $G$ is a Lie group, in which these properties hold or can be derived from more tangible criteria. \medskip

\emph{Throughout this section we let $G$ be a connected finite-dimensional Lie group, $E$ and $B$ be smooth paracompact Banach manifolds and $p\colon E \to B$ be a smooth fiber bundle whose fibers are path-connected. We further assume that $E$ and $B$ are equipped with smooth left $G$-actions}
$$ \Phi\colon  G \times E \to E, \qquad \varphi\colon G \times B \to B,$$
\emph{such that $p$ is $G$-equivariant with respect to these actions. Given a subset $A \subset B$ we further put $E|_A := p^{-1}(A)$ and denote by $E_b := E|_{\{b\}}=p^{-1}(\{b\})$ the fiber over $b$ for each $b \in B$. }

\begin{prop}
\label{PropEquivFiberProps}
Assume that the $G$-action $\varphi$ on $B$ is transitive and let $f\in C^1(E)$ be $G$-invariant. 
\begin{enumerate}[a)]
	\item For each $a \in \RR$, the restriction 
$$p_a\colon f^a \to B, \qquad p_a := p|_{f^a},$$
is a fiber bundle with typical fiber $E_b \cap f^a$ for some $b \in B$. 
\item Each $b \in B$ admits an open neighborhood $U \subset B$ and a local trivialization $\psi\colon E|_U \to U \times E_b$ of $p$, such that
 $$(f \circ \psi^{-1})(b,x)=f(x) \qquad \forall b \in U,\ x \in E_b.$$ 
\end{enumerate}
\end{prop}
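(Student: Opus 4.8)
The plan is to extract from the transitivity of $\varphi$ a smooth family of group elements that translates one fixed fiber onto the neighboring ones; part a) then follows immediately from part b), so I would establish b) first. Fix $b_0\in B$ and consider the orbit map $\mathrm{orb}_{b_0}\colon G\to B$, $g\mapsto\varphi(g,b_0)$. It is smooth and $G$-equivariant (for left translation on $G$ and $\varphi$ on $B$), hence of constant rank, and transitivity makes it surjective; as I explain below, a surjective constant-rank map from the finite-dimensional $G$ onto $B$ must have rank $\dim B$, so in particular $B$ is finite-dimensional and $\mathrm{orb}_{b_0}$ is a submersion. The local normal form for submersions then provides an open neighborhood $U\subset B$ of $b_0$ together with a smooth local section $\sigma\colon U\to G$ with $\sigma(b_0)=e$ and $\varphi(\sigma(b),b_0)=b$ for all $b\in U$.

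For part b) I would set $\psi^{-1}\colon U\times E_{b_0}\to E|_U$, $\psi^{-1}(b,x):=\Phi(\sigma(b),x)=\sigma(b)\cdot x$. Since $p$ is $G$-equivariant, $p(\psi^{-1}(b,x))=\varphi(\sigma(b),p(x))=\varphi(\sigma(b),b_0)=b$, so $\psi^{-1}$ is fiber-preserving over $U$. It is a diffeomorphism, with the manifestly smooth inverse $e\mapsto\bigl(p(e),\ \sigma(p(e))^{-1}\cdot e\bigr)$: one checks that the two composites are identities, using that $\sigma(p(e))^{-1}\cdot e$ lies in $E_{b_0}$ whenever $p(e)\in U$. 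Hence $\psi\colon E|_U\to U\times E_{b_0}$ is a local trivialization of $p$, and the $G$-invariance of $f$ gives $(f\circ\psi^{-1})(b,x)=f(\sigma(b)\cdot x)=f(x)$ for all $b\in U$ and $x\in E_{b_0}$, which is exactly assertion b) (with $b_0$ in place of the fixed base point).

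For part a) I may assume $f^a\neq\emptyset$, since otherwise $p_a$ is the empty bundle. First, $p_a$ is onto $B$: given $e_0\in f^a$ and $b\in B$, transitivity yields $g\in G$ with $g\cdot p(e_0)=b$, and then $g\cdot e_0\in E_b$ with $f(g\cdot e_0)=f(e_0)\le a$; the same computation shows that whenever $g\cdot b'=b''$ the diffeomorphism $\Phi_g$ restricts to a homeomorphism $E_{b'}\cap f^a\xrightarrow{\ \cong\ }E_{b''}\cap f^a$, so after fixing $b^{\ast}\in B$ we may take $F:=E_{b^{\ast}}\cap f^a$ as a single model fiber. Now for each $b_0\in B$ I take $U$ and $\psi$ from part b); because $(f\circ\psi^{-1})(b,x)=f(x)$, one has $\psi^{-1}\bigl(U\times(E_{b_0}\cap f^a)\bigr)=E|_U\cap f^a=p_a^{-1}(U)$, so $\psi$ restricts to a homeomorphism $p_a^{-1}(U)\xrightarrow{\ \cong\ }U\times(E_{b_0}\cap f^a)$ intertwining $p_a$ with the projection. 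Composing with $\mathrm{id}_U\times(\Phi_g|_{E_{b_0}\cap f^a})$ for a $g$ with $g\cdot b_0=b^{\ast}$ turns this into a trivialization of $p_a$ over $U$ with fiber $F$, and since $b_0$ was arbitrary, $p_a\colon f^a\to B$ is a fiber bundle with typical fiber $E_b\cap f^a$.

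The main obstacle is the one non-formal input used above: that transitivity of the finite-dimensional $G$ on the a priori infinite-dimensional Banach manifold $B$ forces $\mathrm{orb}_{b_0}$ to be a submersion, equivalently that the orbit is open in $B$, equivalently that $B$ is finite-dimensional. I would argue this via the constant-rank theorem — the orbit has, locally, the structure of a finite-dimensional submanifold of $B$, and a Baire-category argument over a countable atlas of $G$ then forces this submanifold to exhaust the Baire space $B$, so $\dim B<\infty$ and the rank equals $\dim B$. Once the smooth local section $\sigma$ is in hand, everything else is a routine verification.
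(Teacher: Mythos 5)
Your proof is correct and follows essentially the same route as the paper: both hinge on a local section $s\colon U\to G$ of the orbit map $\pi_\varphi\colon G\to B$, $g\mapsto\varphi(g,b_0)$, and the explicit trivialization $(b,x)\mapsto\Phi(s(b),x)$, with the $G$-invariance of $f$ giving both the restriction to $f^a$ and part b). The only differences are cosmetic or supplementary: you prove b) first and deduce a), you justify the existence of the local section (constant rank plus Baire category, forcing $B$ to be finite-dimensional and the orbit map a submersion) where the paper simply invokes that $\pi_\varphi$ is a principal bundle, and you spell out the identification of the typical fibers over different base points, which the paper leaves implicit.
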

\begin{proof}
\begin{enumerate}[a)]
	\item Choose $b_0\in B$ and put $F:= E_{b_0} \cap f^{a}$. Since $\varphi$ acts transitively on $B$,     
    the map $$\pi_{\varphi} \colon  G\to B, \qquad \pi_\varphi(g)=\varphi(g,b_0),$$ is a principal $G$-bundle, so there is an open neighborhood $U$ of $b_0$ which admits a local section $s\colon U\to G$ of $\pi_{\varphi}$. We consider the map
    $$  \psi\colon p_a^{-1}(U) \to U\times F   , \qquad \psi(x) :=  (p(x),  \Phi( s(p(x))^{-1} , x)) .     $$
   Note that $p_a^{-1}(U) = E|_U \cap f^a$.  One checks that indeed $ \Phi( s(p(x))^{-1} , x) \in F$ for all $x \in p_a^{-1}(U)$. 
    Moreover, the map $\psi$ is clearly continuous and one checks that a continuous inverse to $\psi$ is given by $\psi^{-1}(b,x) = \Phi(s(b),x)$, so that $\psi$ is indeed a local trivialization of $p_a$. Since $b$ was chosen arbitrarily, this shows that $p_a$ is a fiber bundle.
    \item Let $b_0$, $U$ and $\psi$ be given as in the proof of a). Then $f(\psi^{-1}(b,x)) = f(\Phi(s(b),x))=f(x)$ for all $b \in U$ by the $G$-invariance of $f$. 
    \end{enumerate}
\end{proof}

In Proposition \ref{PropEquivFiberProps}.a) we have established a class of fiber bundles with the property that all of its restrictions to sublevel sets of a $G$-invariant function are again fiber bundles. To check in which of these settings we can apply Theorem \ref{TheoremParamTCLSprod}, we next want to study the vertical proportionality of gradients of such functions. \medskip

\emph{Throughout the following, we put }
\begin{align*}
&\Phi_g\colon E \to E, \quad \Phi_g(x):= \Phi(g,x) \quad \forall g \in G, \qquad \Phi^x\colon  G \to E, \quad \Phi^x(g) := \Phi(g,x) \quad \forall x \in E.	
\end{align*}
\emph{We further}
\begin{itemize}
	\item \emph{assume that $E$ admits and is equipped with a $\Phi$-invariant Riemannian metric, which we shall fiberwise denote by $\left<\cdot,\cdot\right>_x$,  $x \in E$,}
	\item \emph{assume that $B$ admits and is equipped with a Riemannian metric, for which $p\colon E \to B$ becomes a Riemannian submersion,}
	\item \emph{assume that $G$ admits and is equipped with a bi-invariant Riemannian metric.}
\end{itemize}

The fiberwise norms on the tangent spaces of $E$ and $G$, respectively, induced by these metrics shall both be denoted by $\|\cdot \|$. We further denote the gradient of  $f\in C^1(E)$ with respect to $\left<\cdot,\cdot\right>$ by $\nabla f$ and let its vertical part $\nabla^\ver f$ be defined with respect to $p$ and $\left<\cdot,\cdot\right>$. We further let $\| \cdot \|_\op$ denote the operator map of a linear map between normed vector spaces whenever it is apparent which normed spaces we are referring to. 

\begin{remark}
By a classical result of Milnor, see \cite[Lemma 7.5]{MilnorLie}, a Lie group admits a bi-invariant metric if and only if it is isomorphic as a Lie group to a group of the form $K \times \RR^m$, where $K$ is a compact Lie group and $m \in \NN_0$. Thus, all Lie groups that we consider subsequently are of this form. 
\end{remark}

We want to establish a tangible criterion on a $C^1$-function on $f$ which implies the vertical proportionality of its gradient, for which we introduce some additional terminology. 
\begin{definition}
\label{Deffcontr}
	Let $f\in C^1(E)$ be bounded from below. We say that the $G$-action $\Phi$ is \emph{$f$-controlled} if there exists a function $C\colon [\inf f,+\infty) \to (0,+\infty)$, such that
\begin{equation}
\label{Eqfcontr}
	\|(D\Phi^x)_g \|_\op \leq C(f(x)) \quad \forall g \in G, \ x \in E.
	\end{equation}
	\end{definition}
	
	\begin{remark}
	\label{Remarkfcontr}
	As one easily sees, if $G$ is compact and if $f \in C^1(E)$ is such that $f^\lambda$ is compact for each $\lambda \in \RR$, then $\Phi$ will be $f$-controlled. This holds in particular for any $f \in C^1(E)$ if both $G$ and $E$ are compact. 
	\end{remark}

\begin{lemma}
\label{LemmaEstvertprop}
Let $f \in C^1(E)$ be $G$-invariant and bounded from below. Assume that $\varphi$ is a transitive $G$-action on $B$ and  that $\Phi$ is $f$-controlled with respect to $C\colon [\inf f,+\infty) \to (0,+\infty)$. Let $\lambda \in \RR$, let $U \subset B$ be open and let $b_0 \in B$. Assume that there is a smooth local section $s\colon U \to G$ of $\pi_\varphi\colon G \to B$, $\pi_\varphi(g)=\varphi(g,b_0)$.   Then 
	$$\|\nabla f(x)\| \leq (C(\lambda)\cdot \|Ds_{p(x)}\|_{\op}+1)\cdot \|\nabla^\ver f(x)\| \qquad \forall x \in f^\lambda \cap E|_U.$$
\end{lemma}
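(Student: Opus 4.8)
The plan is to reduce the estimate to a bound on the \emph{horizontal} part of $\nabla f(x)$ and then to produce, for each tangent vector on the base through $b:=p(x)$, a tangent vector in $T_xE$ lying over it along which $f$ is infinitesimally constant and whose length is controlled by $\|Ds_b\|_{\op}$ and $C(f(x))$.

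First I would fix $x\in f^\lambda\cap E|_U$, put $b:=p(x)$, and write $\nabla f(x)=\nabla^\ver f(x)+\nabla^\hor f(x)$ for the orthogonal decomposition of the gradient along $T_xE=\Ver_x(p)\oplus\Hor_x(p)$, so that $\nabla^\hor f(x):=\nabla f(x)-\nabla^\ver f(x)\in\Hor_x(p)$. Since $\|\nabla f(x)\|^2=\|\nabla^\ver f(x)\|^2+\|\nabla^\hor f(x)\|^2$ and $\sqrt{1+t^2}\le 1+t$ for $t\ge 0$, it is enough to prove $\|\nabla^\hor f(x)\|\le C(f(x))\cdot\|Ds_b\|_{\op}\cdot\|\nabla^\ver f(x)\|$; together with $f(x)\le\lambda$ and the observation that $C$ in Definition \ref{Deffcontr} may be assumed non-decreasing, this gives $\|\nabla f(x)\|\le(1+C(\lambda)\|Ds_b\|_{\op})\|\nabla^\ver f(x)\|$.

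The core of the argument is a lifting statement: for every $w\in T_bB$ there is $\tilde w\in T_xE$ with $Dp_x(\tilde w)=w$, $Df_x(\tilde w)=0$ and $\|\tilde w\|\le C(f(x))\cdot\|Ds_b\|_{\op}\cdot\|w\|$. To construct $\tilde w$ I would use that on $U$ the orbit map $\varphi^b\colon G\to B$, $\varphi^b(g)=\varphi(g,b)$, factors as $\varphi^b=\pi_\varphi\circ R_{s(b)}$, where $R_{s(b)}(g)=g\,s(b)$ is right translation (this uses $b=\varphi(s(b),b_0)$ together with the left-action identity). Differentiating at $e$ and using $\pi_\varphi\circ s=\id_U$, the vector $\xi:=\big((DR_{s(b)})_e\big)^{-1}\!\big(Ds_b(w)\big)\in\mathfrak{g}$ satisfies $(D\varphi^b)_e(\xi)=w$, and since the metric on $G$ is bi-invariant, hence right translations are isometries, $\|\xi\|=\|Ds_b(w)\|\le\|Ds_b\|_{\op}\|w\|$. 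Setting $\tilde w:=(D\Phi^x)_e(\xi)$, the $G$-equivariance of $p$ (which gives $p\circ\Phi^x=\varphi^b$) yields $Dp_x(\tilde w)=w$, the $G$-invariance of $f$ (differentiate $f\circ\Phi^x\equiv f(x)$ at $e$) yields $Df_x(\tilde w)=0$, and the $f$-controlledness bound $\|(D\Phi^x)_e\|_{\op}\le C(f(x))$ yields the norm estimate.

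I would then apply this with $w:=Dp_x\big(\nabla^\hor f(x)\big)$. Since $p$ is a Riemannian submersion, $Dp_x|_{\Hor_x(p)}$ is an isometry, so $\|w\|=\|\nabla^\hor f(x)\|$ and the horizontal lift of $w$ is precisely $\nabla^\hor f(x)$. Decomposing $\tilde w=\tilde w^\hor+\tilde w^\ver$, the horizontal part $\tilde w^\hor$ is again this horizontal lift, so $\tilde w^\hor=\nabla^\hor f(x)$ and $\|\tilde w^\ver\|\le\|\tilde w\|$. From $0=Df_x(\tilde w)=\langle\nabla f(x),\tilde w^\hor\rangle+\langle\nabla f(x),\tilde w^\ver\rangle=\|\nabla^\hor f(x)\|^2+\langle\nabla^\ver f(x),\tilde w^\ver\rangle$ and Cauchy--Schwarz I get $\|\nabla^\hor f(x)\|^2\le\|\nabla^\ver f(x)\|\cdot\|\tilde w\|\le C(f(x))\|Ds_b\|_{\op}\|\nabla^\ver f(x)\|\cdot\|\nabla^\hor f(x)\|$, which is the desired inequality (trivially so when $\nabla^\hor f(x)=0$), completing the proof. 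The main obstacle is exactly this lifting construction: one needs a tangent vector in $T_xE$ that simultaneously projects to a prescribed base vector, points in a direction along which $f$ is stationary, and has explicitly bounded norm, and this is where the $G$-equivariance of $p$, the $G$-invariance of $f$, the $f$-controlledness hypothesis, the section $s$, and the bi-invariance of the metric on $G$ all have to be combined; the remaining steps are routine manipulations with orthogonal projections and the Riemannian submersion property, the only minor technical point being the replacement of $C(f(x))$ by $C(\lambda)$.
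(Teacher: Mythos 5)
Your proof is correct, and it reaches the paper's estimate by a noticeably different route than the paper's own argument, although both exploit the same underlying mechanism (moving in the $G$-orbit direction prescribed by the section $s$ does not change $f$, at a cost controlled by $C(f(x))\,\|Ds_{p(x)}\|_{\op}$). The paper differentiates the identity $f(\Phi(s(p(x))^{-1},x))=f(x)$ on all of $E|_U$, which brings in the map $\eta=\mathrm{inv}\circ s\circ p$, the $G$-equivariance of $\nabla f$ and $\nabla^\ver f$ (hence the $\Phi$-invariance of the metric on $E$), a Cauchy--Schwarz estimate applied directly to $\|\nabla f(x)\|^2$, and the bound $\|D\eta_x\|_{\op}\le\|Ds_{p(x)}\|_{\op}$ obtained from bi-invariance (to control $D\,\mathrm{inv}$) and from $\|Dp_x\|_{\op}=1$. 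You instead prove a pointwise controlled-lift lemma --- for each $w\in T_{p(x)}B$ a vector $\tilde w=(D\Phi^x)_e(\xi)$ with $Dp_x(\tilde w)=w$, $Df_x(\tilde w)=0$ and $\|\tilde w\|\le C(f(x))\|Ds_{p(x)}\|_{\op}\|w\|$, built from the factorization $\varphi^{p(x)}=\pi_\varphi\circ R_{s(p(x))}$ --- and then play the horizontal part of $\nabla f(x)$ off against the vertical part. This buys a slightly leaner set of inputs: you never use the invariance of the metric on $E$ or the equivariance of the gradients, and only right-invariance of the metric on $G$ enters, whereas the paper needs bi-invariance for the inversion map; conversely, the paper avoids the explicit horizontal/vertical splitting and the inequality $\sqrt{1+t^2}\le 1+t$, getting the constant $C(\lambda)\|Ds_{p(x)}\|_{\op}+1$ in a single Cauchy--Schwarz step. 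Your final replacement of $C(f(x))$ by $C(\lambda)$ rests on reading $C$ in Definition \ref{Deffcontr} as (effectively) non-decreasing; this is exactly the same implicit step the paper makes in its last display, and it is harmless in the situations of Remark \ref{Remarkfcontr}, so you are no worse off there.
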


\begin{proof}
 Since $f$ is $G$-invariant, $\nabla f$ is $G$-equivariant, i.e. it holds that
\begin{equation*}
\nabla f(\Phi_g(x)) = (D\Phi_g)_x(\nabla f(x))\quad \forall g \in G, \ x \in E. 
\end{equation*}
Since the metric on $E$ is $G$-invariant and $p$ is $G$-equivariant, the differential of $\Phi_g$ maps vertical tangent spaces with respect to $p$ isomorphically onto one another for each $g \in G$. Thus, 
\begin{equation}
\label{EqverGradPhig}
\nabla^\ver f(\Phi_g(x)) = (D\Phi_g)_x(\nabla^\ver f(x)) \quad \forall g \in G, \ x \in E.
\end{equation}
Put $\eta\colon E|_U \to G$,  $\eta(x) := (s(p(x)))^{-1}$, and $h\colon E|_U \to E$, $h(x) :=\Phi(\eta(x),x)$. Since $f$ is $G$-invariant, it holds that $f(h(x))=f(\Phi(\eta(x),x))=f(x)$ for all $x \in E_U$, so we apply the chain rule to obtain that $Df_x = Df_{h(x)} \circ Dh_x$ for all $x \in E|_U$, or equivalently, $$  \left<\nabla f(x),v\right>=\left<\nabla f(h(x)),Dh_x(v)\right> \ \ \forall x \in E|_U,\ v \in T_xE.$$ 
As discussed in the proof of Proposition \ref{PropEquivFiberProps}, it then holds that  $h(E|_U) \subset E_{b_0}$ and thus $ Dh_x(v) \in T_x (E_{b_0}) = \Ver_x(p)$ for all $x \in E|_U$ and $v \in T_xE$. We derive that
\begin{equation}
\label{EqProofEstVert}
\left<\nabla f(x),v\right>_x = \left<\nabla^\ver f(h(x)),Dh_x(v)\right>_{h(x)} \stackrel{\eqref{EqverGradPhig}}{=} \left<(D\Phi_{\eta(x)})_x(\nabla^\ver f (x)),Dh_x(v)\right>_{h(x)}
\end{equation}
for all $x \in U$. Using the chain rule, we compute that 
\begin{align*}
	Dh_x(v) &= D\Phi_{(\eta(x),x)}(D\eta_x(v),v)= (D\Phi^x)_{\eta(x)}(D\eta_x(v))+(D\Phi_{\eta(x)})_x(v).
\end{align*}
for all $x \in E|_U$ and $v \in T_xE$. Inserting this into  \eqref{EqProofEstVert} yields 
\begin{align*}
\left<\nabla f(x),v\right>_x 
	&= \left<(D\Phi_{\eta(x)})_x(\nabla^\ver f (x)) ,(D\Phi^x)_{\eta(x)}(D\eta_x(v))+(D\Phi_{\eta(x)})_x(v)\right>_{h(x)}\\
	&= \left<\nabla^\ver f(x),(D\Phi_{\eta(x)})_x^{-1}(D\Phi^x)_{\eta(x)}D\eta_x(v)+v\right>_x , 
\end{align*}
where we have used the $G$-invariance of the metric. For $v=\nabla f(x)$ we obtain
\begin{align*}
\|\nabla f(x) \|^2  
&= \left<\nabla^\ver f(x),(D\Phi_{\eta(x)})_x^{-1}(D\Phi^x)_{\eta(x)}D\eta_x(\nabla f(x))\right>_x + \|\nabla^\ver f(x)\|^2 \\
&\leq \|\nabla^\ver f(x)\|\cdot \|(D\Phi^x)_{\eta(x)}\|_\op\cdot  \|D\eta_x\|_\op \cdot \|\nabla f(x))\|+\|\nabla^\ver f(x)\|^2.
\end{align*}
where have used the Cauchy-Schwarz inequality and the $G$-invariance of the metric. Since $\Phi$ is $f$-controlled with respect to $C$, we obtain 
\begin{align*}
\|\nabla f(x)\|^2
&\leq C(f(x))\cdot 	\|\nabla^\ver f(x)\|\cdot \|D\eta_x\|_{\op}\cdot \|\nabla f(x)\|+\|\nabla^\ver f(x)\|^2.
\end{align*}
If $\nabla f(x) \neq 0$, then dividing this inequality by $\|\nabla f(x)\|$ and using that $\|\nabla^\ver f(x)\|\leq \|\nabla f(x)\|$ yields 
$$\|\nabla f(x)\| \leq (C(\lambda)\cdot \|D\eta_x\|_{\op}+1) \|\nabla^\ver f(x)\| \quad \forall x \in f^\lambda \cap E|_U.$$
Here, we use that the inequality is imminent if $\nabla f(x)=0$. It remains to consider  $\|D\eta_x\|_\op$ for $x \in E|_U$. By definition, $\eta$ is given as $\eta = \mathrm{inv} \circ s \circ p$, where $\mathrm{inv}\colon G \to G$ denotes the inversion. Thus,
$$ \|D\eta_x\|_{\op} \leq \|D\mathrm{inv}_{s(p(x))}\|_\op\cdot \|Ds_{p(x)}\|_\op \cdot \|Dp_x\|_\op  \qquad \forall x \in E|_U.$$
Since $p$ is a Riemannian submersion, $\|Dp_x\|_\op = 1$ for all $x \in E$. Let $\ell_g,r_g:G \to G$ denote left- and right-multiplication, respectively, by $g \in G$. A simple computation shows  that $$D\mathrm{inv}_g = - (Dr_{g^{-1}})_e \circ (D\ell_{g^{-1}})_g\qquad \forall g \in G.$$ Since the chosen metric on $G$ is bi-invariant, this yields that $\|D\mathrm{inv}_g\|_\op =1$ for all $g \in G$. We derive that $\|D\eta_x\|_\op \leq \|Ds_{p(x)}\|_\op$ for all $x \in f^\lambda \cap E|_U$.  Inserting this into the above inequality shows the claim.
\end{proof}

\begin{prop}
\label{Propfcontvert}
Assume that $B$ is a closed manifold and that $\varphi$ is a transitive $G$-action. Let $f\in C^1(E)$ be $G$-invariant and bounded from below. If $\Phi$ is $f$-controlled, then $\nabla f|_{f^{<\lambda}}$ is vertically proportional for each $\lambda \in \RR$. 
\end{prop}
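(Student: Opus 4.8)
The plan is to deduce the statement from Lemma \ref{LemmaEstvertprop} by a compactness argument on $B$. That lemma already supplies, for any open $U \subset B$ carrying a smooth local section $s\colon U \to G$ of $\pi_\varphi$, the pointwise estimate $\|\nabla f(x)\| \le (C(\lambda)\,\|Ds_{p(x)}\|_\op + 1)\,\|\nabla^\ver f(x)\|$ on $f^\lambda \cap E|_U$. Since $f^{<\lambda} \subset f^\lambda$, the only thing separating this from the vertical proportionality of $\nabla f|_{f^{<\lambda}}$ is a \emph{uniform} constant, and this is exactly what a cover of $B$ by finitely many such neighborhoods with uniformly bounded $\|Ds\|_\op$ will deliver.

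First I would fix $\lambda \in \RR$; if $f^{<\lambda} = \emptyset$ there is nothing to prove, so I may assume $\lambda > \inf f$, so that $C(\lambda)$ is defined. Because $\varphi$ is transitive and $G$ is a finite-dimensional Lie group, $B$ is a homogeneous space and the orbit map $\pi_\varphi\colon G \to B$, $\pi_\varphi(g) = \varphi(g,b_0)$, is a surjective submersion (indeed a locally trivial bundle), hence admits a smooth local section on a neighborhood of every point of $B$. I would then use that $B$ is closed, hence compact, to extract a finite subcover: choose open sets $U_1,\dots,U_k \subset B$ with $B = \bigcup_i U_i$ such that each $U_i$ has compact closure contained in the domain $V_i$ of a smooth section $s_i\colon V_i \to G$ of $\pi_\varphi$. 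By continuity of $b \mapsto \|(Ds_i)_b\|_\op$ and compactness of $\overline{U_i}$, each $L_i := \sup_{b \in \overline{U_i}} \|(Ds_i)_b\|_\op$ is finite; put $L := \max_{1 \le i \le k} L_i < \infty$.

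Then, for an arbitrary $x \in f^{<\lambda}$, the point $p(x)$ lies in some $U_i$, so $x \in f^\lambda \cap E|_{U_i}$, and Lemma \ref{LemmaEstvertprop} applied with $U = U_i$ and $s = s_i|_{U_i}$ gives $\|\nabla f(x)\| \le (C(\lambda)\,\|(Ds_i)_{p(x)}\|_\op + 1)\,\|\nabla^\ver f(x)\| \le (C(\lambda)L + 1)\,\|\nabla^\ver f(x)\|$. Since $f^{<\lambda}$ is open in $E$, the vertical tangent spaces of $p|_{f^{<\lambda}}$ coincide with those of $p$, so the vertical part of the gradient of $f|_{f^{<\lambda}}$ with respect to $p|_{f^{<\lambda}}$ and the restricted metric is precisely the restriction of $\nabla^\ver f$; hence the bound above, with the uniform constant $\widetilde C := C(\lambda)L + 1 > 0$, is the vertical proportionality of $\nabla f|_{f^{<\lambda}}$. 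As $\lambda$ was arbitrary, this proves the claim.

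I do not anticipate a serious obstacle here: essentially all the analytic content is already contained in Lemma \ref{LemmaEstvertprop}, and what remains is the standard compactness argument producing the uniform bound $L$ on the section differentials. The only points that need a little care are verifying that $B$ can be covered by finitely many domains of smooth sections of $\pi_\varphi$ (using compactness of $B$ together with local triviality of the orbit map) and the purely formal matching of the restricted notion of vertical proportionality on the open sublevel set $f^{<\lambda}$ with the estimate provided by the lemma on $f^\lambda$.
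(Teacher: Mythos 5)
Your proposal is correct and follows essentially the same route as the paper's proof: cover $B$ by finitely many domains of smooth local sections of the orbit map $\pi_\varphi$ with compact closures, take the uniform bound on the operator norms of the section differentials, and apply Lemma \ref{LemmaEstvertprop} to obtain the uniform constant $C(\lambda)L+1$. The additional remarks (transitivity giving local sections, and the formal matching of vertical proportionality on $f^{<\lambda}$ with the estimate on $f^\lambda$) are exactly the points the paper also uses or implicitly assumes.
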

\begin{proof}
Let $b_0 \in B$ and consider again $\pi_\varphi\colon G \to B$, $\pi_\varphi(g)=\varphi(g,b_0)$. Then $\pi_\varphi$ is a principal $G$-bundle, so every $b \in B$ has an open neighborhood  $V_b$ which admits a smooth local section $\sigma_b\colon V_b \to G$ of $\pi_\varphi$.  Since $B$ is a regular topological space, for each $b \in B$ there is an open neighborhood $U_b \subset B$ of $b$ with $\overline{U_b} \subset V_b$. By assumption $B$ is compact, so there exist finitely many $b_1,\dots,b_m \in B$, for which $\{U_{b_1},\dots,U_{b_m}\}$ is an open cover of $B$. Put $U_i:= U_{b_i}$ and $s_i\colon U_i \to G$, $s_i := \sigma_{b_i}|_{U_i}$ for each $i \in \{1,2,\dots,m\}$. 
	Since each $\overline{U_i}$ is compact and $s_i$ has a smooth extension to $\overline{U_i}$, it follows that
	$$M_i:= \sup_{b \in U_i}\|(Ds_i)_b\|_{\op} < +\infty.  $$
		Let $C:[\inf f,+\infty) \to (0,+\infty)$ satisfy \eqref{Eqfcontr}. Given $\lambda \in \RR$, it then follows from Lemma \ref{LemmaEstvertprop} that 
		$$\|\nabla f(x)\| \leq (C(\lambda) \cdot M_i+1) \cdot \|\nabla^\ver f(x)\| \quad \forall x \in f^\lambda \cap E|_{U_i}, \ i \in \{1,2,\dots,m\}.$$
		Thus, with $C_0 := C(\lambda) \cdot \max \{M_1,\dots,M_m\}+1$, it follows that $\|\nabla f(x)\| \leq C_0 \|\nabla^\ver f(x)\|$ for all $x \in f^\lambda$, which shows that $\nabla f|_{f^{<\lambda}}$ is vertically proportional. 
\end{proof}

This leads to the following Lusternik-Schnirelmann type result. 

\begin{theorem}	
\label{TheoremParamTCLScontr}
	 Let $r \in \NN$ with $r \geq 2$. Assume that $B$ is a closed manifold and that $\varphi$ is a transitive $G$-action on $B$.  Let $f\in C^1(E)$ be $G$-invariant and bounded from below. If $(E,\left<\cdot,\cdot\right>)$ is complete, if $(f, \left<\cdot,\cdot\right>)$ satisfies the PS condition and if $\Phi$ is $f$-controlled, then
\begin{align*}
&\TC_r[p|_{f^\lambda}\colon f^\lambda \to B] \\
&\leq \sum_{\mu \in (-\infty,r\lambda]} \max \left\{ \TC_{r,p_\lambda} (\Crit_{\mu_1} f  \times_B \dots \times_B \Crit_{\mu_r} f) \mid \mu_1,\dots,\mu_r \in (-\infty, \lambda], \, \mu_1+\dots+\mu_r=\mu\right\}
 	\end{align*}
\end{theorem}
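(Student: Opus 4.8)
The plan is to derive this statement directly from Theorem~\ref{TheoremParamTCLSprod}, applied to $f$ itself and to the restriction $p_\lambda := p|_{f^\lambda}\colon f^\lambda \to B$. The two preceding results of this section are designed to supply exactly those hypotheses of that theorem which are not already among our assumptions: Proposition~\ref{PropEquivFiberProps}.a) provides the fibration property of $p_\lambda$, and Proposition~\ref{Propfcontvert} provides the vertical proportionality of $\nabla f$.

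In detail, I would first invoke Proposition~\ref{PropEquivFiberProps}.a): since $\varphi$ acts transitively on $B$ and $f$ is $G$-invariant, $p_\lambda$ is a fiber bundle with typical fiber $E_{b_0}\cap f^\lambda$ for some $b_0\in B$; a fiber bundle over the paracompact manifold $B$ is a fibration, which is all that enters the proof of Theorem~\ref{TheoremParamTCLSprod} through the subspace complexities $\TC_{r,p_\lambda}$ (one also uses, as there, that $f^\lambda$ and its fibers are path-connected, so that $\TC_{r,p_\lambda}$ forms an index function on $(f^\lambda)^r_B$). Next, I would fix any $\delta>0$ and apply Proposition~\ref{Propfcontvert} at the level $\lambda+\delta$: since $B$ is closed, $\varphi$ is transitive, $f$ is $G$-invariant and bounded from below, and $\Phi$ is $f$-controlled, this yields that $\nabla f|_{f^{<\lambda+\delta}}$ is vertically proportional with respect to $p$, which is precisely the local vertical-proportionality hypothesis in Theorem~\ref{TheoremParamTCLSprod}; this is the step in which the $f$-controlledness assumption is actually used, via the estimate of Lemma~\ref{LemmaEstvertprop}. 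The remaining hypotheses of Theorem~\ref{TheoremParamTCLSprod} — completeness of $(E,\left<\cdot,\cdot\right>)$, the PS condition for $(f,\left<\cdot,\cdot\right>)$, and discreteness of $f(\Crit f)$ — are among our assumptions.

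Applying Theorem~\ref{TheoremParamTCLSprod} with $X=B$, so that each fiber product $\times_X$ there becomes $\times_B$, then gives
\[
\TC_r[p_\lambda\colon f^\lambda\to B] \leq \sum_{\mu\in(-\infty,r\lambda]} \max\Big\{\TC_{r,p_\lambda}\big(\Crit_{\mu_1}f\times_B\dots\times_B\Crit_{\mu_r}f\big) \;\Big|\; \mu_1,\dots,\mu_r\in(-\infty,\lambda],\ \textstyle\sum_{i=1}^r\mu_i=\mu\Big\},
\]
which is the assertion. I do not expect an essentially new obstacle here: the substantive content — upgrading ``$\Phi$ is $f$-controlled'' to vertical proportionality of $\nabla f$ on a neighbourhood of $f^\lambda$, and recognising $p_\lambda$ as a fiber bundle — has already been carried out in Propositions~\ref{Propfcontvert} and~\ref{PropEquivFiberProps}, so the present theorem is an assembly of those. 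The one point that needs a little care is the bookkeeping with the auxiliary $\delta>0$: Theorem~\ref{TheoremParamTCLSprod} requires vertical proportionality on an \emph{open} sublevel set strictly containing $f^\lambda$ rather than on $f^\lambda$ itself, which is exactly why Proposition~\ref{Propfcontvert} was formulated for $f^{<\lambda'}$ with $\lambda'$ arbitrary.
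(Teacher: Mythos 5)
Your proposal takes essentially the same route as the paper, whose proof consists precisely of combining Proposition \ref{Propfcontvert} (to upgrade $f$-controlledness to vertical proportionality of $\nabla f$ on $f^{<\lambda+\delta}$) with Theorem \ref{TheoremParamTCLSprod}, with the fibration property of $p_\lambda$ supplied by Proposition \ref{PropEquivFiberProps}.a) exactly as you say. One small caveat: the discreteness of $f(\Crit f)$ (and path-connectedness of $f^\lambda$) required by Theorem \ref{TheoremParamTCLSprod} is not literally among the stated hypotheses here, a point your write-up asserts but which the paper's own one-line proof glosses over in the same way.
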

\begin{proof}
	This follows from Proposition \ref{Propfcontvert} and Theorem \ref{TheoremParamTCLSprod}.
\end{proof}

If both $E$ and $B$ are closed manifolds, the assumptions of Theorem \ref{TheoremParamTCLScontr} can be simplified drastically.

\begin{cor}
\label{CorEquivPTCLS}
	 Let $r\in \NN$ with $r\geq 2$. Assume that $E$ and $B$ are closed manifolds and that $\varphi$ is a transitive $G$-action on $B$. Then for any $G$-invariant $f \in C^1(E)$ it holds that
$$\TC_r[p\colon E\to B] \leq \sum_{\mu \in \RR} \max \Big\{ \TC_{r,p} (\Crit_{\mu_1} f  \times_B \dots \times_B \Crit_{\mu_r} f) \ \Big| \ \mu_1,\dots,\mu_r \in \RR, \, \sum_{i=1}^n \mu_i =\mu\Big\}.$$
\end{cor}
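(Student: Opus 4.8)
The plan is to read the corollary off Theorem \ref{TheoremParamTCLScontr} by applying it to a sublevel set $f^\lambda$ with $\lambda$ larger than $\max_E f$, so that $f^\lambda = E$ and $p|_{f^\lambda}=p$; the only real work is to verify that the standing hypotheses of Section \ref{SectionEqui} together with the compactness of $E$ and $B$ provide all the hypotheses of Theorem \ref{TheoremParamTCLScontr}, and to observe that once $\lambda$ exceeds $\max_E f$ the constraints $\mu_i\in(-\infty,\lambda]$ and $\mu\le r\lambda$ in that theorem become vacuous, so its right-hand side collapses to the unconstrained sum claimed here. Before that I would dispose of the case in which $f(\Crit f)\subset\RR$ is not discrete: for every $x\in\Crit f$ the point $(x,\dots,x)$ lies in $\Crit_{f(x)}f\times_B\dots\times_B\Crit_{f(x)}f$, so this fibre product is non-empty and hence $\TC_{r,p}$ of it is at least $1$; since distinct critical values $c$ give distinct indices $\mu=rc$, if $f$ has infinitely many critical values the right-hand side of the asserted inequality is $+\infty$ and there is nothing to prove. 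Hence we may and do assume $f(\Crit f)$ is finite, in particular discrete.

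For the hypothesis check, $E$ carries the $\Phi$-invariant metric $\langle\cdot,\cdot\rangle$ fixed in Section \ref{SectionEqui}, which is complete since $E$ is compact; $f$ is bounded from below since $E$ is compact; $(f,\langle\cdot,\cdot\rangle)$ satisfies the Palais--Smale condition because every $C^1$-function on a closed manifold does (cf.\ Remark \ref{RemarkNavCritzero}.(2)); $\varphi$ is transitive and $B$ is a closed manifold by hypothesis; and $E=f^\lambda$ is path-connected for $\lambda\ge\max_E f$, being a fibre bundle with path-connected fibres over the connected base $B$ (recall $G$ is connected and $\varphi$ transitive exhibits $B$ as a homogeneous space of $G$). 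The one point needing an argument is that $\Phi$ is $f$-controlled in the sense of Definition \ref{Deffcontr}. Differentiating the identity $\Phi^x(gh)=\Phi_g(\Phi^x(h))$ in $h$ at $h=e$ gives $(D\Phi^x)_g=(D\Phi_g)_x\circ(D\Phi^x)_e\circ(D\ell_{g^{-1}})_g$; since $\Phi_g$ is an isometry of $(E,\langle\cdot,\cdot\rangle)$ and $\ell_{g^{-1}}$ an isometry of $G$ (the metric on $G$ being bi-invariant), this yields $\|(D\Phi^x)_g\|_\op\le\|(D\Phi^x)_e\|_\op$ for all $g\in G$, $x\in E$, and $x\mapsto\|(D\Phi^x)_e\|_\op$ is continuous, hence bounded on the compact $E$, so \eqref{Eqfcontr} holds with a constant function $C$. (When $G$ is itself compact this is just Remark \ref{Remarkfcontr}.)

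It then remains to apply Theorem \ref{TheoremParamTCLScontr} with a fixed $\lambda>\max_E f$. Then $f^\lambda=E$, so $p_\lambda=p|_{f^\lambda}=p$, whence $\TC_r[p|_{f^\lambda}\colon f^\lambda\to B]=\TC_r[p\colon E\to B]$ and $\TC_{r,p_\lambda}(A)=\TC_{r,p}(A)$ for every subset $A$. A summand indexed by $\mu$ on the right-hand side of Theorem \ref{TheoremParamTCLScontr} is non-zero only if $\Crit_{\mu_1}f\times_B\dots\times_B\Crit_{\mu_r}f\neq\emptyset$ for some admissible $(\mu_1,\dots,\mu_r)$, which forces each $\mu_i$ to be a critical value of $f$, hence $\mu_i\le\max_E f<\lambda$ and $\mu=\sum_i\mu_i<r\lambda$; so the constraints $\mu_i\in(-\infty,\lambda]$ and $\mu\le r\lambda$ exclude no non-zero summand, and the right-hand side equals $\sum_{\mu\in\RR}\max\{\TC_{r,p}(\Crit_{\mu_1}f\times_B\dots\times_B\Crit_{\mu_r}f)\mid\mu_1,\dots,\mu_r\in\RR,\ \sum_i\mu_i=\mu\}$, which is the claimed bound.

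The main obstacle, such as it is, is the verification that $\Phi$ is $f$-controlled in the stated generality: the Lie group $G$ is only assumed to carry a bi-invariant metric (so $G\cong K\times\RR^m$ and need not be compact), hence Remark \ref{Remarkfcontr} does not apply verbatim and one must instead use the $\Phi$-invariance of the metric on $E$ together with the bi-invariance of the metric on $G$ as above. Everything else is bookkeeping with the quantifiers of Theorem \ref{TheoremParamTCLScontr} and the elementary observation that for $\lambda$ above the maximum of $f$ those quantifiers are vacuous.
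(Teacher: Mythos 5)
Your proof is correct and follows the same route as the paper: apply Theorem \ref{TheoremParamTCLScontr} with $\lambda$ at or above $\max_E f$, so that $f^\lambda=E$, $p_\lambda=p$, and the constraints $\mu_i\in(-\infty,\lambda]$, $\mu\le r\lambda$ become vacuous (in fact only the trivial direction ``constrained sum $\le$ unconstrained sum'' is needed for the stated inequality). Where you differ is in the verification of the hypotheses, and both differences are improvements rather than detours. The paper disposes of $f$-controlledness by citing Remark \ref{Remarkfcontr}, which as stated requires $G$ compact, whereas your identity $(D\Phi^x)_g=(D\Phi_g)_x\circ(D\Phi^x)_e\circ(D\ell_{g^{-1}})_g$, combined with the $\Phi$-invariance of the metric on $E$, the bi-invariance of the metric on $G$, and compactness of $E$, gives the bound $\|(D\Phi^x)_g\|_{\mathrm{op}}\le\|(D\Phi^x)_e\|_{\mathrm{op}}\le C_0$ for every $G\cong K\times\RR^m$ admitted in Section \ref{SectionEqui}, so it covers the noncompact case the remark does not. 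Likewise, your preliminary reduction to finitely many critical values (otherwise each critical value $c$ contributes a nonempty fiber product at $\mu=rc$ and the right-hand side is $+\infty$) supplies the discreteness of $f(\Crit f)$ that Theorem \ref{TheoremParamTCLSprod}, on which Theorem \ref{TheoremParamTCLScontr} rests, actually uses but which is not listed among the latter's hypotheses; the paper's proof passes over this point. The remaining checks (completeness and the PS condition on a closed manifold, path-connectedness of $E$ via connectedness of $G$ and transitivity on $B$, the fibration property of $p_\lambda$ via Proposition \ref{PropEquivFiberProps}) agree with the paper's argument.
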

\begin{proof}
Since $E$ is a closed manifold, $(E,\left<\cdot,\cdot\right>)$ is complete and $(f,\left<\cdot,\cdot\right>)$ satisfies the PS condition. Moreover, $\Phi$ is $f$-controlled, see Remark \ref{Remarkfcontr}. Since $E$ is compact, $f$ attains its maximum $\lambda$, so that $p = p|_{f^\lambda}$. Then claim is thus an immediate consequence of Theorem \ref{TheoremParamTCLScontr}.
\end{proof}

In the rest of this section, we want to apply Corollary \ref{CorEquivPTCLS} to unit tangent bundles of odd-dimensional spheres. The parametrized TC of unit tangent bundles of odd-dimensional spheres has been computed by Minowa in \cite{Minowa}. A general upper bound of the parametrized TCs of unit sphere bundles of vector bundles has been obtained by Farber and Weinberger in \cite{farber2023parametrized} and  extended to sequential TCs by Farber and Paul in \cite{FarberPaulSphere}. Using our approach, we will compute the values of the sequential parametrized TCs of unit tangent bundles of $(4m-1)$-dimensional spheres, where $m \in \NN$.\medskip 

In the following we fix $n \in \NN$ and will use the canonical identification $\RR^{2n}\cong \CC^n$ without always spelling it out explicitly. We further view $\S^{2n-1}$ as equipped with the metric induced by the standard metric on $\RR^{2n}$ and denote its unit tangent bundle by 
 $p\colon U\S^{2n-1}\to \S^{2n-1}.$ 
 Explicitly, 
 $$U\S^{2n-1} = \{(x,v) \in \S^{2n-1} \times \RR^{2n} \mid \langle x,v\rangle=0, \ \langle v,v\rangle =1\},$$
 where $\left<\cdot,\cdot\right>$ denotes the Euclidean inner product on $\RR^{2n}$. Seen as a subgroup of $SO(2n)$, the special unitary group $SU(n)$ acts transitively on the sphere. It further induces an action on $U\S^{2n-1}$, which is not transitive.  
The map $p$ is $SU(n)$-equivariant and scalar multiplication with the complex unit induces an $SU(n)$-equivariant section 
$$ A\colon \S^{2n-1} \to U\S^{2n-1}, \qquad      A(x) =  (x,i x) ,       $$
of $p$. 
We note that the scalar multiplication with $i$ is explicitly given by
\begin{equation}\label{eq_multiplication_with_i}
        ix=   i(x_1,\ldots, x_{2n}) = (x_2,-x_1,x_4,-x_3, \ldots, x_{2n},-x_{2n-1})      
\end{equation}
for $x= (x_1,\ldots, x_{2n})\in \RR^{2n}$. We can identify $U\S^{2n-1}$ with the Stiefel manifold of orthonormal $2$-frames in $\RR^{2n}$, 
$$   V_2(\RR^{2n}) =   \{  X \in M_{2n,2}(\RR) \mid  X^T X =  \mathbb{1}_2 \}      $$
where $M_{k,m}(\RR)$ denotes the space of all real $k\times m$-matrices and where $\mathbb{1}_2 \in M_{2,2}(\RR)$ denotes the identity matrix.
Explicitly, we identify a unit tangent vector $(x,v)\in U\S^{2n-1}$ with the matrix $X = (x,v)$ where $x\in\RR^{2n}$ and $v\in \RR^{2n}$ are taken as the columns of $X$.
The tangent space at an element $X\in U\S^{2n-1}$ is then identified with
$$     T_X U\S^{2n-1} =  \{ Y\in M_{2n,2}(\RR) \mid  X^T Y + Y^T X =  0 \} .     $$
In particular, if we write $X\in U\S^{2n-1}$ as a matrix $X= (x_1,x_2)$ with columns $x_1,x_2\in \RR^{2n}$ and $Y\in T_X U\S^{2n-1}$ as $Y = (y_1,y_2)$ with columns $y_1,y_2\in \RR^{2n}$ we have
\begin{equation}\label{eq_tangent_space_unit_tangent_bundle}
           \langle x_1,y_1\rangle = \langle x_2,y_2\rangle = 0      \quad \text{as well as }\quad    \langle x_2,y_1 \rangle  + \langle x_1, y_2\rangle = 0 .
\end{equation}
 We consider the function\begin{equation}
\label{EqfUS2n-1}
    f\colon U \S^{2n-1} \to \RR, \quad f(X) =   \langle A(x_1),x_2\rangle= \langle ix_1,x_2\rangle \quad \text{for}\quad X= (x_1,x_2)\in U\S^n .  
\end{equation}
Since $A$ is $SU(n)$-equivariant and since the Euclidean inner product is $SO(2n)$-invariant and thus $SU(n)$-invariant as well, $f$ is $SU(n)$-invariant. 
Both $U\S^{2n-1}$ and $\S^{2n-1}$ are closed manifolds, so we can apply Corollary \ref{CorEquivPTCLS} to this setting and obtain for each $r\geq 2$ that 
\begin{equation}
\label{EqPTCupperUS2n-1}
\TC_r[p] \leq \sum_{\mu \in \RR} \max \Big\{\TC_{r,p}(\Crit_{\mu_1}f\times_{\S^{2n-1}} \dots \times_{\S^{2n-1}} \times \Crit_{\mu_r}f) \ \Big| \ \sum_{j=1}^r \mu_j = \mu\Big\},
\end{equation}
where $\TC_r[p]:=\TC_r[p\colon U\S^{2n-1} \to \S^{2n-1}]$. We want to further compute this upper bound of $TC_r[p]$ and study the critical sets of $f$ in greater detail for this purpose. 
\begin{lemma}\label{lemma_crit_points_unit_sphere_bundle}
Let $f\colon U\S^{2n-1} \to \RR$ be given as in \eqref{EqfUS2n-1}.    Then $\Crit f = \Crit_1 f \cup \Crit_{-1} f$, where
    $$       \Crit_1 f =  \{ (x,i x)\in U\S^{2n-1} \mid x\in \S^{2n-1}\}  \quad \text{and}\quad      \Crit_{-1} f =  \{ (x,-i x)\in U\S^{2n-1} \mid x\in \S^{2n-1}\}   .   $$
\end{lemma}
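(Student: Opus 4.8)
The strategy is a direct Lagrange-multiplier computation of the critical points of $f$ on the manifold $U\S^{2n-1}$, using the explicit description of the tangent space in \eqref{eq_tangent_space_unit_tangent_bundle}. First I would fix $X=(x_1,x_2)\in U\S^{2n-1}$ and compute $Df_X(Y)$ for $Y=(y_1,y_2)\in T_XU\S^{2n-1}$. Since $f(X)=\langle ix_1,x_2\rangle$ and scalar multiplication by $i$ is the linear map described in \eqref{eq_multiplication_with_i}, we get $Df_X(y_1,y_2)=\langle iy_1,x_2\rangle+\langle ix_1,y_2\rangle=-\langle y_1,ix_2\rangle+\langle ix_1,y_2\rangle$, using that multiplication by $i$ is skew-adjoint on $\RR^{2n}$ (i.e. $\langle iu,v\rangle=-\langle u,iv\rangle$). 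So $X$ is a critical point precisely when $-\langle y_1,ix_2\rangle+\langle ix_1,y_2\rangle=0$ for all $(y_1,y_2)$ subject to the three linear constraints $\langle x_1,y_1\rangle=0$, $\langle x_2,y_2\rangle=0$, $\langle x_2,y_1\rangle+\langle x_1,y_2\rangle=0$.

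\textbf{Key steps.} Treating this as a constrained-extremum problem, $X$ is critical iff the linear functional $(y_1,y_2)\mapsto -\langle y_1,ix_2\rangle+\langle ix_1,y_2\rangle$ lies in the span of the three constraint functionals, i.e. there exist scalars $\alpha,\beta,\gamma\in\RR$ with
\begin{equation}
-ix_2 = \alpha x_1 + \gamma x_2, \qquad ix_1 = \beta x_2 + \gamma x_1,
\end{equation}
where the first equation collects the coefficient of $y_1$ and the second that of $y_2$. Now I would extract information by pairing with the known orthonormal data: pairing the first equation with $x_1$ gives $-\langle ix_2,x_1\rangle=\alpha$, i.e. $\alpha=f(X)$; pairing with $x_2$ gives $-\langle ix_2,x_2\rangle=\gamma$, and $\langle ix_2,x_2\rangle=0$ since $i$ is skew-adjoint, so $\gamma=0$. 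Similarly from the second equation $\beta=\langle ix_1,x_2\rangle=f(X)$ and $\gamma=\langle ix_1,x_1\rangle=0$, consistently. Substituting $\gamma=0$ back, the equations become $ix_1=f(X)\,x_2$ and $ix_2=-f(X)\,x_1$. Since $\|ix_1\|=\|x_1\|=1$ and $\|x_2\|=1$, the first forces $f(X)^2=1$, hence $f(X)=\pm1$, and then $x_2=\pm ix_1$. Conversely, for $X=(x,\pm ix)$ one checks directly that $x^TX$-type constraints hold (it lies in $U\S^{2n-1}$ because $\langle x,ix\rangle=0$ and $\|ix\|=1$) and that the Lagrange conditions are satisfied, so every such $X$ is critical; evaluating $f$ gives $f(x,ix)=\langle ix,ix\rangle=1$ and $f(x,-ix)=-1$. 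This yields exactly the claimed decomposition $\Crit f=\Crit_1f\cup\Crit_{-1}f$ with the two stated sets.

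\textbf{Main obstacle.} The computation is essentially routine linear algebra, so there is no deep obstacle; the one point requiring care is the correct bookkeeping of the Lagrange-multiplier condition on a constraint set cut out by three functionals that are themselves not obviously independent — one must confirm that at every point of $U\S^{2n-1}$ the differentials of the three defining equations are linearly independent (equivalently that $U\S^{2n-1}$ is a submanifold of the expected codimension $3$ in $\RR^{2n}\times\RR^{2n}$), so that the span of the constraint functionals is genuinely $3$-dimensional and the Lagrange characterization is valid. An alternative, perhaps cleaner, route that avoids Lagrange multipliers is to use the $SU(n)$-invariance established in the text together with Proposition~\ref{PropEquivFiberProps}: since $f$ is $SU(n)$-invariant and $SU(n)$ acts transitively on $\S^{2n-1}$, it suffices to analyze the restriction of $f$ to a single fiber $p^{-1}(\{x_0\})$, reducing the problem to finding critical points of $v\mapsto\langle ix_0,v\rangle$ on the unit sphere of the hyperplane $x_0^\perp$, which are immediately $v=\pm ix_0$ by the standard fact that a linear functional restricted to a sphere is critical exactly at the two points $\pm$(the unit vector dual to the functional). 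I would likely present the fiberwise argument as the main proof since it is shorter and directly leverages the equivariant structure already set up, and it makes the identification $\Crit f=\Crit_1f\cup\Crit_{-1}f$ transparent.
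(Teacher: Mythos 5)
Your proposal is correct, and it reaches the lemma by a genuinely different organization than the paper. The paper computes the same differential $Df_X(Y)=\langle iy_1,x_2\rangle+\langle ix_1,y_2\rangle$ and then argues in two concrete steps: it checks that this vanishes on all $Y$ satisfying \eqref{eq_tangent_space_unit_tangent_bundle} when $x_2=\pm ix_1$, and, when $x_2\notin\{ix_1,-ix_1\}$, it exhibits the explicit vertical test vector $Y=(0,w)$ with $w=ix_1-\langle ix_1,x_2\rangle x_2$, on which $Df_X(Y)=1-\langle ix_1,x_2\rangle^2\neq 0$. Your main argument instead solves the Lagrange-multiplier system in the ambient $\RR^{2n}\times\RR^{2n}$, and the bookkeeping is right: $\alpha=\beta=f(X)$, $\gamma=0$, hence $ix_1=f(X)x_2$ and $ix_2=-f(X)x_1$, forcing $f(X)=\pm1$ and $x_2=\pm ix_1$, with the converse inclusion immediate. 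The price of this route is exactly the caveat you flag: the implication ``critical $\Rightarrow$ multipliers exist'' uses that the three constraint differentials are independent along $U\S^{2n-1}$, i.e.\ that $X\mapsto X^TX$ is a submersion at full-rank $X$ (for symmetric $S$, $Y=\tfrac12 XS$ solves $X^TY+Y^TX=S$); the paper's choice of an explicit descent direction sidesteps this discussion, resting only on the tangent-space description it records. Your alternative fiberwise route is also sound and arguably the most conceptual, but be careful how you justify the reduction: fiberwise criticality of an invariant function does not imply criticality in general (this is exactly condition (A) in the paper), and here it follows either from Proposition~\ref{PropEquivFiberProps}.b), which gives trivializations in which $f$ is independent of the base coordinate, or from the observation that orbit directions (on which $Df$ vanishes by invariance) together with vertical directions span the whole tangent space when the base action is transitive; invariance and transitivity alone, without one of these two remarks, are not a complete argument. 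It is worth noting that the paper's test vector $w$ is precisely the fiberwise (vertical) gradient direction, so its converse step is the fiberwise argument made concrete.
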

\begin{proof}
We compute the differential of $f$ at $X = (x_1,x_2) \in U\S^{2n-1}$ as
    \begin{equation}\label{eq_differential_f}
        Df_X (Y) =  \langle i y_1, x_2 \rangle + \langle i x_1, y_2 \rangle     \qquad \forall Y=(y_1,y_2) \in T_XU\S^{2n-1}.
    \end{equation}
    Let $x\in \S^{2n-1}$. Evidently, $f(x,ix) =1$ and for each $Y = (y_1,y_2)\in T_X U\S^{2n-1}$ we have
    $$   Df_{(x,ix)} (Y) =  \langle ix,i y_1 \rangle +  \langle  i x, y_2\rangle = - \langle x,y_1\rangle + \langle ix,y_2\rangle = 0    $$
by equation \eqref{eq_tangent_space_unit_tangent_bundle}.
    Hence, $(x,ix)\in \Crit_1 f$ and similarly one shows that for each $x\in \S^{2n-1}$ it holds that  $(x,-ix)\in \Crit_{-1} f$. Thus, we have shown that
    $$   \{ (x,i x)\in U\S^{2n-1}\,|\, x\in \S^{2n-1}\}  \cup \{ (x,-i x)\in U\S^{2n-1}\,|\, x\in \S^{2n-1}\}    \subset \Crit f .    $$
To show the converse inclusion, let $X=(x_1,x_2)\in U\S^{2n-1}$ with $x_2\notin \{ix_1,-ix_1\}$.
    Set $Y = (0, w)$ with $w = ix_1 - \langle i x_1, x_2\rangle x_2$.
    One verifies that indeed $Y\in T_X U\S^{2n-1}$ and it holds that
    $   Df_X (Y)  =  1 - \langle ix_1, x_2\rangle^2 \neq 0      $.
    Hence, $X\not\in \Crit f$, which shows that any critical point of $f$ is of the form derived above.  
\end{proof} 
Let $r \in \NN$ with $r \geq 2$. We have seen in the previous Lemma that the critical values of $f$ are $1$ and $-1$, so the fiber products of the critical manifolds occurring in \eqref{EqPTCupperUS2n-1} are of the form
$$     \Crit_{\mu_1}f \times_{\S^{2n-1}} \Crit_{\mu_2} f \times_{\S^{2n-1}}\ldots     \times_{\S^{2n-1}} \Crit_{\mu_r} f     \quad \text{with} \quad \mu_j\in\{+1,-1\},\,\,j\in\{1,2,\ldots, r\} . $$
The sums of critical values that occur as values of $\mu$ on the right-hand side of \eqref{EqPTCupperUS2n-1} must further lie in $\{-r,-r+2,\dots,r-2,r\}$. Thus, it follows from \eqref{EqPTCupperUS2n-1} that 
\begin{equation}
\label{EqTCupperUSn}
\TC_r[p] \leq \sum_{k=0}^r \max \Big\{\TC_{r,p}(\Crit_{\mu_1}f \times_{\S^{2n-1}} \dots \times_{\S^{2n-1}} \Crit_{\mu_r}f) \ \Big| \   \sum_{j=1}^r \mu_j = 2k-r \Big\}. 
\end{equation}
We want to explicitly compute the sequential parametrized subspace complexities that occur on the right-hand side of this inequality in the case of even $n$, i.e. for spheres of dimension $4m-1$ for some $m\in \NN$. Eventually, we will show the following.

\begin{theorem}
\label{TheoremPTC4m-1}
For each $m \in \NN$, it holds that
$\TC_r[p\colon U\S^{4m-1} \to \S^{4m-1}] = r+1. $	
\end{theorem}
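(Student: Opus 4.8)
The plan is to prove the two inequalities $\TC_r[p]\le r+1$ and $\TC_r[p]\ge r+1$ separately. For the upper bound, \eqref{EqTCupperUSn} has already reduced matters to showing that, for every choice of $\mu_1,\dots,\mu_r\in\{+1,-1\}$, the fiber product $\Sigma:=\Crit_{\mu_1}f\times_{\S^{4m-1}}\dots\times_{\S^{4m-1}}\Crit_{\mu_r}f$ satisfies $\TC_{r,p}(\Sigma)=1$; then the right-hand side of \eqref{EqTCupperUSn} collapses to $\sum_{k=0}^r 1=r+1$. By Lemma \ref{lemma_crit_points_unit_sphere_bundle}, $\Crit_{\mu_j}f=\{(x,\mu_j\,ix)\mid x\in\S^{4m-1}\}$, so $\Sigma=\{((x,\mu_1ix),\dots,(x,\mu_rix))\mid x\in\S^{4m-1}\}$ is the image of a continuous section of $q_r$, in particular a closed subset homeomorphic to $\S^{4m-1}$. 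To build a continuous sequential parametrized motion planner on $\Sigma$ I would use the identification $\RR^{4m}\cong\HH^m$: writing $j$ for quaternionic multiplication, $x\mapsto jx$ is a continuous field of unit vectors on $\S^{4m-1}$ with $\langle jx,x\rangle=\langle jx,ix\rangle=0$, so $t\mapsto\cos(t)\,ix+\sin(t)\,jx$ is a half great circle inside the fiber $p^{-1}(x)=\{v\in\S^{4m-1}\mid v\perp x\}$ running from $ix$ to $-ix$. For each $x$ one defines $\gamma_x\colon[0,1]\to U\S^{4m-1}$ by concatenating, over $[\tfrac{\ell-1}{r-1},\tfrac{\ell}{r-1}]$ for $\ell=1,\dots,r-1$, the constant path at $(x,\mu_\ell ix)$ when $\mu_\ell=\mu_{\ell+1}$ and a reparametrized half great circle from $\mu_\ell ix$ to $\mu_{\ell+1}ix$ through $\mu_\ell jx$ otherwise. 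Then $p\circ\gamma_x\equiv x$, so $\gamma_x\in E^I_{\S^{4m-1}}$, and $x\mapsto\gamma_x$ is a continuous section of $\Pi_r$ over $\Sigma$, the continuity being immediate from the linear dependence of $x\mapsto ix,jx$ on $x$. Since $U\S^{4m-1}$ and $\S^{4m-1}$ are ANRs, Lemma \ref{LemmaPMPextend} extends this section to an open neighborhood of $\Sigma$, so $\TC_{r,p}(\Sigma)=1$ and hence $\TC_r[p]\le r+1$.

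\textbf{The lower bound: cohomology of the fibered power.} All cohomology below is taken with $\ZZ$ coefficients. Since $\S^{4m-1}$ is odd-dimensional, $\chi(\S^{4m-1})=0$, so the Euler class of $T\S^{4m-1}$ vanishes and the Gysin sequence of the sphere bundle $p$ splits; combined with the fact that the fiber generator lifts integrally, this yields a ring isomorphism $H^*(U\S^{4m-1})\cong\Lambda_{\ZZ}(a,b)$, where $a$ has degree $4m-2$ and restricts to a generator of $H^{4m-2}(\S^{4m-2})$ on a fiber of $p$, and $b=p^*\omega$ has degree $4m-1$ for $\omega$ a generator of $H^{4m-1}(\S^{4m-1})$. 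Writing $\mathrm{pr}_i\colon (U\S^{4m-1})^r_{\S^{4m-1}}\to U\S^{4m-1}$ for the $i$-th projection and $a_i:=\mathrm{pr}_i^*a$, one checks that the $a_i$ restrict on a fiber $(\S^{4m-2})^r$ of $q_r$ to the standard exterior generators, so Leray--Hirsch gives $H^*\big((U\S^{4m-1})^r_{\S^{4m-1}}\big)\cong\Lambda_{\ZZ}(b,a_1,\dots,a_r)$ with $b=q_r^*\omega$ and $a_i^2=(\mathrm{pr}_i^*a)^2=0$; in particular $a_1a_2\cdots a_r\ne 0$.

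\textbf{The lower bound: zero-divisors.} Under the identification $E^I_{\S^{4m-1}}\simeq U\S^{4m-1}$ by $\gamma\mapsto\gamma(0)$, the map $\Pi_r$ corresponds to the diagonal $\Delta\colon U\S^{4m-1}\to(U\S^{4m-1})^r_{\S^{4m-1}}$, so $\TC_r[p]=\secat(\Pi_r)$ and the relevant ideal is $\ker\bigl(\Delta^*\colon H^*((U\S^{4m-1})^r_{\S^{4m-1}})\to H^*(U\S^{4m-1})\bigr)$, where $\Delta^*a_i=a$ and $\Delta^*b=b$. Hence $\alpha_i:=a_i-a_1\in\ker\Delta^*$ for $i=2,\dots,r$. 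Since the $a_i$ have even degree they commute and square to zero, so $\alpha_2^2=(a_2-a_1)^2=-2a_1a_2$ and therefore
\[
\alpha_2^2\,\alpha_3\,\alpha_4\cdots\alpha_r=-2\,a_1a_2\,(a_3-a_1)\cdots(a_r-a_1)=-2\,a_1a_2\cdots a_r\ne 0 ,
\]
a non-zero product of $r$ elements of $\ker\Delta^*$. By the standard cohomological lower bound for sectional category (Schwarz's zero-divisor estimate), $\TC_r[p]=\secat(\Pi_r)\ge r+1$ in the non-reduced convention used throughout this paper. Together with the upper bound this gives $\TC_r[p]=r+1$.

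\textbf{Main obstacle.} The conceptual core --- and the reason the statement is restricted to $(4m-1)$-spheres --- is the construction of the parametrized motion planner on the critical fiber products: it needs a global continuous unit vector field orthogonal to both $x$ and $ix$, which is exactly what the quaternionic structure on $\RR^{4m}$ supplies. The remaining technical care lies in the integral Leray--Hirsch description of $H^*\big((U\S^{4m-1})^r_{\S^{4m-1}}\big)$ and in verifying that the displayed product of zero-divisors is genuinely non-zero there.
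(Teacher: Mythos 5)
Your proof is correct. The upper bound is essentially identical to the paper's: the paper's Lemma \ref{LemmaTCParamCrit} proves exactly your claim that each fiber product $\Crit_{\mu_1}f\times_{\S^{4m-1}}\dots\times_{\S^{4m-1}}\Crit_{\mu_r}f$ has $\TC_{r,p}=1$, using the same quaternionic vector field $x\mapsto jx$ to build fiberwise half great circles between $ix$ and $-ix$ and then extending by Lemma \ref{LemmaPMPextend}, after which \eqref{EqTCupperUSn} collapses to $r+1$. Where you diverge is the lower bound. The paper simply cites the inequality of Farber--Paul that the $r$-th sequential parametrized TC of a fibration is at least the $r$-th sequential TC of its fiber, and then uses Rudyak's computation $\TC_r(\S^{4m-2})=r+1$ for the even-dimensional fiber. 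You instead run a self-contained cohomological argument: vanishing Euler class plus Gysin gives $H^*(U\S^{4m-1})\cong\Lambda_\ZZ(a,b)$ with an integral class $a$ restricting to the fiber generator, Leray--Hirsch identifies $H^*\big((U\S^{4m-1})^r_{\S^{4m-1}}\big)$ as $\Lambda_\ZZ(b,a_1,\dots,a_r)$, and the zero-divisors $\alpha_i=a_i-a_1\in\ker\Delta^*$ (with $\Delta$ the fiberwise diagonal, which models $\Pi_r$ after the homotopy equivalence $E^I_X\simeq E$) satisfy $\alpha_2^2\alpha_3\cdots\alpha_r=-2\,a_1\cdots a_r\neq 0$, so Schwarz's estimate gives $\secat(\Pi_r)\geq r+1$ in the unreduced convention; the even degree $4m-2$ of the $a_i$ is what makes $\alpha_2^2\neq 0$, just as the even-dimensionality of the fiber is what makes the paper's cited bound sharp. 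Your route costs more space but is self-contained and works directly on the fibered power rather than on a single fiber; the paper's route is shorter by outsourcing both ingredients to the literature. Both correctly isolate the quaternionic structure as the reason for restricting to $(4m-1)$-spheres, since it is needed only for the motion planner in the upper bound.
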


For this purpose, we will use the canonical identification $\RR^4 \cong \HH$ with $\HH$ denoting the quaternions, and consider $\RR^{4m}\cong \HH^m$ as a left $\HH$-module. 
The quaternionic units $i,j,k\in \mathbb{H}$ can be chosen in such a way that multiplication with $i$ is again given by equation \eqref{eq_multiplication_with_i} and that left-multiplication with $j\in \mathbb{H}$ on $\RR^{4m}$ is given by
$$    j(x_1,\ldots, x_{4m}) =   (-x_4,-x_3,x_2,x_1, -x_8,-x_7,x_6,x_5,\ldots, -x_{4m},-x_{4m-1},x_{4m-2},x_{4m-3})  .      $$
In particular, it holds that 
$       \langle x, jx\rangle =  \langle ix, jx \rangle = 0  $ and $ \|jx\|^2  = \|x\|^2 
       $ for all $x\in \mathbb{R}^{4m}$.
\begin{lemma}
\label{LemmaTCParamCrit}
Let $m \in \NN$ and let $f\colon \S^{4m-1}\to \RR$ be given as in \eqref{EqfUS2n-1}.
    Let $\mu_1,\ldots,\mu_r\in\{+1,-1\}$ and consider the manifold $\mathcal{C} =   \Crit_{\mu_1}f \times_{\S^{4m-1}} \Crit_{\mu_2} f \times_{\S^{4m-1}}\ldots     \times_{\S^{4m-1}} \Crit_{\mu_r} f.  $
    Then $\TC_{r,p}(\C) = 1$.
\end{lemma}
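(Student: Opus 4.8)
The plan is to produce one continuous sequential parametrized motion planner defined on $\mathcal{C}$ itself and then invoke Lemma \ref{LemmaPMPextend} to spread it over an open neighbourhood, giving $\TC_{r,p}(\mathcal{C}) \leq 1$; since $\mathcal{C} \neq \emptyset$, the reverse inequality is automatic from the definition.

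First I would make $\mathcal{C}$ explicit. By Lemma \ref{lemma_crit_points_unit_sphere_bundle} each $\Crit_{\mu_j} f$ consists of the pairs $(x,\mu_j i x)$ with $x \in \S^{4m-1}$, and the fibre-product condition $p(u_1)=\dots=p(u_r)$ forces the base points to agree, so
$$\mathcal{C} = \big\{\big((x,\mu_1 i x),(x,\mu_2 i x),\dots,(x,\mu_r i x)\big) \ \big| \ x \in \S^{4m-1}\big\},$$
a homeomorphic copy of $\S^{4m-1}$, hence compact and therefore closed in the Hausdorff space $E^r_X$. Moreover $U\S^{4m-1}$ and $\S^{4m-1}$ are closed manifolds, hence ANRs, so the hypotheses of Lemma \ref{LemmaPMPextend} are met; the projection $\mathcal{C} \to \S^{4m-1}$ onto the base point $x$ is continuous. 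A section of $\Pi_r$ over such a point amounts, for each $x$, to a path $t \mapsto (x,v(t))$ that stays in the fibre $p^{-1}(x) = \{v \in \RR^{4m} \mid \langle x,v\rangle = 0, \ \|v\|=1\}$ and runs through $\mu_1 i x, \mu_2 i x, \dots, \mu_r i x$ at the nodal times $0,\tfrac1{r-1},\dots,1$, continuously in $x$.

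Next I would build this path. On the $j$-th subinterval $[\tfrac{j-1}{r-1},\tfrac{j}{r-1}]$, writing $\tau = (r-1)t - (j-1) \in [0,1]$, I take the constant path at $\mu_j i x$ when $\mu_j = \mu_{j+1}$, and the path $v(\tau) = \mu_j(\cos(\pi\tau)\, i x + \sin(\pi\tau)\, j x)$ when $\mu_j \neq \mu_{j+1}$ (so $\mu_{j+1} = -\mu_j$); then I concatenate these over $j \in \{1,\dots,r-1\}$. Here the quaternionic structure on $\RR^{4m} \cong \HH^m$ enters: by \eqref{eq_multiplication_with_j}, $jx$ is a unit vector with $\langle x,jx\rangle = \langle ix,jx\rangle = 0$, so (using also $\langle x,ix\rangle = 0$ and $\|ix\| = \|jx\| = 1$ for $x \in \S^{4m-1}$) the formula traces a great semicircle inside $p^{-1}(x)$ from $\mu_j i x$ to $-\mu_j i x = \mu_{j+1} i x$. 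The pieces match at the nodal points, so the concatenation is a genuine path in $p^{-1}(x)$, hence lies in $E^I_X$, and $\Pi_r$ of it equals the given tuple. Joint continuity of $(c,t) \mapsto s(c)(t)$ follows from the explicit formulas by a finite closed-cover gluing argument, and continuity of $s \colon \mathcal{C} \to E^I_X$ into the compact-open topology then follows by the exponential law exactly as in the proof of Lemma \ref{LemmaTCrhom}.

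The only genuinely delicate point is the existence of a \emph{continuous} family of connecting semicircles between the antipodal pairs $\{ix,-ix\}$ in the fibre spheres — equivalently, of a nowhere-vanishing continuous section of $(\RR x)^\perp$ pointwise orthogonal to $ix$ — and this is precisely what $x \mapsto jx$ provides; it is the feature that confines the argument to spheres of dimension $4m-1$. Everything else is routine bookkeeping. Combining the planner $s$ with Lemma \ref{LemmaPMPextend} gives an open neighbourhood of $\mathcal{C}$ in $E^r_X$ admitting a continuous sequential parametrized motion planner, so $\TC_{r,p}(\mathcal{C}) \leq 1$, and hence $\TC_{r,p}(\mathcal{C}) = 1$.
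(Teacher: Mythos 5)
Your proposal is correct and follows essentially the same route as the paper: an explicit sequential parametrized motion planner on $\mathcal{C}$ built from great semicircles $\cos(\pi\tau)\,\mu_j ix + \sin(\pi\tau)\,(\pm jx)$ in the fibre spheres, using the quaternionic left-multiplication $x \mapsto jx$ exactly as the paper's map $B$, followed by the extension Lemma \ref{LemmaPMPextend}. The only (immaterial) difference is the sign convention in the semicircle formula, which does not affect the argument.
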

\begin{proof}
Consider the map $B\colon \S^{4m-1}\to U\S^{4m-1}$, $B(x) = jx$.
   The map $B$ satisfies $\langle A(x),B(x)\rangle = 0$ for all $x\in\S^{4m-1}$.
     Let $u  = ((x,v_1),\ldots,(x, v_r)) \in \C $, so that $v_j \in \{ix,-ix\}$ for all $j \in \{1,2,\ldots, r\}$.
    We define a path $ \sigma_u\colon I\to U\S^n      $ as follows.
    For all $j\in \{0,1,\ldots , r-2\}$ and  $t\in [\frac{j}{r-1},\frac{j+1}{r-1}]$ we put 
    $$\sigma_u(t) = \begin{cases}
    	(x,v_{j+1}) & \text{if } v_{j+2}=v_{j+1}, \\
    	\big(x, \cos(\pi((r-1)t -j)) v_{j+1} + \sin( \pi((r-1)t-j)) B(x) \big) & \text{if } v_{j+2} =-v_{j+1}.
   \end{cases}$$
    Clearly, $\sigma_u$ is a continuous path whose image lies in $p^{-1}(\{x\})$. One further checks that the fibration $\Pi_r\colon (U\S^{4m-1})^I_{\S^{4m-1}} \to (U\S^{4m-1})^r_{\S^{4m-1}}$ satisfies
    $$   \Pi_r ( \sigma_u)  =  ((x,v_1),(x,v_2),\ldots, (x,v_r)) .    $$
    Moreover, the map $s\colon \C  \to (U\S^{4m-1})_{\S^{4m-1}}^I$, $s(u) = \sigma_u$, is continuous and it follows that $s$ is a continuous sequential parametrized motion planner on $\C$. Since $\C$ is closed in $(U\S^{4m-1})^r_{\S^{4m-1}}$ and since $U\S^{4m-1}$ and $\S^{4m-1}$ are closed connected manifolds, hence path-connected ANRs, it follows from Lemma \ref{LemmaPMPextend} that $s$ extends to a continuous sequential parametrized motion planner on an open neighborhood of $\C$. Hence, $\TC_{r,p}(\C)=1$.
\end{proof}

\begin{proof}[Proof of Theorem \ref{TheoremPTC4m-1}]
It follows from Lemma \ref{LemmaTCParamCrit} that each summand of the form 
$$  \max \Big\{ \TC_{r,p} (\Crit_{\mu_1} f  \times_{\S^{4m-1}} \dots \times_{\S^{4m-1}} \Crit_{\mu_r} f) \ \Big| \  \sum_{j=1}^r \mu_j =2k-r\Big\}      $$
occurring on the right-hand side of \eqref{EqTCupperUSn} equals one. Since there are precisely $r+1$ of those summands, this yields 
$$    \TC_r[p\colon U\S^{4m-1}\to \S^{4m-1} ] \leq r +1   .  $$	
It is shown in \cite[p. 9]{FarberPaulI} that the $r$-th sequential parametrized TC of a fibration is bounded from below by the $r$-th sequential TC of its fiber. Each fiber of $p\colon U\S^{4m-1} \to \S^{4m-1}$ is homeomorphic to $\S^{4m-2}$, so we derive that 
$$\TC_r[p\colon U\S^{4m-1} \to \S^{4m-1}] \geq \TC_r(\S^{4m-2})=r+1,$$
using the computation of sequential TCs of spheres from \cite[Section 4]{RudyakHigher}. Combining the lower and the upper bound shows the claim.
\end{proof}

\begin{remark}
The fibration $p\colon U\S^{4m-1}\to \S^{4m-1}$ is fiber homotopy equivalent to a trivial fibration if and only if $m \in \{1,2\}$, so that Theorem \ref{TheoremPTC4m-1} can not be derived by elementary arguments for $m \geq 3$. More precisely, as shown in \cite[Theorem 1.1]{james1962note}, $U\S^{4m-1}$ has the homotopy type of $\S^{4m-1}\times \S^{4m-2}$ if and only if there exists an element of Hopf invariant one in $\pi_{8m-1}(\S^{4m})$, which  By \cite{adams1960non} this is the case only for $m\in \{1,2\}$.
    Thus, $p$ is not fiber homotopy trivial for $m \geq 3$. 
\end{remark}

\bibliographystyle{amsalpha}

\bibliography{TCloop.bib}

\providecommand{\bysame}{\leavevmode\hbox to3em{\hrulefill}\thinspace}
\providecommand{\MR}{\relax\ifhmode\unskip\space\fi MR }
\providecommand{\MRhref}[2]{%
  \href{http://www.ams.org/mathscinet-getitem?mr=#1}{#2}
}
\providecommand{\href}[2]{#2}
\begin{thebibliography}{MVMLPS22}

\bibitem[Ada60]{adams1960non}
J.~F. Adams, \emph{On the non-existence of elements of {H}opf invariant one}, Ann. of Math. (2) \textbf{72} (1960), 20--104.

\bibitem[Bar93]{Bartsch}
Thomas Bartsch, \emph{Topological methods for variational problems with symmetries}, Lecture Notes in Mathematics, vol. 1560, Springer-Verlag, Berlin, 1993.

\bibitem[BGRT14]{BGRT}
Ibai Basabe, Jes\'us Gonz\'alez, Yuli~B. Rudyak, and Dai Tamaki, \emph{Higher topological complexity and its symmetrization}, Algebr. Geom. Topol. \textbf{14} (2014), no.~4, 2103--2124.

\bibitem[Bre93]{Bredon}
Glen~E. Bredon, \emph{Topology and geometry}, Graduate Texts in Mathematics, vol. 139, Springer-Verlag, New York, 1993.

\bibitem[CFW21]{cohen2021topology}
Daniel~C. Cohen, Michael Farber, and Shmuel Weinberger, \emph{Topology of parametrized motion planning algorithms}, SIAM Journal on Applied Algebra and Geometry \textbf{5} (2021), no.~2, 229--249.

\bibitem[CFW22]{CFWplane}
\bysame, \emph{Parametrized topological complexity of collision-free motion planning in the plane}, Ann. Math. Artif. Intell. \textbf{90} (2022), no.~10, 999--1015.

\bibitem[CLOT03]{CLOT}
Octav Cornea, Gregory Lupton, John Oprea, and Daniel Tanr\'e, \emph{Lusternik-{S}chnirelmann category}, Mathematical Surveys and Monographs, vol. 103, American Mathematical Society, Providence, RI, 2003.

\bibitem[Cos10]{Costa}
Armindo Costa, \emph{Topological complexity of configuration spaces}, Ph.D. thesis, Durham University, 2010.

\bibitem[CP91]{ClappPuppeSymm}
M\'{o}nica Clapp and Dieter Puppe, \emph{Critical point theory with symmetries}, J. Reine Angew. Math. \textbf{418} (1991), 1--29.

\bibitem[Dau24]{DaundkarGroup}
Navnath Daundkar, \emph{Group actions and higher topological complexity of lens spaces}, J. Appl. Comput. Topol. \textbf{8} (2024), no.~7, 2051--2067.

\bibitem[Far03]{farber:2003}
Michael Farber, \emph{Topological complexity of motion planning}, Discrete Comput. Geom. \textbf{29} (2003), no.~2, 211--221.

\bibitem[Far06]{FarberSurveyTC}
\bysame, \emph{Topology of robot motion planning}, Morse theoretic methods in nonlinear analysis and in symplectic topology, NATO Sci. Ser. II Math. Phys. Chem., vol. 217, Springer, Dordrecht, 2006, pp.~185--230.

\bibitem[Far08]{FarberBook}
\bysame, \emph{Invitation to topological robotics}, Zurich Lectures in Advanced Mathematics, European Mathematical Society (EMS), Z\"urich, 2008.

\bibitem[FO24]{FarberOpreaSeqParam}
Michael Farber and John Oprea, \emph{Sequential parametrized topological complexity and related invariants}, Algebr. Geom. Topol. \textbf{24} (2024), no.~3, 1755--1780.

\bibitem[Fox41]{Fox}
Ralph~H. Fox, \emph{On the {L}usternik-{S}chnirelmann category}, Ann. of Math. (2) \textbf{42} (1941), 333--370.

\bibitem[FP22]{FarberPaulI}
Michael Farber and Amit~Kumar Paul, \emph{Sequential parametrized motion planning and its complexity}, Topology Appl. \textbf{321} (2022), Paper No. 108256, 23.

\bibitem[FP23]{FarberPaulII}
\bysame, \emph{Sequential parametrized motion planning and its complexity, {II}}, Topology Appl. \textbf{331} (2023), Paper No. 108490, 9.

\bibitem[FP24]{FarberPaulSphere}
\bysame, \emph{Sequential parametrized topological complexity of sphere bundles}, Eur. J. Math. \textbf{10} (2024), no.~4, Paper No. 53.

\bibitem[FW23]{farber2023parametrized}
Michael Farber and Shmuel Weinberger, \emph{Parametrized topological complexity of sphere bundles}, Topol. Methods Nonlinear Anal. \textbf{61} (2023), no.~1, 161--177.

\bibitem[Hu65]{HuRetr}
Sze-Tsen Hu, \emph{Theory of retracts}, Wayne State University Press, Detroit, 1965.

\bibitem[JW54]{james1962note}
I.~M. James and J.~H.~C. Whitehead, \emph{Note on fibre spaces}, Proc. London Math. Soc. (3) \textbf{4} (1954), 129--137.

\bibitem[KM11]{KadzisaMimura}
Hiroyuki Kadzisa and Mamoru Mimura, \emph{Morse-{B}ott functions and the {L}usternik-{S}chnirelmann category}, J. Fixed Point Theory Appl. \textbf{10} (2011), no.~1, 63--85.

\bibitem[KR90]{KoditschekRimon}
Daniel~E. Koditschek and Elon Rimon, \emph{Robot navigation functions on manifolds with boundary}, Adv. in Appl. Math. \textbf{11} (1990), no.~4, 412--442.

\bibitem[Lan99]{LangDiff}
Serge Lang, \emph{Fundamentals of differential geometry}, Graduate Texts in Mathematics, vol. 191, Springer-Verlag, New York, 1999.

\bibitem[Mes21]{MescherSC}
Stephan Mescher, \emph{Spherical complexities with applications to closed geodesics}, Algebr. Geom. Topol. \textbf{21} (2021), no.~2, 1021--1074.

\bibitem[Mes24]{MescherSurvey}
\bysame, \emph{Geometric and topological properties of manifolds in robot motion planning}, Topology and AI, EMS Series in Industrial and Applied Mathematics, vol.~4, EMS Press, Berlin, 2024, pp.~123--159.

\bibitem[Mil63]{MilMorse}
John~W. Milnor, \emph{Morse theory}, Annals of Mathematics Studies, No. 51, Princeton University Press, Princeton, N.J., 1963.

\bibitem[Mil76]{MilnorLie}
\bysame, \emph{Curvatures of left invariant metrics on {L}ie groups}, Advances in Math. \textbf{21} (1976), no.~3, 293--329.

\bibitem[Min25]{Minowa}
Yuki Minowa, \emph{Parametrized topological complexity of spherical fibrations over spheres}, Math. Z. \textbf{311} (2025), no.~1, Paper No. 1, 35.

\bibitem[MVML22]{MVMLhomdist}
Enrique Mac\'{\i}as-Virg\'{o}s and David Mosquera-Lois, \emph{Homotopic distance between maps}, Math. Proc. Cambridge Philos. Soc. \textbf{172} (2022), no.~1, 73--93.

\bibitem[MVMLPS22]{MVMLgeneralized}
Enrique Mac\'{\i}as-Virg\'{o}s, David Mosquera-Lois, and Mar\'{\i}a~Jos\'{e} Pereira-S\'{a}ez, \emph{Homotopic distance and generalized motion planning}, Mediterr. J. Math. \textbf{19} (2022), no.~6.

\bibitem[MW10]{MawhinWillem}
Jean Mawhin and Michel Willem, \emph{Origin and evolution of the {P}alais-{S}male condition in critical point theory}, J. Fixed Point Theory Appl. \textbf{7} (2010), no.~2, 265--290.

\bibitem[NO61]{NomizuOzeki}
Katsumi Nomizu and Hideki Ozeki, \emph{The existence of complete {R}iemannian metrics}, Proc. Amer. Math. Soc. \textbf{12} (1961).

\bibitem[Pal66a]{PalaisInf}
Richard~S. Palais, \emph{Homotopy theory of infinite dimensional manifolds}, Topology \textbf{5} (1966), 1--16.

\bibitem[Pal66b]{PalaisLuster}
\bysame, \emph{Lusternik-{S}chnirelman theory on {B}anach manifolds}, Topology \textbf{5} (1966), 115--132.

\bibitem[RS03]{RudyakSchlenk}
Yuli~B. Rudyak and Felix Schlenk, \emph{Lusternik-{S}chnirelmann theory for fixed points of maps}, Topol. Methods Nonlinear Anal. \textbf{21} (2003), no.~1, 171--194.

\bibitem[Rud10]{RudyakHigher}
Yuli~B. Rudyak, \emph{On higher analogs of topological complexity}, Topology Appl. \textbf{157} (2010), no.~5, 916--920.

\bibitem[Sch66]{SchwarzGenus}
Albert~S. Schwarz, \emph{The genus of a fiber space}, Amer. Math. Soc. Transl. \textbf{55} (1966), 49--140.

\bibitem[Str08]{Struwe}
Michael Struwe, \emph{Variational methods}, fourth ed., Ergebnisse der Mathematik und ihrer Grenzgebiete. 3. Folge. A Series of Modern Surveys in Mathematics, vol.~34, Springer-Verlag, Berlin, 2008.

\bibitem[Yag85]{Yagasaki}
Tatsuhiko Yagasaki, \emph{Fiber shape theory}, Tsukuba J. Math. \textbf{9} (1985), no.~2, 261--277.

\end{thebibliography}

\end{document}